\newcommand{\nbb}{\mathbb{N}}
\newcommand{\rbb}{\mathbb{R}}
\renewcommand{\L}{\mathcal{L}}
\newcommand{\W}{\mathcal{W}}
\newcommand{\Hcal}{\mathcal{H}}
\newcommand{\Pcal}{\mathcal{P}}
\newcommand{\B}{\mathcal{B}}
\newcommand{\la}{\langle}
\newcommand{\ra}{\rangle}
\newcommand{\taut}{\tilde{\tau}}
\newcommand{\Psionem}{\Psi_{1,m}}
\newcommand{\Psitwom}{\Psi_{2,m}}
\newcommand{\nt}{\notag}
\newcommand{\uo}{u^0}
\newcommand{\um}{u^m}
\newcommand{\Um}{U^m}
\newcommand{\vm}{v^m}
\newcommand{\ebar}{\bar{\varepsilon}}
\newcommand{\umt}{\tilde{u}^m}
\newcommand{\vmt}{\tilde{v}^m}
\newcommand{\ut}{\tilde{u}}
\newcommand{\vt}{\tilde{v}}
\newcommand{\Jmu}{\pi_1 J^m_{0,t}}
\newcommand{\Jmv}{\pi_2 J^m_{0,t}}
\newcommand{\Amu}{\pi_1 A^m_{0,t}}
\newcommand{\Amv}{\pi_2 A^m_{0,t}}
\newcommand{\rhomu}{\pi_1 \rho^m}
\newcommand{\rhomv}{\pi_2 \rho^m}
\newcommand{\num}{\nu^m}
\newcommand{\tautil}{\tilde{\tau}}
\newcommand{\x}{\mathrm{x}}
\newcommand{\vbar}{\bar{v}}
\newcommand{\ubar}{\bar{u}}
\newcommand{\utilde}{\tilde{u}}
\newcommand{\Ut}{\tilde{U}}
\newcommand{\dmnb}{d^{\,m}_{N,\beta}}
\newcommand{\dmtnb}{\tilde{d}^{\,m}_{N,\beta}}
\newcommand{\dmtnbl}{\tilde{d}^{\,m}_{N,\beta,\lambda}}
\newcommand{\dtnb}{\tilde{d}^{\,0}_{N,\beta}}
\newcommand{\dtnbhalf}{\tilde{d}^{\,0}_{N,\beta/2}}
\newcommand{\dtnbtwo}{\tilde{d}^{\,0}_{N,2\beta}}
\newcommand{\mi}{\wedge}
\newcommand{\Tr}{\text{Tr}}
\renewcommand{\d}{\text{d}}
\newcommand{\domain}{\mathcal{O}}
\newcommand{\f}{\varphi}
\newcommand{\grad}{\nabla}
\newcommand{\Dom}{\text{Dom}}
\newcommand{\ddt}{\tfrac{\d}{\d t}}
\newcommand{\Gammau}{\pi_1\Gamma^m}
\newcommand{\Gammav}{\pi_2\Gamma^m}
\newcommand{\E}{\mathbb{E}}
\renewcommand{\P}{\mathbb{P}}
\newcommand{\nbar}{\bar{n}}
\newcommand{\close}{\!\!\!}
\theoremstyle{plain}
\newtheorem{theorem}{Theorem}[section]
\newtheorem{corollary}[theorem]{Corollary}
\newtheorem{lemma}[theorem]{Lemma}
\newtheorem{assumption}[theorem]{Assumption}
\newtheorem{proposition}[theorem]{Proposition}
\theoremstyle{definition}
\newtheorem{remark}[theorem]{Remark}
\numberwithin{equation}{section}
\title{The small mass limit for long time statistics of a stochastic nonlinear damped wave equation}
\author{ Hung D.~Nguyen$^1$}
\address{$^1$ Department of Mathematics, University of California, Los Angeles, California, USA}
\begin{document}

\begin{abstract}
We study the long time statistics of a class of semi--linear damped wave equations with polynomial nonlinearities and perturbed by additive Gaussian noise in dimensions 2 and 3. We find that if sufficiently many directions in the phase space are stochastically forced, the system is exponentially attractive toward its unique invariant measure with a convergent rate that is uniform with respect to the mass. Then, in the small mass limit, we prove the convergence of the first marginal of the invariant measures in a suitable Wasserstein distance toward the unique invariant measure of a stochastic reaction--diffusion equation. Together with the uniform geometric ergodicity, we obtain the validity of the small mass limit for the solutions on the infinite time horizon $[0,\infty)$. This extends previously known results established for the damped wave equations under Lipschitz nonlinearities. 
\end{abstract}

\maketitle

\section{Introduction} \label{sec:intro}

Let $\domain\subset \rbb^d$, $d=2,3$, be a bounded open domain with smooth boundary. We consider the following equation in the unknown variable $\um(t)=\um(x,t):\domain\times[0,\infty)\to\rbb$
\begin{align}
m\,\partial_{tt} \um(t)&=-\partial_t \um(t)+\triangle \um(t) + \f(\um(t))+ Q\, \partial_t w(t),\label{eqn:wave:original}\\
\um(0)&=u_0 \in H^1_0(\domain),\quad \partial_t \um(0)=v_0\in L^2(\domain),\quad \um(t)\big|_{\partial\domain}=0,\nt
\end{align}
which, by Newton's Law, describes the motion of a particle suspended in a randomly continuous medium, under the impact of external forces $\f+Q\partial_t w$ and a damping force proportional to the speed \cite{barbu2002stochastic,carmona1988random,crauel1997random}. In equation~\eqref{eqn:wave:original}, $m>0$ represents the size of the particle mass, $\f:\rbb\to\rbb$ is the nonlinearity satisfying dissipative conditions with polynomial growth, $w(t)$ is a cylindrical Wiener process taking values in $L^2(\domain)$, and $Q:L^2(\domain)\to \L^2(\domain)$ is a symmetric linear bounded map. We note that equation~\eqref{eqn:wave:original} is also subjected to Dirichlet condition. For simplicity, we set all other physical constants to~1.

When setting $m=0$, equation~\eqref{eqn:wave:original} is formally reduced to the following reaction--diffusion equation
\begin{align}
\partial_{t} \uo(t)&=\triangle \uo(t) + \f(\uo(t))+ Q\, \partial_t w(t),\label{eqn:react-diff:original}\\
\uo(0)&=u_0,\quad u(t)\big|_{\partial\domain}=0.\nt
\end{align}
It is well--known that in the regime of $m\to 0$, the so--called Smoluchowski--Kramers approximations \cite{kramers1940brownian,smoluchowski1916drei}, the process $\um$ converges to $\uo$ in any finite time interval $[0,T]$. Results in this direction for stochastic wave equations appeared as early as in the work of \cite{cerrai2006smoluchowski,cerrai2006smoluchowski2}. In particular, for a broad class of nonlinearities, it can be shown that the following holds \cite{cerrai2006smoluchowski,cerrai2006smoluchowski2}
\begin{align} \label{lim:probability}
\lim_{m\to 0}\P(\sup_{t\in[0,T]}\|\um(t)-\uo(t)\|_H>\varepsilon)= 0,\quad \varepsilon>0,\, T>0. 
\end{align}
If we additional assume that $\f$ is Lipschitz, a convergence in $C([0,T];H)$  established in \cite{cerrai2016smoluchowski,salins2019smoluchowski}
\begin{align} \label{lim:L^p}
\lim_{m\to 0}\E\big[\sup_{t\in[0,T]}\|\um(t)-\uo(t)\|_H^p\big]= 0,\quad T>0,
\end{align}
for suitable $p\ge 2$. More interestingly, if the constant friction  $-\partial_t\um(t)$ is replaced by a nonlinear damping term, the limit equation~\eqref{eqn:react-diff:original} must be modified so that results of~\eqref{lim:probability}--\eqref{lim:L^p}--types still hold \cite{cerrai2022smoluchowski}. See also the related work in \cite{fukuizumi2022non,lv2012averaging,shi2021small,zine2022smoluchowski}. Altogether, the results established therein rigorously justify that the motion of a small particle in any finite time interval can be approximated by the parabolic equation \eqref{eqn:react-diff:original} instead of the hyperbolic equation \eqref{eqn:wave:original} \cite{cerrai2006smoluchowski,cerrai2017smoluchowski}.

Although the small mas limits of~\eqref{eqn:wave:original} have been mostly investigated for finite time windows, there are several results concerning long time behaviors. For example, asymptotics of the exit times and the large deviation theory in the context of small mass limits were central in the work of \cite{cerrai2014smoluchowski,cerrai2016smoluchowski,cerrai2022small}. 
It is also a matter of interest to compare the statistically steady states of~\eqref{eqn:wave:original} and \eqref{eqn:react-diff:original}, which have been well--studied on their own. In \cite{cerrai2006smoluchowski}, under the assumption that systems~\eqref{eqn:wave:original} and \eqref{eqn:react-diff:original} are of gradient type, the invariant measure of~\eqref{eqn:wave:original} was explicitly derived. As a consequence, its first marginal was shown to coincide with the invariant measure of~\eqref{eqn:react-diff:original}. It is worth to mention that in general, it is difficult to provide a formula for the invariant measures, letting alone proving that they are the same. Nevertheless, it turns out that in dimension $d=1$, provided that $\f$ satisfies certain dissipative conditions and that \cite[Hypothesis 2]{cerrai2020convergence}
\begin{align*}
\lambda\in[1,3),\quad \f(x)=\text{O}(|x|^\lambda)\,\, \text{as}\,\, |x|\to\infty,
\end{align*}
there exists a Wasserstein distance $\W$ in $\Pcal r(L^2(\domain))$, the space of probability measures in $L^2(\domain)$, such that \cite[Theorem 5.1]{cerrai2020convergence}
\begin{align} \label{lim:Wass}
\lim_{m\to 0} \W (\pi_1\num,\nu^0)=0.
\end{align}
In the above, $\{\num\}_{m>0}$ is an arbitrarily sequence of invariant measures of~\eqref{eqn:wave:original}, $\pi_1\num$ is the first marginal of $\num$ on $L^2(\domain)$, and $\nu^0$ is the unique invariant measure of~\eqref{eqn:react-diff:original}.
In higher spatial dimensions,~\eqref{lim:Wass} was also established in \cite[Theorem 5.1]{cerrai2020convergence} under the additional restriction $\lambda=1$, leaving out the class of non--Lipschitz polynomials. One of our goals here is to push through this threshold. 

More specifically, our main contributions in this note are summarized as follows: in dimensions $d=2,3$, under the assumptions that $\lambda\in[1,2)$ and that sufficiently many directions of the phase space are stochastically forced, we find that for all $m$ sufficiently small,~\eqref{eqn:wave:original} admits a unique invariant measure $\num$ and that it is exponentially attractive toward $\num$ with a convergent rate uniformly with respect to $m$, see Theorem~\ref{thm:geometric-ergodicity:mass} below. Furthermore, we establish limit~\eqref{lim:Wass} for a suitable Wasserstein distance $\W$ in $\Pcal r(L^2(\domain))$, cf. Theorem~\ref{thm:nu^m->nu^0}. As a consequence, given $(\um_0,\vm_0)$ a sequence of initial conditions with sufficient regularity and $f:L^2(\domain)\to \rbb$ an observable satisfying certain Lipschitz conditions related to $\W$, we show that
\begin{align} \label{lim:Wass:f}
\lim_{m\to 0}\sup_{t\ge 0}\big|\E f(\um(t))-\E f(\uo(t))\big| =0.
\end{align}
See Theorem~\ref{thm:m->0:f} for a precise statement of~\eqref{lim:Wass:f}. 

We note that for fixed $m$, statistically steady states of~\eqref{eqn:wave:original} have been well--studied. The existence of invariant measures of~\eqref{eqn:wave:original} for a broad class of nonlinearities was established in the work of \cite{crauel1997random} via the theory of random attractors. Unique ergodicity was then obtained in \cite{barbu2002stochastic} in dimensions $d\le 3$ under the condition that the effect of $\f$ is dominated by the Laplacian and that $
\f(x)=\text{O}(|x|^\lambda)$ as $|x|\to\infty,$ where $\lambda\in[1,3)$. Exponential mixing for~\eqref{eqn:wave:original} was investigated under degenerate noise in \cite{martirosyan2014exponential} making use of an asymptotic coupling argument. On the other hand, ergodicity of~\eqref{eqn:react-diff:original} has now become a classical topic and can be found in many previous literature \cite{cerrai2020convergence,da1996ergodicity,da2014stochastic, glatt2022short,hairer2011theory}. As mentioned above, in this paper, we address the problem of unique ergodicity of~\eqref{eqn:wave:original} under suitable non--Lipschitz nonlinearities in the context of $m\to 0$. Following closely the framework of \cite{butkovsky2020generalized,hairer2006ergodicity,
hairer2008spectral,hairer2011asymptotic,kulik2017ergodic,
kulik2015generalized}, the uniform geometric ergodicity argument consists of two crucial ingredients: the \emph{d--contracting} property of the Markov semigroup associated with~\eqref{eqn:wave:original} and the notion of \emph{d--small} sets in the phase space; see Section~\ref{sec:geometric-ergodicity:proof}. In turn, the former relies on the so--called \emph{asymptotic strong--Feller} property, which is a large--time smoothing effect of the Markovian dynamics. The latter relies on an irreducibility property asserting that the solutions can always return to any neighborhood of the origin. In contrast with literature where unique ergodicity of~\eqref{eqn:wave:original} in dimensions $d\le 3$ was previously obtained for $\lambda\in[1,3)$ \cite{barbu2002stochastic,martirosyan2014exponential}, due to the singular limit $m\to 0$, the ergodicity result as well as the analysis in this note require the condition $\lambda\in[1,2)$. As a trade--off on the growth rate of the nonlinearities, the convergent speed is independent of the mass, which is very convenient for the purpose of studying the small mass regime. The uniform ergodicity result is rigorously given in Theorem \ref{thm:geometric-ergodicity:mass}, whose proof will be supplied in Section \ref{sec:geometric-ergodicity}. In particular, the condition $\lambda\in[1,2)$ will be employed in Proposition \ref{prop:asymptotic-Feller} establishing the asymptotic strong--Feller property. In turn, this will be invoked to prove Theorem \ref{thm:geometric-ergodicity:mass}.

Turning to~\eqref{lim:Wass}, the restriction $\lambda=1$ in dimensions $d=2,3$, was previously imposed in \cite{cerrai2020convergence} due to a lack of higher regularity of the invariant measure $\num$ as well as suitable bounds on $\sup_{t\in[0,T]}\|\um(t)\|_{L^\infty(\domain)}$ that is independent of $m$. In our work, we circumvent the former by proving that compared with the solutions, $\num$ actually supports in higher regular spaces (see Proposition~\ref{prop:regularity}). We also tackle the latter by performing a series of bootstrap arguments and ultimately obtain uniform estimates on $\sup_{t\in[0,T]}\|\um(t)\|_{L^\infty(\domain)}$. In particular, these delicate bounds employ the crucial condition $\lambda\in[1,2)$; see the proof of Lemma~\ref{lem:moment-bound:H^2:|Au|^2:sup_[0,T]:random-initial-cond} in Section~\ref{sec:moment-bound}. As a consequence, we demonstrate that in the regime of $m\to 0$, the limit~\eqref{lim:Wass} holds. In turn, together with the uniform exponential convergence of~\eqref{eqn:wave:original} toward $\num$, limit~\eqref{lim:Wass} will be invoked to establish~\eqref{lim:Wass:f}, i.e., the validity of the approximation for~\eqref{eqn:wave:original} by~\eqref{eqn:react-diff:original} on the infinite time horizon $[0,\infty)$. The strategy that we employ to prove~\eqref{lim:Wass:f} is drawn upon the framework recently developed in \cite{glatt2021mixing}.

The rest of the paper is organized as follows: in Section~\ref{sec:result}, we introduce all the functional settings as well as the main assumptions on the nonlinearities and noise structures. We also formulate our results in this section, including Theorem~\ref{thm:geometric-ergodicity:mass} on the uniform geometric ergodicity, and the main results concerning the small mass limit stated in Theorem~\ref{thm:nu^m->nu^0} and Theorem~\ref{thm:m->0:f}. In Section~\ref{sec:moment-bound}, we derive a priori moment bounds on the solutions of~\eqref{eqn:wave:original} that will be employed to prove the main results. We then discuss the asymptotic coupling and prove the geometric ergodicity in Section~\ref{sec:geometric-ergodicity}. In Section~\ref{sec:small-mass}, we provide the detailed proofs of the small mass limits making use of the previous sections. In Appendix~\ref{sec:stochastic-convolution}, we perform several estimates on the linear version of~\eqref{eqn:wave:original} ($\f\equiv 0$) that were employed to prove geometric ergodicity and moment bounds in higher regularity. In Appendix \ref{sec:react-diff}, we review previously established results on the reaction--diffusion equation~\eqref{eqn:react-diff:original}. In Appendix \ref{sec:auxiliary-result}, we collect  auxiliary lemmas on the nonlinearities and Wasserstein distances, that were invoked to prove the small mass limits.

\section{Assumptions and main results} \label{sec:result}

\subsection{Functional setting} \label{sec:functional-setting}
Letting $\domain$ be a smooth bounded domain in $\rbb^d$, we denote by $H$ the Hilbert space $L^2(\domain)$ endowed with the inner product $\la\cdot,\cdot\ra_H$ and the induced norm $\|\cdot\|_H$.

Let $A$ be the realization of $-\triangle$ in $H$ endowed with the Dirichlet boundary condition and the domain $\Dom(A)=H^1_0(\domain)\cap H^2(\domain)$. It is well-known that there exists an orthonormal basis $\{e_k\}_{k\ge 1}$ in $H$ that diagonalizes $A$, i.e.,  
\begin{equation}\label{eqn:Ae_k=alpha_k.e_k}
Ae_k=\alpha_k e_k,
\end{equation}
for a sequence of positive numbers $\alpha_1<\alpha_2<\dots$ diverging to infinity. For each $r\in\rbb$, we denote
\begin{equation}
H^r=\Dom(A^{r/2}),
\end{equation}
endowed with the inner product
\begin{align*}
\la u_1,u_2\ra_{H^r}=\la A^{r/2}u_1,A^{r/2}u_2\ra_H.
\end{align*}
In view of~\eqref{eqn:Ae_k=alpha_k.e_k}, the inner product in $H^r$ may be rewritten as \cite{cerrai2006smoluchowski,cerrai2020convergence}
\begin{align*}
\la u_1,u_2\ra_{H^r}=\sum_{k\ge 1}\alpha_k^{r}\la u_1,e_k\ra_H\la u_2,e_k\ra_H.
\end{align*}
The induced norm in $H^r$ then is given by
\begin{align*}
\|u\|^2_{H^r}=\sum_{k\ge 1}\alpha_k^{r}|\la u,e_k\ra_H|^2.
\end{align*}
For $n\ge 1$, we denote by $P_n$ the projection onto the span$\{e_1,\dots,e_n\}$, i.e.,
\begin{equation} \label{form:P_n.u}
P_nu=\sum_{k=1}^n \la u,e_k\ra_He_k.
\end{equation}
To construct a phase space where~\eqref{eqn:wave:original} evolves on, for each $\beta\in\rbb$, let $\Hcal^\beta$ be the product space given by
\begin{equation} \label{form:H^beta.x.H^(beta-1)}
\Hcal^\beta = H^\beta \times H^{\beta-1},
\end{equation}
endowed by the norm
\begin{align*}
\|(u,v)\|^2_{\Hcal^\beta}=\|u\|^2_{H^\beta}+\|v\|^2_{H^{\beta-1}}.
\end{align*}
The projection of $\Hcal^\beta$ on the marginal spaces is denoted by $\pi$, namely,
\begin{align*}
\pi_1(u,v)=u,\quad \pi_2(u,v)=v.
\end{align*}

Following \cite{barbu2002stochastic}, by setting $\vm(t)=\partial_t \um(t)$, we may recast~\eqref{eqn:wave:original} as the following system in the space $\Hcal^1=H^1\times H$
\begin{equation} \label{eqn:wave}
\begin{aligned}
\d \um(t)&=\vm(t)\d t,\\
m\,\d \vm(t)&=-A\um(t)\d t-\vm(t)\d t+\f(\um(t))\d t+ Q\d w(t),\\
(\um(0),\vm(0))&=(u_0,v_0)\in \Hcal^1.
\end{aligned}
\end{equation}
The generator associated with~\eqref{eqn:wave} is denoted by  \cite{cerrai2020convergence}
\begin{equation} \label{form:L^m}
\L^m g(u,v)= \la D_u g, v\ra_H+\tfrac{1}{m}\la D_v g,-Au-v(t)+\f(u(t))\ra_H+\tfrac{1}{2m^2} \Tr(D_{vv}gQQ^*),
\end{equation}
and is defined for all $g\in C^2(\Hcal^1;\rbb)$ such that $\Tr(D_{vv}gQQ^*)<\infty$. Here, we recall the definition $\Tr(M)=\sum_{k\ge 1}\la Me_k,e_k\ra_H$ for $M:H\to H$.

\subsection{Main assumptions} \label{sec:result:assumption}
In this subsection, we state the main assumptions on the nonlinearities and the noise structure what will be employed throughout the paper.

Concerning the nonlinearities $\f:\rbb\to\rbb$, we make the following standard assumptions on $\f$: \cite{cerrai2020convergence,glatt2022short}

\begin{assumption} \label{cond:phi:well-posed} $\f\in C^1$ satisfies $\f(0)=0$.

\begin{enumerate}[noitemsep,topsep=0pt,wide=\parindent, label=\arabic*.,ref=\theassumption.\arabic*]

\item There exist positive constants $a_1,a_2,a_3$ and $\lambda\in[1,2)$ such that for all $x\in\rbb$,
\begin{equation} \label{cond:phi:phi(x)=O(x^lambda)} 
|\f(x)|\le a_1(1+|x|^{\lambda}),
\end{equation}
and
\begin{equation} \label{cond:phi:x.phi(x)<-x^(lambda+1)}
x\f(x)\le -a_2|x|^{\lambda+1}+a_3.
\end{equation}

\item There exist positive constants $a_4$ and $a_\f$ such that the derivative $\f'$ satisfies 
\begin{equation}  \label{cond:phi:phi'=O(x^(lambda-1))}
|\f'(x)| \le a_4(|x|^{\lambda-1}+1),\quad x\in\rbb,
\end{equation}
and
\begin{equation} \label{cond:phi:sup.phi'<a_f}
\sup_{x\in\rbb}\f'(x)=:a_\f<\infty.
\end{equation}

\end{enumerate}

\end{assumption}

\begin{remark} \label{remark:Phi_1}
Under Assumption~\ref{cond:phi:well-posed}, there always exists a function $\Phi_1\ge 1$ such that for all $x\in\rbb$ \cite{cerrai2020convergence}
\begin{equation} \label{cond:Phi_1}
-\Phi_1'(x)=\f(x),\quad\text{and}\quad c_\f|x|^{\lambda+1}\le \Phi_1(x)\le C_\f(|x|^{\lambda+1}+1),
\end{equation}
for some positive constant $c_\f$ and $C_\f$. We will exploit~\eqref{cond:Phi_1} to perform moment bounds on equation~\eqref{eqn:wave} later in Section~\ref{sec:moment-bound}.
\end{remark}

With regard to the noise, we assume that $w(t)$ is a cylindrical Wiener process on $H$, whose decomposition is given by
$$w(t)=\sum_{k\ge 1}e_kB_k(t),$$
where $\{e_k\}_{k\ge 1}$ is the orthonormal basis of $H$ as in \eqref{eqn:Ae_k=alpha_k.e_k} and $\{B_k(t)\}_{k\ge 1}$ is a sequence of independent standard one--dimensional Brownian motions, each defined on the same stochastic basis $\mathcal{S}=(\Omega, \mathcal{F},\{\mathcal{F}_t\}_{t\ge 0},\P)$ \cite{karatzas2012brownian}. Concerning the linear operator $Q$, we impose the following assumption \cite{bonaccorsi2012asymptotic,cerrai2020convergence,da2014stochastic,glatt2017unique}:

\begin{assumption} \label{cond:Q}
1. The operator $Q:H\to H$ is a symmetric, non--negative, bounded linear map satisfying 
\begin{equation} \label{cond:Q:Tr(QA^3Q)}
\emph{Tr}(QA^3Q^*)<\infty.
\end{equation}

2. Let $\f$ be as in Assumption~\ref{cond:phi:well-posed} and $\nbar\in\nbb$ be the minimal index such that
\begin{equation} \label{cond:phi:ergodicity}
\nbar=\min\{n\in\nbb:\alpha_n>a_\f\},
\end{equation}
where $\alpha_{n}$ is the eigenvalue associated with $e_{n}$ as in~\eqref{eqn:Ae_k=alpha_k.e_k} and $a_\f=\sup_{x\in\rbb}\f'(x)$ as in \eqref{cond:phi:sup.phi'<a_f}. There exists a positive constant $a_Q$ such that
\begin{equation} \label{cond:Q:ergodicity}
\|Qu\|_H\ge a_Q\|P_{\nbar}u\|_H,\quad u\in H,
\end{equation}
where  $P_{\nbar}$ is the projection onto $\{e_1,\dots,e_{\nbar}\}$ as in~\eqref{form:P_n.u}.
\end{assumption}

\begin{remark} We note that condition \eqref{cond:Q:Tr(QA^3Q)} states that the noise's regularity is at least in $H^3$. This condition is stronger than \cite[Condition (2.4)]{cerrai2020convergence} where noise is assumed to belong to $H^\beta$ for some $\beta>0$. On the other hand, condition \eqref{cond:Q:ergodicity} in dimension $d=2,3$, is analogous to \cite[Condition (2.10)]{cerrai2020convergence} in dimension $d=1$. More importantly, \eqref{cond:Q:ergodicity} states that a sufficiently large but finite number of directions in the phase space are required to be stochastically forced.

\end{remark}

Under Assumption~\ref{cond:phi:well-posed} and Assumption~\ref{cond:Q}, it is a classical result that \eqref{eqn:wave} is well-posed. That is, fixing the stochastic basis $\mathcal{S}$, for each initial data $U_0=(u_0,v_0)\in\Hcal^1=H^1\times H$, equation~\eqref{eqn:wave} admits a unique weak solution $\Um(t;U_0)=(\um(t;U_0),\vm(t;U_0))$. The argument may be derived following standard methods, e.g., the Galerkin approximation \cite[Section 3.3]{crauel1997random}. See also \cite{carmona1988random,
da1996ergodicity,da2014stochastic,
ondrejat2004existence,ondrejat2010stochastic}.

As a consequence of the well--posedness, we can thus introduce the Markov transition probabilities of the solution $\Um(t)$ by
\begin{align*}
P_t^m(U_0,A):=\P(\Um(t;U_0)\in A),
\end{align*}
which are well--defined for $t\ge 0$, initial states $U_0\in\Hcal^1$ and Borel sets $A\subseteq \Hcal^1$. Letting $\B_b(\Hcal^1)$ denote the set of bounded Borel measurable functions $f:\Hcal^1 \rightarrow \rbb$, the associated Markov semigroup $P_t^m:\B_b(\Hcal^1)\to\B_b(\Hcal^1)$ is defined and denoted by
\begin{align}\label{form:P_t^m}
P_t^m f(U_0)=\E[f(\Um(t;U_0))], \quad f\in \B_b(\Hcal^1).
\end{align}
Let $\Pcal r(\Hcal^1)$ be the space of probability measures in $\Hcal^1$. The push--forward of $\nu\in\Pcal r(\Hcal^1)$ under the action of $P^m_t$ is denoted by $(P^m_t)^*\nu$ and defined as
\begin{align*}
(P^m_t)^*\nu (A)= \int_{\Hcal^1}P_t^m(U,A)\nu(\d U).
\end{align*}
The first marginal distribution of $\nu$ in $H^1$ is denoted by $\pi_1\nu$, namely,
\begin{align*}
\pi_1\nu(A) = \int_{\Hcal^1} \mathbf{1}_A(u)\nu(\d u,\d v),
\end{align*}
which is defined for all Borel sets $A\subseteq H^1$.

\subsection{Uniform geometric ergodicity} \label{sec:result:ergodicity}
We now turn to the topic of uniform geometric ergodicity of~\eqref{eqn:wave}. Recall that a probability measure $\num\in \Pcal r(\Hcal^1)$ is said to be {\it\textbf{invariant}} for the semigroup $P_t^m$ if for every $f\in \B_b(\Hcal^1)$
\begin{align*}
\int_{\Hcal^1}P_t^m f(U)\num(\d U)=\int_{\Hcal^1} f(U)\num(\d U).
\end{align*}
It is well--known that under Assumption~\ref{cond:phi:well-posed} and Assumption~\ref{cond:Q}, $P^m_t$ always admits an invariant probability measure $\num$, which is obtained via the classical Krylov--Bogoliubov tightness argument \cite{barbu2002stochastic} applied to a sequence of time--averaged measures. Alternatively, in \cite{crauel1997random}, the existence of random attractors is proved, thereby implying the existence of invariant probability measures. As mentioned in the introduction, in dimension $d=1$, if~\eqref{eqn:wave} is of gradient type, an explicit formula of $\num$ was given in \cite{cerrai2006smoluchowski}.

With regard to unique ergodicity and exponentially mixing of~\eqref{eqn:wave}, as mentioned in the introduction, we will draw upon the framework developed in \cite{hairer2006ergodicity,hairer2008spectral} and later popularized in \cite{butkovsky2020generalized,hairer2011asymptotic,
hairer2011theory,kulik2017ergodic,kulik2015generalized}, tailored to our settings. For the reader's convenience, we briefly review the theory below.

For a slight abuse of notation, recall that a function $d:\Hcal^1\times\Hcal^1\to [0,\infty)$ is called \emph{distance--like} if it is symmetric, lower semi--continuous and $d(U,\Ut)=0$ if and only if $U=\Ut$; see \cite[Definition 4.3]{hairer2011asymptotic}. Let $\W_d$ be the Wasserstein distance in $\Pcal r(\Hcal^1)$ associated with $d$ and given by
\begin{align} \label{form:W_d}
\W_d(\nu_1,\nu_2)=\inf \E\, d(X,Y),
\end{align}
where the infimum is taken over all bivariate random variables $(X,Y)$ such that $X\sim \nu_1$ and $Y\sim \nu_2$. In case, $d$ is a metric in $\Hcal^1$, by the dual Kantorovich Theorem, $\W_{d}$ is equivalently defined as \cite[Theorem 5.10]{villani2008optimal}
\begin{align} \label{form:W_d:dual-Kantorovich}
\W_{d}(\nu_1,\nu_2)=\sup_{[f]_{\text{Lip},d}\leq 1}\Big|\int_{\Hcal^1}f(U)\nu_1(\d U)-\int_{\Hcal^1}f(U)\nu_2(\d U)\Big|,
\end{align}
where
\begin{align} \label{form:Lipschitz}
[f]_{\text{Lip},d}=\sup_{U\neq \Ut}\frac{|f(U)-f(\tilde{U})|}{d(U,\tilde{U})}.
\end{align}
On the other hand, if $d$ is a distance--like function, then the following one--sided inequality holds
\begin{equation} \label{ineq:W_d(nu_1,nu_2):dual}
\W_{d}(\nu_1,\nu_2) \ge \sup_{[f]_{\text{Lip},d}\le 1}\Big|\int_{\Hcal^1} f(U)\nu_1(\d U)-\int_{\Hcal^1} f(U)\nu_2(\d U)\Big|.
\end{equation}
See \cite[Proposition A.3]{glatt2021mixing} for a further discussion of~\eqref{ineq:W_d(nu_1,nu_2):dual}. We will particularly invoke~\eqref{ineq:W_d(nu_1,nu_2):dual} to establish Theorem~\ref{thm:m->0:f} below.

To study geometric ergodicity of~\eqref{eqn:wave}, we introduce the function $V_m:\Hcal^1\to[0,\infty)$ defined as
\begin{align} \label{form:V_m}
V_m(u,v)= m\|u\|^2_{H^1}+m^2\|v\|^2_H+m\|u\|^{\lambda+1}_{L^{\lambda+1}}+\|u\|^2_H,
\end{align}
and the associated metric $\varrho^m_\beta:\Hcal^1\times\Hcal^1\to[0,\infty)$
\begin{align} \label{form:varrho^m_beta}
\varrho^m_\beta(U,\Ut)=\inf\int_0^1 e^{\beta V_m(\gamma(s))}\big(m\|\pi_1\gamma'(s)\|^2_{H^1}+m^2\|\pi_2\gamma'(s)\|^2_{H}+\|\pi_1\gamma'(s)\|^2_{H}\big)^{1/2}\d s,
\end{align}
where the infimum is taken over all paths $\gamma \in C^1([0,1];\Hcal^1)$ such that $\gamma(0)=U$ and $\gamma(1)=\Ut$. Following the framework of \cite{butkovsky2020generalized,glatt2021mixing,hairer2011asymptotic,
kulik2017ergodic,kulik2015generalized,nguyen2023ergodicity}, in our settings, for $N>0$, we consider the distance
\begin{align} \label{form:d^m_(N,beta)}
\dmnb(U,\Ut)=N\varrho^m_\beta (U,\Ut) \mi 1.
\end{align}
The actual convergent rate of~\eqref{eqn:wave} toward equilibrium is measured through the distance--like function $\dmtnb$ defined as
\begin{align} \label{form:d.tilde^m_(N,beta)}
\dmtnb(U,\Ut) =\sqrt{ \dmnb(U,\Ut)\big[1+e^{\beta V_m(U)}+e^{\beta V_m(\Ut)}   \big]}.
\end{align}

We now state the first main result giving the unique ergodicity and the uniform exponential convergent rate of \eqref{eqn:wave} with respect to the mass $m$.

\begin{theorem} \label{thm:geometric-ergodicity:mass}
Suppose Assumption \ref{cond:phi:well-posed} and Assumption \ref{cond:Q} hold. Then, for all $m$ sufficiently small,~\eqref{eqn:wave} admits a unique invariant probability measure $\num$. Furthermore, for all $N$ sufficiently large and $\beta$ sufficiently small, there exists a positive constant $T^*=T^*(N,\beta)$ independent of $m$ such that
\begin{align} \label{ineq:geometric-ergodicity:mass}
\limsup_{m\to 0}\sup_{\nu_1\neq \nu_2\in \Pcal r(\Hcal^1)}  \frac{ \W_{\dmtnb}( (P^m_{nT^*})^*\nu_1,(P^m_{nT^*})^*\nu_2) }{ \W_{\dmtnb}( \nu_1,\nu_2)  } \le e^{-c^* n},\quad n\in\nbb,
\end{align}
where $\W_{\dmtnb}$ is the Wasserstein distances associated with $\dmtnb$ as in~\eqref{form:d.tilde^m_(N,beta)}, and  $c^*$ is a positive constant independent of $m$, $n$, $\nu_1$ and $\nu_2$.

\end{theorem}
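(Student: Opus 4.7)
The plan is to adapt the generalized contracting-metric framework of Hairer--Mattingly and Butkovsky--Kulik, with the crucial point that every estimate must hold with constants independent of $m$ as $m\to 0$. Existence of an invariant measure $\num$ for each small $m$ follows from Krylov--Bogoliubov applied to time-averaged measures once the Lyapunov structure below is in place. Uniqueness and the geometric contraction \eqref{ineq:geometric-ergodicity:mass} then reduce to verifying, with $m$-uniform constants, the two standard ingredients: (i) a $\dmnb$-contraction of the one-step semigroup $P^m_{T^*}$ in a neighbourhood of the diagonal, and (ii) the $\dmtnb$-smallness of sufficiently large sublevel sets $B_R=\{V_m\le R\}$ of the Lyapunov function \eqref{form:V_m}. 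The abstract iteration of \cite{butkovsky2020generalized,hairer2011asymptotic,kulik2015generalized} then produces \eqref{ineq:geometric-ergodicity:mass} with $c^*$ and $T^*$ depending only on the constants appearing in (i) and (ii).

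First I would use the a priori bounds of Section \ref{sec:moment-bound} to produce a drift inequality $\L^m e^{\beta V_m}\le -c\,e^{\beta V_m}+K$ for $\beta$ small, where $\L^m$ is the generator \eqref{form:L^m} and $c,K$ are $m$-independent. The key cancellation is that, after multiplying the $\vm$-equation by $m$ and regrouping as in \eqref{form:V_m}, the natural energy satisfies a dissipation estimate whose leading-order terms are $-\|\vm\|_H^2$ and a contribution from $\f$ controlled by \eqref{cond:phi:x.phi(x)<-x^(lambda+1)}, while the martingale/trace term is absorbed by \eqref{cond:Q:Tr(QA^3Q)}; none of these involve $m^{-1}$ in an essential way. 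This yields the return-to-$B_R$ property and, combined with (i)--(ii), closes the argument.

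Next, for the $\dmnb$-contraction I would invoke the asymptotic strong Feller estimate of Proposition \ref{prop:asymptotic-Feller}. Interpreting $\varrho^m_\beta$ in \eqref{form:varrho^m_beta} as a weighted Riemannian path-length and differentiating along geodesics, the contraction $\W_{\dmnb}(P^m_{T^*}\delta_U,P^m_{T^*}\delta_{\Ut})\le \tfrac12\dmnb(U,\Ut)$ reduces to bounding the derivative process of \eqref{eqn:wave} in a norm compatible with the $e^{\beta V_m}$ weight; the exponential weight absorbs the polynomial growth of $\f'$ in \eqref{cond:phi:phi'=O(x^(lambda-1))} thanks to $\lambda\in[1,2)$ and the Sobolev embedding $H^1\hookrightarrow L^6$ in $d=2,3$. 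For the $\dmtnb$-smallness of $B_R$ I would establish a uniform-in-$m$ irreducibility: starting from any $U\in B_R$, the solution can be steered into an arbitrarily small $\Hcal^1$-ball around the origin within a deterministic time $T_R$ with probability at least $p_R>0$. This rests on a controllability/Stroock--Varadhan argument using the noise coverage \eqref{cond:Q:ergodicity} on the $\nbar$ low modes, while $\alpha_{\nbar}>a_\f$ from \eqref{cond:phi:ergodicity} together with the damping $-\partial_t\um$ forces the uncontrolled high modes to decay at a rate independent of $m$. Combining this with (i) inside the small terminal neighbourhood gives $\W_{\dmtnb}(P^m_{T^*}\delta_U,P^m_{T^*}\delta_{\Ut})\le \alpha<1$ on $B_R\times B_R$.

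The main obstacle will be the $m$-uniformity in step (i), since the linearization of \eqref{eqn:wave} carries the stiff factor $1/m$ in front of the elastic and damping terms, and a direct Gronwall bound degenerates as $m\to 0$. The remedy is to work in the scaled energy corresponding to $V_m$, in which the deterministic linearization is dissipative at an $m$-independent rate, and to exploit that the synchronizing control provided by the Malliavin/Girsanov coupling only needs to act on the finitely many low modes covered by \eqref{cond:Q:ergodicity}; the remaining modes relax through the $m$-uniform dissipation of the damped wave dynamics. Once (i) and (ii) hold uniformly in $m$, the conclusion \eqref{ineq:geometric-ergodicity:mass} follows from the standard one-step-contraction iteration, with the $\limsup_{m\to 0}$ absorbing any lower-order $m$-dependent error terms generated along the way.
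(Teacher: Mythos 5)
Your high-level scaffold — the generalized Weak Harris theorem via asymptotic strong Feller plus irreducibility plus a Lyapunov bound, with all constants tracked uniformly in $m$ — is exactly the paper's strategy, and your identification of the roles of Proposition~\ref{prop:asymptotic-Feller}, condition~\eqref{cond:Q:ergodicity}, the spectral gap $\alpha_{\nbar}>a_\f$, and the restriction $\lambda\in[1,2)$ is accurate. The genuine gap is in the Lyapunov ingredient. You assert a drift inequality $\L^m e^{\beta V_m}\le -c\,e^{\beta V_m}+K$ directly for $V_m$ as in \eqref{form:V_m} and feed it to the abstract iteration. This is not enough. The quantity whose It\^o calculation actually closes is the modified energy $\Psi_{1,m}(u,v)+2m\|\Phi_1(u)\|_{L^1}$, and the comparison~\eqref{cond:V_m<Psi_1m<V_m} only gives $cV_m\le \Psi_{1,m}+2m\|\Phi_1\|_{L^1}\le C(V_m+1)$ with $c<1<C$. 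After exponentiating, $e^{\beta V_m}$ and $e^{\beta(\Psi_{1,m}+2m\|\Phi_1\|_{L^1})}$ differ by a factor that is exponentially large in $V_m$. Concretely, translating the available exponential estimate~\eqref{ineq:moment-bound:H:exponential} back to $V_m$ gives only $\E\,e^{\beta V_m(U^m(t))}\le C e^{-ct}e^{C'\beta V_m(U_0)}+C$ with $C'>1$; for $V_m(U_0)$ large this can never be absorbed into $\gamma\, e^{\beta V_m(U_0)}+K$ with $\gamma<1$, no matter how large $t$ is, because the defect $e^{(C'-1)\beta V_m(U_0)}$ diverges. The paper resolves this with the super-Lyapunov estimate~\eqref{ineq:moment-bound:H:exponential:super-Lyapunov}, obtained through the exponential martingale inequality, which shrinks the exponent on the right by $e^{-\gamma t}$; interpolating (H\"older) against~\eqref{ineq:moment-bound:H:exponential:exponential} \emph{sic}, against~\eqref{ineq:moment-bound:H:exponential}, yields~\eqref{ineq:moment-bound:H:exponential:super-Lyapunov:a}, the form that actually fits the hypotheses of \cite[Theorem 4.8]{hairer2011asymptotic}. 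Your proposal omits this ingredient, and the standard drift inequality alone does not close the argument.

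Two smaller remarks. First, the sublevel sets $B_R$ must be $\dmnb$-small, not $\dmtnb$-small; the weighted distance $\dmtnb$ appears only in the conclusion, and smallness is established in the base metric $\dmnb$ via an independent coupling plus irreducibility alone (no need to invoke the contraction of step (i) inside the small ball). Second, the paper establishes irreducibility by a Girsanov shift together with small-ball estimates on the stochastic convolution (Proposition~\ref{prop:irreducibility}, Lemma~\ref{lem:irreducibility:Gamma^m(t)}), rather than a controllability/Stroock--Varadhan argument; your structural observations ($Q$ invertible on the low modes, $\alpha_{\nbar}>a_\f$ forcing $m$-uniform high-mode decay) are nonetheless the right ones.
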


As mentioned in the introduction, the argument of~\eqref{ineq:geometric-ergodicity:mass} relies on two ingredients, namely, the $\dmnb-contracting$ property of $P^m_t$ and the $\dmnb-small$ sets. Then, combined with suitable energy estimates, we will be able to conclude the convergent rate~\eqref{ineq:geometric-ergodicity:mass} in term of $\dmtnb$. In Section~\ref{sec:geometric-ergodicity:proof}, we will discuss these terminologies in detail (see Proposition~\ref{prop:contracting-d-small}) and supply the proof of Theorem~\ref{thm:geometric-ergodicity:mass}, cf.~\eqref{ineq:geometric-ergodicity:mass}. In turn, the uniform exponential convergent rate~\eqref{ineq:geometric-ergodicity:mass} will be exploited to study the small mass limits, which we describe next.

\subsection{Small mass limits} \label{sec:result:small-mass}

Having established the geometric ergodicity for $\num$ in the previous section, we turn to the main topic of the paper concerning the small mass limits as $m\to 0$. We first recast equation~\eqref{eqn:react-diff:original} as
\begin{align}
\d u^0(t)&=-A u^0(t)\d t+\f(u^0(t))\d t+ Q\d w(t), \quad u^0(0)=u_0\in H\label{eqn:react-diff}.
\end{align}
It is well--known that under Assumption \ref{cond:phi:well-posed} and Assumption~\ref{cond:Q}, equation~\eqref{eqn:react-diff} admits a unique invariant probability measure $\nu^0$ \cite{cerrai2020convergence,hairer2011theory,glatt2022short}. To study the convergence of $\num$ toward $\nu^0$, we introduce the $H-$analogue version of $\dmtnb$, namely, the distance--like function $\dtnb:H\times H\to [0,\infty)$ given by
\begin{equation} \label{form:d.tilde_(N,beta)}
\dtnb(u,\ut)=\sqrt{(N\|u-\ut\|_H\mi 1)\big[1+ e^{\beta \|u\|^2_H}+ e^{\beta \|\ut\|^2_H}   \big]}.
\end{equation}
Our second main result is the following theorem giving the convergence of $\pi_1\num$ toward $\nu^0$ in $\W_{\dtnb}$, which extends \cite[Theorem 5.1]{cerrai2020convergence} in dimension $d=2,3$.

\begin{theorem} \label{thm:nu^m->nu^0} 
Suppose that Assumption \ref{cond:phi:well-posed} and Assumption \ref{cond:Q} hold. Let $\num$ and $\nu^0$ respectively be the unique invariant probability measure for~\eqref{eqn:wave} and~\eqref{eqn:react-diff}. Then, for all $N>0$ large and $\beta>0$ small enough,
\begin{align*}
 \W_{\dtnb}(\pi_1\num,\nu^0) \to 0,\quad\text{as }\,\,m\to 0.
\end{align*}
In the above, $\dtnb$ is the distance defined in~\eqref{form:d.tilde_(N,beta)}.

\end{theorem}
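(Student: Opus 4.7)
The plan is to reduce the convergence $\pi_1\num\to\nu^0$ to a finite--time small--mass limit by running both dynamics forward from a carefully chosen reference measure and absorbing the tails via the two ergodic theorems available to us: Theorem~\ref{thm:geometric-ergodicity:mass} for~\eqref{eqn:wave} and the classical geometric ergodicity of the reaction--diffusion equation~\eqref{eqn:react-diff} in $\W_{\dtnb}$ (quoted from Appendix~\ref{sec:react-diff}).

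I would first fix a reference probability measure $\mu\in \Pcal r(\Hcal^1)$ concentrated on a bounded set of higher regularity, e.g. a ball in $\Hcal^s$ for some $s>1$ in the support of the regular invariant measures supplied by Proposition~\ref{prop:regularity}. For any $t>0$, the invariance of $\num$ and $\nu^0$ together with the triangle inequality for $\W_{\dtnb}$ gives
\begin{align*}
\W_{\dtnb}(\pi_1\num,\nu^0)
&\le \W_{\dtnb}\bigl(\pi_1(P^m_t)^*\num,\,\pi_1(P^m_t)^*\mu\bigr)\\
&\quad + \W_{\dtnb}\bigl(\pi_1(P^m_t)^*\mu,\,(P^0_t)^*\pi_1\mu\bigr)\\
&\quad + \W_{\dtnb}\bigl((P^0_t)^*\pi_1\mu,\,\nu^0\bigr).
\end{align*}
I would specialize to times $t=nT^*$ with $T^*$ as in Theorem~\ref{thm:geometric-ergodicity:mass}, let $m\to 0$ first, and only afterwards send $n\to\infty$.

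For the first summand, I would compare $\dtnb$ on $H$ with $\dmtnb$ on $\Hcal^1$: since $V_m$ dominates $\|u\|_H^2$ and the path functional $\varrho^m_\beta$ controls the $H$--length of the $u$--component of the curve, one has the pointwise bound $\dtnb(\pi_1 U,\pi_1\Ut)\le C\,\dmtnb(U,\Ut)$ with $C$ independent of $m$. Theorem~\ref{thm:geometric-ergodicity:mass} then forces this first summand to be at most $C\,e^{-c^*n}\,\W_{\dmtnb}(\num,\mu)$, and the right--hand side is uniformly finite for small $m$ thanks to the moment bound $\sup_m\int e^{\beta V_m}\d\num<\infty$ obtained in Section~\ref{sec:moment-bound}. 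The third summand decays in $n$ by the analogous geometric ergodicity of~\eqref{eqn:react-diff}.

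The middle summand carries the actual small--mass content. I would couple $\um$ and $u^0$ driven by the same noise $w$ with initial laws $\mu$ and $\pi_1\mu$ respectively, then estimate $\E\,\dtnb(\um(t),u^0(t))$ directly from~\eqref{form:d.tilde_(N,beta)}. A Cauchy--Schwarz split separates $\|\um(t)-u^0(t)\|_H$ from the exponential weight $1+e^{\beta\|\um(t)\|_H^2}+e^{\beta\|u^0(t)\|_H^2}$. The first factor vanishes as $m\to 0$ for each fixed $t=nT^*$ by the finite--time Smoluchowski--Kramers convergence applied to data of the required regularity, which is where Proposition~\ref{prop:regularity} and the $L^\infty$ sup bounds of Lemma~\ref{lem:moment-bound:H^2:|Au|^2:sup_[0,T]:random-initial-cond} enter. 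The second factor is controlled uniformly in $m$ by the exponential moment estimates from the same moment section.

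The main obstacle is this middle term. Convergence in $H$ on finite intervals is classical, but propagating it through the exponential weight defining $\dtnb$ requires uniform--in--$m$ exponential moment bounds on $\sup_{s\le t}\|\um(s)\|_H$ and the delicate $L^\infty$ control of $\um$ on $[0,t]$; these are precisely the estimates developed in Section~\ref{sec:moment-bound} where the restriction $\lambda\in[1,2)$ is essential. Once all three summands are controlled as described, letting $m\to 0$ kills the middle summand for fixed $n$, after which letting $n\to\infty$ makes the two ergodic summands vanish, completing the argument.
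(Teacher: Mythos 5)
Your proposal is correct in substance but takes a genuinely different route from the paper. You insert an intermediate reference measure $\mu$ and split into three terms, controlling the outer two by the \emph{two} ergodicity theorems (Theorem~\ref{thm:geometric-ergodicity:mass} for the wave equation and the reaction--diffusion counterpart), and the middle one by finite--time Smoluchowski--Kramers convergence. The paper's proof of Theorem~\ref{thm:nu^m->nu^0} is shorter: it writes $\W_{\dtnb}(\pi_1\num,\nu^0)=\W_{\dtnb}(\pi_1(P^m_t)^*\num,(P^0_t)^*\nu^0)$ using invariance of \emph{both} measures, applies the generalized triangle inequality (Lemma~\ref{lem:W_dtnbhalf<W_dtnb}) with the single pivot $(P^0_t)^*(\pi_1\num)$, uses the reaction--diffusion ergodicity alone to make the second piece a factor $\tfrac12$ of the left--hand side (hence absorbable), and is then left only with $\W_{\dtnbtwo}(\pi_1(P^m_t)^*\num,(P^0_t)^*(\pi_1\num))$, which Corollary~\ref{cor:m->0:dtnb(u^m,u^0)} together with Proposition~\ref{prop:regularity} sends to zero. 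Thus the paper does not invoke Theorem~\ref{thm:geometric-ergodicity:mass} here at all (it is reserved for Theorem~\ref{thm:m->0:f}, whose proof actually resembles your decomposition); nor does it need to choose a reference measure. Your route also requires two applications of the generalized triangle inequality, inflating the exponent from $\beta$ to $4\beta$ rather than $2\beta$, which is harmless but adds bookkeeping. One point you should state more carefully: $\dtnb$ is only distance--like, not a metric, so the plain triangle inequality you invoke must be replaced by the weighted estimate of Lemma~\ref{lem:W_dtnbhalf<W_dtnb}; correspondingly, when you "compare $\dtnb$ with $\dmtnb$", the relevant fact is the pointwise domination $\dtnb(\pi_1 U,\pi_1\Ut)\le \dmtnb(U,\Ut)$ of Lemma~\ref{lem:W_dmtnb>W_dtnb}, which holds with constant one. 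Finally, your reference to Proposition~\ref{prop:regularity} for the choice of $\mu$ is slightly off target; that proposition concerns $\num$ itself, whereas $\mu$ only needs to be, say, a Dirac mass at a fixed $\Hcal^2$ point so that condition~\eqref{cond:U_0} and the exponential moments you need are trivially satisfied.
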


We note that the distance $\W_{\dtnb}$ as in Theorem~\ref{thm:nu^m->nu^0} is different from the Wasserstein distance in \cite[Theorem 5.1]{cerrai2020convergence}, which was first studied in \cite{hairer2008spectral}. Here, the distance $\W_{\dtnb}$ is motivated by the work of \cite{hairer2011asymptotic} and turns out to be more convenient to explore small mass limits in our settings. The proof of Theorem~\ref{thm:nu^m->nu^0} relies on two crucial properties: of which, the first is uniform moment bounds $\num$ in $\Hcal^2$ (see~\eqref{ineq:moment-bound:nu^m:H^1+H^2} below) as well as $\sup_{[0,T]}\|\um(t)\|_{L^\infty}$ (see Lemma~\ref{lem:moment-bound:H^2:|Au|^2:sup_[0,T]:random-initial-cond}). As mentioned in the introduction, these estimates were not available in \cite{cerrai2020convergence}, hence the restriction $\lambda=1$ therein. The second key step in the proof of Theorem~\ref{thm:nu^m->nu^0} is the well--known fact that~\eqref{eqn:react-diff} possesses geometric ergodicity with respect to $\W_{\dtnb}$. That is letting $P^0_t$ be the Markov semigroup associated with~\eqref{eqn:react-diff}, for all $N$ large and $\beta$ small enough, it holds that \cite[Theorem 8.1]{glatt2022short}
\begin{align} \label{ineq:react-diff:geom}
\W_{\dtnb}\big( (P^0_t)^*\nu_1,(P^0_t)^*\nu_2   \big)\le Ce^{-ct}\W_{\dtnbhalf}\big( \nu_1,\nu_2   \big),\quad t\ge \tilde{T}=\tilde{T}(N,\beta),\quad \nu_1,\nu_2\in \Pcal r(H).
\end{align}
See Theorem~\ref{thm:react-diff:geometric-ergodicity} in Appendix \ref{sec:react-diff} for the precise statement of the above result. It is important to point out that the appearance of $\W_{\dtnbhalf}$ rather than $\W_{\dtnb}$ on the right--hand side of~\eqref{ineq:react-diff:geom} stems from the fact that $\dtnb$ does not satisfy the usual triangle inequality. All of this will be clearer in the proof of Theorem~\ref{thm:nu^m->nu^0}, which is provided in Section \ref{sec:small-mass:nu^m->nu^0}.

As a consequence of Theorem~\ref{thm:nu^m->nu^0}, together with Theorem~\ref{thm:geometric-ergodicity:mass}, we establish the validity of the approximation of the solutions of~\eqref{eqn:wave} by \eqref{eqn:react-diff} on the infinite time horizon $[0,\infty)$.

\begin{theorem} \label{thm:m->0:f}
Suppose Assumption \ref{cond:phi:well-posed} and Assumption \ref{cond:Q} hold. Given $R>0$, let $\{U_0^m=(u_0^m,v_0^m)\}_{m\in(0,1)}$ be a sequence of deterministic initial conditions such that $\|U^m_0\|_{\Hcal^2}<R$ for all $m\in(0,1)$. Then, the followings hold:

1. For all $N$ sufficiently large and $\beta$ sufficiently small, 
\begin{align} \label{lim:m->0:delta_(u^m)}
\sup_{t\ge 0}\W_{\dtnb}\big( \pi_1(P^m_t)^*\delta_{U^m_0},(P^0_t)^*\delta_{u_0^m}  \big) \to 0,\quad\text{as }\,\, m\to 0.
\end{align}
In the above, $\dtnb$ is the distance defined in~\eqref{form:d.tilde_(N,beta)}. 

2. As a consequence, for every $f\in C_b(H)$ such that $[f]_{\emph{Lip},\dtnb}<\infty$,
\begin{align} \label{lim:m->0:f(u^m)}
\sup_{t\ge 0} |\E\, f(u^m(t))-\E\, f(u^0(t))| \to 0,\quad\text{as }\,\, m\to 0.
\end{align}

\end{theorem}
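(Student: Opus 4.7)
The plan is to prove Part 1 by a time-splitting argument that combines Theorem~\ref{thm:geometric-ergodicity:mass} with Theorem~\ref{thm:nu^m->nu^0}, and then to deduce Part 2 from Part 1 via the duality inequality~\eqref{ineq:W_d(nu_1,nu_2):dual}. Fix $\varepsilon>0$; I will find $T_\varepsilon>0$ (independent of $m$) and $m_\varepsilon>0$ such that $\W_{\dtnb}(\pi_1(P^m_t)^*\delta_{U_0^m},(P^0_t)^*\delta_{u_0^m})<\varepsilon$ for every $m<m_\varepsilon$ and every $t\ge 0$, treating the regimes $t\le T_\varepsilon$ and $t>T_\varepsilon$ separately.

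\textbf{Long-time regime $t>T_\varepsilon$.} Using a weak triangle inequality for $\dtnb$ of the standard $\sqrt{a\cdot b}$--Cauchy--Schwarz flavor (to be supplied as an auxiliary lemma in Appendix~\ref{sec:auxiliary-result}), which costs a $\beta\to\beta/2$ degradation on the exponential weight, insert both invariant measures to bound
\begin{align*}
\W_{\dtnb}\big(\pi_1(P^m_t)^*\delta_{U_0^m},(P^0_t)^*\delta_{u_0^m}\big)
&\lesssim \W_{\dtnbhalf}\big(\pi_1(P^m_t)^*\delta_{U_0^m},\pi_1\num\big)\\
&\quad + \W_{\dtnb}\big(\pi_1\num,\nu^0\big) + \W_{\dtnbhalf}\big((P^0_t)^*\delta_{u_0^m},\nu^0\big).
\end{align*}
The middle term tends to $0$ as $m\to 0$ by Theorem~\ref{thm:nu^m->nu^0}. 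The third term decays exponentially in $t$, uniformly in $m$, by the reaction--diffusion geometric ergodicity~\eqref{ineq:react-diff:geom} together with the $\Hcal^2$ bound on $u_0^m$. For the first term, the pointwise comparisons $\|\pi_1 U-\pi_1\Ut\|_H\le \varrho^m_\beta(U,\Ut)$ and $e^{\beta\|\pi_1 U\|_H^2}\le e^{\beta V_m(U)}$, which are immediate from~\eqref{form:varrho^m_beta} and~\eqref{form:V_m}, yield $\dtnb(\pi_1 U,\pi_1\Ut)\le \dmtnb(U,\Ut)$, and therefore $\W_{\dtnb}(\pi_1\mu_1,\pi_1\mu_2)\le\W_{\dmtnb}(\mu_1,\mu_2)$; Theorem~\ref{thm:geometric-ergodicity:mass} then supplies exponential decay of the first term with an $m$-independent rate. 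Choosing $T_\varepsilon$ large renders the first and third terms each below $\varepsilon/3$.

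\textbf{Short-time regime $t\le T_\varepsilon$.} Invoke a finite-time Smoluchowski--Kramers approximation of the form~\eqref{lim:L^p}, which is available under the $\Hcal^2$ regularity of $U_0^m$ and the uniform $L^\infty$ bounds of Lemma~\ref{lem:moment-bound:H^2:|Au|^2:sup_[0,T]:random-initial-cond}, to obtain $\E\sup_{s\le T_\varepsilon}\|u^m(s)-u^0(s)\|_H^2\to 0$ as $m\to 0$, with $u^m$ and $u^0$ coupled via the same Wiener process $w$. A Cauchy--Schwarz applied to the definition of $\dtnb$ yields
\begin{align*}
\E\,\dtnb(u^m(t),u^0(t)) \le \Big(\E(N\|u^m(t)-u^0(t)\|_H\wedge 1)\Big)^{1/2}\Big(\E\big[1+e^{\beta\|u^m(t)\|_H^2}+e^{\beta\|u^0(t)\|_H^2}\big]\Big)^{1/2},
\end{align*}
and the second factor is bounded uniformly on $[0,T_\varepsilon]$ and in $m$ by the exponential moment estimates of Section~\ref{sec:moment-bound}, while the first factor vanishes.

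\textbf{Part 2.} For $f\in C_b(H)$ with $[f]_{\text{Lip},\dtnb}<\infty$, the duality bound~\eqref{ineq:W_d(nu_1,nu_2):dual} gives
\begin{align*}
|\E f(u^m(t))-\E f(u^0(t))| \le [f]_{\text{Lip},\dtnb}\,\W_{\dtnb}\big(\pi_1(P^m_t)^*\delta_{U_0^m},(P^0_t)^*\delta_{u_0^m}\big),
\end{align*}
so Part 1 immediately gives~\eqref{lim:m->0:f(u^m)}. The main obstacle will be keeping all Lyapunov and exponential moment bounds uniform in $m$ across the long-time splitting; the most delicate point is controlling $\W_{\dmtnb}(\delta_{U_0^m},\num)$ uniformly as $m\to 0$, which reduces to a uniform bound on $\int e^{\beta V_m}\d\num$---a consequence of the moment estimates in Section~\ref{sec:moment-bound} and the regularity statement of Proposition~\ref{prop:regularity}.
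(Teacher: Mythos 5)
Your decomposition is essentially the paper's: a short-time regime handled by the finite-horizon Smoluchowski--Kramers estimate (Proposition~\ref{prop:m->0:|u^m-u^0|} / Corollary~\ref{cor:m->0:dtnb(u^m,u^0)} in the paper), a long-time regime handled by the two geometric-ergodicity results together with Theorem~\ref{thm:nu^m->nu^0}, and the comparison $\W_{\dtnb}\le\W_{\dmtnb}$ to transfer from the wave to the reaction--diffusion distance. Part~2 via \eqref{ineq:W_d(nu_1,nu_2):dual} is correct. However, there is one genuine error that you would need to repair before the argument closes.

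\textbf{The direction of the generalized triangle inequality is backwards.} You write the weak triangle inequality as costing a $\beta\to\beta/2$ degradation, giving $\W_{\dtnbhalf}$ on the right-hand side. But $\dtnbhalf$ is a \emph{weaker} distance than $\dtnb$ (since $e^{\beta\|u\|^2/2}\le e^{\beta\|u\|^2}$), so $\W_{\dtnbhalf}\le\W_{\dtnb}$, and no chain rule can bound the larger Wasserstein distance by sums of the smaller one. The correct generalized triangle inequality (the one that actually appears as Lemma~\ref{lem:W_dtnbhalf<W_dtnb}) runs the other way: when you insert a midpoint measure, the exponential parameter \emph{doubles}, i.e.\ $\W_{\dtnb}(\nu_1,\nu_3)\le C\big(\W_{\dtnbtwo}(\nu_1,\nu_2)+\W_{\dtnbtwo}(\nu_2,\nu_3)\big)$. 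This is forced by the Cauchy--Schwarz step: the cross terms $e^{\beta\|u_i\|^2}e^{\beta\|u_j\|^2}$ must be dominated by $e^{2\beta\|u_i\|^2}+e^{2\beta\|u_j\|^2}$, never by halving $\beta$. The fix does not change the overall strategy---because the claim is stated for ``all $\beta$ sufficiently small'' one can start with $\beta$ small enough that the post-degradation parameter $4\beta$ (you insert two midpoints) still lies in the admissible range for Theorems~\ref{thm:geometric-ergodicity:mass} and \ref{thm:react-diff:geometric-ergodicity}---but as written the splitting inequality is false.

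Two smaller remarks. First, in the short-time regime you cite \eqref{lim:L^p}, which is only valid for Lipschitz $\f$; the paper first truncates via an $L^\infty$ stopping time and only then invokes a Lipschitz-type estimate (this is exactly the content of Proposition~\ref{prop:m->0:|u^m-u^0|}). You do gesture at the uniform $L^\infty$ bounds of Lemma~\ref{lem:moment-bound:H^2:|Au|^2:sup_[0,T]:random-initial-cond} as ingredients, so the intent is right, but the citation of \eqref{lim:L^p} as though it applied directly would need to be replaced by the cut-off construction. Second, for the long-time regime, Theorem~\ref{thm:geometric-ergodicity:mass} gives a \emph{contraction} per step of size $T^*$; to obtain an actual decaying bound on $\W_{\dmtnb}((P^m_t)^*\delta_{U_0^m},\num)$ you must additionally control $\sup_{s\in[0,T^*]}\W_{\dmtnb}((P^m_s)^*\delta_{U_0^m},(P^m_s)^*\num)$ uniformly in $m$, which you correctly flag at the end as reducing to a uniform bound on $\int e^{\beta V_m}\,\d\num$ (this is \eqref{ineq:moment-bound:nu^m:exponential:H^1}).
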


The method that we employ to prove Theorem~\ref{thm:m->0:f} draws upon the framework recently developed in \cite{glatt2021mixing}. The argument is summarized as follows: we first decompose $[0,\infty)=[0,T]\cup[T,\infty)$ for some suitably chosen $T$. Then, for $t\in[0,T]$, er demonstrate that $\um(t)$ can be well approximated by $\uo(t)$, the solution of~\eqref{eqn:react-diff}. This will be derived rigorously by making use of the strategy from \cite{cerrai2020convergence} tailored to our settings. On the other hand, to control the difference between $\um(t)$ and $\uo(t)$ for $t\in[T,\infty)$, we invoke the exponential convergent rates~\eqref{ineq:geometric-ergodicity:mass} and \eqref{ineq:react-diff:geom}. Altogether, since the analysis does not depend on $m$, we conclude \eqref{lim:m->0:delta_(u^m)} by sending $m\to 0$. In turn, this implies~\eqref{lim:m->0:f(u^m)} by invoking~\eqref{ineq:W_d(nu_1,nu_2):dual}. The proof of Theorem~\ref{thm:m->0:f} will be provided in Section~\ref{sec:small-mass:f}.

\section{A priori moment estimates} \label{sec:moment-bound}

Throughout the rest of the paper, $c$ and $C$ denote generic positive constants that may change from line to line. The main parameters that they depend on will appear between parenthesis, e.g., $c(T,q)$ is a function of $T$ and $q$.

In this section, we perform useful moment bounds on the solutions $(\um(t),\vm(t))$ of~\eqref{eqn:wave}. For this purpose, we introduce the function
\begin{equation} \label{form:Psi_1m}
\Psi_{1,m}(u,v)=m\|A^{1/2}u\|^2_H+m^2\|v\|^2_H+m\la u,v\ra_H+\tfrac{1}{2}\|u\|^2_H.
\end{equation}
We start the procedure by Lemma~\ref{lem:moment-bound:H}, stated and proven next, asserting moment bounds of $(\um(t),\vm(t))$ in $\Hcal^1$.

\begin{lemma} \label{lem:moment-bound:H}
Given $(u_0,v_0)\in \Hcal^1$, let $(u^m(t),v^m(t))$ be the solution of~\eqref{eqn:wave} with initial condition $(u_0,v_0)$. Then, the followings hold for all $t\ge 0$:

1. There exist positive constants $C_{1,t}$ independent of $m$, and $(u_0,v_0)$ such that
\begin{align} 
&\E\Big[\sup_{r\in[0,t]} \Psi_{1,m}(\um(r),\vm(r))+2m\|\Phi_1(\um(r))\|_{L^1}\Big]   \nt \\
&\le C_{1,t}\big(\Psi_{1,m}(u_0,v_0)+2m\|\Phi_1(u_0)\|_{L^1}+1\big),\label{ineq:moment-bound:H:sup_[0,t]}
\end{align}
where $\Phi_1$ and $\Psionem$ are the functions as in~\eqref{cond:Phi_1} and~\eqref{form:Psi_1m}, respectively.

2.  For all $n\ge 1$, there exist positive constants $C_{1,n},c_{1,n}$ independent of $m$, $t$ and $(u_0,v_0)$ such that
\begin{align}
&\E\big[ \Psi_{1,m}(\um(t),\vm(t))+2m \|\Phi_1(\um(t))\|_{L^1} \big]^n \nt \\
 &\le C_{1,n} e^{-c_{1,n}t} \big( \Psi_{1,m}(u_0,v_0)+2m\|\Phi_1(u_0)\|_{L^1}  \big)^n +C_{1,n}.\label{ineq:moment-bound:H:n-moment}
\end{align}

3. For all $\beta$ sufficiently small independent of $m$, 
\begin{align}
&\E \exp\Big\{ \beta\big( \Psi_{1,m}(\um(t),\vm(t))+2m\|\Phi_1(\um(t))\|_{L^1}\big) \Big\} \nt  \\
&\le C_{1,0} e^{-c_{1,0}t}\exp\{\beta\big(\Psi_{0,m}(u_0,v_0)+2m\|\Phi_1(u_0)\|_{L^1}\big)\big\}+C_{1,0}, \label{ineq:moment-bound:H:exponential}
\end{align}
holds for some positive constants $C_{1,0},c_{1,0}$ independent of $m$, $t$ and $(u_0,v_0)$. 

4. For all $\beta$ sufficiently small independent of $m$, $t$ and $(u_0,v_0)$, it holds that
\begin{align}
&\E \exp \Big\{  \tfrac{1}{2}\beta \int_0^t\|A^{1/2}\um(r)\|^2_H \emph{d} r  \Big\}  \nt \\
&\le 2 \exp \Big\{ \beta\big[ \Psionem(u_0,v_0)+2m\|\Phi_1(u_0)\|_{L^1}\big]  +\beta (a_2+a_3|\domain|)t\Big\}. \label{ineq:moment-bound:H:exponential:int_0^t}
\end{align}
In the above, $a_2,a_3$ are as in Assumption~\ref{cond:phi:well-posed}, and $|\domain|$ denotes the volume of $\domain$.

5. For all $\beta$ and $\gamma$ sufficiently small independent of $m$, there exists a positive constant $C_{\beta,\gamma}$ such that
\begin{align} 
&\E \exp\big\{\beta \big[ \Psionem(\um(t),\vm(t))+2m\|\Phi_1(\um(t))\|_{L^1}\big]\big\}  \nt  \\
&\le C_{\beta,\gamma}\exp \big\{\beta\big[ \Psionem(u_0,v_0)+2m\|\Phi_1(u_0)\|_{L^1}\big] e^{-\gamma t}\big\}.\label{ineq:moment-bound:H:exponential:super-Lyapunov}
\end{align}

\end{lemma}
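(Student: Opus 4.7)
The unified strategy for all five parts is to work with the modified Lyapunov functional $\Psi^*_m(u,v) := \Psionem(u,v) + 2m\|\Phi_1(u)\|_{L^1}$ and to apply It\^o's formula to suitable nonlinear functions of $\Psi^*_m(\um(t),\vm(t))$. A direct computation from \eqref{eqn:wave} shows that the \emph{bad} term $+2m\la\vm,\f(\um)\ra_H$ produced by $\d(m^2\|\vm\|^2_H)$ cancels exactly against $-2m\la\f(\um),\vm\ra_H$ coming from $\d(2m\|\Phi_1(\um)\|_{L^1})$, while the Hamiltonian pair $\pm 2m\la A\um,\vm\ra_H$ also cancels between $\d(m\|A^{1/2}\um\|^2_H)$ and $\d(m^2\|\vm\|^2_H)$. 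What remains is
\[
\L^m \Psi^*_m = -\|A^{1/2}\um\|^2_H - m\|\vm\|^2_H + \la\um,\f(\um)\ra_H + \Tr(QQ^*),
\]
while the martingale part $M^m_t$ has quadratic variation bounded by $C(\|\um\|^2_H + m^2\|\vm\|^2_H)\,\d t \le C'\Psi^*_m\,\d t$. Combining this identity with the dissipative bound \eqref{cond:phi:x.phi(x)<-x^(lambda+1)} and comparing $\|\um\|^{\lambda+1}_{L^{\lambda+1}}$ with $\|\Phi_1(\um)\|_{L^1}$ through \eqref{cond:Phi_1} yields the \emph{master Lyapunov inequality} $\L^m \Psi^*_m \le -c\,\Psi^*_m + C$, where $c,C>0$ are independent of $m\in(0,1]$.

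Parts 1 and 2 are standard consequences of this master inequality. For Part 1, after integrating the It\^o identity on $[0,t]$, take the supremum and invoke Burkholder--Davis--Gundy to bound $\E\sup_{r\le t}|M^m_r|$ by $C\E\big(\int_0^t \Psi^*_m\,\d s\big)^{1/2}$; Young's inequality absorbs half into the left-hand side and Gr\"onwall delivers \eqref{ineq:moment-bound:H:sup_[0,t]}. For Part 2, It\^o applied to $(\Psi^*_m)^n$ produces $\L^m(\Psi^*_m)^n \le -cn(\Psi^*_m)^n + C_n$ after the quadratic-variation correction $\tfrac{n(n-1)}{2}(\Psi^*_m)^{n-2}\cdot C'\Psi^*_m = O((\Psi^*_m)^{n-1})$ is absorbed via Young's inequality, and the linear ODE comparison $z'(t)\le -cn\,z(t) + C_n$ for $z(t) := \E(\Psi^*_m(\um(t),\vm(t)))^n$ yields \eqref{ineq:moment-bound:H:n-moment}.

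For Part 3, It\^o on $e^{\beta\Psi^*_m}$ gives $\L^m e^{\beta\Psi^*_m} \le e^{\beta\Psi^*_m}\big[\beta(-c\Psi^*_m + C) + \tfrac{\beta^2}{2}C'\Psi^*_m\big]$; choosing $\beta$ below an $m$-independent threshold absorbs the quadratic-variation correction into the dissipative term, and splitting according to whether $\Psi^*_m$ exceeds a large threshold yields $\L^m e^{\beta\Psi^*_m} \le -\tilde c\, e^{\beta\Psi^*_m} + \tilde C$, whence Gr\"onwall produces \eqref{ineq:moment-bound:H:exponential}. For Part 4, define
\[
Y_t := \exp\Big(\beta\Psi^*_m(\um(t),\vm(t)) + \tfrac{\beta}{2}\int_0^t\|A^{1/2}\um(r)\|^2_H\,\d r - \beta K t\Big)
\]
for a suitable $K = K(a_2,a_3,|\domain|,Q)$. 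A direct It\^o calculation using the master identity (with the $\tfrac12\|A^{1/2}\um\|^2_H$ term absorbing half of the dissipation from $\L^m \Psi^*_m$) shows that for $\beta$ sufficiently small (uniformly in $m$) the drift of $Y_t$ is nonpositive, so $Y_t$ is a nonnegative local supermartingale; optional stopping together with Fatou's lemma delivers $\E Y_t \le 2Y_0$, which rearranges to \eqref{ineq:moment-bound:H:exponential:int_0^t}.

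For the super-Lyapunov estimate in Part 5, rewrite the bound from Part 3 using $\beta\Psi^*_m = \log(e^{\beta\Psi^*_m})$ as
\[
\L^m e^{\beta\Psi^*_m} \le -(\tilde c/\beta)\, e^{\beta\Psi^*_m}\log e^{\beta\Psi^*_m} + \tilde C\beta\, e^{\beta\Psi^*_m}.
\]
Let $y(t) := \E\,e^{\beta\Psi^*_m(\um(t),\vm(t))}$. Dynkin's identity, together with Jensen's inequality applied to the convex function $x\mapsto x\log x$ (which yields $\E[X\log X] \ge (\E X)\log(\E X)$), produces the differential inequality $y'(t) \le -\gamma\, y(t)\log y(t) + \tilde C\beta\, y(t)$ with $\gamma := \tilde c/\beta$. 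Setting $w(t) := \log y(t)$ linearizes this to $w'(t) \le -\gamma w(t) + \tilde C\beta$, and Gr\"onwall gives $w(t) \le e^{-\gamma t}w(0) + \tilde C\beta/\gamma$, which is exactly \eqref{ineq:moment-bound:H:exponential:super-Lyapunov} with $C_{\beta,\gamma} = \exp(\tilde C\beta/\gamma)$. \textbf{The principal obstacle} throughout is guaranteeing that every constant is strictly independent of $m$; this relies delicately on the specific $m$-scaling in $\Psionem$ together with the prefactor $2m$ of $\|\Phi_1\|_{L^1}$, which is engineered precisely so that (i) the nonlinearity $\f$ enters $\d(m^2\|\vm\|^2_H)$ with the same coefficient as $\d(2m\|\Phi_1\|_{L^1})$, enabling the crucial cancellation, and (ii) the dissipative term $-a_2\|u\|^{\lambda+1}_{L^{\lambda+1}}$ dominates the penalty $2cm\|\Phi_1(u)\|_{L^1}$ uniformly for $m\in(0,1]$.
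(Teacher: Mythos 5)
Your Parts 1--3 proceed exactly as the paper does: compute $\L^m$ applied to $\Psi^*_m := \Psionem + 2m\|\Phi_1\|_{L^1}$ (so that the Hamiltonian cross term and the nonlinearity cancel), derive the master inequality $\L^m\Psi^*_m \le -c\Psi^*_m + C$ with $c,C$ independent of $m$, then integrate, apply Burkholder/Gr\"onwall for Part 1, run the induction with Young's inequality absorbing the $(\Psi^*_m)^{n-1}$ terms for Part 2, and push $\beta$ small to subsume the quadratic-variation correction for Part 3. Parts 4 and 5, however, take genuinely different routes from the paper. For Part 4 the paper applies the exponential martingale inequality to obtain $\E\exp\{\sup_t[M(t)-\tfrac12 a\la M\ra(t)]\}<2$ and integrates; you instead exhibit
\[
Y_t=\exp\Big(\beta\Psi^*_m(\um(t),\vm(t))+\tfrac{\beta}{2}\int_0^t\|A^{1/2}\um\|^2_H\,\d r-\beta Kt\Big)
\]
directly as a nonnegative local supermartingale and invoke Fatou. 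These are close cousins (the exponential martingale inequality is itself a supermartingale statement), but yours is a bit tighter --- it gives $\E Y_t\le Y_0$, not $\le 2Y_0$ as you wrote, which only strengthens the conclusion and makes the paper's factor $2$ superfluous. For Part 5 the paper applies It\^o to the time-weighted functional $\beta e^{\gamma(t-T)}\Psi^*_m$ and uses the exponential martingale inequality once more; you instead keep the \emph{intermediate} bound $\L^m e^{\beta\Psi^*_m}\le -c\,\beta\Psi^*_m e^{\beta\Psi^*_m}+C\beta e^{\beta\Psi^*_m}$, recognize $\beta\Psi^*_m=\log e^{\beta\Psi^*_m}$ to read off a super-Lyapunov (Osgood-type) inequality $\L^m G_1\le -\gamma G_1\log G_1+C\beta G_1$, push the $\log$ through the expectation with Jensen applied to the convex map $x\mapsto x\log x$, and linearize via $w=\log y$. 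This is the classical log-Lyapunov shortcut and is arguably cleaner than the paper's time-weight trick. One cosmetic caution: the decay rate you obtain is $\gamma=c$ (the dissipation rate from the master inequality), not $\tilde c/\beta$ as written --- your $\tilde c$ implicitly carries a factor of $\beta$ from the paper's Part 3 normalization, so the two agree, but writing $\gamma=\tilde c/\beta$ obscures that the final rate is genuinely $\beta$-independent, as the statement requires. With these two small slips corrected, the argument is complete.
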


\begin{remark}
We note that $V_m(u,v)$ as in~\eqref{form:V_m} is slightly different from $\Psionem(u,v)+2m\|\Phi_1(u)\|_{L^1}$, which is employed in Lemma~\ref{lem:moment-bound:H} and is more convenient for the purpose of computing It\^o's formula. By the choice of $\Phi_1$ as in~\eqref{cond:Phi_1}, it is not difficult to see that
\begin{align}
cV_m(u,v)&=c \big[m\|u\|^2_{H^1}+m^2\|v\|^2_{H}+\|u\|^2_{H}+m\|u\|^{\lambda+1}_{L^{\lambda+1}}\big] \nt \\
&\le \Psionem(u,v)+2m\|\Phi_1(u)\|_{L^1} \nt \\
&\le C (  m\|u\|^2_{H^1}+m^2\|v\|^2_{H}+\|u\|^2_{H}+m\|u\|^{\lambda+1}_{H^1} +1)=C(V_m(u,v)+1) ,\label{cond:V_m<Psi_1m<V_m}
\end{align}
for some positive constants $C,c$ independent of $m$ and $(u,v)$.
\end{remark}

\begin{proof}[Proof of Lemma~\ref{lem:moment-bound:H}]
1. Let $\Phi_1$ and $\Psionem$ be the functions as in~\eqref{cond:Phi_1} and~\eqref{form:Psi_1m}, respectively. In view of the generator $\L^m$ from~\eqref{form:L^m}, a routine calculation gives
\begin{align}
\L^m\big[\Psionem(u,v)+2m\|\Phi_1(u)\|_{L^1}\big]&=-\|A^{1/2}u\|^2_H -m\|v\|^2_H+\la \f(u),u\ra_H+\Tr(QQ^*).\label{eqn:Ito:L^m.Psi_1m}
\end{align}
We invoke~\eqref{cond:phi:x.phi(x)<-x^(lambda+1)} and~\eqref{cond:Phi_1} to see that
\begin{align*}
\la \f(u),u\ra_H\le -a_2\|u\|^{\lambda+1}_{L^{\lambda+1}}+a_3|\domain|\le -\tfrac{a_2}{C_\f}\|\Phi_1(u)\|_{L^1}+a_2+a_3|\domain|,
\end{align*}
where $a_2,a_3,\lambda$ are as in Assumption~\ref{cond:phi:well-posed}, $C_\f$ is from~\eqref{cond:Phi_1} and $|\domain|$ denotes the volume of $\domain$. It follows that
\begin{align}
&\L^m\big[\Psionem(u,v)+2m\|\Phi_1(u)\|_{L^1}\big]\nt  \\
&\le -\|A^{1/2}u\|^2_H -m\|v\|^2_H-\tfrac{a_2}{C_\f}\|\Phi_1(u)\|_{L^1}+a_2+a_3|\domain|+\Tr(QQ^*). \label{ineq:L^m.Psi_1m}
\end{align}
By It\^o's formula, we obtain
\begin{align}
&\d\big[ \Psionem(\um(t),\vm(t))+2m\|\Phi_1(\um(t))\|_{L^1}\big] \nt \\
&= \L^m\big[\Psionem(\um(t),\vm(t))+2m\|\Phi_1(\um(t))\|_{L^1}\big]\d t+\la 2m \vm(t)+\um(t),Q\d w(t)\ra_H \label{eqn:Ito:d.Psi_1m} \\
&\le -\|A^{1/2}\um(t)\|^2_H\d t -m\|\vm(t)\|^2_H\d t  -\tfrac{a_2}{C_\f}\|\Phi_1(\um(t))\|_{L^1}\d t \nt \\
&\qquad+\Big(a_2+a_3|\domain|+\Tr(QQ^*)\Big)\d t +\la 2m \vm(t)+\um(t),Q\d w(t)\ra_H.\nt
\end{align}
As a consequence, integrating the above estimate with respect to time yields
\begin{align*}
&\Psionem(\um(t),\vm(t))+2m\|\Phi_1(\um(t))\|_{L^1}\\
&\le \Psionem(u_0,v_0)+2m\|\Phi_1(u_0)\|_{L^1}+\Big(a_2+a_3|\domain|+\Tr(QQ^*)\Big)t\\
&\qquad+\int_0^t \la 2m \vm(r)+\um(r),Q\d w(r)\ra_H.
\end{align*}
Now, we invoke Burkholder's inequality to estimate the Martingale term on the above right-hand side
\begin{align*}
\E\sup_{r\in[0,t]}\Big|\int_0^r \la 2m \vm(s)+\um(s),Q\d w(s)\ra_H\Big|^2
& \le \E\int_0^t \|Q\big(2m\vm(r)+\um(r)\big)\|^2_H\d r\\
&\le \Tr(QQ^*)\int_0^t \E\|2m\vm(r)+\um(r)\|^2_H\d r\\
&\le c\int_0^t\E \Psionem(\um(r),\vm(r))+2m\|\Phi_1(\um(r))\|_{L^1}\d r.
\end{align*}
As a consequence, 
\begin{align*}
&\E\Big[\sup_{r\in[0,t]}\Psionem(\um(r),\vm(r))+2m\|\Phi_1(\um(r))\|_{L^1}\Big]\\
&\le \Psionem(u_0,v_0)+2m\|\Phi_1(u_0)\|_{L^1}+\Big(a_2+a_3|\domain|+\Tr(QQ^*)\Big)t\\
&\qquad+C\int_0^t\E\big[ \Psionem(\um(r),\vm(r))+2m\|\Phi_1(\um(r))\|_{L^1}\big]\d r+C.
\end{align*}
By Gronwall's inequality, this produces~\eqref{ineq:moment-bound:H:sup_[0,t]}, as claimed.

2. We proceed to prove~\eqref{ineq:moment-bound:H:n-moment} by induction on $n$. For the base case $n=1$, from~\eqref{eqn:Ito:d.Psi_1m}, we deduce the following bound in expectation
\begin{align}
&\tfrac{\d}{\d t}\E\big[ \Psionem(\um(t),\vm(t))+2m\|\Phi_1(\um(t))\|_{L^1}\big]   \nt \\
&\le  -\E\big[\|A^{1/2}\um(t)\|^2_H+m\|\vm(t)\|^2_H+\tfrac{a_2}{C_\f}\|\Phi_1(\um(t))\|_{L^1}\big]   \nt   \\
&\qquad+\Big(a_2+a_3|\domain|+\Tr(QQ^*)\Big). \label{ineq:d.E.Psi_1m}
\end{align}
Recall $\Psionem$ from~\eqref{form:Psi_1m}, we deduce ($m\le 1$)
\begin{align*}
&\E\Big[ \Psionem(\um(t),\vm(t))+2m\|\Phi_1(\um(t))\|_{L^1}\Big]\\
&\le Ce^{-ct}\Big[ \Psionem(u_0,v_0)+2m\|\Phi_1(u_0)\|_{L^1}\Big]+C.
\end{align*}
This proves~\eqref{ineq:moment-bound:H:n-moment} for $n=1$. 

Next, assuming~\eqref{ineq:moment-bound:H:n-moment} holds for $n-1$, we consider the case $n\ge 2$. The partial derivatives of $(\Psionem(u,v)+2m\|\Phi_1(u)\|_{L^1})^n$ along a direction $\xi\in \Hcal^1$ are given by
\begin{align*}
&\la D_u\big[\Psionem(u,v)+2m\|\Phi_1(u)\|_{L^1}\big]^n,\pi_1\xi\ra_H\\ &= n\big[\Psionem(u,v)+2m\|\Phi_1(u)\|_{L^1}\big]^{n-1}\big( 2m\la A^{1/2}u,A^{1/2}\pi_1 \xi   \ra_H+ m\la v,\pi_1\xi  \ra_H\\
&\qquad\qquad\qquad\qquad+ \la u,\pi_1\xi\ra_H -2m \la \f(u),\pi_1\xi\ra_H\big),\\
&\la D_v\big[\Psionem(u,v)+2m\|\Phi_1(u)\|_{L^1}\big]^n,\pi_2\xi\ra_H\\
&= n\big[\Psionem(u,v)+2m\|\Phi_1(u)\|_{L^1}\big]^{n-1}\la 2m^2 v+mu,\pi_2\xi\ra_H,
\end{align*}
and
\begin{align*}
& D_{vv}\big[\Psionem(u,v)+2m\|\Phi_1(u)\|_{L^1}\big]^n(\xi)\\
&=n\big[\Psionem(u,v)+2m\|\Phi_1(u)\|_{L^1}\big]^{n-1}2m^2\pi_2\xi\\
&\qquad +n(n-1) \big[\Psionem(u,v)+2m\|\Phi_1(u)\|_{L^1}\big]^{n-2}\la 2m^2 v+mu,\pi_2\xi\ra_H\cdot  (2m^2 v+mu).
\end{align*}
Applying $\L^m$ from~\eqref{form:L^m} to $\big[\Psionem(u,v)+2m\|\Phi_1(u)\|_{L^1}\big]^n$ gives
\begin{align}
&\L^m\big[\Psionem(u,v)+2m\|\Phi_1(u)\|_{L^1}\big]^n\notag \\
&=n\big[\Psionem(u,v)+2m\|\Phi_1(u)\|_{L^1}\big]^{n-1}\big(-\|A^{1/2}u\|^2_H -m\|v\|^2_H+\la \f(u),u\ra_H +\Tr(QQ^*) \big) \nt \\
&\qquad+\tfrac{1}{2} n(n-1) \big[\Psionem(u,v)+2m\|\Phi_1(u)\|_{L^1}\big]^{n-2}\|Q( 2m v+u)\|_H^2.\label{eqn:Ito:L^m.(Psi_1m)^n}
\end{align}
To estimate the last term on the above right--hand side, we recall that $\Tr(QQ^*)<\infty$ (see condition~\eqref{cond:Q:Tr(QA^3Q)}) implying
\begin{align*}
\|Q( 2m v+u)\|_H^2 \le \Tr(QQ^*)\|2mv+u\|^2_H&\le 8 \Tr(QQ^*)(m^2\|v\|_H^2+\tfrac{1}{2}\|u\|^2_H)\\
&\le c\big[\Psionem(u,v)+2m\|\Phi_1(u)\|_{L^1}\big],
\end{align*}
for some positive constant $c$ independent of $m$. It follows that 
\begin{align*}
&\tfrac{1}{2} n(n-1) \big[\Psionem(u,v)+2m\|\Phi_1(u)\|_{L^1}\big]^{n-2}\|Q( 2m v+u)\|_H^2\\
&\le c \big[\Psionem(u,v)+2m\|\Phi_1(u)\|_{L^1}\big]^{n-1}.
\end{align*}
To estimate the first term on the right--hand side of~\eqref{eqn:Ito:L^m.(Psi_1m)^n}, we employ the same bound from the base case $n=1$ to deduce (recalling $m\le 1$)
\begin{align*}
&n\big[\Psionem(u,v)+2m\|\Phi_1(u)\|_{L^1}\big]^{n-1}\big(-\|A^{1/2}u\|^2_H -m\|v\|^2_H+\la \f(u),u\ra_H  \big)\\
&\le  - c \big[\Psionem(u,v)+2m\|\Phi_1(u)\|_{L^1}\big]^{n}+C\big[\Psionem(u,v)+2m\|\Phi_1(u)\|_{L^1}\big]^{n-1} ,
\end{align*}
for some positive constants $c=c(n), C=C(n)$ independent of $m$. As a consequence, from~\eqref{eqn:Ito:L^m.(Psi_1m)^n}, we obtain
\begin{align}
&\L^m\big[\Psionem(u,v)+2m\|\Phi_1(u)\|_{L^1}\big]^n\notag \\
&\le -c \big[\Psionem(u,v)+2m\|\Phi_1(u)\|_{L^1}\big]^n+C\big[\Psionem(u,v)+2m\|\Phi_1(u)\|_{L^1}\big]^{n-1} \nt  \\
&\le -c \big[\Psionem(u,v)+2m\|\Phi_1(u)\|_{L^1}\big]^n +C. \label{ineq:L^m.(Psi_1m)^n}
\end{align}
In the last estimate above, we employed Young's inequality to subsume the lower order term into $-c\big[\Psionem(u,v)+2m\|\Phi_1(u)\|_{L^1}\big]^n$. 

Turning back to~\eqref{ineq:moment-bound:H:n-moment}, we invoke It\^o's formula to see that
\begin{align*}
&\d \big[\Psionem(\um(t),\vm(t))+2m\|\Phi_1(\um(t))\|_{L^1}\big]^n\\
&= \L^m \big[\Psionem(\um(t),\vm(t))+2m\|\Phi_1(\um(t))\|_{L^1}\big]^n\d t+ \d M_n(t),
\end{align*}
where the semi-Martingale term $M_n(t)$ is given by
\begin{align*}
M_n(t)=\int_0^t n\big[\Psionem(\um(r),\vm(r))+2m\|\Phi_1(\um(r))\|_{L^1}\big]^{n-1}\la 2m^2 \vm(r)+m\um(r),Q\d w(r)\ra_H.
\end{align*}
In view of~\eqref{ineq:L^m.(Psi_1m)^n}, we obtain the bound in expectation
\begin{align*}
&\tfrac{\d}{\d t}\E [\Psionem(\um(t),\vm(t))+2m\|\Phi_1(\um(t))\|_{L^1}\big]^n \\
&\le -c \E[\Psionem(\um(t),\vm(t))+2m\|\Phi_1(\um(t))\|_{L^1}\big]^n+C.
\end{align*}
In the above, we emphasize that $c,C$ do not depend on $m$. This establishes~\eqref{ineq:moment-bound:H:n-moment} for the general case $n\ge 2$.

3. The proof of~\eqref{ineq:moment-bound:H:exponential} follows along the lines of \cite[Section 9]{cerrai2020convergence} and is included here for the sake of completeness.

For $\beta>0$, denote 
\begin{align*}
G_1(u,v)= \exp\big\{ \beta\big(\Psionem(u,v)+2m\|\Phi_1(u)\|_{L^1} \big)  \big\}.
\end{align*}
In view of~\eqref{eqn:Ito:L^m.Psi_1m} and~\eqref{eqn:Ito:d.Psi_1m}, we see that
\begin{align*}
\L^mG_1(u,v)&=\beta G_1(u,v)\big(   -\|A^{1/2}u\|^2_H -m\|v\|^2_H+\la \f(u),u\ra_H+\Tr(QQ^*)    \big)\\
&\qquad+ \tfrac{1}{2}\beta^2G_1(u,v)\|Q( 2m v+u)\|_H^2.
\end{align*}
Similarly the part 2,
\begin{align*}
\|Q( 2m v+u)\|_H^2 
&\le \Tr(QQ^*) \|2mv+u\|^2_H\le 2\Tr(QQ^*)(4m^2\|v\|^2_H+\|u\|_H^2).
\end{align*}
Together with~\eqref{ineq:L^m.Psi_1m}, we infer the existence of $c,C$ independent of $m$ and $\beta$ such that
\begin{align*}
\L^m G_1(u,v)\le \beta G_1(u,v)\big[ -c\big(\Psionem(u,v)+2m\|\Phi_1(u)\|_{L^1} \big) +C\\
&\hspace{-4cm} +\beta\Tr(QQ^*)(4m^2\|v\|^2_H+\|u\|_H^2)\big]. 
\end{align*}
Recalling $\Psionem$ as in~\eqref{form:Psi_1m} and taking $\beta$ sufficiently small independent of $m$, we deduce
\begin{align*}
\L^m G_1(u,v) \le \beta G_1(u,v) \big[ -c\big(\Psionem(u,v)+2m\|\Phi_1(u)\|_{L^1} \big) +C    \big].
\end{align*}
We now invoke the elementary inequality
\begin{align*}
e^{\beta r}(cr-C)\ge \tilde{c} e^{\beta r} -\tilde{C}, \quad r\ge 0,
\end{align*}
and obtain
\begin{align*}
\L^m G_1(u,v) \le - c G_1(u,v)+C,
\end{align*}
for some positive constants $c=c(\beta), C=C(\beta)$ independent of $m$. Taking expectation from it\^o' formula, this also implies
\begin{align*}
\tfrac{\d}{\d t}\E G_1(\um(t),\vm(t)) \le -c G_1(\um(t),\vm(t))+C.
\end{align*}
This establishes~\eqref{ineq:moment-bound:H:exponential}, as claimed.

4. Concerning the exponential bound in~\eqref{ineq:moment-bound:H:exponential:int_0^t}, for $\beta>0$, recall from~\eqref{eqn:Ito:d.Psi_1m} that
\begin{align*}
& \beta\big[\Psionem(\um(t),\vm(t))+2m\|\Phi_1(\um(t))\|_{L^1}\big]-\beta\big[ \Psionem(u_0,v_0)+2m\|\Phi_1(u_0)\|_{L^1}\big]\\
&=\beta\int_0^t\L^m\big[\Psionem(\um(r),\vm(r))+2m\|\Phi_1(\um(r))\|_{L^1}\big]\d r+ M(t),
\end{align*}
where
\begin{align*}
M(t)=\beta\int_0^t\la 2m \vm(r)+\um(r),Q\d w(r)\ra_H,
\end{align*}
whose quadratic variation process is given by
\begin{align*}
\la M\ra (t) =\beta^2 \int_0^t\|Q(2m \vm(r)+\um(r))\|_H^2\d r.
\end{align*}
Note also that
\begin{align*}
\d \la M\ra (t) \le 2\beta^2 \Tr(QQ^*)(4m^2\|\vm(t)\|^2_H+\|\um(t)\|^2_H).
\end{align*}
On the other hand, in view of~\eqref{ineq:L^m.Psi_1m}, it holds that
\begin{align*}
&\beta \int_0^t \L^m\big[\Psionem(\um(r),\vm(r))+2m\|\Phi_1(\um(r))\|_{L^1}\big]\d r\\
&\le  - \beta \int_0^t \|A^{1/2}\um(r)\|^2_H+m\|\vm(r)\|_H^2 +\tfrac{a_2}{C_\f}\|\Phi_1(\um(r))\|_{L^1}   \d r+\beta (a_2+a_3|\domain|)t\\
&= - \beta \int_0^t \tfrac{1}{2}\|A^{1/2}\um(r)\|^2_H+m(1-2m)\|\vm(r)\|_H^2 +\tfrac{a_2}{C_\f}\|\Phi_1(\um(r))\|_{L^1}   \d r+\beta (a_2+a_3|\domain|)t\\
&\qquad -\tfrac{1}{2}\cdot \tfrac{1}{\beta\Tr(QQ^*)}\cdot \beta^2\Tr(QQ^*)\int_0^t \|A^{1/2}\um(r)\|^2_H+4m^2\|\vm(r)\|_H^2  \d r.
\end{align*}
Thanks to the fact that $\Tr(QQ^*)<\infty$, we use Sobolev embedding to further deduce
\begin{align*}
&\beta^2\Tr(QQ^*)\int_0^t \|A^{1/2}\um(r)\|^2_H+4m^2\|\vm(r)\|_H^2  \d r \\
&\ge  \beta^2\Tr(QQ^*)\int_0^t \alpha_1 \|\um(r)\|^2_H+4m^2\|\vm(r)\|_H^2  \d r \\
&\ge \tfrac{1}{2}\min\{\alpha_1,1\} \la M\ra (t).
\end{align*}
In the above, $\alpha_1>0$ is the first eigenvalue of $A$ as in~\eqref{eqn:Ae_k=alpha_k.e_k}. As a consequence, we obtain a.s.
\begin{align*}
& \beta\big[\Psionem(\um(t),\vm(t))+2m\|\Phi_1(\um(t))\|_{L^1}\big]-\beta\big[ \Psionem(u_0,v_0)+2m\|\Phi_1(u_0)\|_{L^1}\big]\\
&\le - \beta \int_0^t \tfrac{1}{2}\|A^{1/2}\um(r)\|^2_H+m(1-2m)\|\vm(r)\|_H^2 +\tfrac{a_2}{C_\f}\|\Phi_1(\um(r))\|_{L^1}   \d r\\ &\qquad +M(t)  -\tfrac{1}{2}\cdot \tfrac{\min\{\alpha_1,1\}}{2\beta\Tr(QQ^*)}\cdot \la M\ra(t)+\beta (a_2+a_3|\domain|)t.
\end{align*}
Now, applying the exponential Martingale inequality to $M(t)$ gives
\begin{align*}
\P\Big( \sup_{t\ge 0}\Big[ M(t)  -\tfrac{1}{2}\cdot \tfrac{\min\{\alpha_1,1\}}{2\beta\Tr(QQ^*)}\cdot \la M\ra(t)\Big] \ge R \Big)\le e^{-\frac{\min\{\alpha_1,1\}}{2\beta\Tr(QQ^*)}R},\quad R>0.
\end{align*}
In particular, taking $\beta$ sufficiently small, e.g., 
\begin{align*}
\beta<\frac{\min\{\alpha_1,1\}}{4\Tr(QQ^*)},
\end{align*}
we infer
\begin{align*}
\P\Big( \sup_{t\ge 0}\Big[ M(t)  -\tfrac{1}{2}\cdot \tfrac{\min\{\alpha_1,1\}}{2\beta\Tr(QQ^*)}\cdot \la M\ra(t)\Big] \ge R \Big)\le e^{-2R},\quad R>0,
\end{align*}
whence
\begin{align*}
\E \exp\Big\{ \sup_{t\ge 0}\Big[ M(t)  -\tfrac{1}{2}\cdot \tfrac{\min\{\alpha_1,1\}}{2\beta\Tr(QQ^*)}\cdot \la M\ra(t)\Big]\Big\}<2.
\end{align*}
Altogether, we arrive at the bounds
\begin{align*}
&\E \exp \Big\{\beta\big[\Psionem(\um(t),\vm(t))+2m\|\Phi_1(\um(t))\|_{L^1}\big]\\
&\qquad\qquad+   \beta \int_0^t \tfrac{1}{2}\|A^{1/2}\um(r)\|^2_H+m(1-2m)\|\vm(r)\|_H^2 +\tfrac{a_2}{C_\f}\|\Phi_1(\um(r))\|_{L^1}   \d r  \Big\} \\
&\le 2 \exp \Big\{ \beta\big[ \Psionem(u_0,v_0)+2m\|\Phi_1(u_0)\|_{L^1}\big]  +\beta (a_2+a_3|\domain|)t\Big\}.
\end{align*}
This produces~\eqref{ineq:moment-bound:H:exponential:int_0^t}, as claimed.

5. Turning to~\eqref{ineq:moment-bound:H:exponential:super-Lyapunov}, let $\gamma\in(0,1)$ and $\beta>0$ be given and be chosen later. Fixing $T>0$, from~\eqref{eqn:Ito:d.Psi_1m}, we have
\begin{align*}
&\d \Big( \beta e^{\gamma(t-T)} \big[ \Psionem(\um(t),\vm(t))+2m\|\Phi_1(\um(t))\|_{L^1}\big]  \Big)\\
&= \beta \gamma e^{\gamma(t-T)} \big[ \Psionem(\um(t),\vm(t))+2m\|\Phi_1(\um(t))\|_{L^1}\big]\d t \\
&\qquad +\beta e^{\gamma(t-T)}\L^m \big[ \Psionem(\um(t),\vm(t))+2m\|\Phi_1(\um(t))\|_{L^1}\big]\d t \\
&\qquad+\beta e^{\gamma(t-T)}\la 2m\vm(t)+\um(t),Q\d w(t)\ra_H.
\end{align*}
Reasoning as in part 4 above, we see that for $t\in[0,T]$
\begin{align*}
&\beta e^{\gamma(t-T)}\L^m \big[ \Psionem(\um(t),\vm(t))+2m\|\Phi_1(\um(t))\|_{L^1}\big]\\
&\le  - \beta  e^{\gamma(t-T)} \big(\tfrac{1}{2}\|A^{1/2}\um(t)\|^2_H+m(1-2m)\|\vm(t)\|_H^2 +\tfrac{a_2}{C_\f}\|\Phi_1(\um(t))\|_{L^1}\big)\\
&\qquad -\tfrac{1}{2}\cdot \tfrac{\min\{\alpha_1,1\}}{2\beta\Tr(QQ^*)}\cdot \beta^2  e^{2\gamma(t-T)}\|Q(2m \vm(t)+\um(t))\|_H^2  +\beta  e^{\gamma(t-T)}(a_2+a_3|\domain|).
\end{align*}
So, for $\gamma$ sufficiently small, we obtain
\begin{align*}
&\d \Big( \beta e^{\gamma(t-T)} \big[ \Psionem(\um(t),\vm(t))+2m\|\Phi_1(\um(t))\|_{L^1}\big]  \Big)\\
&\le \beta  e^{\gamma(t-T)}(a_2+a_3|\domain|)+ \beta e^{\gamma(t-T)}\la 2m\vm(t)+\um(t),Q\d w(t)\ra_H\\
&\qquad-\tfrac{1}{2}\cdot \tfrac{\min\{\alpha_1,1\}}{2\beta\Tr(QQ^*)}\cdot \beta^2  e^{2\gamma(t-T)}\|Q(2m \vm(t)+\um(t))\|_H^2.
\end{align*}
Integrating the above inequality with respect to $t\in[0,T]$ yields
\begin{align*}
&\beta \big[ \Psionem(\um(T),\vm(T))+2m\|\Phi_1(\um(T))\|_{L^1}\big] - \beta e^{-\gamma T} \big[ \Psionem(u_0,v_0)+2m\|\Phi_1(u_0)\|_{L^1}\big] \\
&\le \tfrac{\beta}{\gamma} +M_1(T)-\tfrac{1}{2}\cdot \tfrac{\min\{\alpha_1,1\}}{2\beta\Tr(QQ^*)}\la M_1\ra(T),
\end{align*}
where
\begin{align*}
M_1(t)=\int_0^t \beta e^{\gamma(t-T)}\la 2m\vm(r)+\um(r),Q\d w(r)\ra_H.
\end{align*}
Similarly to part 4, applying the exponential Martingale inequality to $M_1$ for $\beta<\frac{\min\{\alpha_1,1\}}{4\Tr(QQ^*)}$, we arrive at
\begin{align*}
&\E \exp \Big\{  \beta \big[ \Psionem(\um(T),\vm(T))+2m\|\Phi_1(\um(T))\|_{L^1}\big]  \Big\}\\
& \le 2e^{\beta/\gamma}\exp\Big\{ \beta e^{-\gamma T} \big[ \Psionem(u_0,v_0)+2m\|\Phi_1(u_0)\|_{L^1}\big]\Big\}.
\end{align*}
This establishes~\eqref{ineq:moment-bound:H:exponential:super-Lyapunov} with $C_{\beta,\gamma}=2e^{\beta/\gamma}$, thereby finishing the proof.
\end{proof}

As a consequence of Lemma~\ref{lem:moment-bound:H}, in Lemma~\ref{lem:moment-bound:H:random-initial-cond} below, we obtain moment bounds in $\Hcal^1$ with respect to random initial conditions. In particular, Lemma~\ref{lem:moment-bound:H:random-initial-cond} will be invoked to prove the small mass limits in Section~\ref{sec:small-mass}.
\begin{lemma} \label{lem:moment-bound:H:random-initial-cond} 
Let $(u_0,v_0)$ be random variable in $L^2(\Omega;\Hcal^1)$ and $(u^m(t),v^m(t))$ be the solution of~\eqref{eqn:wave} with initial condition $(u_0,v_0)$. Then, the following holds:

1. For all $\beta$ sufficiently small independent of $m$,
\begin{align}
&\E \exp\Big\{ \beta\big( \Psi_{1,m}(\um(t),\vm(t))+2m\|\Phi_1(\um(t))\|_{L^1}\big) \Big\} \nt  \\
&\le C_{1,0} e^{-c_{1,0}t}\E\exp\{\beta\big(\Psi_{0,m}(u_0,v_0)+2m\|\Phi_1(u_0)\|_{L^1}\big)\big\}+C_{1,0},\quad t\ge 0 \label{ineq:moment-bound:H:exponential:random-initial-cond}
\end{align}
where $C_{1,0}$ and $c_{1,0}$ are as in~\eqref{ineq:moment-bound:H:exponential}.

2. For all $\beta$ sufficiently small independent of $m$,
 \begin{align}
&\E \exp\Big\{ \sup_{t\in[0,T]}\tfrac{1}{2}\beta\big[ \Psi_{1,m}(\um(t),\vm(t))+2m\|\Phi_1(\um(t))\|_{L^1}\big] \Big\} \nt  \\
&\le C_{\beta,T}\E\exp\{\beta\big[\Psi_{0,m}(u_0,v_0)+2m\|\Phi_1(u_0)\|_{L^1}\big]\big\},\quad t\ge 0, \label{ineq:moment-bound:H:exponential:random-initial-cond:sup_[0,T]}
\end{align}
for some positive constant $C_{\beta,T}$ independent of $m$ and $(u_0,v_0)$.

\end{lemma}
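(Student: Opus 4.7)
The plan is to bootstrap from Lemma~\ref{lem:moment-bound:H} in two different ways: conditioning on the initial $\sigma$--algebra handles part 1, while for part 2 I would revisit the pathwise It\^o computation underlying \eqref{ineq:moment-bound:H:exponential:int_0^t} and combine it with Cauchy--Schwarz.

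For part 1, I would condition on $\Fcal_0$ and apply the deterministic--initial--condition bound~\eqref{ineq:moment-bound:H:exponential} pointwise in $\omega$ to $(u_0(\omega),v_0(\omega))$. Since the constants $C_{1,0}, c_{1,0}$ are independent of the starting point, taking the outer expectation preserves the inequality and produces~\eqref{ineq:moment-bound:H:exponential:random-initial-cond} immediately.

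For part 2, the sup inside the exponential obstructs a direct tower--property argument, so instead I would go back to the almost sure pathwise estimate established during the proof of \eqref{ineq:moment-bound:H:exponential:int_0^t}. After dropping the nonpositive dissipative integral there, one obtains for every $t\in[0,T]$, a.s.,
\[
\beta\big[\Psionem(\um(t),\vm(t))+2m\|\Phi_1(\um(t))\|_{L^1}\big] \le \beta\big[\Psionem(u_0,v_0)+2m\|\Phi_1(u_0)\|_{L^1}\big]+\beta(a_2+a_3|\domain|)T+M(t)-\tfrac{k}{2}\la M\ra(t),
\]
where $M(t)=\beta\int_0^t\la 2m\vm(r)+\um(r),Q\d w(r)\ra_H$ and $k=\min\{\alpha_1,1\}/(2\beta\Tr(QQ^*))$. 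Multiplying by $1/2$, taking $\sup_{t\in[0,T]}$, exponentiating, and applying Cauchy--Schwarz yields
\[
\E\exp\big\{\sup_{t\in[0,T]}\tfrac{1}{2}\beta(\Psionem+2m\|\Phi_1\|_{L^1})(t)\big\} \le e^{\beta(a_2+a_3|\domain|)T/2}\Big(\E\exp\{\beta(\Psionem+2m\|\Phi_1\|_{L^1})(0)\}\Big)^{1/2}\Big(\E\exp\{\sup_{t\in[0,T]}[M(t)-\tfrac{k}{2}\la M\ra(t)]\}\Big)^{1/2}.
\]
The last factor is controlled by a constant depending only on $k$: the exponential supermartingale inequality gives $\P(\sup_{t\in[0,T]}[M(t)-\tfrac{k}{2}\la M\ra(t)]\ge R)\le e^{-kR}$, and integrating this tail against $e^R\d R$ shows the expectation is at most $k/(k-1)$, provided $k>1$. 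This last inequality amounts to $\beta<\min\{\alpha_1,1\}/(2\Tr(QQ^*))$, i.e., exactly the same smallness on $\beta$ used in Lemma~\ref{lem:moment-bound:H}(4). Collecting the constants produces $C_{\beta,T}$ and establishes~\eqref{ineq:moment-bound:H:exponential:random-initial-cond:sup_[0,T]}.

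The main obstacle is precisely the sup inside the exponential: randomness of the initial condition prevents a simple conditioning argument, and the Cauchy--Schwarz split is what forces the factor $\tfrac{1}{2}$ on the left--hand side against the full $\beta$ on the right. Beyond this trade--off, no new ingredients are needed; everything reduces to the exponential supermartingale concentration already used to prove \eqref{ineq:moment-bound:H:exponential:int_0^t}, and the mass independence of all constants is inherited from Lemma~\ref{lem:moment-bound:H}.
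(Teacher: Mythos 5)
Your proof is correct and follows essentially the same route as the paper: part 1 by conditioning/re-running the It\^o--Gronwall argument with a random initial condition, and part 2 by combining the pathwise estimate underlying \eqref{ineq:moment-bound:H:exponential:int_0^t} with the exponential martingale inequality and Cauchy--Schwarz. The only cosmetic difference is that the paper first bounds $\E\exp\{\sup_t\beta[\Psi_{1,m}](t)-\beta[\Psi_{1,m}](0)-\beta(a_2+a_3|\domain|)T\}\le 2$ and then applies H\"older, whereas you split the three terms simultaneously and integrate the martingale tail directly; both yield the same conclusion after the final (implicit) observation that $(\E e^{\beta[\Psi_{1,m}](0)})^{1/2}\le\E e^{\beta[\Psi_{1,m}](0)}$.
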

\begin{proof}
1. The proof of~\eqref{ineq:moment-bound:H:exponential:random-initial-cond} is similar to that of~\eqref{ineq:moment-bound:H:exponential} in Lemma~\ref{lem:moment-bound:H}, part 3, and thus is omitted.

2. Concerning~\eqref{ineq:moment-bound:H:exponential:random-initial-cond:sup_[0,T]}, we employ the same argument of~\eqref{ineq:moment-bound:H:exponential:int_0^t} (see the proof of Lemma~\ref{lem:moment-bound:H}, part 4) to infer
\begin{align*}
&\E \exp \Big\{\sup_{t\in[0,T]}\Big(\beta\big[\Psionem(\um(t),\vm(t))+2m\|\Phi_1(\um(t))\|_{L^1}\big]\Big)\\
&\qquad\qquad -\beta\big[ \Psionem(u_0,v_0)+2m\|\Phi_1(u_0)\|_{L^1}\big]  -\beta (a_2+a_3|\domain|)T\Big\} \le 2 .
\end{align*}
By Holder's inequality, we obtain
\begin{align*}
&\Big|\E \exp \Big\{\sup_{t\in[0,T]}\Big(\tfrac{1}{2}\beta\big[\Psionem(\um(t),\vm(t))+2m\|\Phi_1(\um(t))\|_{L^1}\big]\Big)\Big\}\Big|^2\\
&\le 2 \exp\Big\{\beta (a_2+a_3|\domain|)T\Big\}\E\exp\Big\{\beta\big[ \Psionem(u_0,v_0)+2m\|\Phi_1(u_0)\|_{L^1}\big] \Big\}.
\end{align*}
Since the expectation on the above right--hand side is always greater than one, we establish~\eqref{ineq:moment-bound:H:exponential:random-initial-cond:sup_[0,T]}, as claimed.

\end{proof}

Next, we establish a $\Hcal^2-$analogue of Lemma~\ref{lem:moment-bound:H} provided the initial data $(u_0,v_0)\in\Hcal^2$. For this purpose, we introduce the function
\begin{equation} \label{form:Psi_2m}
\Psi_{2,m}(u,v)=m\|Au\|^2_H+m^2\|A^{1/2}v\|^2_H+m\la A^{1/2}u,A^{1/2}v\ra_H+\tfrac{1}{2}\|A^{1/2}u\|^2_H.
\end{equation}
The moment bounds in $\Hcal^2$ are given in the following lemma:

\begin{lemma} \label{lem:moment-bound:H^2}
Given $(u_0,v_0)\in \Hcal^2$, let $(u^m(t),v^m(t))$ be the solution of~\eqref{eqn:wave} with initial condition $(u_0,v_0)$. Then, the followings hold for all $t\ge 0$:

1. For all $n\ge 1$, there exist positive constants $C_{2,n},c_{2,n}, q_{n}$ independent of $m$, $t$ and $(u_0,v_0)$ such that
\begin{align}
&\E\, \Psitwom(\um(t),\vm(t))^n \nt \\
& \le C_{2,n} e^{-c_{2,n}t}  \Big(\Psi_{2,m}(u_0,v_0)^n + \big[\Psionem(u_0,v_0)+2m\|\Phi_1(u_0)\|_{L^1}\big]^{q_{2,n}} \Big) +C_{2,n},\label{ineq:moment-bound:H^2:n-moment}
\end{align}
where $\Psitwom$, $\Psionem$ and $\Phi_1$ are the functions as in~\eqref{form:Psi_2m}, \eqref{form:Psi_1m} and~\eqref{cond:Phi_1}, respectively.

2. There exists a positive constant $C_{2,t}$ independent of $m$ and $(u_0,v_0)$ such that
\begin{align} 
&\E\Big[\sup_{r\in[0,t]} \Psitwom(\um(r),\vm(r))\Big]   \nt \\
&\le C_{2,t}\Big(\Psitwom(u_0,v_0)+   \big(\Psi_{1,m}(u_0,v_0)+2m\|\Phi_1(u_0)\|_{L^1}\big)^{q_{2,1}}+1\Big),\label{ineq:moment-bound:H^2:sup_[0,t]}
\end{align}
where $q_{2,1}$ is the constant as in~\eqref{ineq:moment-bound:H^2:n-moment} with $n=1$.
\end{lemma}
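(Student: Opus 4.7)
\medskip

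\noindent\textbf{Proof plan for Lemma \ref{lem:moment-bound:H^2}.}

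The strategy mirrors Lemma~\ref{lem:moment-bound:H} lifted one derivative. First I would compute $\L^m\Psitwom$ by applying the generator \eqref{form:L^m} to the functional $\Psitwom$ from~\eqref{form:Psi_2m}. The linear part should, after using the cancellation between the cross term $m\la A^{1/2}u,A^{1/2}v\ra_H$ and the contributions from $\|Au\|_H^2$ and $\|A^{1/2}v\|_H^2$ (exactly as in \eqref{ineq:L^m.Psi_1m}), produce a dissipation of the form
\begin{equation*}
-c\bigl(\|Au\|_H^2+m\|A^{1/2}v\|_H^2\bigr)\ \le\ -\tilde c\,\Psitwom(u,v).
\end{equation*}
The quadratic variation produces a trace term $\Tr(AQQ^*)$, which is finite thanks to the stronger regularity assumption $\Tr(QA^3Q^*)<\infty$ in \eqref{cond:Q:Tr(QA^3Q)}. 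The nonlinear contribution from the cross and $\|A^{1/2}v\|_H^2$ terms reduces to
\begin{equation*}
\la A^{1/2}v,A^{1/2}\f(u)\ra_H \ +\ \la A^{1/2}u,A^{1/2}\f(u)\ra_H
\;=\;\la \grad v,\f'(u)\grad u\ra_H+\la \grad u,\f'(u)\grad u\ra_H,
\end{equation*}
up to bounded constants. The second term is bounded by $a_\f\|A^{1/2}u\|_H^2$ by \eqref{cond:phi:sup.phi'<a_f}; this is harmless.

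The main obstacle is the mixed term $\la\grad v,\f'(u)\grad u\ra_H$. I would exploit the bound $|\f'(x)|\le a_4(|x|^{\lambda-1}+1)$ from \eqref{cond:phi:phi'=O(x^(lambda-1))} together with H\"older's inequality and Sobolev embedding. In dimensions $d=2,3$, we have $H^{d/2+\varepsilon}\hookrightarrow L^\infty$ and interpolation $\|u\|_{H^{d/2+\varepsilon}}\le \|u\|_{H^1}^{1-\theta}\|u\|_{H^2}^{\theta}$ with some $\theta<1$. Bounding
\begin{equation*}
|\la\grad v,\f'(u)\grad u\ra_H|\ \le\ a_4\bigl(\|u\|_{L^\infty}^{\lambda-1}+1\bigr)\|\grad u\|_H\|\grad v\|_H,
\end{equation*}
raising to the $(\lambda-1)\theta$ power and using Young's inequality yields, because $\lambda<2$, an exponent on $\|u\|_{H^2}$ strictly less than one. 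Hence one can absorb a small multiple of $\|Au\|_H^2+m\|A^{1/2}v\|_H^2$ into the dissipation, leaving a lower-order remainder of the form $C(1+\Psionem+2m\|\Phi_1(u)\|_{L^1})^{q}$ for some $q=q(\lambda)\ge 1$. This produces the Lyapunov-type identity
\begin{equation*}
\L^m\Psitwom(u,v)\ \le\ -c\,\Psitwom(u,v)+C\bigl(\Psionem(u,v)+2m\|\Phi_1(u)\|_{L^1}\bigr)^{q}+C.
\end{equation*}

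For part~1, I would apply It\^o's formula to $\Psitwom^n$ following the induction argument in the proof of \eqref{ineq:moment-bound:H:n-moment}: the top-order term yields $-cn\Psitwom^n$, while the cross-variation and nonlinear error terms are controlled by $\Psitwom^{n-1}$ times lower-order quantities, which Young's inequality turns into $\frac c2\Psitwom^n+C(\Psionem+2m\|\Phi_1(u)\|_{L^1})^{q_{2,n}}$ for a suitable $q_{2,n}$. Taking expectation, using the uniform $n$-moment estimate~\eqref{ineq:moment-bound:H:n-moment} of Lemma~\ref{lem:moment-bound:H} on the lower-order factor, and integrating the resulting scalar ODE yields \eqref{ineq:moment-bound:H^2:n-moment}. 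For part~2, I would integrate the It\^o identity for $\Psitwom$ (not $\Psitwom^n$), take the supremum in $[0,t]$, and bound the stochastic integral via Burkholder-Davis-Gundy: the integrand involves $Q^*(2m^2A v+mAu)$, whose Hilbert-Schmidt norm is controlled by $\Tr(AQQ^*)\Psitwom$. Gronwall's inequality together with \eqref{ineq:moment-bound:H:sup_[0,t]} applied to the nonlinear remainder then closes the estimate and delivers~\eqref{ineq:moment-bound:H^2:sup_[0,t]}.

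The decisive step is the absorption of $\la\grad v,\f'(u)\grad u\ra_H$ into the dissipation; this is precisely where the restriction $\lambda\in[1,2)$ is used, and it is the reason the bound must carry a power $q_{2,n}$ of the $\Hcal^1$-Lyapunov function on the right-hand side rather than just a multiplicative constant.
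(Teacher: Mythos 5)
Your proposal matches the paper's proof in all essential respects: compute $\L^m\Psitwom$, bound $\la\f'(u)\grad u,\grad u\ra_H$ using $\sup\f'=a_\f$, use Agmon's inequality together with $\lambda<2$ to absorb the cross term $m\la\f'(u)\grad u,\grad v\ra_H$ into the dissipation at the cost of a power of the $\Hcal^1$-Lyapunov function, then close via Gr\"onwall and Lemma~\ref{lem:moment-bound:H}, with induction on $n$ for part~1 and Burkholder for part~2. One minor nit: in part~2 the remainder appears as a \emph{time integral} $\int_0^t\E\big[\Psionem(\um(r),\vm(r))+2m\|\Phi_1(\um(r))\|_{L^1}\big]^{q_{2,1}}\d r$, which the paper bounds via the pointwise $n$-moment decay~\eqref{ineq:moment-bound:H:n-moment} rather than via the supremum estimate~\eqref{ineq:moment-bound:H:sup_[0,t]} you cite (the latter only controls the first power, not the $q_{2,1}$-th); this is a citation issue, not a gap in the argument.
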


\begin{proof} 1. Similarly to the proof of~\eqref{ineq:moment-bound:H:n-moment}, we proceed by induction on $n$ . For the base case $n=1$, we apply $\L^m$ to $\Psitwom$ to obtain
\begin{align}
\L^m \Psitwom(u,v) &=-\|Au\|_H^2 -m\|A^{1/2}v\|^2_H+\Tr(QAQ^*) \nonumber \\
&\qquad+m\la \f'(u)\grad u,\grad v\ra_H+\la \f'(u)\grad u,\grad u\ra_H. \label{eqn:Ito:L^m.Psi_2m}
\end{align}
Recall condition~\eqref{cond:Q:Tr(QA^3Q)}, we readily have $\Tr(QAQ^*)<\infty$. To estimate the last term on the right-hand side of~\eqref{eqn:Ito:L^m.Psi_2m}, we invoke condition~\eqref{cond:phi:sup.phi'<a_f} with Cauchy-Schwarz inequality to see that
\begin{align} \label{ineq:phi'.grad.u.grad.u}
\la \f'(u)\grad u,\grad u\ra_H \le a_\f \la \grad u,\grad u\ra_H
& = a_\f \la u, Au\ra_H \le a_\f^2\|u\|^2_H+\tfrac{1}{4}\|Au\|^2_H.
\end{align}
Concerning the nonlinear term $m\la \f'(u)\grad u,\grad v\ra_H$,
we invoke~\eqref{cond:phi:phi'=O(x^(lambda-1))} and Cauchy--Schwarz inequality to see that
\begin{align*}
m\la \f'(u)\grad u,\grad v\ra_H & \le c\,m\, \big(1+\|u\|_{L^\infty}^{\lambda-1}\big)\|A^{1/2}u\|_H\|A^{1/2}v\|_H\\
&\le c\,m(1+\|u\|_{L^\infty}^{2\lambda-2}\big)\|A^{1/2}u\|_H^2+\tfrac{1}{2}m\|A^{1/2}v\|_H^2\\
&= c\, m\|A^{1/2}u\|_H^2+\tfrac{1}{2}m\|A^{1/2}v\|_H^2+c\, m\|u\|_{L^\infty}^{2\lambda-2}\|A^{1/2}u\|_H^2\\
&\le \tfrac{1}{8}\|Au\|_H^2+\tfrac{1}{2}m\|A^{1/2}v\|_H^2+c\, m\|u\|_{L^\infty}^{2\lambda-2}\|A^{1/2}u\|_H^2,
\end{align*}
which holds for all $m$ sufficiently small. To further bound the last term on the above right-hand side, we employ Agmon's, Young's and interpolation inequalities and obtain (in dimension $d=2$ or $d=3$)
\begin{align*}
m\|u\|_{L^\infty}^{2\lambda-2}\|A^{1/2}u\|_H^2 &\le c\,m\|A^{1/2}u\|_H^{\lambda-1}\|Au\|_H^{\lambda-1}\|u\|_H\|Au\|_H\\
&= c\|u\|_H\cdot m\|A^{1/2}u\|^{\lambda-1}\cdot\|Au\|^{\lambda}\\
&\le C \|u\|^{\frac{4}{2-\lambda}}_H+C\big(m\|A^{1/2}u\|_H^{\lambda-1} \big)^{\frac{4}{2-\lambda}}+\tfrac{1}{8}\|Au\|_H^2\\
&\le C \|u\|^{\frac{4}{2-\lambda}}_H+C\big(m\|A^{1/2}u\|_H^2 \big)^{\frac{2(\lambda-1)}{2-\lambda}}+\tfrac{1}{8}\|Au\|_H^2.
\end{align*}
In the above, we emphasize that $c$ and $C$ do not depend on $m$. As a consequence, we arrive at
\begin{align}
&m\la \f'(u)\grad u,\grad v\ra_H\nt \\
 &\le \tfrac{1}{4}\|Au\|_H^2+\tfrac{1}{2}m\|A^{1/2}v\|_H^2+C \|u\|^{\frac{4}{2-\lambda}}_H+C\big(m\|A^{1/2}u\|_H^{2} \big)^{\frac{2(\lambda-1)}{2-\lambda}} \nt \\
 &\le \tfrac{1}{4}\|Au\|_H^2+\tfrac{1}{2}m\|A^{1/2}v\|_H^2+\|u\|^{q}_H+\big(m\|A^{1/2}u\|_H^{2} \big)^{q}+C, \label{ineq:phi'.grad.u.grad.v:lambda<2}
\end{align}
for some positive constants $q=q(\lambda)\in\nbb$ and $C=C(\lambda)$ sufficiently large independent of $m$. 

Next, we collect~\eqref{eqn:Ito:L^m.Psi_2m}--\eqref{ineq:phi'.grad.u.grad.u}--\eqref{ineq:phi'.grad.u.grad.v:lambda<2} to deduce the following estimate
\begin{align*} 
\L^m\Psitwom(u,v)\le -\tfrac{1}{2}\|Au\|^2_H-\tfrac{1}{2}m\|A^{1/2}v\|^2_H+\|u\|^q_H+(m\|A^{1/2}u\|^2)^q+C.
\end{align*}
Recall the function $\Psionem$ as in~\eqref{form:Psi_1m}, we see that
\begin{align*}
\|u\|^q_H+(m\|A^{1/2}u\|^2)^q \le C\Psionem(u,v)^{\tilde{q}}+C,
\end{align*}
for a possibly different exponent $\tilde{q}$ larger than $q$. Altogether, we arrive at
\begin{align} \label{ineq:L^m.Psi_2m:lambda<2}
\L^m\Psitwom(u,v)\le -\tfrac{1}{2}\|Au\|^2_H-\tfrac{1}{2}m\|A^{1/2}v\|^2_H+C\Psionem(u,v)^q+C.
\end{align}
In particular, this also implies the bound in expectation
\begin{align}
&\tfrac{\d}{\d t}\E \Psitwom(\um(t),\vm(t)) \nt \\
&\le -\tfrac{1}{2}\E\|A\um(t)\|^2_H-\tfrac{1}{2}m\|A^{1/2}\vm(t)\|^2_H+C\E\Psionem(\um(t),\vm(t))^q+C, \label{ineq:d.E.Psi_2m}
\end{align}
whence
\begin{align*}
\E\Psitwom(\um(t),\vm(t))&\le e^{-ct}\Psitwom(u_0,v_0)+C\\
&\qquad+C\int_0^t e^{-c(t-r)}\E\Psionem(\um(r),\vm(r))^q\d r.
\end{align*}
To bound the integral on the above right-hand side, we recall Lemma~\ref{lem:moment-bound:H}, cf.~\eqref{ineq:moment-bound:H:n-moment}, that
\begin{align*}
\E\Psionem(\um(r),\vm(r))^q \le C_{1,q}e^{-c_{1,q}\,r}\big[\Psionem(u_0,v_0)+2m\|\Phi_1(u_0)\|_{L^1}\big]^q+C_{1,q},
\end{align*}
where $-\Phi_1$ is the antiderivative of $\f$ as in~\eqref{cond:Phi_1}. It follows that
\begin{align*}
\E\Psitwom(\um(t),\vm(t))&\le e^{-ct}\Psitwom(u_0,v_0)+C\\
&\qquad+C\int_0^t e^{-c(t-r)}\Big[C_{1,q}e^{-c_{1,q}\,r}\big[\Psionem(u_0,v_0)+2m\|\Phi_1(u_0)\|_{L^1}\big]^q+C_{1,q}\Big]\d r\\
&\le  e^{-ct}\Psitwom(u_0,v_0)+C e^{-\tilde{c}t}\big[\Psionem(u_0,v_0)+2m\|\Phi_1(u_0)\|_{L^1}\big]^q+C\\
&\le Ce^{-ct}\Big(\Psitwom(u_0,v_0)+\big[\Psionem(u_0,v_0)+2m\|\Phi_1(u_0)\|_{L^1}\big]^q\Big)+C.
\end{align*}
This produces~\eqref{ineq:moment-bound:H^2:n-moment} for $n=1$.

Next, assume~\eqref{ineq:moment-bound:H^2:n-moment} holds for $n-1$ and consider the case $n\ge 2$. Analogous to~\eqref{eqn:Ito:L^m.(Psi_1m)^n}, applying $\L^m$ to $\Psitwom(u,v)^n$ gives
\begin{align}
\L^m\Psitwom(u,v)^n&=n\Psitwom(u,v)^{n-1}\L^m\Psitwom(u,v) \nt \\
&\qquad+\tfrac{1}{2} n(n-1) \Psitwom(u,v)^{n-2}\sum_{k\ge 1}|\la  2mv+u, Qe_k\ra_{H^1}|^2.\label{eqn:Ito:L^m.(Psi_2m)^n}
\end{align}
In light of~\eqref{ineq:L^m.Psi_2m:lambda<2}, 
we employ Young's inequality to see that
\begin{align*}
&n\Psitwom(u,v)^{n-1}\L^m\Psitwom(u,v) \\
&\le n\Psitwom(u,v)^{n-1}\big(  -\tfrac{1}{2}\|Au\|^2_H-\tfrac{1}{2}m\|A^{1/2}v\|^2_H+\Psionem(u,v)^q+C    \big)\\
&\le -c \Psitwom(u,v)^n + C \Psitwom(u,v)^{n-1}\Psionem(u,v)^q+C\Psitwom(u,v)^{n-1}\\
&\le -c \Psitwom(u,v)^n +C \Psionem(u,v)^{nq}+C.
\end{align*}
Concerning the last term on the right--hand side of \eqref{eqn:Ito:L^m.(Psi_2m)^n}, we invoke the fact that $\Tr(QAQ^*)<\infty$ to see that
\begin{align*}
\sum_{k\ge 1}|\la  2mv+u, Qe_k\ra_{H^1}|^2 \le \Tr(QAQ^*) \|2mv+u\|^2_{H^1}\le c \Psitwom(u,v).
\end{align*}
whence
\begin{align*}
\tfrac{1}{2} n(n-1)& \Psitwom(u,v)^{n-2}\sum_{k\ge 1}|\la  2mv+u, Qe_k\ra_{H^1}|^2\le C\Psitwom(u,v)^{n-1}.
\end{align*}
Combining the above with~\eqref{eqn:Ito:L^m.(Psi_2m)^n}, we deduce 
\begin{align*}
\L^m\Psitwom(u,v)^n& \le -c \Psitwom(u,v)^n+C\Psitwom(u,v)^{n-1} +C \Psionem(u,v)^{nq}+C\\
&\le -c \Psitwom(u,v)^n+C \Psionem(u,v)^{nq}+C.
\end{align*}
As a consequence, from~\eqref{ineq:moment-bound:H:n-moment}, we arrive at
\begin{align*}
&\E\Psitwom(\um(t),\vm(t))^n\\
&\le e^{-ct}\Psitwom(u_0,v_0)^n+C+C\int_0^t e^{-c(t-r)}\E\Psionem(\um(r),\vm(r))^{qn}\d r\\
&\le e^{-ct}\Psitwom(u_0,v_0)^n+C+C\int_0^t e^{-c(t-r)}\Big[C_{1,qn}e^{-c_{1,qn}\,r}\big[\Psionem(u_0,v_0)+2m\|\Phi_1(u)\|_{L^1}\big]^{qn}+C_{1,qn}\Big]\d r\\
&\le Ce^{-ct}\Big(\Psitwom(u_0,v_0)+ \big[\Psionem(u_0,v_0)+2m\|\Phi_1(u_0)\|_{L^1}\big]^{qn}\Big)+C,
\end{align*}
which holds for some positive constants $C,\,c$ and $q$ independent of $m$. This produces~\eqref{ineq:moment-bound:H^2:n-moment} for the general case $n\ge 2$, thereby completing part 1.

2. Turning to~\eqref{ineq:moment-bound:H^2:sup_[0,t]}, from~\eqref{ineq:L^m.Psi_2m:lambda<2} together with It\^o's formula, we have
\begin{align*}
&\d\Psitwom(\um(t),\vm(t))\\
&= \L^m\Psitwom(\um(t),\vm(t))\d t + \la 2m\vm(t)+\um(t),Q\d w(t)\ra_{H^1}\\
&\le \big[\Psionem(\um(t),\vm(t))+2m\|\Phi_1(\um(t))\|_{L^1}  \big]^{q_{2,1}}\d t+\la 2m\vm(t)+\um(t),Q\d w(t)\ra_{H^1}+C.
\end{align*}
Integrating both sides with respect to time gives
\begin{align*}
\Psitwom(\um(t),\vm(t))&\le \Psitwom(u_0,v_0)+\int_0^t\big[\Psionem(\um(r),\vm(r))+2m\|\Phi_1(\um(r))\|_{L^1}  \big]^{q_{2,1}}\d r\\
&\qquad+\int_0^t\la 2m\vm(r)+\um(r),Q\d w(r)\ra_{H^1}+Ct.
\end{align*}
By Burkholder's inequality,
\begin{align*}
\E\sup_{r\in[0,t]}\Big|\int_0^r\la 2m\vm(s)+\um(s),Q\d w(s)\ra_{H^1}\Big|^2&\le  \E\int_0^t \|Q(2m\vm(r)+\um(r))\|^2_{H^1}\d r\\
&\le c\E\int_0^t m^2\|\vm(r)\|^2_{H^1}+\|\um(r)\|^2_{H^1}\d r\\
&\le c\int_0^t \E\Psitwom(\um(r),\vm(r))\d r.
\end{align*}
In view of~\eqref{ineq:moment-bound:H:n-moment} applying to $q_{2,1}$, we see that
\begin{align*}
&\int_0^t\E\big[\Psionem(\um(r),\vm(r))+2m\|\Phi_1(\um(r))\|_{L^1}  \big]^{q_{2,1}}\d r \\
&\le C \big[\Psionem(u_0,v_0)+2m\|\Phi_1(u_0)\|_{L^1}  \big]^{q_{2,1}}+Ct.
\end{align*}
Altogether, we obtain
\begin{align*}
\E\sup_{r\in[0,t]} \Psitwom(\um(r),\vm(r)) &\le \Psitwom(u_0,v_0)+C \big[\Psionem(u_0,v_0)+2m\|\Phi_1(u_0)\|_{L^1}  \big]^{q_{2,1}}+Ct+C\\
&\qquad +C\int_0^t \E\Psitwom(\um(r),\vm(r))\d r.
\end{align*}
This establish~\eqref{ineq:moment-bound:H^2:sup_[0,t]} by virtue of Gronwall's inequality, thereby finishing the proof.

\end{proof}

Having obtained moment bound in $\Hcal^2$, we turn to the sup norm in $H^1$ for the process $\um(t)$ with respect to random initial conditions.

\begin{lemma}  \label{lem:moment-bound:H^2:Psi_2m:sup_[0,T]:random-initial-cond}
Let $(u_0,v_0)$ be random variable in $L^2(\Omega;\Hcal^2)$ and $(u^m(t),v^m(t))$ be the solution of~\eqref{eqn:wave} with initial condition $(u_0,v_0)$. Then, the following holds for all $p\ge 1$, $\beta>0$ sufficiently small and $T> 0$:
\begin{align} 
&\E\sup_{t\in[0,T]}\|\um(t)\|^{2p}_{H^1}    \nt \\
&\le C\Big(\E \Psitwom(u_0,v_0)^p+ \E\exp\Big\{\beta \big[\Psionem(u_0,v_0) +2m\|\Phi_1(u_0)\|_{L^1}\big]  \Big\}+1\Big),\label{ineq:moment-bound:|u|_(H^1):sup_[0,T]:random-initial-cond}
\end{align}
for some positive constant $C=C(T,p,\beta)$ independent of $m$ and $(u_0,v_0)$.
\end{lemma}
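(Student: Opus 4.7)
The plan is to first reduce the claim to a sup–bound on $\Psitwom(\um(t),\vm(t))^{p}$ and then repeat the Ito–based argument of Lemma~\ref{lem:moment-bound:H^2}, but at the level of the $p$--th power and with random (rather than deterministic) initial data, paying for the random initial data with an exponential moment. Observe first that using Young's inequality on the cross term $m\la A^{1/2}u,A^{1/2}v\ra_H$ in~\eqref{form:Psi_2m} gives, for all $m\in(0,1)$,
\[
\tfrac{1}{4}\|A^{1/2}u\|_H^{2}\;\le\;\Psitwom(u,v),
\]
so that $\|u\|_{H^1}^{2p}\le 4^{p}\,\Psitwom(u,v)^{p}$. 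It is therefore enough to bound $\E\sup_{t\in[0,T]}\Psitwom(\um(t),\vm(t))^{p}$ by the right–hand side of~\eqref{ineq:moment-bound:|u|_(H^1):sup_[0,T]:random-initial-cond}.

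Next, I would apply It\^o's formula to $\Psitwom(\um(t),\vm(t))^{p}$. The computation of $\L^m\Psitwom^{p}$ was essentially carried out inside the proof of Lemma~\ref{lem:moment-bound:H^2} with the exponent $n$ and crucially used $\lambda\in[1,2)$ via the bound~\eqref{ineq:phi'.grad.u.grad.v:lambda<2}. Repeating that chain of estimates (with $n$ replaced by $p$, writing $p$ as a real number and noting that the same Young's inequality manipulations go through) yields
\[
\L^m \Psitwom(u,v)^{p}\;\le\;-c\,\Psitwom(u,v)^{p}+C\bigl(\Psionem(u,v)+2m\|\Phi_1(u)\|_{L^1}\bigr)^{qp}+C,
\]
for positive constants $c,C,q$ independent of $m$. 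Consequently,
\[
\Psitwom(\um(t),\vm(t))^{p}\;\le\;\Psitwom(u_0,v_0)^{p}+C\!\int_{0}^{t}\bigl(\Psionem(\um(r),\vm(r))+2m\|\Phi_1(\um(r))\|_{L^1}\bigr)^{qp}\d r+Ct+M(t),
\]
where $M(t)=\int_{0}^{t} p\,\Psitwom(\um,\vm)^{p-1}\la 2m\vm(r)+\um(r),Q\d w(r)\ra_{H^1}$.

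I would then take $\sup_{t\in[0,T]}$ and expectation. For the martingale piece, Burkholder's inequality combined with $\Tr(QAQ^*)<\infty$ gives
\[
\E\sup_{t\in[0,T]}|M(t)|\;\le\;C\,\E\Bigl(\int_{0}^{T}\Psitwom(\um,\vm)^{\,2p-1}\d r\Bigr)^{1/2}\;\le\;\tfrac{1}{2}\E\sup_{t\in[0,T]}\Psitwom(\um(t),\vm(t))^{p}+C(T,p),
\]
after an application of Young's inequality; the first term can then be absorbed into the left–hand side. For the source term $(\Psionem+2m\|\Phi_1\|_{L^1})^{qp}$, I would use the elementary inequality $x^{qp}\le C_{\beta,p}\,e^{\beta x/2}$ valid for all $x\ge 0$ with some $C_{\beta,p}>0$; this gives
\[
\int_{0}^{T}\bigl(\Psionem(\um(r),\vm(r))+2m\|\Phi_1(\um(r))\|_{L^1}\bigr)^{qp}\d r\;\le\;C_{\beta,p}\,T\sup_{t\in[0,T]}\exp\!\Bigl\{\tfrac{\beta}{2}\bigl[\Psionem(\um(t),\vm(t))+2m\|\Phi_1(\um(t))\|_{L^1}\bigr]\Bigr\}.
\]
Taking expectation and invoking Lemma~\ref{lem:moment-bound:H:random-initial-cond}, part 2 (for $\beta$ sufficiently small), bounds this by $C(T,p,\beta)\,\E\exp\{\beta[\Psionem(u_0,v_0)+2m\|\Phi_1(u_0)\|_{L^1}]\}$. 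Collecting everything produces~\eqref{ineq:moment-bound:|u|_(H^1):sup_[0,T]:random-initial-cond}.

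The main obstacle will be the source term $(\Psionem+2m\|\Phi_1\|_{L^1})^{qp}$ that appears in the dissipative bound on $\L^m\Psitwom^{p}$. Controlling its time–integrated sup uniformly in $m$ is precisely what forces the exponential moment of $\Psionem(u_0,v_0)+2m\|\Phi_1(u_0)\|_{L^1}$ on the initial data, rather than a mere polynomial moment; this is the reason Lemma~\ref{lem:moment-bound:H:random-initial-cond}, part 2, is needed as an input, and it is also the place where the restriction $\lambda\in[1,2)$ (inherited from~\eqref{ineq:phi'.grad.u.grad.v:lambda<2} in the proof of Lemma~\ref{lem:moment-bound:H^2}) is used essentially.
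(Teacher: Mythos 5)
Your proposal is correct, but it takes a genuinely different route from the paper. The paper's own proof \emph{subtracts the stochastic convolution}: it sets $\ubar(t)=\um(t)-\Gammau(t)$, $\vbar(t)=\vm(t)-\Gammav(t)$, notes that $(\ubar,\vbar)$ solves the pathwise random equation~\eqref{eqn:wave:u^m-Gamma^m} with no noise term, and then derives a pathwise Gronwall estimate for $\Psitwom(\ubar(t),\vbar(t))^p$ using only the classical chain rule. This eliminates the martingale entirely (no Burkholder, no absorption) and sidesteps the fact that $g\mapsto g^p$ is not $C^2$ at $g=0$ for $1\le p<2$. The price is that $\Gammau$ now enters the nonlinear terms $\la\f'(\um)\grad\um,\grad\ubar\ra_H$ and $m\la\f'(\um)\grad\um,\grad\vbar\ra_H$, which are handled through the H\"older/Sobolev manipulations of~\eqref{ineq:<phi'.grad(u^m),grad(u^m-Gamma^m)>} together with the sup-in-time bound $\E\sup_{[0,T]}\|\Gammau\|_{H^2}^q<\infty$ from Lemma~\ref{lem:irreducibility:Gamma^m(t)}. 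Your route — It\^o directly on $\Psitwom(\um,\vm)^p$, reuse of the $\L^m$ bound~\eqref{ineq:L^m.Psi_2m:lambda<2}, Burkholder plus Young to absorb the martingale, and the exponential estimate~\eqref{ineq:moment-bound:H:exponential:random-initial-cond:sup_[0,T]} for the source — is shorter and avoids $\Gammau$ altogether. Two technicalities you should flag, since avoiding them is arguably why the paper chose its route: (i) absorbing $\tfrac12\E\sup_{[0,T]}\Psitwom^p$ into the left-hand side presupposes a priori finiteness, so one should localize at the stopping time $\tau_R=\inf\{t:\Psitwom(\um(t),\vm(t))>R\}$, run the argument up to $T\wedge\tau_R$, and send $R\to\infty$ by Fatou; and (ii) for non-integer $p\in[1,2)$ one should either apply It\^o to $(\Psitwom+\varepsilon)^p$ and let $\varepsilon\to 0$, or simply first establish the bound for the integer $\lceil p\rceil$, which dominates. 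Both are routine, and with them in place your argument is sound.
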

\begin{proof}
Let $\Gamma^m(t)$ be the stochastic convolution as in~\eqref{eqn:wave:linear}. Denoting 
\begin{align*}
\ubar(t)=\um(t)-\Gammau(t) \quad \text{and}\quad \vbar(t)=\vm(t)-\Gammav(t),
\end{align*}
observe that $(\ubar(t),\vbar(t))$ solves the equation
\begin{align}
\ddt \ubar(t) &=\vbar(t),\nt \\
m\,\ddt\vbar(t)&=-A\ubar(t)-\vbar(t)+\f(\um(t)),\label{eqn:wave:u^m-Gamma^m}\\
(\ubar(0),\vbar(0))&=(u_0,v_0).\nt
\end{align}
Recalling $\Psitwom$ as in~\eqref{form:Psi_2m}, a calculation from~\eqref{eqn:wave:u^m-Gamma^m} gives
\begin{align}
\ddt \Psitwom(\ubar(t),\vbar(t)) &= -\|\ubar(t)\|^2_{H^2}-m\|\vbar(t)\|^2_{H^1}+2m\la \f'(\um(t))\grad \um(t),\grad \vbar(t)\ra_{H} \nt \\
&\qquad +\la  \f'(\um(t))\grad \um(t),\grad \ubar(t)\ra_{H}. \label{eqn:d.Psi_2m:u^m-Gamma^m}
\end{align}
Concerning the last term on the right--hand side of~\eqref{eqn:d.Psi_2m:u^m-Gamma^m}, we invoke~\eqref{cond:phi:phi'=O(x^(lambda-1))} and~\eqref{cond:phi:sup.phi'<a_f} to estimate
\begin{align*}
&\la  \f'(\um(t))\grad \um(t),\grad \ubar(t)\ra_{H}\\
&= \la  \f'(\um(t))\grad \ubar(t),\grad \ubar(t)\ra_{H}+\la  \f'(\um(t))\grad \Gammau(t),\grad \ubar(t)\ra_{H}\\
&\le a_\f \|\ubar(t)\|^2_{H^1}+ a_4\la (|\um(t)|^{\lambda-1}+1)|\grad \Gammau(t)|,|\grad \ubar(t)|\ra_H\\
&\le c \|\ubar(t)\|^2_{H^1}+c\|\Gammau(t)\|^2_{H^1}+c\la |\um(t)|^{\lambda-1}|\grad \Gammau(t)|,|\grad \ubar(t)|\ra_H.
\end{align*}
To further bound the last term on the above right--hand side, we employ Holder's inequality and Sobolev embedding (recalling $\lambda<2$)
\begin{align*}
\la |\um(t)|^{\lambda-1}|\grad \Gammau(t)|,|\grad \ubar(t)|\ra_H &\le c \|\um(t)\|^{\lambda-1}_{H^1}\|\Gammau(t)\|_{H^2}\|\ubar(t)\|_{H^1}\\
&\le c \|\ubar(t)\|^\lambda_{H^1}\|\Gammau(t)\|_{H^2}+c\|\ubar(t)\|_{H^1}\|\Gammau(t)\|^\lambda_{H^2}\\
&\le c \|\ubar(t)\|^2_{H^1}+ c\|\Gammau(t)\|_{H^2}^{2\lambda}+c\|\Gammau(t)\|_{H^2}^{\frac{2}{2-\lambda}}.
\end{align*}
It follows that
\begin{align} 
&\la  \f'(\um(t))\grad \um(t),\grad \ubar(t)\ra_{H} \nt \\
&\le c \|\ubar(t)\|^2_{H^1}+c\|\Gammau(t)\|^2_{H^1}+c\|\Gammau(t)\|_{H^2}^{2\lambda}+c\|\Gammau(t)\|_{H^2}^{\frac{2}{2-\lambda}}.\label{ineq:<phi'.grad(u^m),grad(u^m-Gamma^m)>}
\end{align}
To estimate the cross term between $\um(t)$ and $\vbar(t)$ on the right--hand side of~\eqref{eqn:d.Psi_2m:u^m-Gamma^m}, we employ an argument similarly to~\eqref{ineq:phi'.grad.u.grad.v:lambda<2} and obtain
\begin{align*}
&m\la \f'(\um(t))\grad \um(t),\grad \vbar(t)\ra_H\\
 &\le  \tfrac{1}{4}\|\um(t)\|_{H^2}^2+\tfrac{1}{2}m\|\vbar(t)\|_{H^2}^2+\big(\|\um(t)\|^{2}_H+m\|\um(t)\|_{H^1}^{2} \big)^{q}+C\\
 &\le \tfrac{1}{2}\|\ubar(t)\|_{H^2}^2+\tfrac{1}{2}m\|\vbar(t)\|_{H^2}^2+C\big[\Psionem(\um(t),\vm(t))+2m\|\Phi_1(\um(t))\|_{L^1} \big]^{q}+\tfrac{1}{2}\|\Gammau(t)\|_{H^2}^2+C.
\end{align*}
We now collect everything together with the identity~\eqref{eqn:d.Psi_2m:u^m-Gamma^m} to deduce
\begin{align*}
\ddt \Psitwom(\ubar(t),\vbar(t)) & \le C \Psitwom(\ubar(t),\vbar(t)) +C\big[\Psionem(\um(t),\vm(t)) +2m\|\Phi_1(\um(t))\|_{L^1}\big]^{q}\\
&\qquad+C\|\Gammau(t)\|_{H^2}^{2}+ C\|\Gammau(t)\|_{H^2}^{2\lambda}+C\|\Gammau(t)\|_{H^2}^{\frac{2}{2-\lambda}}.
\end{align*}
As a consequence, for all $p\ge1$, we infer a positive constant $q=q(p,\lambda)$ large enough such that
\begin{align*}
\ddt \Psitwom(\ubar(t),\vbar(t))^p &\le C \Psitwom(\ubar(t),\vbar(t))^p+C\big[\Psionem(\um(t),\vm(t)) +2m\|\Phi_1(\um(t))\|_{L^1}\big]^{q}\\
&\qquad+C\|\Gammau(t)\|_{H^2}^{q}+C.
\end{align*}
By virtue of Gronwall's inequality, we arrive at the suppremum bound for $T>0$
\begin{align*}
&\E\sup_{t\in[0,T]} \Psitwom(\ubar(t),\vbar(t))^p \\
& \le C\E \Psitwom(u_0,v_0)^p + C\E\sup_{t\in[0,T]}\big[\Psionem(\um(t),\vm(t)) +2m\|\Phi_1(\um(t))\|_{L^1}\big]^{q}\\
&\qquad+C\E\sup_{t\in [0,T]}\|\Gammau(t)\|_{H^2}^{q}+C,
\end{align*}
that holds for some positive constant $C=C(T)$ independent of $m$ and $(u_0,v_0)$. It follows that
\begin{align*}
&\E\sup_{t\in[0,T]} \|\um(t)\|^{2p}_{H^1} \\
& \le C\E \Psitwom(u_0,v_0)^p + C\E\sup_{t\in[0,T]}\big[\Psionem(\um(t),\vm(t)) +2m\|\Phi_1(\um(t))\|_{L^1}\big]^{q}\\
&\qquad+C\E\sup_{t\in [0,T]}\|\Gammau(t)\|_{H^2}^{q}+C.
\end{align*}
In view of~\eqref{ineq:moment-bound:H:exponential:random-initial-cond:sup_[0,T]} and~\eqref{ineq:moment-bound:Gamma^m(t):H^2:sup_[0,T]}, we deduce for $\beta$ sufficiently small
\begin{align*}
&\E\sup_{t\in[0,T]} \|\um(t)\|^{2p}_{H^1} \\
&\le C\E \Psitwom(u_0,v_0)^p+ C\E\exp\Big\{\beta \big[\Psionem(u_0,v_0) +2m\|\Phi_1(u_0)\|_{L^1}\big]  \Big\}+C.
\end{align*}
This produces~\eqref{ineq:moment-bound:|u|_(H^1):sup_[0,T]:random-initial-cond}, thereby finishing the proof.

\end{proof}

As a consequence of Lemma~\ref{lem:moment-bound:H^2:Psi_2m:sup_[0,T]:random-initial-cond}, we assert a moment bound on $\um(t)$ in the sup norm in $H^2$. Ultimately, Lemma~\ref{lem:moment-bound:H^2:|Au|^2:sup_[0,T]:random-initial-cond} will be exploited to prove the small mass limits in Section~\ref{sec:small-mass}.

\begin{lemma}  \label{lem:moment-bound:H^2:|Au|^2:sup_[0,T]:random-initial-cond}
Let $(u_0,v_0)$ be random variable in $L^2(\Omega;\Hcal^2)$ and $(u^m(t),v^m(t))$ be the solution of~\eqref{eqn:wave} with initial condition $(u_0,v_0)$. Then, there exist positive constants $p>0$ such that the following holds for all $\beta$ sufficiently small and $T> 0$:
\begin{align} 
\E\sup_{t\in[0,T]}\|\um(t)\|^2_{H^2}&\le C\Big(\E \big[\|u_0\|^2_{H^2}+m\|v_0\|^2_{H^1}\big]+\E\Psitwom(u_0,v_0)^p \nt   \\
&\qquad+\E\exp\Big\{\beta \big[\Psionem(u_0,v_0) +2m\|\Phi_1(u_0)\|_{L^1}\big]  \Big\}+1\Big),\label{ineq:moment-bound:H^2:|Au|^2:sup_[0,T]:random-initial-cond}
\end{align}
for some positive constant $C=C(T,p,\beta)$ independent of $m$ and $(u_0,v_0)$.
\end{lemma}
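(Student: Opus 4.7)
The plan is to bootstrap from the $H^1$-sup bound of Lemma~\ref{lem:moment-bound:H^2:Psi_2m:sup_[0,T]:random-initial-cond} to an $H^2$-sup bound by a pathwise energy estimate on the shifted process $(\ubar,\vbar) := (\um - \Gammau, \vm - \Gammav)$, which solves the \emph{deterministic} system~\eqref{eqn:wave:u^m-Gamma^m}. Since $\|\um\|_{H^2}^2 \le 2\|A\ubar\|_H^2 + 2\|\Gammau\|_{H^2}^2$, and the stochastic convolution piece $\E\sup_{[0,T]}\|\Gammau(t)\|_{H^2}^q$ is finite uniformly in $m$ for every $q\ge 1$ by the appendix estimates (exploiting $\Tr(QA^3 Q^*)<\infty$), it suffices to bound $\E\sup_{[0,T]}\|A\ubar\|_H^2$.

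For this, I would introduce the second-order analogue of $\Psitwom$,
\begin{align*}
F(t) := \tfrac{1}{2}\|A\ubar(t)\|_H^2 + \tfrac{m}{2}\|A^{1/2}\vbar(t)\|_H^2 + \epsilon\,m\la A\ubar(t),\vbar(t)\ra_H,
\end{align*}
equivalent to $\|A\ubar\|_H^2 + m\|A^{1/2}\vbar\|_H^2$ for $\epsilon$ sufficiently small. Along~\eqref{eqn:wave:u^m-Gamma^m}, the standard wave-type cancellation annihilates the cross term $\la A\ubar, A\vbar\ra_H$ coming from the first two summands, while $\epsilon\,m\ddt\la A\ubar,\vbar\ra_H$ supplies the missing $-\epsilon\|A\ubar\|_H^2$ dissipation, mirroring the computation behind~\eqref{ineq:L^m.Psi_2m:lambda<2}. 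Applying Cauchy--Schwarz to $\la A\vbar,\f(\um)\ra_H \le \tfrac{1}{4}\|A^{1/2}\vbar\|_H^2 + C\|\f(\um)\|_{H^1}^2$, the outcome is
\begin{align*}
\ddt F(t) \le -c\|A\ubar(t)\|_H^2 - c\|A^{1/2}\vbar(t)\|_H^2 + C\|\f(\um(t))\|_{H^1}^2,
\end{align*}
for $m,\epsilon$ small enough.

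The crux is the $H^1$ bound on $\f(\um)$. Combining~\eqref{cond:phi:phi(x)=O(x^lambda)}--\eqref{cond:phi:phi'=O(x^(lambda-1))} with the Sobolev embedding $H^2 \hookrightarrow L^\infty$ (valid in $d=2,3$) yields
\begin{align*}
\|\f(\um)\|_{H^1}^2 \le c\big(1 + \|\um\|_{H^1}^{2\lambda}\big) + c\|\um\|_{H^2}^{2(\lambda-1)}\|\um\|_{H^1}^2.
\end{align*}
Since $\lambda<2$, Young's inequality converts the last product into $\delta\|\um\|_{H^2}^2 + C_\delta\|\um\|_{H^1}^{2/(2-\lambda)}$; splitting $\|\um\|_{H^2}^2 \le 2\|A\ubar\|_H^2 + 2\|\Gammau\|_{H^2}^2$ and choosing $\delta\ll c$, the $\delta\|A\ubar\|_H^2$ contribution is absorbed by the $-c\|A\ubar\|_H^2$ on the left of the energy inequality. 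This absorption is the principal technical obstacle, and it is exactly here that $\lambda\in[1,2)$ is essential --- any exponent $\lambda\ge 2$ would break the Young-inequality argument and leave an uncontrolled $H^2$ feedback, reproducing the obstruction highlighted in the introduction.

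Integrating in time, taking suprema, and using $F(0) \le C(\|u_0\|_{H^2}^2 + m\|v_0\|_{H^1}^2)$ produces a pathwise bound
\begin{align*}
\sup_{t\in[0,T]}\|A\ubar(t)\|_H^2 \le F(0) + C\int_0^T \big(1 + \|\um(s)\|_{H^1}^{q_*}\big)\d s + C\sup_{t\in[0,T]}\|\Gammau(t)\|_{H^2}^2,
\end{align*}
with $q_* := \max(2\lambda, 2/(2-\lambda))$. Taking expectations, and invoking Lemma~\ref{lem:moment-bound:H^2:Psi_2m:sup_[0,T]:random-initial-cond} (whose right-hand side delivers the $\E\Psitwom(u_0,v_0)^p$ and exponential-moment contributions in~\eqref{ineq:moment-bound:H^2:|Au|^2:sup_[0,T]:random-initial-cond}) together with the uniform-in-$m$ stochastic-convolution bound on $\Gammau$, closes the argument.
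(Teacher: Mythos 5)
Your proposal is correct and follows essentially the same route as the paper: shift by the stochastic convolution, do a pathwise energy estimate on $(\ubar,\vbar)$ solving~\eqref{eqn:wave:u^m-Gamma^m}, use $H^2\hookrightarrow L^\infty$ together with Young's inequality (where $\lambda<2$ is decisive) to tame $\|\f'(\um)\grad\um\|_H^2$, split $\|\um\|_{H^2}^2\lesssim\|\ubar\|_{H^2}^2+\|\Gammau\|_{H^2}^2$, and close via Gronwall, Lemma~\ref{lem:moment-bound:H^2:Psi_2m:sup_[0,T]:random-initial-cond}, and the stochastic-convolution bound~\eqref{ineq:moment-bound:Gamma^m(t):H^2:sup_[0,T]}.

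The one genuine divergence is your choice of Lyapunov functional. You introduce the cross-term-modified
$F=\tfrac12\|A\ubar\|_H^2+\tfrac m2\|A^{1/2}\vbar\|_H^2+\epsilon m\la A\ubar,\vbar\ra_H$
so as to manufacture a sign-definite dissipation $-\epsilon\|A\ubar\|_H^2$ into which you absorb the Young-inequality remainder $\delta\|A\ubar\|_H^2$, and you describe this absorption as the principal technical obstacle. The paper does not do this: it uses the bare $\|\ubar\|_{H^2}^2+m\|\vbar\|_{H^1}^1$ (no $\epsilon$-cross term), accepts a \emph{positive} coefficient $c\|\ubar\|_{H^2}^2$ on the right-hand side, and relies on Gronwall over the finite horizon $[0,T]$ — which is all the statement requires, since $C=C(T)$ may grow in $T$. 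Sign-definiteness is unnecessary here, and the $\epsilon$-modified functional introduces extra bookkeeping (the new terms $\epsilon m\|A^{1/2}\vbar\|_H^2$, $\epsilon\la A\ubar,\vbar\ra_H$, $\epsilon\la A\ubar,\f(\um)\ra_H$ from differentiating the cross term, each of which must be reabsorbed). Your version is a valid alternative but is strictly more work; the essential place where $\lambda\in[1,2)$ enters is the Young step producing $\delta\|\um\|_{H^2}^2 + C_\delta\|\um\|_{H^1}^{2/(2-\lambda)}$, which you do correctly identify, not the downstream absorption.
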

\begin{proof}
Similarly to the proof of Lemma~\ref{lem:moment-bound:H^2:Psi_2m:sup_[0,T]:random-initial-cond}, we consider the stochastic convolution $\Gamma^m(t)$ as in~\eqref{eqn:wave:linear} and the difference
\begin{align*}
\ubar(t)=\um(t)-\Gammau(t) \quad \text{and}\quad \vbar(t)=\vm(t)-\Gammav(t).
\end{align*}
From~\eqref{eqn:wave:u^m-Gamma^m}, we employ~\eqref{cond:phi:phi'=O(x^(lambda-1))} making use of Cauchy--Schwarz inequality to see that
\begin{align*}
\ddt \big(\|\ubar(t)\|^2_{H^2}+m\|\vbar(t)\|^2_{H^1}\big)& = -2\|\vbar(t)\|^2_{H^1}+2\la \f'(\um(t))\grad \um(t),\grad \vbar(t)\ra_{H} \\
&\le \| \f'(\um(t))\grad \um(t)\|^2_H\\
&\le c\|\um(t)\|^{2(\lambda-1)}_{L^\infty} \|\um(t)\|^2_{H^1}+c\|\um(t)\|^2_{H^1}.
\end{align*}
Since $H^2\hookrightarrow L^{\infty}$ in dimension $d\le 3$, we have (recalling $\lambda<2$)
\begin{align*}
\|\um(t)\|^{2(\lambda-1)}_{L^\infty} \|\um(t)\|^2_{H^1} &\le c\|\um(t)\|^{2(\lambda-1)}_{H^2} \|\um(t)\|^2_{H^1}\\
&\le c\|\um(t)\|^2_{H^2}+c\|\um(t)\|^{\frac{2}{2-\lambda}}_{H^1}\\
&\le c\|\ubar(t)\|^2_{H^2}+c\|\Gammau(t)\|^2_{H^2} +c\|\um(t)\|^{\frac{2}{2-\lambda}}_{H^1}.
\end{align*}
It follows that
\begin{align*}
\ddt \big(\|\ubar(t)\|^2_{H^2}+m\|\vbar(t)\|^2_{H^1}\big)& \le c\|\ubar(t)\|^2_{H^2}+c\|\Gammau(t)\|^2_{H^2} +c\|\um(t)\|^{\frac{2}{2-\lambda}}_{H^1}+c\|\um(t)\|^2_{H^1},
\end{align*}
whence
\begin{align*}
\ddt \big(\|\ubar(t)\|^2_{H^2}+m\|\vbar(t)\|^2_{H^1}\big) &\le c \big(\|\ubar(t)\|^2_{H^2}+m\|\vbar(t)\|^2_{H^1}\big) +c\|\Gammau(t)\|^2_{H^2} +c\|\um(t)\|^{2p}_{H^1}+c,
\end{align*}
where $p$ and $c$ are positive constants independent of $m$ and $(u_0,v_0)$.
Integrating the above inequality with respect to time, we obtain
\begin{align*}
&\E\sup_{t\in[0,T]}\big[\|\ubar(t)\|^2_{H^2}+m\|\vbar(t)\|^2_{H^1}\big]\\
&\le  \E \big[\|u_0\|^2_{H^2}+m\|v_0\|^2_{H^1}\big]+C\,\E\sup_{t\in[0,T]}\|\Gammau(t)\|^2_{H^2}+C\,\E\sup_{t\in[0,T]}\|\um(t)\|^{2p}_{H^1}+C,
\end{align*}
implying
\begin{align*}
&\E\sup_{t\in[0,T]}\|\um(t)\|^2_{H^2}\\
&\le 2 \E \big[\|u_0\|^2_{H^2}+m\|v_0\|^2_{H^1}\big]+C\,\E\sup_{t\in[0,T]}\|\Gammau(t)\|^2_{H^2}+C\,\E\sup_{t\in[0,T]}\|\um(t)\|^{2p}_{H^1}+C.
\end{align*}
In view of~\eqref{ineq:moment-bound:|u|_(H^1):sup_[0,T]:random-initial-cond} and~\eqref{ineq:moment-bound:Gamma^m(t):H^2:sup_[0,T]}, we infer $C=C(p,T)$ independent of $m$ and $(u_0,v_0)$ such that
\begin{align*}
\E\sup_{t\in[0,T]}\|\ubar(t)\|^2_{H^2}&\le C\Big(\E \big[\|u_0\|^2_{H^2}+m\|v_0\|^2_{H^1}\big]+\E\Psitwom(u_0,v_0)^p\\
&\qquad+\E\exp\Big\{\beta \big[\Psionem(u_0,v_0) +2m\|\Phi_1(u_0)\|_{L^1}\big]  \Big\}+1\Big).
\end{align*}
This produces~\eqref{ineq:moment-bound:H^2:|Au|^2:sup_[0,T]:random-initial-cond}, thereby finishing the proof.

\end{proof}


\section{Uniform Geometric ergodicity of~\eqref{eqn:wave}} \label{sec:geometric-ergodicity}

In this section, we prove Theorem~\ref{thm:geometric-ergodicity:mass} by establishing the uniqueness of invariant measure $\num$ as well as the uniform exponential
convergent rate of the Markov semigroup $P^m_t$ towards $\num$. In Section~\ref{sec:geometric-ergodicity:irreducibility}, we assert the irreducibility condition stating that the solution $(\um(t),\vm(t))$ is always able to visit any small neighborhood of the origin. In Section~\ref{sec:geometric-ergodicity:asymptotic-Feller}, we prove asymptotic strong Feller property, which is a large time smoothing effect of $P^m_t$. In Section~\ref{sec:geometric-ergodicity:proof}, we invoke irreducibility and asymptotic strong Feller respectively to prove that bounded sets are $\dmnb-$small and that $P^m_t$ is $\dmnb-$contracting; see Proposition~\ref{prop:contracting-d-small} for the precise statements. Altogether, we will conclude Theorem~\ref{thm:geometric-ergodicity:mass} making use of these ingredients.

\subsection{Irreducibility} \label{sec:geometric-ergodicity:irreducibility}

We start with the irreducibility property:

\begin{proposition} \label{prop:irreducibility}
Given $(u_0,v_0)\in\Hcal^1$, let $(\um(t),\vm(t))$ be the solution of~\eqref{eqn:wave}. Then, for all $r,\,R>0$, there exists $T_1$ sufficiently large independent of $m$ such that for all $t>T_1$,
\begin{align}\label{ineq:irreducibility}
\inf_{(u_0,v_0)\in B^m_R}\P\Big(V_m(\um(t),\vm(t))<r \Big) \ge c>0,
\end{align}
for some positive constant $c=c(R,r,t)$ independent of $m$. In the above, $V_m$ is the function as in~\eqref{form:V_m} and
\begin{align*}
B^m_R=\{(u_0,v_0)\in\Hcal^1: V_m(u_0,v_0)<R\}.
\end{align*} 

\end{proposition}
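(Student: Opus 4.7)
The plan is to split the solution into its stochastic-linear part and a deterministic-like remainder, then combine a Gaussian small-ball estimate for the stochastic convolution with a decay argument that exploits the spectral-gap condition \eqref{cond:phi:ergodicity}. Write $u^m = \pi_1 \Gamma^m + \bar u^m$, $v^m = \pi_2 \Gamma^m + \bar v^m$, where $\Gamma^m$ denotes the stochastic convolution solving the linearized equation treated in Appendix \ref{sec:stochastic-convolution}, and the remainder $(\bar u^m,\bar v^m)$ solves the (pathwise) PDE
$$\partial_t \bar u^m = \bar v^m, \qquad m\,\partial_t \bar v^m = -A\bar u^m - \bar v^m + \f(u^m),$$
with initial data $(u_0,v_0)$.

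First, using that $\Gamma^m$ is a centered Gaussian process in $\Hcal^2$ with covariance operator whose trace is bounded uniformly in small $m$ (thanks to \eqref{cond:Q:Tr(QA^3Q)} and the estimates in Appendix \ref{sec:stochastic-convolution}), I would establish a small-ball estimate: for every $\delta, T > 0$ there is $c_1 = c_1(\delta,T) > 0$, independent of $m$, with
$$\P\Big(\sup_{t\in[0,T]}\|\Gamma^m(t)\|_{\Hcal^2} < \delta\Big) \ge c_1.$$
On this event, the forcing $\f(u^m) = \f(\bar u^m + \pi_1\Gamma^m)$ is an $O(\delta)$-perturbation of $\f(\bar u^m)$ in $H$, using the polynomial growth \eqref{cond:phi:phi'=O(x^(lambda-1))} together with the uniform $L^\infty$ bounds on $u^m$ provided by Lemma \ref{lem:moment-bound:H^2:|Au|^2:sup_[0,T]:random-initial-cond}.

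Next, I would decompose the remainder along the projector $P_{\nbar}$ into low and high modes, and analyze each component with a tailored energy estimate. For the high modes $(I-P_{\nbar})(\bar u^m,\bar v^m)$, the strict spectral gap $\alpha_{\nbar} > a_\f$ from \eqref{cond:phi:ergodicity}, combined with $\sup \f' \le a_\f$ from \eqref{cond:phi:sup.phi'<a_f}, yields an exponential decay at rate independent of $m$, obtained by applying the Lyapunov computation of Lemma \ref{lem:moment-bound:H} to the $(I-P_{\nbar})$-projection of $\Psionem$ (the offending term $\la\f(u^m),(I-P_{\nbar})\bar u^m\ra_H$ is absorbed into $-\|A^{1/2}(I-P_{\nbar})\bar u^m\|_H^2$ by the gap). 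For the finite-dimensional low modes $P_{\nbar}(\bar u^m,\bar v^m)$, the noise-coercivity \eqref{cond:Q:ergodicity} makes the driving Brownian motion non-degenerate in every direction of $P_{\nbar}H$, so a Cameron--Martin shift (equivalently, a Stroock--Varadhan type support argument for the finite-dimensional SDE $P_{\nbar}$-projection) can steer the low-mode part into any $\delta$-neighborhood of the origin on $[0,T_1]$ with positive probability, uniformly in small $m$. Intersecting the small-ball and the Girsanov events and choosing $T_1 = T_1(R,r)$ large enough will yield $V_m(u^m(t),v^m(t)) < r$ for $t > T_1$ with a positive probability bounded below uniformly in $m$.

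The main obstacle will be maintaining uniformity in $m$ as $m\to 0$: the Lyapunov function $V_m$ weights $\|v\|_H^2$ by $m^2$ and the kinetic part of the dynamics becomes singular in this limit. The resolution is to work throughout with the scaled energy $\Psionem$ of \eqref{form:Psi_1m}, whose comparison \eqref{cond:V_m<Psi_1m<V_m} with $V_m$ is uniform in $m$, and to verify that the spectral-gap constant $\alpha_{\nbar} - a_\f$, the noise coercivity $a_Q$, and the small-ball constant $c_1$ are all $m$-independent. Once each step is carried out with this care, the constants $T_1$ and $c$ in \eqref{ineq:irreducibility} inherit independence of $m$.
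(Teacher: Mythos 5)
Your overall strategy — stochastic convolution small-ball estimate plus a spectral-gap decay for high modes plus a Cameron--Martin shift for the finitely many low modes — is the right circle of ideas, but the way you propose to combine them has two concrete problems.

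First, you cannot apply a Cameron--Martin/Girsanov shift to the low-mode projection of the \emph{remainder} $(\bar u^m,\bar v^m)$, because after subtracting $\Gamma^m$ this remainder solves a deterministic pathwise PDE with no driving Brownian motion; there is nothing to shift. The Girsanov change of measure must be applied to the full SDE for $(u^m,v^m)$, and the delicate point is to choose the added drift so that (i) it lives in $\mathrm{Range}(Q)$ (which forces it to be supported on $P_{\nbar}H$, as you note), and (ii) after subtracting $\Gamma^m$ it gives the \emph{remainder} equation a helpful extra damping while leaving $\Gamma^m$ itself, and hence the small-ball event, untouched. The natural choice is a drift of the form $-\alpha_{\nbar}P_{\nbar}(u^m-\pi_1\Gamma^m)$, i.e.\ a shift depending on the \emph{difference} rather than on $u^m$ alone. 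This is exactly what the paper's auxiliary system~\eqref{eqn:wave:irreducibility} does, and once it is in place one does not need (and cannot cleanly carry out) a separate low/high split of the remainder: $\f(u^m)$ does not commute with $P_{\nbar}$, so the projected equations do not decouple; instead the low-mode extra damping and the high-mode spectral gap are combined in a single Lyapunov estimate on $\Psi_{1,m}(\bar u,\bar v)+2m\|\Phi_2(\bar u)\|_{L^1}$. You also need a truncation of the Girsanov drift and a stopping-time/Novikov argument to justify the change of measure; ``intersecting the small-ball and Girsanov events'' elides this.

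Second, you invoke Lemma~\ref{lem:moment-bound:H^2:|Au|^2:sup_[0,T]:random-initial-cond} to get uniform $L^\infty$-bounds on $u^m$ and thereby treat $\f(u^m)$ as an $O(\delta)$-perturbation of $\f(\bar u^m)$. That lemma requires $(u_0,v_0)\in\Hcal^2$, while Proposition~\ref{prop:irreducibility} is stated for $(u_0,v_0)\in\Hcal^1$, so it is unavailable here. The paper sidesteps this by splitting $\f(\um)=\f(\ubar)+(\f(\um)-\f(\ubar))$, using the one-sided dissipativity~\eqref{cond:phi:xphi(x)<x^2-x^(lambda+1)} for the $\f(\ubar)$ part and the Lipschitz-type bound~\eqref{cond:phi:|phi(x)-phi(y)|} for the difference; all resulting error terms are expressed through $\|\pi_1\Gamma^m(t)\|_{L^\infty}$, which is controlled by $\|\pi_1\Gamma^m(t)\|_{H^2}$ and hence by the small-ball event, without ever requiring $L^\infty$-bounds on $u^m$ itself. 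You would need to rework your nonlinear estimate along these lines for the argument to close.
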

Due the difficulty caused by the nonlinear structure, in order to prove Proposition~\ref{prop:irreducibility}, we first
modify~\eqref{eqn:wave} as follows: we introduce
\begin{align}
\d \umt(t)&=\vmt(t)\d t,  \nt \\
m\,\d \vmt(t)&=-A\umt(t)\d t-\vmt(t)\d t+\f(\umt(t))\d t+ Q\d w(t), \label{eqn:wave:irreducibility}\\
&\qquad -\alpha_{\nbar} P_{\nbar}(\umt(t)-\Gammau(t))\d t, \nt \\
(\umt(0),\vmt(0))&=(u_0,v_0)\in \Hcal^1,\nt
\end{align}
where $\Gamma^m(t)$ is the stochastic convolution solving~\eqref{eqn:wave:linear}, $\alpha_{\nbar}$ is the eigenvalue of $A$ as in Assumption~\ref{cond:Q}, part 2, and $P_{\nbar}$ is the projection on $\text{span}(e_1,\dots,e_{\nbar})$. We note that~\eqref{eqn:wave:irreducibility} only differs from~\eqref{eqn:wave} by the appearance of the term $ -\alpha_{\nbar} P_{\nbar}(\umt(t)-\Gammau(t))\d t$. Two of the main ingredients to prove Proposition~\ref{prop:irreducibility} are given in the following lemmas whose proofs are deferred to the end of this subsection.

\begin{lemma} \label{lem:irreducibility:shift}
Given $(u_0,v_0)\in\Hcal^1$, let $(\umt(t),\vmt(t))$ be the solution of~\eqref{eqn:wave:irreducibility}. Then, for all $r,\,R>0$, there exists $T_1$ sufficiently large independent of $m$ such that for all $t>T_1$,
\begin{align}\label{ineq:irreducibility:shift}
\inf_{V_m(u_0,v_0)<R}\P\Big(V_m(\umt(t),\vmt(t))<r \Big) \ge c>0,
\end{align}
for some positive constant $c=c(R,r,t)$ independent of $m$. In the above, $V_m$ is the function as in~\eqref{form:V_m}.
\end{lemma}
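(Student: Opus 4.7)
The plan is to remove the randomness by passing to the residuals $\bar u^m(t) := \umt(t) - \Gammau(t)$ and $\bar v^m(t) := \vmt(t) - \Gammav(t)$. Subtracting~\eqref{eqn:wave:linear} from~\eqref{eqn:wave:irreducibility}, the pair $(\bar u^m, \bar v^m)$ satisfies the pathwise random ODE
\begin{align*}
\partial_t \bar u^m(t) &= \bar v^m(t), \\
m\,\partial_t \bar v^m(t) &= -(A + \alpha_{\nbar} P_{\nbar})\bar u^m(t) - \bar v^m(t) + \f(\bar u^m(t) + \Gammau(t)),
\end{align*}
with $(\bar u^m(0), \bar v^m(0)) = (u_0, v_0)$. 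By~\eqref{cond:phi:ergodicity} the symmetric operator $\tilde{A} := A + \alpha_{\nbar} P_{\nbar}$ has spectrum bounded below by $\alpha_{\nbar} > a_\f$, which is precisely the strict dissipativity that the added shift term $-\alpha_{\nbar} P_{\nbar}\bar u^m$ was designed to furnish.

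The first step is a Lyapunov estimate for the $m$-weighted energy
\[
\Phitilde^m(\bar u,\bar v) := m\|\tilde A^{1/2}\bar u\|_H^2 + m^2\|\bar v\|_H^2 + m\la \bar u,\bar v\ra_H + \tfrac12\|\bar u\|_H^2,
\]
the natural analogue of $\Psionem$ from~\eqref{form:Psi_1m}. Differentiating along the random ODE, the cross term $m\la \bar u,\bar v\ra$ cancels the singular $1/m$ scaling in the $\bar v$-equation, while the nonlinear contribution $\la \f(\umt),\bar u^m + 2m\bar v^m\ra_H$ is controlled through the one-sided Lipschitz bound
\[
\la \f(u) - \f(v),\,u-v\ra_H \le a_\f\|u-v\|_H^2,
\]
which follows from~\eqref{cond:phi:sup.phi'<a_f} and is applied with $u=\umt$, $v=\Gammau$. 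Combining this with $\la \tilde A\bar u,\bar u\ra_H \ge \alpha_{\nbar}\|\bar u\|_H^2$ and Young's inequality on the error terms $\la \f(\Gammau),\bar u^m\ra_H$ and $m\la \f(\umt),\bar v^m\ra_H$, I expect a differential inequality
\[
\ddt \Phitilde^m(\bar u^m(t),\bar v^m(t)) \le -\gamma\,\Phitilde^m(\bar u^m(t),\bar v^m(t)) + C\bigl(\|\Gamma^m(t)\|_{\Hcal^2}^2 + \|\f(\Gammau(t))\|_H^2\bigr),
\]
with $\gamma, C > 0$ independent of $m$. Gronwall's inequality then yields exponential decay of $\Phitilde^m(\bar u^m,\bar v^m)$ up to an additive error controlled by a sup-norm of $\Gamma^m$ on $[0,t]$.

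The second step is a small-ball bound. Condition~\eqref{cond:Q:Tr(QA^3Q)} makes $\Gamma^m$ a centered Gaussian process taking values in a space of regularity at least $\Hcal^2$, and the $m$-uniform trajectory estimates of Appendix~\ref{sec:stochastic-convolution} should supply, for any $T,\varepsilon>0$,
\[
\inf_{m\in(0,1)} \P\Bigl(\sup_{s\in[0,T]}\|\Gamma^m(s)\|_{\Hcal^2} < \varepsilon\Bigr) > 0.
\]
Given $R, r > 0$, I would choose $\varepsilon$ so small that both the additive Gronwall error and $V_m(\Gammau(t),\Gammav(t))$ lie below $r/4$, then pick $T_1$ large enough that $e^{-\gamma T_1}(R+1) < r/4$. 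For any $t > T_1$, restricting to the small-ball event on $[0,t]$ and combining the decay estimate with the convexity-based bound $V_m(\umt,\vmt) \le C\bigl(V_m(\bar u^m,\bar v^m) + V_m(\Gammau,\Gammav)\bigr)$ then delivers $V_m(\umt(t),\vmt(t)) < r$ with positive probability, uniformly over $m \in (0,1)$ and $(u_0,v_0) \in B^m_R$.

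The principal obstacle I anticipate is $m$-uniformity throughout: the dissipation rate $\gamma$ must be independent of $m$ (secured by the $m$-independent gap $\alpha_{\nbar} - a_\f > 0$ together with the cross term $m\la \bar u,\bar v\ra$ that cures the singular $1/m$ factor), the $L^{\lambda+1}$ component of $V_m$ must be absorbed into $H^1$ control via Sobolev embedding (requiring only $\lambda < 2$ in dimensions $d \le 3$), and the small-ball estimate for $\Gamma^m$ must likewise be $m$-uniform. Once these three uniformities are in hand, the remainder of the argument closely mirrors the moment computations of Section~\ref{sec:moment-bound}.
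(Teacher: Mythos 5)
Your overall strategy — subtract the stochastic convolution, derive a pathwise exponential Lyapunov bound for the residual, and finish via the $m$-uniform small-ball estimate for $\Gamma^m$ — is exactly the architecture of the paper's proof. However, your Lyapunov functional is missing a crucial piece, and this is not one of the three obstacles you flag at the end; it causes the differential inequality itself to fail to close.

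Concretely, with the purely quadratic functional
\[
\Phitilde^m(\bar u,\bar v)= m\|\tilde A^{1/2}\bar u\|^2_H+m^2\|\bar v\|^2_H+m\la \bar u,\bar v\ra_H+\tfrac12\|\bar u\|^2_H,
\]
the time derivative produces the cross term $2m\la \f(\umt),\bar v\ra_H$. You propose to treat this by Young's inequality, giving $\tfrac12 m\|\bar v\|^2_H + Cm\|\f(\umt)\|^2_H$. But $\|\f(\umt)\|^2_H\lesssim 1+\|\umt\|^{2\lambda}_{L^{2\lambda}}$, and with $H^1\hookrightarrow L^{2\lambda}$ this becomes $m\|\umt\|^{2\lambda}_{H^1}$. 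There is no dissipative term in $\ddt\Phitilde^m$ that absorbs this with $m$-uniform constants once $\lambda>1$: writing $m\|\bar u\|^{2\lambda}_{H^1}=m^{1-\lambda}(m\|\bar u\|^2_{H^1})^\lambda$ exposes a factor $m^{1-\lambda}\to\infty$ as $m\to 0$. The one-sided Lipschitz bound $\la\f(u)-\f(v),u-v\ra_H\le a_\f\|u-v\|^2_H$ that you invoke only controls the symmetric pairing $\la\f(\umt),\bar u\ra_H$; it gives you nothing on $\la\f(\umt),\bar v\ra_H$.

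The paper's fix, which is the key idea you are missing, is to enlarge the Lyapunov functional by the potential term $2m\|\Phi_2(\bar u)\|_{L^1}$, where $\Phi_2(x)=-\int_0^x\f(r)\,\d r$. Its time derivative is $-2m\la\f(\bar u),\bar v\ra_H$, which cancels the worst part of $2m\la\f(\umt),\bar v\ra_H$ exactly, leaving only the remainder $2m\la\f(\umt)-\f(\bar u),\bar v\ra_H$. That remainder is genuinely Lipschitz-small: by~\eqref{cond:phi:|phi(x)-phi(y)|}, $|\f(\umt)-\f(\bar u)|\lesssim|\Gammau|^\lambda+|\Gammau|\,|\bar u|^{\lambda-1}+|\Gammau|$, so it can be split by Young's inequality into $\tfrac14 m\|\bar v\|^2_H$ plus small powers of $\|\Gammau\|_{L^\infty}$ plus $m\|\bar u\|^{\lambda+1}_{L^{\lambda+1}}$, and this last piece is absorbed by the dissipation that $\Phi_2$ itself provides (via~\eqref{cond:Phi_2:a}--\eqref{cond:Phi_2:b}, $\Phi_2$ is comparable to $|x|^{\lambda+1}$ modulo $x^2$). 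Once you carry $\Phi_2$ in the Lyapunov functional, the differential inequality closes with $m$-uniform constants precisely because of the $m$-independent gap $\alpha_{\nbar}-a_\f>0$ you identified, and the final comparison to $V_m$ — which already contains $m\|u\|^{\lambda+1}_{L^{\lambda+1}}$ — becomes a direct equivalence rather than an interpolation step. In short: the functional you need is $\Psionem(\bar u,\bar v)+2m\|\Phi_2(\bar u)\|_{L^1}$ (plus the $\alpha_{\nbar}P_{\nbar}$ contribution, which is fine to keep as you have it).
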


\begin{lemma} \label{lem:irreducibility:shift:sup_[0,t]}
Given $(u_0,v_0)\in\Hcal^1$, let $(\umt(t),\vmt(t))$ be the solution of~\eqref{eqn:wave:irreducibility}. Then, for all $T>0$, the following holds
\begin{align}\label{ineq:irreducibility:shift:sup_[0,t]}
\E\sup_{t\in[0,T]} \|\umt(t)\|^2_H\le   C(V_m(u_0,v_0)+1)e^{cT},
\end{align}
for some positive constants $c,\, C$ independent of $m$, $T$ and $(u_0,v_0)$. In the above, $V_m$ is the function as in~\eqref{form:V_m}.
\end{lemma}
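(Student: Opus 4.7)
The plan is to repeat the Lyapunov calculation from Lemma~\ref{lem:moment-bound:H} applied to the functional $\Psionem(\umt,\vmt)+2m\|\Phi_1(\umt)\|_{L^1}$, treating the extra drift $-\alpha_{\nbar}P_{\nbar}(\umt(t)-\Gammau(t))$ in the $\vmt$-equation as a perturbation. Since the claimed upper bound grows like $e^{cT}$, there is no need to extract sharp dissipation; a Gronwall argument with generous constants will suffice.

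Concretely, I would first write the It\^o differential of $\Psionem(\umt(t),\vmt(t))+2m\|\Phi_1(\umt(t))\|_{L^1}$ for the modified equation~\eqref{eqn:wave:irreducibility}. Compared with~\eqref{eqn:Ito:L^m.Psi_1m}, the only new term is
\[
\tfrac{1}{m}\la 2m^2\vmt+m\umt,-\alpha_{\nbar}P_{\nbar}(\umt-\Gammau)\ra_H
= -\alpha_{\nbar}\la 2m\vmt+\umt,P_{\nbar}(\umt-\Gammau)\ra_H.
\]
By Cauchy--Schwarz and Young's inequality, this perturbation is bounded by $C(m^2\|\vmt\|_H^2+\|\umt\|_H^2+\|\Gammau\|_H^2)$, of which the first two contributions are absorbed into $C[\Psionem(\umt,\vmt)+2m\|\Phi_1(\umt)\|_{L^1}]$. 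Combined with the bound~\eqref{ineq:L^m.Psi_1m} for the unperturbed generator (now discarding its negative dissipative part), this yields
\[
\d\big[\Psionem(\umt,\vmt)+2m\|\Phi_1(\umt)\|_{L^1}\big]
\le C\big[\Psionem(\umt,\vmt)+2m\|\Phi_1(\umt)\|_{L^1}\big]\d t + C\|\Gammau(t)\|_H^2 \d t + C\d t + \d M(t),
\]
where $M(t)$ is the stochastic integral $\int_0^t\la 2m\vmt(r)+\umt(r),Q\d w(r)\ra_H$.

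Integrating in $t$, taking $\sup_{t\in[0,T]}$, and applying Burkholder's inequality to $M$ gives (after absorbing a small multiple of the sup into the left side)
\[
\E\sup_{t\in[0,T]}\big[\Psionem(\umt(t),\vmt(t))+2m\|\Phi_1(\umt(t))\|_{L^1}\big]
\le C\Psionem(u_0,v_0)+2Cm\|\Phi_1(u_0)\|_{L^1}+C\E\int_0^T\|\Gammau(r)\|_H^2\d r
\]
plus a time-integrated Gronwall term. The moment bound on the stochastic convolution supplied by Appendix~\ref{sec:stochastic-convolution} (in particular, an estimate analogous to~\eqref{ineq:moment-bound:Gamma^m(t):H^2:sup_[0,T]}) bounds $\E\int_0^T\|\Gammau(r)\|_H^2\d r$ by $CT$, uniformly in $m$. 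Gronwall's lemma then produces an $e^{cT}$ factor, and~\eqref{cond:V_m<Psi_1m<V_m} together with $\|u\|_H^2\le C\,V_m(u,v)$ gives the stated bound $C(V_m(u_0,v_0)+1)e^{cT}$.

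The main obstacle is verifying that all constants remain independent of $m$, despite the $\tfrac{1}{m}$ factor in the generator~\eqref{form:L^m}. This uniformity is exactly what the mixed term $m\la u,v\ra_H$ inside $\Psionem$ is designed for: it produces the cancellation of $\tfrac{1}{m}$ in the calculation of $\L^m$ and ensures that the perturbation estimate above has coefficients independent of the mass, just as in the proof of Lemma~\ref{lem:moment-bound:H}.
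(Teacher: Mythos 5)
Your proposal is correct and follows essentially the same route as the paper: It\^o's formula on $\Psionem(\umt,\vmt)+2m\|\Phi_1(\umt)\|_{L^1}$, the bound~\eqref{ineq:L^m.Psi_1m} for the unperturbed generator, Cauchy--Schwarz/Young's on the extra shift term $-\alpha_{\nbar}\la 2m\vmt+\umt,P_{\nbar}(\umt-\Gammau)\ra_H$, Burkholder for the martingale, the stochastic-convolution bound~\eqref{ineq:moment-bound:Gamma^m(t):H^2:sup_[0,T]}, and Gronwall. The only cosmetic difference is the choice of weights in Young's inequality (you keep the $\|\vmt\|^2_H$ contribution at order $m^2$ so it is absorbed by $\Psionem$, whereas the paper keeps it at order $m$ and offsets it against the $-m\|v\|^2_H$ dissipation in~\eqref{ineq:L^m.Psi_1m}); both are fine.
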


We now give the proof of Proposition~\ref{prop:irreducibility}, whose argument follows along the lines of \cite[Proposition 4.2]{foldes2019large} tailored to our settings.
\begin{proof}[Proof of Proposition~\ref{prop:irreducibility}]
For $N>0$, let $(\umt_N,\vmt_N)$ solve the following truncated version of~\eqref{eqn:wave:irreducibility}
\begin{align}
\d \umt_N (t)&=\vmt_N (t)\d t,  \nt \\
m\,\d \vmt_N (t)&=-A\umt_N (t)\d t-\vmt_N (t)\d t+\f(\umt_N (t))\d t+ Q\d w(t), \label{eqn:wave:irreducibility:N}\\
&\qquad -\alpha_{\nbar}\theta_N(\|P_{\nbar}(\umt_N (t)-\Gammau(t))\|_H) P_{\nbar}(\umt_N (t)-\Gammau(t))\d t, \nt \\
(\umt_N (0),\vmt_N (0))&=(u_0,v_0)\in \Hcal^1,\nt
\end{align}
where $\theta_N:\rbb\to\rbb$ is a smooth truncating function such that
\begin{align} \label{form:theta_N}
\theta_N(x)=\begin{cases}
1, & |x|\le N,\\
\text{monotonicity},& N\le |x|\le N+1,\\
0,& |x|\ge N+1.
\end{cases}
\end{align}
Denoting $\tilde{\tau}_N$ to be the stopping time given by
\begin{align*}
\tilde{\tau}_N^m=\inf \big\{t\ge 0:\|P_{\nbar}(\umt(t)-\Gammau(t))\|_H>N   \big\},
\end{align*}
observe that $\P-$a.s. for $0\le t\le \taut_N^m$, $(\umt_N(t),\vmt_N(t))=(\umt(t),\vmt(t))$. Furthermore, we note that under this truncation, by Girsanov Theorem, the law induced by $(\umt_N(\cdot),\vmt_N(\cdot))$ on $C([0,t];\Hcal^1)$ is equivalent to that induced by $(\um(\cdot),\vm(\cdot))$. Indeed, thanks to the condition~\eqref{cond:Q} on the fact that $Q$ is invertible on $\text{span}\{e_1,\dots,e_{\nbar}\}$, it holds that
\begin{align*}
&\E \exp\Big\{\tfrac{1}{2}\int_0^t\alpha_{\nbar}^2\|Q^{-1}P_{\nbar}\big(\umt_N(s)-\Gammau(s)\big)\|^2_H\theta_N^2\big(\|P_{\nbar}\big(\utilde^m_N(s)-\Gammau(s)\big)\|_H\big)\d s\Big\}\\
&\le  \exp\Big\{\tfrac{1}{2}\alpha_{\nbar}^2a_Q^{-2}(N+1)^2t\Big\}.
\end{align*}
This verifies Novikov's condition, which in turn yields the equivalence in laws. We now introduce the change--of--measure function
$$U_N^m(t)=\exp\Big\{\int_0^t\la f^m_N(s),\d w(s)\ra_H-\tfrac{1}{2}\int_0^t\|f_N^m(s)\|_H^2\d s \Big\} ,$$
where
$$ f_N^m(t)=\alpha_{\nbar}Q^{-1}P_{\nbar}\big(\utilde^m_N(t)-\Gammau(t)\big)\theta_N\big(\|P_{\nbar}\big(\utilde^m_N(t)-\Gammau(t)\big)\|_H\big).$$
By Girsanov Theorem, we have the following identity
\begin{align*}
\P(V_m(\um(t),\vm(t))< r)=\E\Big[U_N^m(t)\cdot \mathbf{1}\big\{V_m(\umt_N(t),\vmt_N(t))<r\big\}\Big].
\end{align*}
Hence, on one hand, for given $C>0$, we have a chain of implications
\begin{align*}
\P(V_m(\um(t),\vm(t))< r)&=\E\Big[U_N^m(t)\cdot \mathbf{1}\big\{V_m(\umt_N(t),\vmt_N(t))< r\big\}\Big]\\
&\ge C\,\P\Big(U_N^m(t)>C, V_m(\umt_N(t),\vmt_N(t))< r\Big)\\
&\ge C\,\P\Big(U_N^m(t)>C, V_m(\umt_N(t),\vmt_N(t))< r,\tautil_{N}^m>t\Big)\\
&=C\,\P\Big(U_N^m(t)>C, V_m(\umt(t),\vmt(t))< r,\tautil_N^m>t\Big).
\end{align*}
On the other hand, it holds that
\begin{align*}
&\P\big(V_m(\umt(t),\vmt(t))< r\big)\\
&\le \P\Big(U_N^m(t)>C, V_m(\umt(t),\vmt(t))< r,\tautil_{N}^m>t\Big)+\P\big(U_N^m(t)\le C\big)+\P\big(\tautil_{N}^m\le t\big).
\end{align*}
We thus arrive at the estimate
\begin{align*}
\frac{1}{C}\P(V_m(\um(t),\vm(t))< r)\ge \P\big(V_m(\umt(t),\vmt(t))< r\big)-\P\big(U_N^m(t)\le C\big)-\P\big(\tautil_N^m\le t\big),
\end{align*}
whence
\begin{align}
&\frac{1}{C}\inf_{V_m(u_0,v_0)< R}\P(V_m(\um(t),\vm(t))< r) \nt\\
&\ge \inf_{V_m(u_0,v_0)< R}\P\big(V_m(\umt(t),\vmt(t))< r\big)-\sup_{V_m(u_0,v_0)< R}\P\big(U_N^m(t)\le C\big)\nt\\
&\qquad-\sup_{V_m(u_0,v_0)< R}\P\big(\tautil_{N}^m\ge t\big). \label{ineq:irreducibility:1/C}
\end{align}
In view of Lemma~\ref{lem:irreducibility:shift}, for $t\ge T_1$ where $T_1=T_1(r,R)$ is as in Lemma~\ref{lem:irreducibility:shift}, we readily have
$$\inf_{V_m(u_0,v_0))< R}\P\big(V_m(\umt(t),\vmt(t))< r\big)>c^*>0, $$
where $c^*=c^*(r,R,t)>0$ is a constant independent of $C$, and $N$. It therefore remains to show that, by taking $N$ to infinity and $C$ to zero, two suppremum terms on the right-hand side of~\eqref{ineq:irreducibility:1/C} can be arbitrarily small. In other words, we claim that there exist $C$ sufficiently small and $N$ sufficiently large both independent of $m$ such that
\begin{equation} \label{ineq:sup.U+sup.tau}
\sup_{V_m(u_0,v_0)< R}\P\big(U_N^m(t)\le C\big)+\sup_{V_m(u_0,v_0)< R}\P\big(\tautil_{N}^m\le t\big)\le \frac{c^*}{2}. 
\end{equation}
To bound the second term involving $\tautil_{N}^m$, we invoke Markov inequality together with~\eqref{ineq:irreducibility:shift:sup_[0,t]} and~\eqref{ineq:moment-bound:Gamma^m(t):H^2:sup_[0,T]} to see that
\begin{align*}
\P\big(\tautil_{N}^m\le t\big)&=\P\big(\sup_{0\le s\le t} \|\utilde^m(s)-\Gammau(s)\|_H\ge N  \big) \\
&\le \frac{1}{N^2}\E\sup_{0\le s\le t} \|\utilde^m-\Gammau(s)\|_H^2\\
&\le \frac{1}{N^2} (V_m(u_0,v_0)+1)e^{ct}\\
&\le \frac{1}{N^2} (R+1)e^{ct},
\end{align*}
where in the last implication above, $c=c(t)$ does not depend on $N$, $R$ and $m$. With regards to first term $\sup_{x\in B_R}\P\big(U_{\x}^N(t)\le C\big)$, we employ Burkholder's inequality to estimate
\begin{align*}
\P(U_N^m(t)\le C)&=\P\Big(\tfrac{1}{2}\int_0^t\|f_N^m(s)\|_H^2\d s-\int_0^t\la f_N^m(s),\d w(s)\ra_H\d s \ge \log(C^{-1}) \Big)\\
&\le \frac{1}{\log(C^{-1})}\Big(\tfrac{1}{2}\E\int_0^t\|f_N^m(s)\|_H^2\d s+\E\Big|\int_0^t\la f_N^m(s),\d w(s)\ra_H\d s \Big|\Big)\\
&\le \frac{2}{\log(C^{-1})}\Big(1+\E\int_0^t\|f_N^m(s)\|_H^2\d s\Big)\\
&\le \frac{2}{\log(C^{-1})}\big(1+\alpha_{\nbar}^2a_Q^{-2}(N+1)^2t\big).
\end{align*}
Hence, by first taking $N$ large and then shrinking $C$ further to zero, we infer two positive constants $N^*$ and $C^*$ independent of $m$ such that inequality~\eqref{ineq:sup.U+sup.tau} holds. We therefore immediately obtain the desired lower bound
\begin{align*}
\inf_{V_m(u_0,v_0)<R}\P(V_m(\um(t),\vm(t))< r)\ge \frac{C^*c^*}{2},
\end{align*}
uniformly in $m$. This finishes the proof.

\end{proof}

Turning to Lemma~\ref{lem:irreducibility:shift}, we prove that $(\umt(t),\vmt(t))$ satisfies the irreducibility condition~\eqref{ineq:irreducibility:shift} making use of the stochastic convolution $\Gamma^m(t)$ solving the linear system~\eqref{eqn:wave:linear}.

\begin{proof}[Proof of Lemma~\ref{lem:irreducibility:shift}]
Letting $\Gamma^m(t)$ be the solution of~\eqref{eqn:wave:linear}, denote
\begin{align*}
\ubar(t)=\umt(t)-\pi_1\Gamma^m(t),\qquad  \vbar(t)=\vmt(t)-\pi_2\Gamma^m(t).
\end{align*}
From~\eqref{eqn:wave:irreducibility} and~\eqref{eqn:wave:linear}, observe that $(\ubar(t),\vbar(t))$ solves the system
\begin{align*}
\tfrac{\d}{\d t} \ubar(t)&=\vbar(t),\\
m\ddt \vbar(t)&=-A\ubar(t)-\vbar(t)+\f(\umt(t))-\alpha_{\nbar}P_{\nbar}\ubar(t),\\
(\ubar(0),\vbar(0))&=(u_0,v_0).
\end{align*}
Denote 
\begin{align} \label{form:Phi_2}
\Phi_2(x)=-\int_0^x \f(r)\d r.
\end{align}
For a slight abuse of notation, we denote
\begin{align*}
\|\Phi_2(u)\|_{L^1} = \int_{\domain} \Phi_2(u(x))\d x.
\end{align*}
Recall $\Psionem$ as in~\eqref{form:Psi_1m}, a routine calculation gives
\begin{align}
&\ddt \big[\Psionem(\ubar(t),\vbar(t))+2m\|\Phi_2(\ubar(t))\|_{L^1}\big] \nt \\
&=-\|A^{1/2}\ubar(t)\|^2_H-m\|\vbar(t)\|^2_H+2m\la \f(\um(t))-\f(\ubar(t)),\vbar(t)\ra_H+\la \f(\um(t)),\ubar(t)\ra_H  \label{eqn:Ito:Psi_1m+Phi:irreducibility}\\
&\qquad -2m\alpha_{\nbar}\la P_{\nbar}\ubar(t),\vbar(t)\ra_H-\alpha_{\nbar}\|P_n\ubar(t)\|^2_H. \nt
\end{align}
Using Young inequality, we readily have
\begin{align*}
-2m\alpha_{\nbar}\la P_{\nbar}\ubar(t),\vbar(t)\ra_H \le \tfrac{1}{4}m\|\vbar(t)\|^2_H+4m\alpha_{\nbar}^2\|\ubar(t)\|^2.
\end{align*}
Also, by Sobolev embedding,  it holds that
\begin{align*}
&-\|A^{1/2}\ubar(t)\|^2_H-\alpha_{\nbar}\|P_n\ubar(t)\|^2_H\\
& \le -\varepsilon\|A^{1/2}\ubar(t)\|^2_H-(1-\varepsilon)\|(I-P_{\nbar})A^{1/2}\ubar(t)\|^2_H-\alpha_{\nbar}\|P_{\nbar}\ubar(t)\|^2_H\\
&\le  -\varepsilon\|A^{1/2}\ubar(t)\|^2_H-(1-\varepsilon)\alpha_{\nbar}\|\ubar(t)\|^2_H.
\end{align*}
With regard to $\la \f(\um(t)),\ubar(t)\ra_H$, note that
\begin{align*}
\la \f(\um(t)),\ubar(t)\ra_H = \la \f(\um(t))-\f(\ubar(t)),\ubar(t)\ra_H+\la \f(\ubar(t)),\ubar(t)\ra_H.
\end{align*}
In view of~\eqref{cond:phi:xphi(x)<x^2-x^(lambda+1)}, for all $\varepsilon$ sufficiently small independent of $m$, we readily have 
\begin{align*}
\la \f(\ubar(t)),\ubar(t)\ra_H\le   (a_\f+\varepsilon)\|\ubar(t)\|^2_H-\frac{\varepsilon}{(\lambda+1)\big(\frac{2a_3}{a_2}\big)^{\frac{\lambda-1}{\lambda+1}}}\|\ubar(t)\|^{\lambda+1}_{L^{\lambda+1}}.
\end{align*}
On the other hand, we invoke~\eqref{cond:phi:|phi(x)-phi(y)|} to estimate
\begin{align*}
\f(\um(t))-\f(\ubar(t)) \le c\big( |\Gammau(t)|^{\lambda}+|\Gammau(t)|\,|\ubar(t)|^{\lambda-1}+|\Gammau(t)|\big),
\end{align*}
whence
\begin{align*}
 \la \f(\um(t))-\f(\ubar(t)),\ubar(t)\ra_H&\le C\big(\|\Gammau(t)\|^{2\lambda}_{L^{2\lambda}}+\|\Gammau(t)\|^{\lambda+1}_{L^{\lambda+1}}+\|\Gammau(t)\|^2_H  \big)\\
 &\qquad + \varepsilon\|\ubar(t)\|^2_H+\frac{\varepsilon}{2(\lambda+1)\big(\frac{2a_3}{a_2}\big)^{\frac{\lambda-1}{\lambda+1}}}\|\ubar(t)\|^{\lambda+1}_{L^{\lambda+1}}. 
\end{align*}
As a consequence, we obtain the bound
\begin{align*}
\la \f(\um(t)),\ubar(t)\ra_H&\le C\big(\|\Gammau(t)\|^{2\lambda}_{L^{\infty}}+\|\Gammau(t)\|^{\lambda+1}_{L^{\infty}}+\|\Gammau(t)\|^2_{L^\infty} \big)\\
 &\qquad + (a_\f+2\varepsilon)\|\ubar(t)\|^2_H-\frac{\varepsilon}{2(\lambda+1)\big(\frac{2a_3}{a_2}\big)^{\frac{\lambda-1}{\lambda+1}}}\|\ubar(t)\|^{\lambda+1}_{L^{\lambda+1}}.
\end{align*}
In the above, we emphasize that $C=C(\varepsilon, a_\f)$ is a positive constant independent of $m$. 

Similarly, concerning                                                                                                                                                                                                                                                                                                                                                                                                                                                                                                                                                                                                                                                                                                                                                                                                                                                                                                                                                                                                                                                                                                                                                                                                                                                                                                                                                                                                                                                                                                                                                                                                                                                                                                                                     the nonlinear term involving $\vbar$ on the right--hand side of~\eqref{eqn:Ito:Psi_1m+Phi:irreducibility}, we note that
\begin{align*}
2m\la \f(\um(t))-\f(\ubar(t)),\vbar(t)\ra_H &\le c\,m \big(\|\Gammau(t)\|_{L^\infty}^{\lambda}+\|\Gammau(t)\|_{L^\infty}\big)\|\vbar(t)\|_H\\
& \qquad + c\,m \la |\Gammau(t)|\,|\ubar(t)|^{\lambda-1},|\vbar(t)|\ra_H.
\end{align*}
We invoke Young's inequality to further estimate
\begin{align*}
 &c\,m \big(\|\Gammau(t)\|_{L^\infty}^{\lambda}+\|\Gammau(t)\|_{L^\infty}\big)\|\vbar(t)\|_H\\
 &\le  c\,m \big(\|\Gammau(t)\|_{L^\infty}^{2\lambda}+\|\Gammau(t)\|_{L^\infty}^2\big) +\tfrac{1}{4}m\|\vbar(t)\|_H^2,
\end{align*}
and (recalling $\lambda<2$)
\begin{align*}
&c\,m \la |\Gammau(t)|\,|\ubar(t)|^{\lambda-1},|\vbar(t)|\ra_H \\
&\le c\,m \|\Gammau(t)\|_{L^\infty}^{\frac{2(\lambda+1)}{3-\lambda}} + m\|\ubar(t)\|^{\lambda+1}_{L^{\lambda+1}}+ \tfrac{1}{4}m\|\vbar(t)\|_H^2.
\end{align*}
It follows that
\begin{align*}
2m\la \f(\um(t))-\f(\ubar(t)),\vbar(t)\ra_H &\le  c\,m \Big(\|\Gammau(t)\|_{L^\infty}^{2\lambda}+\|\Gammau(t)\|_{L^\infty}^2+\|\Gammau(t)\|_{L^\infty}^{\frac{2(\lambda+1)}{3-\lambda}} \Big)\\
&\qquad+ m\|\ubar(t)\|^{\lambda+1}_{L^{\lambda+1}}+ \tfrac{1}{2}m\|\vbar(t)\|_H^2.
\end{align*}
Altogether, from~\eqref{eqn:Ito:Psi_1m+Phi:irreducibility}, we deduce the bound
\begin{align*}
&\ddt \big[\Psionem(\ubar(t),\vbar(t))+2m\|\Phi_2(\ubar(t))\|_{L^1}\big] \\
&\le -\varepsilon\|A^{1/2}\ubar(t)\|^2_H-\big[(1-\varepsilon)\alpha_{\nbar}-(a_\f+2\varepsilon)-4m\alpha_{\nbar}^2\big]\|\ubar(t)\|^2_H\\
&\qquad- \tfrac{1}{4}m\|\vbar(t)\|_H^2 -\Big( \frac{\varepsilon}{2(\lambda+1)\big(\frac{2a_3}{a_2}\big)^{\frac{\lambda-1}{\lambda+1}}}-m\Big)\|\ubar(t)\|^{\lambda+1}_{L^{\lambda+1}}\\
&\qquad+ C\Big(\|\Gammau(t)\|^{2\lambda}_{L^{\infty}}+\|\Gammau(t)\|^{\lambda+1}_{L^{\infty}}+\|\Gammau(t)\|^2_{L^\infty} +\|\Gammau(t)\|_{L^\infty}^{\frac{2(\lambda+1)}{3-\lambda}} \Big).
\end{align*}
Since $m$ is arbitrarily small, we may further deduce
\begin{align*}
&\ddt \big[\Psionem(\ubar(t),\vbar(t))+2m\|\Phi_2(\ubar(t))\|_{L^1}\big] \\
&\le -\varepsilon\|A^{1/2}\ubar(t)\|^2_H-\big[(1-\varepsilon)\alpha_{\nbar}-(a_\f+3\varepsilon)\big]\|\ubar(t)\|^2_H\\
&\qquad- \tfrac{1}{4}m\|\vbar(t)\|_H^2- c\|\ubar(t)\|^{\lambda+1}_{L^{\lambda+1}}\\
&\qquad+ C\Big(\|\Gammau(t)\|^{2\lambda}_{L^{\infty}}+\|\Gammau(t)\|^{\lambda+1}_{L^{\infty}}+\|\Gammau(t)\|^2_{L^\infty} +\|\Gammau(t)\|_{L^\infty}^{\frac{2(\lambda+1)}{3-\lambda}} \Big),
\end{align*}
for some positive constants $C$ and $c$ independent of $m$. In view of~\eqref{cond:Phi_2:b}, 
\begin{align*}
- c \|\ubar(t)\|^{\lambda+1}_{L^{\lambda+1}}&\le -c\,m \|\ubar(t)\|^{\lambda+1}_{L^{\lambda+1}} \\
& \le - c\,m \|\Phi_2(\ubar(t))\|_{L^1}+c\,m \|\ubar(t)\|^2_H.
\end{align*}
It follows that
\begin{align}
&\ddt \big[\Psionem(\ubar(t),\vbar(t))+2m\|\Phi_2(\ubar(t))\|_{L^1}\big] \nt \\
&\le -\varepsilon\|A^{1/2}\ubar(t)\|^2_H-\big[(1-\varepsilon)\alpha_{\nbar}-(a_\f+4\varepsilon)\big]\|\ubar(t)\|^2_H  \nt \\
&\qquad- \tfrac{1}{4}m\|\vbar(t)\|_H^2- c\,m \|\Phi_2(\ubar(t))\|_{L^1}\nt \\
&\qquad+ C\Big(\|\Gammau(t)\|^{2\lambda}_{L^{\infty}}+\|\Gammau(t)\|^{\lambda+1}_{L^{\infty}}+\|\Gammau(t)\|^2_{L^\infty} +\|\Gammau(t)\|_{L^\infty}^{\frac{2(\lambda+1)}{3-\lambda}} \Big). \label{ineq:Psi_1m+Phi:irreducibility}
\end{align}
In view of the choice $\alpha_{\nbar}$ as in~\eqref{cond:phi:ergodicity}, i.e., $a_\f<\alpha_{\nbar}$, we may pick $\varepsilon$ sufficiently small such that
\begin{align*}
(1-\varepsilon)\alpha_{\nbar}>a_\f+4\varepsilon.
\end{align*}
This together with~\eqref{ineq:Psi_1m+Phi:irreducibility} implies the estimate
\begin{align*}
&\Psionem(\ubar(T),\vbar(T))+2m\|\Phi_2(\ubar(T))\|_{L^1} &\\
&\le e^{-cT}\big[\Psionem(u_0,v_0)+2m\|\Phi_2(u_0)\|_{L^1}\big]\\
&\qquad +CT\sup_{t\in [0,T]}\Big(\|\Gammau(t)\|^{2\lambda}_{L^{\infty}}+\|\Gammau(t)\|^{\lambda+1}_{L^{\infty}}+\|\Gammau(t)\|^2_{L^\infty} +\|\Gammau(t)\|_{L^\infty}^{\frac{2(\lambda+1)}{3-\lambda}} \Big).
\end{align*}
Recalling $V_m$ from~\eqref{form:V_m}, we employ~\eqref{cond:Phi_2:a}--\eqref{cond:Phi_2:b} again to see that (for $m$ sufficiently small)
\begin{align*}
cV_m(u,v)\le \Psionem(u,v)+2m\|\Phi_2(u)\|_{L^1}\le C (V_m(u,v)+1),
\end{align*}
holds for some positive constants $c,C$ independent of $m$ and $(u,v)$. As a consequence
\begin{align*}
&V_m(\ubar(T),\vbar(T)) 
\le Ce^{-cT}(V_m(u_0,v_0)+1)\\
&\qquad +C\sup_{t\in [0,T]}\Big(\|\Gammau(t)\|^{2\lambda}_{L^{\infty}}+\|\Gammau(t)\|^{\lambda+1}_{L^{\infty}}+\|\Gammau(t)\|^2_{L^\infty} +\|\Gammau(t)\|_{L^\infty}^{\frac{2(\lambda+1)}{3-\lambda}} \Big).
\end{align*}
It follows that
\begin{align*}
&V_m(\umt(T),\vmt(T)) 
\le Ce^{-cT}(V_m(u_0,v_0)+1)+C\sup_{t\in[0,T]}\Big(\|\Gammau(t)\|^2_{H^1}+m^2\|\Gammav(t)\|^2_H\Big)\\
&\qquad +C\sup_{t\in [0,T]}\Big(\|\Gammau(t)\|^{2\lambda}_{L^{\infty}}+\|\Gammau(t)\|^{\lambda+1}_{L^{\infty}}+\|\Gammau(t)\|^2_{L^\infty} +\|\Gammau(t)\|_{L^\infty}^{\frac{2(\lambda+1)}{3-\lambda}} \Big).
\end{align*}
In the above, we emphasize that $C$ and $c$ do not depend on $m$, $T$ and $(u_0,v_0)$. In light of Lemma~\ref{lem:irreducibility:Gamma^m(t)} and Agmon's inequality (in dimension $d=2$ or $d=3$), for $r\in(0,1)$ small enough, it holds that
\begin{align} \label{ineq:irreducibility:shift:sup_Gamma}
&\P\Big(\sup_{t\in [0,T]}\Big[\|\Gammau(t)\|^2_{H^1}+m^2\|\Gammav(t)\|^2_H+\|\Gammau(t)\|^{2\lambda}_{L^{\infty}}  \nt \\
&\qquad\qquad+\|\Gammau(t)\|^{\lambda+1}_{L^{\infty}}+\|\Gammau(t)\|^2_{L^\infty} +\|\Gammau(t)\|_{L^\infty}^{\frac{2(\lambda+1)}{3-\lambda}} \Big]< \tfrac{r}{2CT}\Big)>0.
\end{align}
Turning back to~\eqref{ineq:irreducibility:shift}, since $V_m(u_0,v_0)<R$, we may choose $T_1$ sufficiently large such that
\begin{align*}
Ce^{-cT_1}(V_m(u_0,v_0)+1)\le Ce^{-cT_1}(R+1)<\tfrac{r}{2},
\end{align*}
which together with~\eqref{ineq:irreducibility:shift:sup_Gamma} implies for all $r$ sufficiently small
\begin{align*}
\P\Big(V_m(\umt(t),\vmt(t))<r\Big)\ge c>0.
\end{align*}
This produces~\eqref{ineq:irreducibility:shift} for all $r>0$, thereby finishing the proof.

\end{proof}

We finish this subsection by the proof of Lemma~\ref{lem:irreducibility:shift:sup_[0,t]}, which together with Lemma~\ref{lem:irreducibility:shift} concludes Proposition~\ref{prop:irreducibility}.

\begin{proof}[Proof of Lemma~\ref{lem:irreducibility:shift:sup_[0,t]}] Recalling $\Psionem$ and $\Phi_1$ as in~\eqref{form:Psi_1m} and \eqref{cond:Phi_1}, respectively, from~\eqref{eqn:wave:irreducibility}, we compute
\begin{align}
&\d\big[ \Psionem(\umt(t),\vmt(t))+2m\|\Phi_1(\umt(t))\|_{L^1}\big] \nt \\
&= \L^m\big[\Psionem(\umt(t),\vmt(t))+2m\|\Phi_1(\umt(t))\|_{L^1}\big]\d t+\la 2m \vmt(t)+\umt(t),Q\d w(t)\ra_H\nt \\
&\qquad -2m\alpha_{\nbar}\la P_{\nbar}(\umt(t)-\Gammau(t)),\vmt(t)\ra_H\d t-\alpha_{\nbar}\la P_{\nbar}(\umt(t)-\Gammau(t)),\umt(t)\ra_H\d t. \label{eqn:Ito:d.Psi_1m:irreducibility}
\end{align}
In the above, $\L^m$ is the generator as in~\eqref{form:L^m}. Similarly to~\eqref{ineq:L^m.Psi_1m}, we readily have
\begin{align*}
&\L^m\big[\Psionem(\umt(t),\vmt(t))+2m\|\Phi_1(\umt(t))\|_{L^1}\big]
\\
&\le -\|A^{1/2}\um(t)\|^2_H -m\|\vm(t)\|^2_H  -\tfrac{a_2}{C_\f}\|\Phi_1(\um(t))\|_{L^1} +a_2+a_3|\domain|+\Tr(QQ^*).
\end{align*}
Using Burkholder's inequality, 
\begin{align*}
\E\sup_{r\in[0,t]}\Big|\int_0^t \la 2m \vmt(r)+\umt(r),Q\d w(r)\ra_H\Big|^2
& \le \E\int_0^t \|Q\big(2m\vmt(r)+\umt(r)\big)\|^2_H\d r\\
&\le \Tr(QQ^*)\int_0^t \E\|2m\vmt(r)+\umt(r)\|^2_H\d r\\
&\le c\int_0^t\E\big[ \Psionem(\umt(r),\vmt(r))+2m\|\Phi_1(\umt(r))\|_{L^1}\big]\d r.
\end{align*}
With regard to the last two terms on the right-hand side of~\eqref{eqn:Ito:d.Psi_1m:irreducibility}, we invoke Young's inequality to see that (here $m\le 1$)
\begin{align*}
&-2m\alpha_{\nbar}\la P_{\nbar}(\umt(t)-\Gammau(t)),\vmt(t)\ra_H-\alpha_{\nbar}\la P_{\nbar}(\umt(t)-\Gammau(t)),\umt(t)\ra_H\\
&\le  m\|\vmt(t)\|^2_H+m\alpha_{\nbar}^2\| \umt(t)-\Gammau(t)\|^2_H+\alpha_{\nbar}\|\umt(t)\|^2_H+\alpha_{\nbar}\|\Gammau(t)\|^2_H\\
&\le m\|\vmt(t)\|^2_H +c\|\umt(t)\|^2_H+c\|\Gammau(t)\|^2_H.
\end{align*}
In the above, $c>0$ does not depend on $m$ and $t$.

Altogether, from~\eqref{eqn:Ito:d.Psi_1m:irreducibility}, we arrive at
\begin{align*}
&\E\Big[\sup_{t\in[0,T]} \Psionem(\umt(t),\vmt(t))+2m\|\Phi_1(\umt(t))\|_{L^1}\Big]\\
& \le \Psionem(u_0,v_0)+2m\|\Phi_1(u_0)\|_{L^1}+c\,\E\sup_{t\in[0,T]}\|\Gammau(t)\|^2_H+cT\\
&\qquad +\int_0^T\E\big[ \Psionem(\umt(r),\vmt(r))+2m\|\Phi_1(\umt(r))\|_{L^1}\big]\d r.
\end{align*}
In view of Lemma~\ref{lem:irreducibility:Gamma^m(t)}, part 2, cf.~\eqref{ineq:moment-bound:Gamma^m(t):H^2:sup_[0,T]}, we obtain
\begin{align*}
&\E\Big[\sup_{t\in[0,T]} \Psionem(\umt(t),\vmt(t))+2m\|\Phi_1(\umt(t))\|_{L^1}\Big]\\
& \le C(\Psionem(u_0,v_0)+2m\|\Phi_1(u_0)\|_{L^1}+1)e^{cT},
\end{align*}
for some positive constants $C,c$ independent of $m$, $T$ and $(u_0,v_0)$. Recalling $V_m$ as in~\eqref{form:V_m} is equivalent to $\Psionem(u,v)+2m\|\Phi_1(u)\|_{L^1}$, cf.~\eqref{cond:V_m<Psi_1m<V_m}, this produces~\eqref{ineq:irreducibility:shift:sup_[0,t]}, thereby finishing the proof.

\end{proof}

\subsection{Asymptotic strong Feller property} \label{sec:geometric-ergodicity:asymptotic-Feller}

Having obtained irreducibility, we turn to the asymptotic strong Feller property. More specifically, the result in this subsection is given below.

\begin{proposition} \label{prop:asymptotic-Feller}
Under the same hypothesis of Theorem \ref{thm:geometric-ergodicity:mass}, there exist positive constants $c,C$ such that for all positive constants $N$ large, $\beta,$ $\varepsilon$, $\gamma$ sufficiently small, the following holds for for all $U,\xi\in\Hcal^1$, $f\in C^1_b(\Hcal^1;\rbb)$ satisfying $[f]_{\emph{Lip},\dmnb}\le 1$, and $m$ sufficiently small
\begin{align} 
&\big|\la D P_t^m f(U),\xi\ra_{\Hcal^1}\big| \nt\\
&\le C\, \Big(N\, e^{-ct} \exp\big\{C\big(e^{-\gamma t}+\varepsilon\big)\beta V_m(U) \big\}+\|f\|_{L^\infty}\exp \big\{C\varepsilon\beta V_m(U)   \big\}\Big)   \label{ineq:asymptotic-Feller}  \\
&\qquad\times\sqrt{m\|\pi_1\xi\|^2_{H^1}+m^2 \|\pi_2\xi\|^2_{H}+\|\pi_1\xi\|^2_{H}}.\nt 
\end{align}
Here, $[f]_{\emph{Lip},\dmnb}$ is defined in~\eqref{form:Lipschitz}, $\dmnb$ is the distance defined in~\eqref{form:d^m_(N,beta)}, and $V_m$ is the function as in~\eqref{form:V_m}.
\end{proposition}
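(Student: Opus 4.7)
The plan is to implement the Hairer--Mattingly Malliavin-calculus asymptotic coupling framework, adapted to the singular wave equation~\eqref{eqn:wave}. Let $J^m_{0,t}\xi$ denote the Jacobian of~\eqref{eqn:wave} in direction $\xi=(\xi_1,\xi_2)\in\Hcal^1$, i.e., the solution of the linearized equation
\begin{equation*}
\ddt\pi_1 J^m_{0,t}\xi=\pi_2 J^m_{0,t}\xi,\quad m\,\ddt\pi_2 J^m_{0,t}\xi=-A\pi_1 J^m_{0,t}\xi-\pi_2 J^m_{0,t}\xi+\f'(\um(t))\pi_1 J^m_{0,t}\xi
\end{equation*}
with initial data $\xi$, and for a predictable $v\in L^2([0,t];H)$ let $A^m_{0,t}\xi=\D^v\Um(t)$ be the Malliavin directional derivative of $\Um(t)$ along $v$, which satisfies the same linearization with forcing $Qv(s)\,\d s$ on the second component and zero initial data. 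Decompose $J^m_{0,t}\xi=A^m_{0,t}\xi+\rho^m(t)$, so $\rho^m=(\rhomu,\rhomv)$ obeys the homogeneous linearization with extra forcing $-Qv\,\d s$ and initial data $\xi$. Malliavin integration by parts then yields
\begin{equation*}
\la DP^m_tf(U),\xi\ra_{\Hcal^1}=\E\Bigl[f(\Um(t))\!\int_0^t\!\la v(s),\d w(s)\ra_H\Bigr]+\E\la Df(\Um(t)),\rho^m(t)\ra_{\Hcal^1}.
\end{equation*}

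The feedback control is chosen to exploit the spectral gap $\alpha_{\nbar}>a_\f$ from~\eqref{cond:phi:ergodicity}. First, let $\rho^m$ be the unique solution of the closed linear system
\begin{equation*}
\ddt\rhomu=\rhomv,\quad m\,\ddt\rhomv=-A\rhomu-\rhomv+\f'(\um(t))\rhomu-\alpha_{\nbar}P_{\nbar}\rhomu,\quad\rho^m(0)=\xi,
\end{equation*}
then set $v(s):=\alpha_{\nbar}Q^{-1}P_{\nbar}\rhomu(s)$, which is well defined on $\mathrm{range}(P_{\nbar})$ by Assumption~\ref{cond:Q}; subtracting the two linearizations confirms $A^m_{0,t}\xi+\rho^m(t)=J^m_{0,t}\xi$. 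An energy estimate for $\Psionem(\rho^m)$ parallel to Lemma~\ref{lem:irreducibility:shift}, using $\la\f'(\um)\rhomu,\rhomu\ra_H\le a_\f\|\rhomu\|^2_H$ and the splitting $\rhomu=P_{\nbar}\rhomu+(I-P_{\nbar})\rhomu$ with $\|(I-P_{\nbar})\rhomu\|^2_H\le\alpha_{\nbar+1}^{-1}\|(I-P_{\nbar})\rhomu\|^2_{H^1}$, produces a coercive term $-c\|\rhomu\|^2_{H^1}$; the cross terms $m\la\f'(\um)\rhomu,\rhomv\ra_H$ and $m\alpha_{\nbar}\la P_{\nbar}\rhomu,\rhomv\ra_H$ are absorbed via Young's inequality at the cost of a Lyapunov-weighted perturbation, yielding pathwise
\begin{equation*}
\ddt\Psionem(\rho^m(t))\le\bigl(-c_1+Cm\|\f'(\um(t))\|^2_{L^\infty}\bigr)\Psionem(\rho^m(t))
\end{equation*}
with $c_1,C>0$ independent of $m$. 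Gronwall gives $\Psionem(\rho^m(t))\le\Psionem(\xi)\exp\{-c_1 t+Cm\!\int_0^t\!\|\f'(\um(s))\|^2_{L^\infty}\d s\}$ and $\int_0^t\|v\|^2_H\d s\le C\!\int_0^t\!\Psionem(\rho^m(s))\d s$; since $\Psionem(\xi)\asymp m\|\xi_1\|^2_{H^1}+m^2\|\xi_2\|^2_H+\|\xi_1\|^2_H$ for $m$ small, this matches the weighted norm in~\eqref{ineq:asymptotic-Feller}.

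For the first term on the right of the integration-by-parts identity, Cauchy--Schwarz and the It\^o isometry give $|\E[f(\Um(t))\!\int_0^t\!\la v,\d w\ra]|\le\|f\|_{L^\infty}(\E\!\int_0^t\!\|v\|^2_H\d s)^{1/2}$; a H\"older split that separates the pathwise decay of $\Psionem(\rho^m)$ from the Lyapunov correction $\exp\{Cm\!\int_0^t\!\|\f'(\um)\|^2_{L^\infty}\d s\}$ produces the factor $\exp\{C\varepsilon\beta V_m(U)\}$, yielding the $\|f\|_{L^\infty}$-summand of~\eqref{ineq:asymptotic-Feller}. For the second term, the Lipschitz hypothesis $[f]_{\text{Lip},\dmnb}\le 1$ combined with the straight-line path $s\mapsto U+s\rho^m(t)$ in~\eqref{form:varrho^m_beta} gives
\begin{equation*}
|\la Df(\Um(t)),\rho^m(t)\ra|\le N e^{\beta V_m(\Um(t))}\sqrt{m\|\rhomu(t)\|^2_{H^1}+m^2\|\rhomv(t)\|^2_H+\|\rhomu(t)\|^2_H}.
\end{equation*}
Cauchy--Schwarz together with the super-Lyapunov bound~\eqref{ineq:moment-bound:H:exponential:super-Lyapunov} applied at $2\beta$---giving $\sqrt{\E e^{2\beta V_m(\Um(t))}}\le C\exp\{Ce^{-\gamma t}\beta V_m(U)\}$---and the pathwise decay of $\Psionem(\rho^m)$ combine to produce the $N$-summand with overall factor $Ne^{-c_1 t/2}\exp\{C(e^{-\gamma t}+\varepsilon)\beta V_m(U)\}$.

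The main technical obstacle is controlling the Lyapunov-weighted perturbation $m\|\f'(\um)\|^2_{L^\infty}$ in the energy estimate for $\Psionem(\rho^m)$. Here the polynomial restriction $\lambda<2$ from Assumption~\ref{cond:phi:well-posed} is decisive: by~\eqref{cond:phi:phi'=O(x^(lambda-1))} and Agmon's inequality in $d\le 3$, $\|\f'(\um)\|^2_{L^\infty}\lesssim 1+\|\um\|^{\lambda-1}_{H^1}\|\um\|^{\lambda-1}_{H^2}$, and the exponent $\lambda-1<1$ together with $m$ small ensure that $Cm\!\int_0^t\!\|\f'(\um)\|^2_{L^\infty}\d s$ is exponentially integrable with arbitrarily small exponent, controllable via Lemmas~\ref{lem:moment-bound:H} and~\ref{lem:moment-bound:H^2}; the parameter $\varepsilon$ in~\eqref{ineq:asymptotic-Feller} is precisely the slack needed in this H\"older step. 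This obstruction is absent in~\cite{cerrai2020convergence}, where $\f'$ is uniformly bounded and no Lyapunov correction appears, and is also why our ergodicity requires $\lambda<2$, stricter than the $\lambda<3$ of~\cite{barbu2002stochastic,martirosyan2014exponential} for fixed $m$.
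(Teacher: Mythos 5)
Your overall architecture --- the Malliavin integration-by-parts decomposition, the choice of control $v(s)=\alpha_{\nbar}Q^{-1}P_{\nbar}\rhomu(s)$, the resulting closed system for $\rho^m$, the use of the spectral gap $\alpha_{\nbar}>a_\f$, and the final combination with the super-Lyapunov bound~\eqref{ineq:moment-bound:H:exponential:super-Lyapunov} --- coincides with the paper's. However, there is a genuine gap in your treatment of the cross term $2m\la\f'(\um)\rhomu,\rhomv\ra_H$. You bound it by $C m\,\|\f'(\um)\|^2_{L^\infty}\Psionem(\rho^m)$ and then invoke Agmon's inequality to write $\|\f'(\um)\|^2_{L^\infty}\lesssim 1+\|\um\|^{\lambda-1}_{H^1}\|\um\|^{\lambda-1}_{H^2}$, claiming the resulting Gronwall exponent $Cm\int_0^t\|\f'(\um)\|^2_{L^\infty}\,\d s$ is exponentially integrable via Lemmas~\ref{lem:moment-bound:H} and~\ref{lem:moment-bound:H^2}. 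This does not close. Lemma~\ref{lem:moment-bound:H^2} supplies only \emph{polynomial} moments of $\Psitwom$; there is no exponential moment bound on any power of $\|\um\|_{H^2}$, and the factor $m$ (a fixed small parameter, independent of $\|\um(s)\|_{H^2}$) does not create one. So the quantity $\E\exp\{Cm\int_0^t\|\um(s)\|^{\lambda-1}_{H^1}\|\um(s)\|^{\lambda-1}_{H^2}\,\d s\}$ that both your $\|f\|_{L^\infty}$-summand and your $N$-summand require is simply not available.

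The paper sidesteps this by never passing through $L^\infty$. Instead it uses H\"older with $H^1\subset L^6$ in $d\le 3$ to place $\rhomu$ in $L^6$, $\f'(\um)$ in $L^3$, and $\rhomv$ in $L^2$:
\begin{equation*}
2m\la\f'(\um)\rhomu,\rhomv\ra_H\le c\,m\,\|\rhomu\|_{H^1}\|\rhomv\|_H\big(\|\um\|^{\lambda-1}_{H^1}+1\big),
\end{equation*}
so only $H^1$ norms of $\um$ appear. A Young split then absorbs $\|\um\|^{2(\lambda-1)}_{H^1}\le\varepsilon\beta\|\um\|^{2}_{H^1}+C_{\beta,\varepsilon}$ (this is where $\lambda<2$, i.e.\ $2(\lambda-1)<2$, is decisive --- not in Agmon), yielding the Gronwall perturbation $\varepsilon\beta\|\um(t)\|^2_{H^1}$ in~\eqref{ineq:d.Psi_1m:rho^m:a}. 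That quantity \emph{is} exponentially integrable over time with arbitrarily small coefficient by~\eqref{ineq:moment-bound:H:exponential:int_0^t}, which is precisely what the remainder of your argument needs. If you replace your $L^\infty$/Agmon bound of the cross term by this $L^6\cdot L^3\cdot L^2$ H\"older bound, the rest of your proof plan goes through and matches the paper's.
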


\begin{remark} We note that the estimate~\eqref{ineq:asymptotic-Feller} is slightly different from the usual approach \cite{hairer2006ergodicity,hairer2008spectral,hairer2011theory,
hairer2011asymptotic} where it is sufficient to derive a bound on $\|DP^m_tf(U)\|_{\Hcal^1}=\sup_{\|\xi\|_{\Hcal^1}=1}\big|\la D P_t^m f(U),\xi\ra_{\Hcal^1}\big|$. In our settings, since we will employ~\eqref{ineq:asymptotic-Feller} to obtain the contracting of $P^m_t$ with respect to $\dmnb$, which is related to $\varrho^m_\beta$ defined in~\eqref{form:varrho^m_beta}, it is crucial to keep track of the dependence on $m$ and $\xi$, hence the right--hand side of~\eqref{ineq:asymptotic-Feller}. See the proof of Proposition~\ref{prop:contracting-d-small}, part 1, for a further discussion of this point.

\end{remark}

Before diving into the proof of Proposition~\ref{prop:asymptotic-Feller}, for the reader's convenience, we will briefly sketch the main approach that was developed in \cite{hairer2006ergodicity,hairer2008spectral,hairer2011theory}. For any  $\xi$ in $\Hcal^1$, we denote by $J^m_{0,t}\xi$, the derivative of $(\um(t),\vm(t))$ with respect to the initial condition $U_0\in\Hcal^1$, along the direction $\xi$. We observe that $J^m_{0,t}\xi$ satisfies the following random equation
\begin{align}
\ddt \Jmu\xi & = \Jmv\xi,  \nt \\
m\,\ddt \Jmv\xi &=-A\,\Jmu\xi-\Jmv\xi+\f'(\um(t))\Jmu\xi, \label{eqn:J}  \\ 
J^m_{0,0}\xi&=\xi.\nt 
\end{align}
Moreover, for any adapted process $\zeta\in L^2([0,t];H)$, we introduce $A_{0,t}^m  \zeta(t)$ the Malliavin derivative of $(\um(t),\vm(t))$ with respect to $w(t)$ along the path $\zeta(t)$. We note that $A_{0,t}^m\zeta$ is the solution of
\begin{align} 
\ddt \Amu\zeta & =  \Amv\zeta,  \nt \\
m\,\ddt \Amv\zeta &=-A\,\Amu\zeta-\Amv\zeta+\f'(\um(t))\Amu\zeta +Q\zeta(t), \label{eqn:A:Malliavin} \\
A^m_{0,0}\zeta&=0.\nt
\end{align}
Denote $\rho^m (t)=J_{0,t}^m\xi-A_{0,t}^m\zeta  $. Then $\rho^m(t)$ solves
\begin{align}
\ddt \rhomu(t) & =  \rhomv(t),  \nt \\
m\,\ddt \rhomv(t) &=-A\, \rhomu(t)-\rhomv(t)+\f'(\um(t))\rhomu(t) -Q\zeta(t),  \label{eqn:rho-JA} \\
\rho^m(0)&=\xi.\nt
\end{align}
We observe that for $f\in C^1_b(\Hcal^1)$ and $U,\xi\in\Hcal^1$,
\begin{align}
\la D P^m_t  f(U),\xi\ra_{\Hcal^1}&= \E\la D f(\um(t),\vm(t) ),J_{0,t}^m \xi\ra_{\Hcal^1}    \nt\\
&=\E\la Df(\um(t),\vm(t)),\rho^m(t)\ra_{\Hcal^1}+\E\la Df(\um(t),\vm(t)),A_{0,t}^m\zeta\ra_{\Hcal^1}  \nt\\
&=\E\la Df(\um (t),\vm(t)),\rho^m(t)\ra_{\Hcal^1}+\E f(\um(t),\vm(t))\int_0^t\la \zeta(s),Q\d w(s)\ra_H,\label{eqn:Malliavin-by-part}
\end{align}
where the last implication follows from the Malliavin integration by part and the integral is understood in Skorohod sense, should $\zeta(t)$ be non adapted. It is now a control problem to find $\zeta(\cdot)\in L^2([0,\infty);H)$ such that
\[\lim_{t\to\infty}\E\la Df(\um (t),\vm(t)),\rho^m(t)\ra_{\Hcal^1}=0,\quad\text{and}\quad \E\Big|\int_0^\infty\close\langle \zeta(s),Q\d w(s)\rangle_H\Big|^2<\infty.\]
Together with~\eqref{eqn:Malliavin-by-part}, the choice of $\zeta(t)$ will guarantee that the Markov semigroup $P_t^m $ satisfies the asymptotic strong Feller property as stated in~\eqref{ineq:asymptotic-Feller}. In the proof of Proposition~\ref{prop:asymptotic-Feller} below, we will construct such functions to meet our requirement.

\begin{proof}[Proof of Proposition~\ref{prop:asymptotic-Feller}]
Let $f\in C^1(\Hcal^1;\rbb)$ satisfy $[f]_{\text{Lip}_{\dmnb}}\le 1$. Recalling~\eqref{form:Lipschitz}, note that
\begin{align*}
[f]_{\text{Lip},\dmnb}=\sup_{U\neq \Ut}\frac{f(U)-f(\Ut)}{\dmnb(U,\tilde{U})}=\sup_{U\neq \Ut}\frac{\big(f(U)-f(0)\big)-\big(f(\tilde{U})-f(0)\big)}{\dmnb(U,\tilde{U})}.
\end{align*}
Thus, we may assume without loss of generality that $f(0)=0$. In particular, since $\dmnb\le 1$, cf.~\eqref{form:d^m_(N,beta)}, for all $U\in\Hcal^1$
\begin{align*}
|f(U)|=|f(U)-f(0)|\le \dmnb(U,0)\le 1.
\end{align*}
Furthermore, by choosing the path 
\begin{equation} \label{form:gamma_*}
\gamma_*(s)=s\cdot U+(1-s)\cdot\tilde{U},
\end{equation}
we see that
\begin{align*}
|f(U)-f(\Ut)|\le \dmnb(U,\Ut)&\le N\varrho^m_\beta(U,\Ut)\\
&\le N\int_0^1 e^{\beta V_m(\gamma_*(s))}\d s \cdot \big(m\|u-\ut\|^2_{H^1}+m^2\|v-\vt\|^2_{H}+\|u-\ut\|^2_{H}\big)^{1/2}.
\end{align*}
It follows that
\begin{align} \label{ineq:<df(U),Utilde>}
\la D f(U),\Ut\ra_{\Hcal^1}\le 2 N e^{\beta V_m(U)}  \big(m\|\ut\|^2_{H^1}+m^2\|\vt\|^2_{H}+\|\ut\|^2_{H}\big)^{1/2}.
\end{align}

Next, in view of~\eqref{eqn:J}--\eqref{eqn:A:Malliavin}--\eqref{eqn:rho-JA}, we choose $\zeta$ as
\begin{align} \label{form:zeta(t)}
\zeta(t)=\alpha_{\nbar} Q^{-1}P_{\nbar} \rhomu(t), 
\end{align}
where $\alpha_{\nbar}$ is the eigenvalue of $A$ as in~\eqref{cond:phi:ergodicity}, and $P_{\nbar}$ is the projection defined in~\eqref{form:P_n.u}. We note that this choice is possible thanks to the condition~\eqref{cond:Q:ergodicity}, that is $Q$ is invertible on span$\{e_1,\dots,e_{\nbar}\}$. With this choice of $\zeta$, observe that~\eqref{eqn:rho-JA} is reduced to
\begin{align}
\ddt \rhomu(t) & =  \rhomv(t),  \nt \\
m\,\ddt \rhomv(t) &=-A\, \rhomu(t)-\rhomv(t)+\f'(\um(t))\rhomu(t) -\alpha_{\nbar} P_{\nbar} \rhomu(t),  \label{eqn:rho-JA:eta} \\
\rho^m(0)&=\xi.\nt
\end{align}
To estimate $\rho^m(t)$, recalling $\Psionem$ defined in~\eqref{form:Psi_1m}, we employ~\eqref{eqn:rho-JA:eta} and compute 
\begin{align}
\ddt \Psionem(\rho^m(t))
&=-\|\rhomu(t)\|^2_{H^1}-m\|\rhomv(t)\|^2_H \nt \\
&\qquad+ 2m\la \f'(\um(t))\rhomu(t),\rhomv(t)\ra_H-2m\alpha_{\nbar}\la P_{\nbar}\rhomu(t),\rhomv(t)\ra_H   \nt \\
&\qquad +\la \f'(\um(t)\rhomu(t),\rhomu(t)\ra_H-\alpha_{\nbar}\| P_{\nbar}\rhomu(t)\|^2_H. \label{eqn:Ito:Psi_1m:rho^m}
\end{align}
In view of $\alpha_{\nbar}$ as in~\eqref{cond:phi:ergodicity}, we pick
\begin{equation*}
\ebar = \frac{\alpha_{\nbar}-a_\f}{2\alpha_{\nbar}}\in (0,1/2),
\end{equation*}
where $a_\f=\sup_{x\in\rbb}\f'(x)$. We invoke~\eqref{cond:phi:sup.phi'<a_f} together with Sobolev embedding
\begin{align*}
&-\|A^{1/2}\rhomu(t)\|^2_H+\la \f'(\um(t))\rhomu(t) ,\rhomu(t)\ra_H-\alpha_{\nbar}\| P_{\nbar}\rhomu(t)\|^2_H\\
&\le -\ebar\|A^{1/2}\rhomu(t)\|^2_H-(1-\ebar)\|(I-P_{\nbar})A^{1/2}\rhomu(t)\|_H+a_\f\|\rhomu(t)\|_H^2-\alpha_{\nbar}\| P_{\nbar}\rhomu(t)\|^2_H\\
&\le -\ebar\|A^{1/2}\rhomu(t)\|^2_H-(1-\ebar)\alpha_{\nbar}\|(I-P_{\nbar})\rhomu(t)\|_H+a_\f\|\rhomu(t)\|_H^2-\alpha_{\nbar}\| P_{\nbar}\rhomu(t)\|^2_H\\
&=  -\ebar\|A^{1/2}\rhomu(t)\|^2_H - \tfrac{1}{2}(\alpha_{\nbar}-a_\f)\|\rhomu(t)\|^2_H.
\end{align*}
To estimate the cross term $\la P_{\nbar}\rhomu(t),\rhomv(t)\ra_H$ on the right--hand side of~\eqref{eqn:rho-JA:eta}, we employ Cauchy--Schwarz inequality and obtain
\begin{align*}
-2m\alpha_{\nbar}\la P_{\nbar}\rhomu(t),\rhomv(t)\ra_H \le \tfrac{1}{4}m\|\rhomv(t)\|^2_H+4m\alpha_{\nbar}^2\|\rhomu(t)\|^2_H.
\end{align*}
Concerning the cross term $\la \f'(\um(t))\rhomu(t),\rhomv(t)\ra_H$, for small $\varepsilon\in(0,1)$ to be chosen later, we employ condition~\eqref{cond:phi:phi'=O(x^(lambda-1))} making use of the embedding $H^1 \subset L^6$ and Holder's inequality to estimate as follows:
\begin{align*}
& 2m\la \f'(\um(t))\rhomu(t),\rhomv(t)\ra_H\\
  & \le  c\, m \la |\rhomu(t)|\cdot (|\um(t)|^{\lambda-1}+1 ),|\rhomv(t)|\ra_H\\
 &\le c\, m\|\rhomu(t)\|_{H^1}\|\rhomv(t)\|_H\big( \|\um(t)\|^{\lambda-1}_{H^1}+1    \big)\\
&\le  m \|\rhomu(t)\|_{H^1}^2 \cdot \varepsilon\beta\|\um(t)\|^{2}_{H^1} + \tfrac{1}{4}m\|\rhomv(t)\|^2_H+C_{\beta,\varepsilon} m\|\rhomu(t)\|^2_{H^1}.
\end{align*}
We emphasize that $C_{\beta,\varepsilon}$ does not depend on $m$. Collecting the above estimates together with~\eqref{eqn:Ito:Psi_1m:rho^m} and take $m$ sufficiently small, we obtain 
\begin{align}
&\ddt \Psionem(\rho^m(t))   \nt \\
 &\le -\ebar\|A^{1/2}\rhomu(t)\|^2_H - \tfrac{1}{4}(\alpha_{\nbar}-a_\f)\|\rhomu(t)\|^2_H -\tfrac{1}{2}m\|\rhomv(t)\|^2_H \nt  \\
&\qquad+  m \|\rhomu(t)\|_{H^1}^2 \cdot \varepsilon\beta\|\um(t)\|^{2}_{H^1}. \label{ineq:d.Psi_1m:rho^m}
\end{align}
It follows that 
\begin{align}
\ddt \Psionem(\rho^m(t))  \le  \big(-c + \varepsilon\beta\|\um(t)\|^{2}_{H^1}  \big)\Psionem(\rho^m(t)), \label{ineq:d.Psi_1m:rho^m:a}
\end{align}
holds for some positive constant $c$ independent of $m$, $\beta$ and $\varepsilon$. Recalling $\rho^m(0)=\xi $, we deduce
\begin{align*}
\Psionem(\rho^m(t))  \le e^{-c t+\varepsilon\beta\int_0^t \|\um(r)\|^{2}_{H^1}\d r }\Psionem(\xi),
\end{align*}
whence
\begin{align*}
\E\Psionem(\rho^m(t))  \le e^{-c t}\Psionem(\xi) \E e^{\varepsilon\beta\int_0^t \|\um(r)\|^{2}_{H^1}\d r}.
\end{align*}
In light of~\eqref{ineq:moment-bound:H:exponential:int_0^t}, for $\beta$ sufficiently small, we deduce
\begin{align} \label{ineq:Psi_1m(rho^m(t))}
&\E\Psionem(\rho^m(t))   \nt  \\
& \le 2\,e^{-c t}\Psionem(\xi) \exp\big\{ 2\varepsilon \beta\big[ \Psionem(u,v)+2m\|\Phi_1(u)\|_{L^1}\big] \big\}.
\end{align}
holds for some positive constant $c =c(\alpha_{\nbar})$ independent of $\varepsilon,\beta,m,t$ and $\xi$. Recalling $V_m(u,v)$ is equivalent to $\Psionem(u,v)+\|\Phi_1(u)\|_{L^1}$, cf.~\eqref{cond:V_m<Psi_1m<V_m}, we combine~\eqref{ineq:Psi_1m(rho^m(t))} with~\eqref{ineq:moment-bound:H:exponential:super-Lyapunov} and~\eqref{ineq:<df(U),Utilde>} to infer
\begin{align*}
&\E\la Df(\um (t),\vm(t)),\rho^m(t)\ra_{\Hcal^1}\\
& \le 2 N \E\Big[ e^{\beta V_m(\um(t),\vm(t))}  \big(m\|\rhomu(t)\|^2_{H^1}+m^2\|\rhomv(t)\|^2_{H}+\|\rhomu(t)\|^2_{H}\big)^{1/2} \Big]\\
&\le C\,N \big(\E \exp\big\{ C\beta [\Psionem(\um(t),\vm(t))+2m\|\Phi_1(\um(t))\|_{L^1}]   \big\}\big)^{1/2}\big( \E\Psionem(\rho^m(t)) \big)^{1/2}\\
&\le C \, N e^{-ct} \exp\big\{ \big(C\,e^{-\gamma t}+\varepsilon\big)\beta[\Psionem(u,v)+2m\|\Phi_1(u)\|_{L^1}]  \big\}  \sqrt{\Psionem(\xi)}\\
&\le C\,  N e^{-ct }\exp\big\{ C\big(e^{-\gamma t}+\varepsilon\big)\beta V_m(U)\big\}\sqrt{\Psionem(\xi)}.
\end{align*}
In the above, $\gamma$ is the constant as in~\eqref{ineq:moment-bound:H:exponential:super-Lyapunov}. Since $\Psionem(\xi) $ is dominated by $m\|\pi_1\xi\|^2_{H^1}+m^2 \|\pi_2\xi\|^2_{H}+\|\pi_1\xi\|^2_{H}$, we deduce further 
\begin{align}
&\E\la Df(\um (t),\vm(t)),\rho^m(t)\ra_{\Hcal^1} \nt \\
& \le C\, N\, e^{-ct} \exp\big\{C\big(e^{-\gamma t}+\varepsilon\big)\beta V_m(U) \big\}\sqrt{m\|\pi_1\xi\|^2_{H^1}+m^2 \|\pi_2\xi\|^2_{H}+\|\pi_1\xi\|^2_{H}}.\label{ineq:E<Df,rho^m>}
\end{align}
We emphasize that $C,c$ are independent of $N,\gamma,\beta,\varepsilon,
 m$, $U$ and $\xi$.

Next, to estimate $\zeta(t)$ defined in~\eqref{form:zeta(t)}, we invoke~\eqref{ineq:Psi_1m(rho^m(t))} and It\^o's isometry to estimate
\begin{align}
\E\Big|\int_0^\infty\close \la \zeta(t),Q\d w(t)\ra_H\Big|^2 &= \int_0^\infty \close\E\|Q\zeta(t)\|^2_H\d t    \nt \\
&\le \alpha_{\nbar}^2 \int_0^\infty\close \E\|\rhomu(t)\|^2_H\d t  \nt \\
&\le C \big(m\|\pi_1\xi\|^2_{H^1}+m^2 \|\pi_2\xi\|^2_{H}+\|\pi_1\xi\|^2_{H}\big)\exp \big\{C\varepsilon\beta V_m(U)   \big\}.\label{ineq:int_0^infty.zeta}
\end{align}

Finally, we collect~\eqref{eqn:Malliavin-by-part},~\eqref{ineq:E<Df,rho^m>} and~\eqref{ineq:int_0^infty.zeta} to obtain the bound
\begin{align*}
\big|\la D P^t_m f(U),\xi\ra_{\Hcal^1}\big|
&\le C\Big( N\, e^{-ct} \exp\big\{C\big(e^{-\gamma t}+\varepsilon\big)\beta V_m(U) \big\}+\|f\|_{L^\infty}\exp \big\{C\varepsilon\beta V_m(U)   \big\}\Big)\\
&\qquad\times\sqrt{m\|\pi_1\xi\|^2_{H^1}+m^2 \|\pi_2\xi\|^2_{H}+\|\pi_1\xi\|^2_{H}}.
\end{align*}
This produces~\eqref{ineq:asymptotic-Feller}, as claimed.

\end{proof}

\subsection{Proof of Theorem \ref{thm:geometric-ergodicity:mass}  } \label{sec:geometric-ergodicity:proof}

In this section, we establish the uniform exponential convergent rate of $P^m_t$ toward $\num$. Two of the main ingredients, namely, $\dmnb-$contracting property and $\dmnb-$small sets, are given in the following proposition: \cite{butkovsky2020generalized,hairer2011asymptotic,
kulik2017ergodic,kulik2015generalized}

\begin{proposition} \label{prop:contracting-d-small} Under the same hypothesis of Theorem \ref{thm:geometric-ergodicity:mass}. Then, for all $N$ sufficiently large, $\beta$ sufficiently small, the followings hold for all $m$ sufficiently small:

1. The distance $\dmnb$ as in~\eqref{form:d^m_(N,beta)} is contracting for $P_t$.
That is, there exists $t_1=t_1(N,R)>0$ independent of $m$ such that
\begin{align}\label{ineq:contracting-d-small:contracting}
\W_{\dmnb}(P_t^m(U_0,\cdot),P_t^m(\Ut_0,\cdot))\le \tfrac{1}{2} \dmnb(U_0,\Ut_0),\quad t\ge t_1,
\end{align}
whenever $\dmnb(U,\Ut)<1$.

2.  For all $R$, there exists $t_2=t_2(N,\beta,R)>0$ independent of $m$ such that the set $B_R^m=\{U:V_m(U)\le R\}$, where $V_m$ is defined in~\eqref{form:V_m}, is $\dmnb$--small.
That is 
\begin{align}\label{ineq:contracting-d-small:d-small}
\sup_{U,\Ut\in B_R^m}\W_{\dmnb}(P_t^m(U,\cdot),P_t^m(\Ut,\cdot))\le 1-\gamma_1,\quad t\ge t_2,
\end{align}
for some $\gamma_1=\gamma_1(t,N,\beta,R)$ independent of $m$.

\end{proposition}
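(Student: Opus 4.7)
I will prove the two claims separately, both building on the asymptotic strong Feller bound of Proposition~\ref{prop:asymptotic-Feller}, the irreducibility of Proposition~\ref{prop:irreducibility}, and the Kantorovich--Rubinstein duality~\eqref{form:W_d:dual-Kantorovich}, which applies since $\dmnb$ is a genuine metric (the truncation of a length metric).

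For part~1, assume $\dmnb(U_0,\Ut_0)<1$, so the truncation in~\eqref{form:d^m_(N,beta)} is inactive and $\dmnb(U_0,\Ut_0)=N\varrho^m_\beta(U_0,\Ut_0)$. Fix $f\in C^1_b(\Hcal^1;\rbb)$ with $[f]_{\emph{Lip},\dmnb}\le 1$; subtracting the constant $f(0)$, we may assume $\|f\|_{L^\infty}\le\sup_U\dmnb(U,0)\le 1$. For any $C^1$ path $\gamma:[0,1]\to\Hcal^1$ with $\gamma(0)=U_0$, $\gamma(1)=\Ut_0$, the fundamental theorem of calculus combined with~\eqref{ineq:asymptotic-Feller} gives
\begin{align*}
|P_t^m f(U_0)-P_t^m f(\Ut_0)| &\le \int_0^1|\la DP_t^m f(\gamma(s)),\gamma'(s)\ra_{\Hcal^1}|\,\d s\\
&\hspace{-2cm}\le C\int_0^1\Big(Ne^{-ct}\,e^{C(e^{-\gamma t}+\varepsilon)\beta V_m(\gamma(s))}+e^{C\varepsilon\beta V_m(\gamma(s))}\Big)\\
&\hspace{-1cm}\times\sqrt{m\|\pi_1\gamma'(s)\|_{H^1}^2+m^2\|\pi_2\gamma'(s)\|_H^2+\|\pi_1\gamma'(s)\|_H^2}\,\d s.
\end{align*}
Choosing $\varepsilon$ small and then $t$ large enough that $C(e^{-\gamma t}+\varepsilon)\le 1$, both exponentials are dominated by $e^{\beta V_m(\gamma(s))}$, and the integrand now matches the one defining $\varrho^m_\beta$ in~\eqref{form:varrho^m_beta}. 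Taking the infimum over $\gamma$ and invoking duality,
\begin{align*}
\W_{\dmnb}\bigl(P_t^m(U_0,\cdot),P_t^m(\Ut_0,\cdot)\bigr)\le C(Ne^{-ct}+1)\,\varrho^m_\beta(U_0,\Ut_0)=\bigl(Ce^{-ct}+C/N\bigr)\dmnb(U_0,\Ut_0),
\end{align*}
and selecting $N\ge 4C$ together with $t_1$ satisfying $Ce^{-ct_1}\le 1/4$ yields~\eqref{ineq:contracting-d-small:contracting}. Since the constants $c,C$ in~\eqref{ineq:asymptotic-Feller} are $m$-independent, so are $N$ and $t_1$.

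For part~2, observe that each summand of $V_m$ in~\eqref{form:V_m} is a convex function of $(u,v)$, so along the straight line $\gamma(s)=sU+(1-s)\Ut$ one has $V_m(\gamma(s))\le\max\{V_m(U),V_m(\Ut)\}$; moreover each of $m\|\pi_1(U-\Ut)\|^2_{H^1}$, $m^2\|\pi_2(U-\Ut)\|^2_H$ and $\|\pi_1(U-\Ut)\|^2_H$ is bounded by $4\max\{V_m(U),V_m(\Ut)\}$. Setting $S_r:=\{V_m\le r\}$, these facts combine to give the $m$-uniform bound
\begin{align*}
\varrho^m_\beta(U,\Ut)\le e^{\beta r}\sqrt{12\,r}\qquad\text{for all }U,\Ut\in S_r,
\end{align*}
so by shrinking $r$ so that $N e^{\beta r}\sqrt{12r}\le 1/2$ (depending only on $N,\beta$) we obtain $\dmnb\le 1/2$ on $S_r\times S_r$. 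For arbitrary $U,\Ut\in B_R^m$, Proposition~\ref{prop:irreducibility} furnishes $t_2\ge T_1$ and $c>0$, independent of $m$, with $P^m_{t_2}(U,S_r),P^m_{t_2}(\Ut,S_r)\ge c$. The independent coupling $(X_1,X_2)\sim P^m_{t_2}(U,\cdot)\otimes P^m_{t_2}(\Ut,\cdot)$ then satisfies $\P(X_1\in S_r,X_2\in S_r)\ge c^2$, whence
\begin{align*}
\W_{\dmnb}\bigl(P^m_{t_2}(U,\cdot),P^m_{t_2}(\Ut,\cdot)\bigr)\le\E\,\dmnb(X_1,X_2)\le\tfrac{1}{2}c^2+(1-c^2)=1-\tfrac{1}{2}c^2,
\end{align*}
which is~\eqref{ineq:contracting-d-small:d-small} with $\gamma_1=c^2/2$.

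The main delicacy, already handled by the proof of Proposition~\ref{prop:asymptotic-Feller}, is the precise matching between the $m$-weighted square-root factor in~\eqref{ineq:asymptotic-Feller} and the infinitesimal length element of $\varrho^m_\beta$; without this matching, minimising over paths in part~1 would produce an $m$-dependent surrogate for $\varrho^m_\beta$ and the contraction constant would degenerate in the limit $m\to 0$.
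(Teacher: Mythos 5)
Your proof is correct and follows essentially the same route as the paper: Part~1 integrates the asymptotic strong Feller bound of Proposition~\ref{prop:asymptotic-Feller} along a path, chooses $\varepsilon$, $t$, $N$ to absorb the exponentials into $e^{\beta V_m}$ and force the prefactor below $1/2$, while Part~2 combines the straight-line upper bound on $\varrho^m_\beta$ over small $V_m$-sublevel sets with the irreducibility bound of Proposition~\ref{prop:irreducibility} and an independent coupling. The only cosmetic difference is in Part~2, where you use convexity of $V_m$ to get $V_m(\gamma(s))\le\max\{V_m(U),V_m(\Ut)\}$ along the segment, whereas the paper uses the cruder estimate $V_m(sU+(1-s)\Ut)\le 4V_m(U)+4V_m(\Ut)$; both give an $m$-uniform small bound on $\varrho^m_\beta$ over $S_r\times S_r$ once $r$ is taken small depending only on $N,\beta$, so this plays no essential role.
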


Assuming the result of Proposition~\ref{prop:contracting-d-small}, we now conclude Theorem \ref{thm:geometric-ergodicity:mass}, whose proof follows along the lines of \cite[Theorem 4.8]{hairer2011asymptotic}. For the sake of completeness, we sketch the main ideas without going into detail. See also \cite{glatt2022short,glatt2021mixing}.

\begin{proof}[Sketch of the proof of Theorem~\ref{thm:geometric-ergodicity:mass}]
Fix $N$ large and $\beta$ small enough such that Proposition~\ref{prop:contracting-d-small} holds. We first claim that there exist positive constants $t_3$ and $C$ independent of $m$ and $(u_0,v_0)$ such that
\begin{align} \label{ineq:moment-bound:H:exponential:super-Lyapunov:a}
\E\,e^{\beta V_m(\um(t),\vm(t))} \le \tfrac{1}{8} e^{\frac{1}{4}\beta V_m(u_0,v_0)}+C,\quad t\ge t_3.
\end{align}
To see this, fixing $\varepsilon>0$ to be chosen later, we invoke~\eqref{cond:V_m<Psi_1m<V_m} and Holder's inequality to estimate
\begin{align*}
&\E\,e^{\beta V_m(\um(t),\vm(t))}\\
 & \le \E\,e^{c\beta[ \Psionem(\um(t),\vm(t))+\|\Phi_1(\um(t))\|_{L^1}]  } \\
&\le \Big|\E\,e^{\varepsilon\beta[ \Psionem(\um(t),\vm(t))+\|\Phi_1(\um(t))\|_{L^1}]  }  \Big|^{1/2}\Big|\E\,e^{(2c-\varepsilon)\beta[ \Psionem(\um(t),\vm(t))+\|\Phi_1(\um(t))\|_{L^1}]  }\Big|^{1/2}.
\end{align*}
From~\eqref{ineq:moment-bound:H:exponential}, observe that
\begin{align*}
\E\,e^{\varepsilon\beta[ \Psionem(\um(t),\vm(t))+\|\Phi_1(\um(t))\|_{L^1}]  } 
&\le Ce^{-ct}e^{\varepsilon\beta[ \Psionem(u_0,v_0)+\|\Phi_1(u_0)\|_{L^1}]  } +C\\
&\le Ce^{-ct}e^{C\varepsilon \beta V_m(u_0,v_0)} +C,
\end{align*}
where in the last implication above, we once again employed~\eqref{cond:V_m<Psi_1m<V_m}. Likewise, for $\gamma>0$ small enough,~\eqref{ineq:moment-bound:H:exponential:super-Lyapunov} implies
\begin{align*}
\E\,e^{(2c-\varepsilon)\beta[ \Psionem(\um(t),\vm(t))+\|\Phi_1(\um(t))\|_{L^1}]  } 
&\le Ce^{(2c-\varepsilon)e^{-\gamma t}\beta[ \Psionem(u_0,v_0)+\|\Phi_1(u_0)\|_{L^1}]  } \\
&\le Ce^{C(2c-\varepsilon)e^{-\gamma t} \beta V_m(u_0,v_0)}\\
&\le \tfrac{1}{16} e^{2C(2c-\varepsilon)e^{-\gamma t} \beta V_m(u_0,v_0)}+16C^2.
\end{align*}
It follows that
\begin{align*}
&\E\,e^{\beta V_m(\um(t),\vm(t))}\\
&\le Ce^{-ct}e^{C\varepsilon \beta V_m(u_0,v_0)}+ \tfrac{1}{16}e^{2C(2c-\varepsilon)e^{-\gamma t} \beta V_m(u_0,v_0)} +16C^2+C.
\end{align*}
By taking $\varepsilon$ sufficiently small and $t$ large enough, e.g.,
\begin{align*}
Ce^{-ct}<\tfrac{1}{16},\quad  C\varepsilon<\tfrac{1}{4},\quad 2C(2c-\varepsilon)e^{-\gamma t}<\tfrac{1}{4},
\end{align*}
we obtain
\begin{align*}
\E\,e^{\beta V_m(\um(t),\vm(t))} &\le \tfrac{1}{8} e^{\frac{1}{4}\beta V_m(u_0,v_0)}+16C^2+C.
\end{align*}
That is, we infer the existence of a positive constant $t_3$ such that~\eqref{ineq:moment-bound:H:exponential:super-Lyapunov:a} holds.

Next, for $\lambda>0$, we introduce the function--like distance
\begin{align*}
\dmtnbl(U,\Ut)=\sqrt{\dmtnb(U,\Ut)\big[1+ \lambda e^{\beta V_m(U)} +\lambda e^{\beta V_m(\Ut) }\big]}, \quad U,\Ut\in\Hcal^1.
\end{align*}
Since~\eqref{ineq:contracting-d-small:contracting},~\eqref{ineq:contracting-d-small:d-small} and~\eqref{ineq:moment-bound:H:exponential:super-Lyapunov:a} hold uniformly with respect to $m$, following the proof of \cite[Theorem 4.8]{hairer2011asymptotic}, there exists a time constant $T^*=T^*(N,\beta,\lambda)$ independent of $m$ such that
\begin{align*}
\W_{\dmtnbl}\big( (P^m_{T^*})^*\nu_1, (P^m_{T^*})^*\nu_2\big) \le \alpha_*\W_{\dmtnbl}\big( \nu_1, \nu_2\big),\quad  \nu_1,\nu_2\in \Pcal r(\Hcal^1),
\end{align*}
holds for some $\alpha^*=\alpha^*(T^*,N,\beta,\lambda)\in (0,1)$ independent of $m$. By Markov property, we obtain
\begin{align} \label{ineq:geometric-ergodicity:mass:lambda}
\W_{\dmtnbl}\big( (P^m_{nT^*})^*\nu_1, (P^m_{nT^*})^*\nu_2\big) \le \alpha_*^n\W_{\dmtnbl}\big( \nu_1, \nu_2\big),\quad  \nu_1,\nu_2\in \Pcal r(\Hcal^1).
\end{align}
Note that $\W_{\dmtnbl}$ is actually equivalent to $\W_{\dmtnb}$ thanks to the fact that
\begin{align*}
\tfrac{1}{\max\{\lambda,1\} }  \dmtnbl(U,\Ut) \le \dmtnb(U,\Ut) \le \tfrac{1}{ \min\{\lambda,1\} } \dmtnbl(U,\Ut).
\end{align*}
This together with~\eqref{ineq:geometric-ergodicity:mass:lambda} produces the exponential decaying rate~\eqref{ineq:geometric-ergodicity:mass}, as claimed.

\end{proof}

We now give the proof of Proposition~\ref{prop:contracting-d-small} whose argument is standard and can be found in literarture \cite{butkovsky2020generalized,hairer2011asymptotic,
kulik2017ergodic,kulik2015generalized}.

\begin{proof}[Proof of Proposition~\ref{prop:contracting-d-small}]

1. Fix $\beta$ sufficiently small as in Lemma~\ref{lem:moment-bound:H} and Proposition~\ref{prop:asymptotic-Feller} and let $N>0$ be given and be chosen later. Consider $U_0,\,\Ut_0\in\Hcal^1$  such that $\dmnb(U_0,\Ut_0)<1$. By the definition~\eqref{form:d^m_(N,beta)} of $\dmnb$, observe that
\begin{align*}
\dmnb(U_0,\Ut_0) = N\varrho^m_\beta(U_0,\Ut_0),
\end{align*}
where $\varrho^m_\beta$ is the metric defined in~\eqref{form:varrho^m_beta}. Since $\dmnb$ is a metric \cite{hairer2008spectral,hairer2011asymptotic}, in view of the dual formula~\eqref{form:W_d:dual-Kantorovich}, the desired bound~\eqref{ineq:contracting-d-small:contracting} is equivalent to
\begin{align} \label{ineq:contracting-d-small:contracting:P^m_t.f(U)}
|P^m_tf(U_0)-P^m_tf(\Ut_0)| \le \tfrac{1}{2}N\varrho^m_\beta(U_0,\Ut_0),
\end{align}
which holds for all $f\in C^1_b(\Hcal^1)$ satisfying $[f]_{\text{Lip},\dmnb}\le 1$ (see~\eqref{form:Lipschitz}). Moreover, from~\eqref{form:Lipschitz}, we note that
\begin{align*}
[f]_{\text{Lip},\dmnb} =\sup_{U\neq \Ut}\frac{f(U)-f(\Ut)}{\dmnb(U,\Ut)}=\sup_{U\neq \Ut}\frac{f(U)-f(0)-(f(\Ut)-f(0))}{\dmnb(U,\Ut)},
\end{align*}
it suffices to prove~\eqref{ineq:contracting-d-small:contracting:P^m_t.f(U)} for those functions $f$'s with $f(0)=0$. In particular, this implies $\|f\|_{L^\infty}\le 1$ since
\begin{align*}
|f(U)|=|f(U)-f(0)|\le \dmnb(U,0)\le 1,\quad U\in\Hcal^1.
\end{align*}

Next, we recall from Proposition~\ref{prop:asymptotic-Feller} to see that for all $U,\xi\in\Hcal^1$,
\begin{align*}
\big|\la D P_t^m f(U),\xi\ra_{\Hcal^1}\big|
&\le C\Big( N\, e^{-ct} \exp\big\{C\big(e^{-\gamma t}+\varepsilon\big)\beta V_m(U) \big\}+\|f\|_{L^\infty}\exp \big\{C\varepsilon\beta V_m(U)   \big\}\Big)\\
&\qquad\times\sqrt{m\|\pi_1\xi\|^2_{H^1}+m^2 \|\pi_2\xi\|^2_{H}+\|\pi_1\xi\|^2_{H}}, 
\end{align*}
holds for all $\varepsilon$ and $\gamma$ sufficiently small. Here $C$ and $c$ are independent of $\varepsilon$, $\gamma$, $N$ and $\beta$. By choosing $\varepsilon$ appropriately and $t$ large enough, e.g.,
\begin{align*}
 C\big(e^{-\gamma t}+\varepsilon\big)\le 1,
\end{align*}
we observe that (recalling $\|f\|_{L^\infty}\le 1$)
\begin{align*}
&\big|\la D P_t^m f(U),\xi\ra_{\Hcal^1}\big|\\
&\le C\Big( N\, e^{-ct} \exp\big\{\beta V_m(U) \big\}+\exp \big\{\beta V_m(U)   \big\}\Big)\sqrt{m\|\pi_1\xi\|^2_{H^1}+m^2 \|\pi_2\xi\|^2_{H}+\|\pi_1\xi\|^2_{H}}\\
&\le  N\, \exp\big\{\beta V_m(U) \big\}\big( Ce^{-ct}+\tfrac{C}{N} \big)\sqrt{m\|\pi_1\xi\|^2_{H^1}+m^2 \|\pi_2\xi\|^2_{H}+\|\pi_1\xi\|^2_{H}}.
\end{align*}
We may now choose $t_1$ and $N$ sufficiently large satisfying
\begin{align*}
Ce^{-ct_1}+\tfrac{C}{N} <\tfrac{1}{2},
\end{align*}
and obtain
\begin{align*}
\big|\la D P_t^m f(U),\xi\ra_{\Hcal^1}\big|&\le \tfrac{1}{2} N\, e^{\beta V_m(U) }\sqrt{m\|\pi_1\xi\|^2_{H^1}+m^2 \|\pi_2\xi\|^2_{H}+\|\pi_1\xi\|^2_{H}},\quad t\ge t_1.
\end{align*}

Turning to~\eqref{ineq:contracting-d-small:contracting:P^m_t.f(U)}, for any differentiable path $\gamma:[0,1]\to\Hcal^1$ connecting $U_0,\Ut_0$, we invoke the above estimate to infer
\begin{align*}
|P^m_tf(U_0)-P^m_tf(\Ut_0)| &= \int_0^1 \la D P^m_tf(\gamma(s)),\gamma'(s)\ra_{\Hcal^1}\d s \\
&\le \tfrac{1}{2} N \int_0^ 1  e^{\beta V_m(\gamma(s)) }\sqrt{m\|\pi_1\gamma'(s)\|^2_{H^1}+m^2 \|\pi_2\gamma'(s)\|^2_{H}+\|\pi_1\gamma'(s)\|^2_{H}}   \d s.
\end{align*}
Since the above estimate holds for arbitrarily such path $\gamma$, in view of the expression~\eqref{form:varrho^m_beta} for $\varrho^m_\beta$, we establish~\eqref{ineq:contracting-d-small:contracting:P^m_t.f(U)}. In turn, this produces~\eqref{ineq:contracting-d-small:contracting} and completes part 1.

2. With regard to~\eqref{ineq:contracting-d-small:d-small}, we first note that for $s\in[0,1]$, $\lambda\in[1,2)$ and $V_m$ as in~\eqref{form:V_m},
\begin{align*}
V_m(sU+(1-s)\Ut) \le 4V_m(U)+4V_m(\Ut).
\end{align*}
So, applying~\eqref{form:varrho^m_beta} to the path $\gamma^*(s)=sU+(1-s)\Ut$, we have
\begin{align}
\varrho^m_\beta(U,\Ut)&\le e^{4\beta V_m(U)+4\beta V_m(\Ut)}\sqrt{m\|u-\ut\|^2_{H^1}+m^2 \|v-\vt\|^2_{H}+\|u-\ut\|^2_{H}} \nt \\
&\le \big(e^{8\beta V_m(U)}+e^{8\beta V_m(\Ut)}\big)\sqrt{m\|u-\ut\|^2_{H^1}+m^2 \|v-\vt\|^2_{H}+\|u-\ut\|^2_{H}}.\label{ineq:varrho^m_beta<|U-U.tilde|}
\end{align}
The proof of~\eqref{ineq:contracting-d-small:d-small} now follows along the lines of~\cite[Lemma 5.3]{hairer2011asymptotic} and \cite[Theorem 2.4]{butkovsky2020generalized} tailored to our setting.
Let $U(t)$ and $\Ut(t)$ be the solutions of~\eqref{eqn:wave} with initial conditions $U_0$ and $\Ut_0$, respectively. Here, $U_0$ and $\Ut_0$ both belong to $B_R^m$. Let $(X,Y)$ be a coupling of $\big(P^m_t(U_0,\cdot),P^m_t(\Ut_0,\cdot)\big)$ such that $X$ and $Y$ are independent. Let $r>0$ be given and to be chosen later, by the definition~\eqref{form:W_d}, we invoke~\eqref{ineq:varrho^m_beta<|U-U.tilde|} to infer
\begin{align*}
&\W_{\dmnb}\!\big(P^m_t(U_0,\cdot),P^m_t(\Ut_0,\cdot)\big)\\
& \le \E \, \dmnb (X,Y) = \E \big[1\mi N\varrho^m_\beta(X,Y)\big]\\
&\le  N \sup_{U,\Ut\in B_r^m}\big(e^{8\beta V_m(U)}+e^{8\beta V_m(\Ut)}\big)\sqrt{m\|u-\ut\|^2_{H^1}+m^2 \|v-\vt\|^2_{H}+\|u-\ut\|^2_{H}}\\
&\qquad\times \P\big(\{X\in B_r^m \}\cap\{Y\in B_r^m\}\big) +\P\big(\{X\notin B_r^m \}\cup\{Y\notin B_r^m\}\big).
\end{align*}
In the above, $B^m_r=\{U:V_m(U)\le r\}$. Since $V_m(U)$ dominates $m\|u\|^2_{H^1}+m^2 \|v\|^2_{H}+\|u\|^2_{H}$, we pick $r=r(N)$ sufficiently small such that
\begin{align*}
 & N \sup_{U,\Ut\in B_r^m}\big(e^{8\beta V_m(U)}+e^{8\beta V_m(\Ut)}\big)\sqrt{m\|u-\ut\|^2_{H^1}+m^2 \|v-\vt\|^2_{H}+\|u-\ut\|^2_{H}} \\
 &\le 4 N e^{8\beta r} r<\tfrac{1}{2}.
\end{align*}
As a consequence, we obtain
\begin{align*}
\W_{\dmnb}\!\big(P_t^m(U_0,\cdot),P_t^m(\Ut_0,\cdot)\big)&\le 1-\tfrac{1}{2}\P\big(\{X\in B_r^m\}\cap\{Y\in B_r^m\}\big)\\
&= 1-\tfrac{1}{2}\P\big(X\in B_r^m\big)\P\big(Y\in B_r^m\big).
\end{align*}
In the above, we simply employed the choice of $X$ and $Y$ being independent. By virtue of Proposition~\ref{prop:irreducibility}, for $t\ge T_1$ where $T_1$ is the time constant in~\eqref{ineq:irreducibility}, it holds that 
\begin{align*}
\inf_{U_0\in B_R^m}\P(V_m(U(t))\le r)\ge c(R,r,t),
\end{align*}
for some positive constant $c(R,r,t)$ independent of $U_0$. It follows that 
\begin{align*}
\sup_{U_0,\Ut_0\in B^m_R}\W_{\dmnb}\big(P_t^m(U_0,\cdot),P_t^m(\Ut_0,\cdot)\big)\le 1-\tfrac{1}{2}c(R,r,t)^2.
\end{align*}
This establishes~\eqref{ineq:contracting-d-small:d-small}, thereby finishing the proof.

\end{proof}



\section{Small mass limit} \label{sec:small-mass}

\subsection{Proof of Theorem~\ref{thm:nu^m->nu^0}} \label{sec:small-mass:nu^m->nu^0}

In this subsection, we establish Theorem~\ref{thm:nu^m->nu^0}. For the reader’s convenience, we summarize the idea of the proof of Theorem~\ref{thm:nu^m->nu^0}. The argument
essentially consists of four steps as follows.

\emph{Step 1}: we first show that under suitable moment bounds on the random initial conditions, cf \eqref{cond:U_0} below, the difference $\|\um(t)-\uo(t)\|_H$ converges to zero as $m\to 0$. This result appears in Proposition~\ref{prop:m->0:|u^m-u^0|}.

\emph{Step 2}: as a consequence, we obtain the convergence in $\dtnb(\um(t),\uo(t))$ as $m\to 0$, thanks to the fact that $\|\um(t)-\uo(t)\|_H$ dominates $\dtnb(\um(t),\uo(t))$. This is discussed in details in Corollary~\ref{cor:m->0:dtnb(u^m,u^0)}.

\emph{Step 3}: we show that the invariant measure $\num$ satisfies the condition~\eqref{cond:U_0}, allowing for the choice of $\num$ as an eligible random initial condition. This is established in Proposition~\ref{prop:regularity}.

\emph{Step 4}: we prove Theorem~\ref{thm:nu^m->nu^0} by showing that $\W_{\dtnb}(\pi_1\num,\nu^0)$ is dominated by $\dtnb(\um(t),\uo(t))$  where $t$ is chosen to be sufficiently large and $\um(0)=\uo(0)$ are both distributed as $\pi_1\num$.

For the sake of clarity, the proofs of Proposition~\ref{prop:m->0:|u^m-u^0|}, Corollary \ref{cor:m->0:dtnb(u^m,u^0)} and Proposition~\ref{prop:regularity} are deferred to the end of this subsection. We start by stating Proposition~\ref{prop:m->0:|u^m-u^0|} asserting the convergence of $\um(t)$ toward $\uo(t)$ in $H$.

\begin{proposition} \label{prop:m->0:|u^m-u^0|}
Under the same hypothesis of Theorem~\ref{thm:geometric-ergodicity:mass}, let $U_0^m=(u_0^m,v_0^m)\in \Hcal^1$ be a random variable satisfying for all $p\ge 1$, and for all $\beta>0$ sufficiently small
\begin{align}
\limsup_{m\to 0}\,&\Big(\E \big[\|u_0^m\|^2_{H^2}+m\|v_0^m\|^2_{H^1}\big]+\E\Psitwom(u_0^m,v_0^m)^p  \nt \\
&\qquad +\E\exp\big\{\beta \big[\Psionem(u_0^m,v_0^m) +2m\|\Phi_1(u_0^m)\|_{L^1}\big]\big\}\Big)<\infty, \label{cond:U_0}
\end{align}
where $\Phi_1$, $\Psionem$ and $\Psitwom$ are defined in~\eqref{cond:Phi_1}, \eqref{form:Psi_1m} and \eqref{form:Psi_2m}, respectively. Suppose that $(\um(t),\vm(t))$ and $\uo(t)$ are respectively the solutions of~\eqref{eqn:wave} and~\eqref{eqn:react-diff} with initial conditions $U_0^m$ and $u_0^m$. Then, for all $T>0$,
\begin{align} \label{lim:m->0:|u^m-u^0|}
\sup_{t\in[0,T]}\E \|\um(t)-\uo(t)\|_H\to 0,\quad{as  }\,\,m\to 0. 
\end{align}
\end{proposition}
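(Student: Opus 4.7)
The plan is to adapt the strategy of \cite{cerrai2020convergence} to the polynomial nonlinearity $\lambda\in[1,2)$, using the moment bounds from Section~\ref{sec:moment-bound}. First, I would split off the stochastic convolutions: let $\Gamma^m(t)$ solve the linear version of~\eqref{eqn:wave} with zero initial data (studied in Appendix~\ref{sec:stochastic-convolution}), and let $\Lambda(t)=\int_0^t e^{-(t-s)A}Q\,\d w(s)$ be the stochastic convolution associated with~\eqref{eqn:react-diff}. Writing $u^m=\bar u^m+\pi_1\Gamma^m$ and $u^0=\bar u^0+\Lambda$, the triangle inequality reduces matters to controlling $\E\|\bar u^m-\bar u^0\|_H$ and $\E\|\pi_1\Gamma^m-\Lambda\|_H$ uniformly on $[0,T]$. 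The latter tends to zero as $m\to 0$ by a direct mode-by-mode computation leveraging $\mathrm{Tr}(QAQ^*)<\infty$, carried out in Appendix~\ref{sec:stochastic-convolution}.

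For the nonlinear parts, $(\bar u^m,\bar v^m):=(u^m-\pi_1\Gamma^m,v^m-\pi_2\Gamma^m)$ satisfies the \emph{noise-free} system $\d\bar u^m=\bar v^m\,\d t$, $m\,\d\bar v^m=[-A\bar u^m-\bar v^m+\f(u^m)]\,\d t$ with initial data $(u_0^m,v_0^m)$, and similarly $\bar u^0:=u^0-\Lambda$ solves $\partial_t\bar u^0=-A\bar u^0+\f(u^0)$ with $\bar u^0(0)=u_0^m$. Integrating the $\bar v^m$-equation in time and substituting $\int_0^t\bar v^m\,\d s=\bar u^m(t)-u_0^m$ eliminates the velocity and produces the pathwise identity
\begin{equation*}
\bar y^m(t)+m\bar v^m(t)-mv_0^m=\int_0^t\bigl[-A\bar y^m(s)+\f(u^m(s))-\f(u^0(s))\bigr]\,\d s,\qquad \bar y^m:=\bar u^m-\bar u^0.
\end{equation*}
Because the noise has canceled on the right, $\bar Z^m(t):=\bar y^m(t)+m(\bar v^m(t)-v_0^m)$ is absolutely continuous in $t$ with $\bar Z^m(0)=0$ and $\partial_t\bar Z^m=-A\bar y^m+\f(u^m)-\f(u^0)$.

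Testing against $\bar Z^m$, the principal dissipation $-\|\bar y^m\|_{H^1}^2$ dominates, the main nonlinear contribution is bounded by $a_\f\|\bar y^m\|_H^2$ via~\eqref{cond:phi:sup.phi'<a_f}, and the cross terms involving $m(\bar v^m-v_0^m)$ split into an $H^1$-type piece absorbed into $\tfrac14\|\bar y^m\|_{H^1}^2+Cm^2(\|\bar v^m\|_{H^1}^2+\|v_0^m\|_{H^1}^2)$ plus a lower-order piece bounded by $Cm\|\bar v^m-v_0^m\|_H(1+\|u^m\|_{H^2}^{\lambda-1}+\|u^0\|_{H^2}^{\lambda-1})\|\bar y^m\|_H$ using~\eqref{cond:phi:phi'=O(x^(lambda-1))} and the embedding $H^2\hookrightarrow L^\infty$ (valid for $d\le 3$). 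The decisive observation is that, since $(\bar u^m,\bar v^m)$ solves a noise-free equation, energy identities analogous to those in Section~\ref{sec:moment-bound} but applied to $(\bar u^m,\bar v^m)$ directly yield $\sup_{t\in[0,T]}\E\,m\|\bar v^m(t)\|_{H^1}^2\le C$ uniformly in $m$, so that the forcing quantity $\E\,m^2\|\bar v^m(t)\|_{H^1}^2=O(m)$ vanishes as $m\to 0$. Applying Gronwall's inequality with the random coefficient $1+\|u^m\|_{H^2}^{2(\lambda-1)}+\|u^0\|_{H^2}^{2(\lambda-1)}$, whose exponential is $L^1(\Omega)$-integrable by Lemma~\ref{lem:moment-bound:H^2:|Au|^2:sup_[0,T]:random-initial-cond} together with~\eqref{cond:U_0}, gives $\sup_{t\in[0,T]}\E\|\bar Z^m(t)\|_H\to 0$. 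Combining with $\|\bar y^m\|_H\le\|\bar Z^m\|_H+m\|\bar v^m\|_H+m\|v_0^m\|_H$, the bound $\E\,m\|\bar v^m\|_H=O(\sqrt m)$, and the convolution estimate of Step~1 yields~\eqref{lim:m->0:|u^m-u^0|}. The main obstacle is that $\f$ is non-Lipschitz for $\lambda>1$, forcing the argument to pass through uniform-in-$m$ $L^\infty$ bounds on $u^m$ and $u^0$: this is precisely what assumption~\eqref{cond:U_0} and Lemmas~\ref{lem:moment-bound:H^2:Psi_2m:sup_[0,T]:random-initial-cond}--\ref{lem:moment-bound:H^2:|Au|^2:sup_[0,T]:random-initial-cond} are calibrated to deliver.
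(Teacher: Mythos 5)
Your plan substitutes a direct pathwise Gronwall argument for the paper's stopping-time truncation, and the two are not equivalent: the Gronwall route requires an exponential moment that the paper's estimates do not supply.

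Concretely, when you test $\partial_t\bar Z^m=-A\bar y^m+\f(u^m)-\f(u^0)$ against $\bar Z^m$ and control the non-Lipschitz part of $\f'$ through $H^2\hookrightarrow L^\infty$, the resulting differential inequality has the random coefficient $a(t)=C\bigl(1+\|u^m(t)\|_{H^2}^{2(\lambda-1)}+\|u^0(t)\|_{H^2}^{2(\lambda-1)}\bigr)$ multiplying $\|\bar Z^m\|_H^2$. Closing by Gronwall and taking expectations then requires $\E\exp\bigl\{\int_0^T a(s)\,\d s\bigr\}<\infty$. You cite Lemma~\ref{lem:moment-bound:H^2:|Au|^2:sup_[0,T]:random-initial-cond} for this, but that lemma only gives $\E\sup_{t\in[0,T]}\|u^m(t)\|_{H^2}^2\le C$, i.e.\ an $L^1$ bound. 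Since $2(\lambda-1)$ can be arbitrarily close to $2$ under Assumption~\ref{cond:phi:well-posed}, the exponential moment you need is essentially $\E\exp\{cT\sup_t\|u^m\|_{H^2}^2\}$, which is a qualitatively stronger statement and is nowhere established in the paper (nor would it follow from the available estimates, which involve $\E\exp\{\beta\Psi_{1,m}\}$ in the $\Hcal^1$ topology but only polynomial moments in $\Hcal^2$). The paper's proof is structured precisely to avoid this: it introduces stopping times $\tau_R^m,\tau_R^0$ for the first exit of $\|u^m\|_{L^\infty}$, $\|u^0\|_{L^\infty}$ from the ball of radius $R$, notes that on $\{\tau_R^m\wedge\tau_R^0>t\}$ the solutions coincide with those of truncated equations with Lipschitz nonlinearity $\f\theta_R$ so Lemma~\ref{lem:m->0:|u^m-u^0|:Lipschitz} applies, and controls the complement by Markov's inequality using only the $L^1$-type bound on $\sup_t\|u^m\|_{L^\infty}^2$ together with Cauchy--Schwarz. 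That truncation trick is the load-bearing step; the Gronwall-with-random-coefficient route cannot replace it without a genuinely new exponential $H^2$ estimate.

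A secondary but real imprecision: in $\la\f(u^m)-\f(u^0),\bar Z^m\ra_H$ you invoke $\sup\f'\le a_\f$ to get $a_\f\|\bar y^m\|_H^2$, but $u^m-u^0=\bar y^m+(\pi_1\Gamma^m-\Lambda)$, not $\bar y^m$, so the one-sided Lipschitz bound produces $a_\f\|u^m-u^0\|_H^2$, and there is an additional cross term $\la\f(u^m)-\f(u^0),\pi_1\Gamma^m-\Lambda\ra_H$ that must be estimated through~\eqref{cond:phi:phi'=O(x^(lambda-1))}. This is repairable, but it again feeds the same non-Lipschitz growth into your Gronwall coefficient.
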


As a consequence, we obtain the convergence of $\um(t)$ toward $\uo(t)$ in $\dtnb$.

\begin{corollary} \label{cor:m->0:dtnb(u^m,u^0)}
Under the same hypothesis of Theorem~\ref{thm:geometric-ergodicity:mass}, let $U_0^m=(u_0^m,v_0^m)\in \Hcal^1$ be a random variable satisfying condition~\eqref{cond:U_0}. Suppose that $(\um(t),\vm(t))$ and $\uo(t)$ are respectively the solutions of~\eqref{eqn:wave} and~\eqref{eqn:react-diff} with initial conditions $U_0^m$ and $u_0^m$. Then, for all $T,N>0$, and $\beta>0$ sufficiently small, 
\begin{align} \label{lim:m->0:dtnb(u^m,u^0)}
\sup_{t\in[0,T]}\E\, \dtnb(\um(t),\uo(t))\to 0,\quad{as  }\,\,m\to 0. 
\end{align}
In the above, $\dtnb$ is the distance in $H$ defined in~\eqref{form:d.tilde_(N,beta)}.
\end{corollary}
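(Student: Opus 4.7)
The plan is to reduce the statement to the previously established convergence in $L^1$--norm of $\|\um(t)-\uo(t)\|_H$ via a Cauchy--Schwarz decomposition of the square root defining $\dtnb$, combined with the uniform exponential moment bounds already proved in Section~\ref{sec:moment-bound}.

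First, I would use that $\sqrt{ab} \le \sqrt{a}\sqrt{b}$ together with the trivial bound $(N\|u - \tilde{u}\|_H \mi 1) \le N\|u - \tilde{u}\|_H$ to obtain, for every $t \ge 0$,
\begin{align*}
\E\, \dtnb(\um(t),\uo(t)) &\le \sqrt{N\, \E\|\um(t) - \uo(t)\|_H} \\
&\qquad \cdot \sqrt{\E\bigl[1 + e^{\beta\|\um(t)\|^2_H} + e^{\beta\|\uo(t)\|^2_H}\bigr]}.
\end{align*}
It then suffices to prove that the first factor tends to $0$ uniformly in $t \in [0,T]$, while the second remains bounded uniformly in $t \ge 0$ and $m$ small.

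For the first factor, Proposition~\ref{prop:m->0:|u^m-u^0|} gives precisely $\sup_{t\in[0,T]} \E\|\um(t) - \uo(t)\|_H \to 0$ as $m \to 0$. For the second, I would observe that $\|u\|^2_H \le V_m(u,v)$ and invoke the equivalence $V_m \sim \Psionem + 2m\|\Phi_1\|_{L^1}$ from~\eqref{cond:V_m<Psi_1m<V_m} together with Lemma~\ref{lem:moment-bound:H:random-initial-cond}~\eqref{ineq:moment-bound:H:exponential:random-initial-cond}; under the hypothesis~\eqref{cond:U_0}, this yields $\sup_{t \ge 0}\E e^{\beta\|\um(t)\|^2_H} < \infty$ uniformly in $m$ small, provided $\beta$ is sufficiently small. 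The analogous uniform-in-$t$ exponential moment bound for $\uo(t)$ is the classical exponential Lyapunov estimate for~\eqref{eqn:react-diff} recalled in Appendix~\ref{sec:react-diff}, and holds once $\E e^{\beta\|u_0^m\|^2_H}$ is bounded in $m$, which is again guaranteed by~\eqref{cond:U_0}.

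Putting these together, the product is controlled in the desired way, which establishes~\eqref{lim:m->0:dtnb(u^m,u^0)}. The only mildly subtle point is to choose $\beta$ small enough that the exponential moment bounds from both Lemma~\ref{lem:moment-bound:H:random-initial-cond} and the parabolic analogue for~\eqref{eqn:react-diff} apply simultaneously; beyond that, the argument is essentially a two--line application of Cauchy--Schwarz on top of Proposition~\ref{prop:m->0:|u^m-u^0|}.
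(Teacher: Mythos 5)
Your proposal is correct and follows essentially the same argument as the paper: both apply Cauchy--Schwarz to split off $\sqrt{\E\|\um(t)-\uo(t)\|_H}$, which tends to zero uniformly on $[0,T]$ by Proposition~\ref{prop:m->0:|u^m-u^0|}, from the exponential moment factor, which is controlled via the Lyapunov estimates of Lemma~\ref{lem:moment-bound:H:random-initial-cond} and Lemma~\ref{lem:react-diff:exponential-bound} together with condition~\eqref{cond:U_0}. The only cosmetic difference is that you cite the pointwise-in-$t$ bound~\eqref{ineq:moment-bound:H:exponential:random-initial-cond} while the paper cites the sup-in-$[0,T]$ version~\eqref{ineq:moment-bound:H:exponential:random-initial-cond:sup_[0,T]}; either suffices.
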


Next, we state the following result asserting that $\num$ satisfies the condition~\eqref{cond:U_0}.

\begin{proposition} \label{prop:regularity}
Let $\num$ be the unique invariant probability measure of~\eqref{eqn:wave} as in Theorem~\ref{thm:geometric-ergodicity:mass}. Then, for all $p\ge 1$ and $\beta>0$ sufficiently small,
 \begin{align} 
& \limsup_{m\to 0}\int_{\Hcal^1} \|u\|^2_{H^2}+m\|v\|^2_{H^1}+\Psitwom(u,v)^p \nt \\
 &\qquad\qquad+\exp\big\{\beta \big[\Psionem(u,v) +2m\|\Phi_1(u)\|_{L^1}\big]  \big\}\num(\emph{d} u,\emph{d}v)< \infty,\label{ineq:moment-bound:nu^m:H^1+H^2}
 \end{align}
where $\Phi_1$, $\Psionem$ and $\Psitwom$ are defined in~\eqref{cond:Phi_1},~\eqref{form:Psi_1m} and \eqref{form:Psi_2m}, respectively.
\end{proposition}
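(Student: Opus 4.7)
The plan is to transfer the Lyapunov-type dissipative estimates already established at the path level---namely Lemma~\ref{lem:moment-bound:H}, Lemma~\ref{lem:moment-bound:H^2}, and the pointwise generator bound~\eqref{ineq:L^m.Psi_2m:lambda<2}---to the invariant measure $\num$ itself, exploiting throughout that all the constants appearing there are independent of $m$. I would split the argument according to the three classes of terms on the left-hand side of~\eqref{ineq:moment-bound:nu^m:H^1+H^2}.

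For the exponential moment and the polynomial $\Psitwom^p$ moment, the natural strategy is to pick the deterministic initial datum $U_0=(0,0)\in\Hcal^2$ and apply~\eqref{ineq:moment-bound:H:exponential} and~\eqref{ineq:moment-bound:H^2:n-moment} respectively. Since $U_0=(0,0)$ makes the initial contributions on the right-hand sides of these estimates vanish, one obtains a uniform bound $\sup_{t\ge 0,\,m\in(0,1)} \E\, G(\um(t;0),\vm(t;0))\le C$, where $G$ stands either for the exponential Lyapunov or for $\Psitwom^p$. Because Theorem~\ref{thm:geometric-ergodicity:mass} yields convergence $(P^m_t)^*\delta_0\to\num$ (in particular weakly) as $t\to\infty$, applying the Portmanteau principle to the bounded lower semicontinuous truncations $G\wedge M$, and then letting $M\to\infty$ by monotone convergence, produces $\int G\,d\num \le C$ uniformly in $m$.

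The essentially new ingredient is the bound on $\int(\|u\|^2_{H^2}+m\|v\|^2_{H^1})\,d\num$, which is \emph{not} implied by $\int \Psitwom^p\,d\num<\infty$ because $\Psitwom$ only controls $\|Au\|^2_H$ with an extraneous $m$ prefactor. The crucial observation is that~\eqref{ineq:L^m.Psi_2m:lambda<2} reads
\[
\L^m\Psitwom(u,v)\le -\tfrac{1}{2}\|Au\|^2_H -\tfrac{m}{2}\|A^{1/2}v\|^2_H + C\,\Psionem(u,v)^q + C,
\]
where the dissipation on $\|Au\|^2_H$ carries no $m$. I would then initialize $(u_0,v_0)\sim\num$, apply It\^o's formula to $\Psitwom(\um(t),\vm(t))$ up to the stopping time $t\wedge\tau_R$ with $\tau_R=\inf\{s\ge 0:\Psitwom(\um(s),\vm(s))\ge R\}$, take expectation, use stationarity of $\num$ (the first two parts guarantee $\E\Psitwom(u_0,v_0)<\infty$), and then let first $R\to\infty$ by monotone convergence and afterwards $t\to\infty$. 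This produces
\[
\tfrac{1}{2}\int \|u\|^2_{H^2}\,d\num + \tfrac{m}{2}\int \|v\|^2_{H^1}\,d\num \le C\int\Psionem^q\,d\num + C,
\]
whose right-hand side is finite uniformly in $m$ by the previous parts.

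The main obstacle is precisely the rigorous justification of this expectation/invariance identity, because both $\Psitwom$ and $\L^m\Psitwom$ are unbounded on $\Hcal^1$ and $\int\|Au\|^2_H\,d\num$ is the very quantity one is trying to prove finite. The stopping-time truncation is designed to avoid this circularity: on $[0,t\wedge\tau_R]$ every quantity is a priori integrable thanks to the $\Psionem$ controls of Lemma~\ref{lem:moment-bound:H} and the boundedness of $\Psitwom$ on the stopped region; the stochastic integral is a true martingale by optional stopping; and the monotone passage $\tau_R\to\infty$ (valid because $\Psitwom$ is $\num$-almost surely finite by the second part) delivers the inequality above, while the finiteness of $\int\|Au\|^2_H\,d\num$ is recovered a posteriori from the uniform control of the right-hand side.
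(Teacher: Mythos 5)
Your overall strategy is aligned with the paper's: all three bounds on $\num$ are extracted from the Lyapunov/dissipation estimates of Lemmas~\ref{lem:moment-bound:H} and~\ref{lem:moment-bound:H^2} (and the generator bound~\eqref{ineq:L^m.Psi_2m:lambda<2}), exploiting that every constant there is $m$-independent, and the $\|u\|^2_{H^2}$ control uses stationarity of $\num$ against the $m$-free dissipation $-\tfrac12\|Au\|^2_H$ in $\L^m\Psitwom$. Your stopping-time treatment of part~3 is correct; the paper instead truncates the dissipation integrand pointwise with $\wedge N$ before applying stationarity and monotone convergence, but the two devices serve the same purpose and both avoid the circularity you flag.

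There is, however, a genuine gap in the way you set up parts~1--2. You write that ``Theorem~\ref{thm:geometric-ergodicity:mass} yields convergence $(P^m_t)^*\delta_0\to\num$ (in particular weakly)'' and then apply Portmanteau. But the contraction~\eqref{ineq:geometric-ergodicity:mass} only produces decay of $\W_{\dmtnb}\big((P^m_{nT^*})^*\delta_0,\num\big)$ when the initial distance $\W_{\dmtnb}(\delta_0,\num)$ is \emph{finite}, and since $\dmtnb(0,\Ut)$ grows like $e^{\beta V_m(\Ut)/2}$ this finiteness is precisely an exponential moment of $\num$ — essentially what you are trying to establish. (This finiteness is implicit in the fixed-point machinery behind \cite[Theorem~4.8]{hairer2011asymptotic}, but it is not what the statement of Theorem~\ref{thm:geometric-ergodicity:mass} hands you directly, so as written the step is circular.) Two standard patches, both used in the paper: (i) for the $\Psitwom^p$ moment, replace $(P^m_t)^*\delta_0$ with the Krylov--Bogoliubov time averages $\num_T=\tfrac1T\int_0^T(P^m_s)^*\delta_0\,\d s$, whose weak convergence to $\num$ needs only tightness (from your uniform-in-$t$ Lyapunov bound) plus uniqueness of $\num$, with no Wasserstein input; (ii) for the exponential moment, bypass weak convergence entirely and argue from the bare invariance identity $\int_{\Hcal^1} P^m_t\phi_N\,\d\num=\int_{\Hcal^1}\phi_N\,\d\num$ for the bounded truncations $\phi_N = e^{\beta[\Psionem+2m\|\Phi_1\|_{L^1}]}\wedge N$, splitting $\Hcal^1$ into the sub-level set $\{\Psionem+2m\|\Phi_1\|_{L^1}\le R\}$ (where~\eqref{ineq:moment-bound:H:exponential} controls $P^m_t\phi_N$) and its complement (where $\phi_N\le N$ and $\num$ puts mass $<\varepsilon$), then letting $t\to\infty$ and $N\to\infty$. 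With either fix your proposal matches the paper's proof.
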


Assuming the above results, we are now in a position to conclude Theorem~\ref{thm:nu^m->nu^0} whose proof is standard and can be found in many previous works, e.g., \cite{cerrai2020convergence,foldes2017asymptotic,foldes2015ergodic,
foldes2016ergodicity,foldes2019large}.
\begin{proof}[Proof of Theorem~\ref{thm:nu^m->nu^0}]
Let $\W_{\dtnb}$ be the Wasserstein distance as in Theorem~\ref{thm:react-diff:geometric-ergodicity}. By the invariance of $\num$ and $\nu^0$, we invoke the generalized triangle inequality~\eqref{ineq:W_dtnbhalf<W_dtnb} to see that
\begin{align*}
\W_{\dtnb}\big( \pi_1\num,\nu^0    \big) &= \W_{\dtnb}\big(\pi_1(P^m_t)^*\num, (P^0_t)^*\nu^0 \big)  \\
&  \le C\Big[ \W_{\dtnbtwo}\big(\pi_1(P^m_t)^*\num, (P^0_t)^*(\pi_1\num) \big)  +\W_{\dtnbtwo}\big((P^0_t)^*(\pi_1\num), (P^0_t)^*\nu^0 \big)   \Big].
\end{align*}
In view of Theorem~\ref{thm:react-diff:geometric-ergodicity}, for $t$ sufficiently large independent of $\num$ and $\nu^0$,
\begin{align*}
C\W_{\dtnbtwo}\big((P^0_t)^*(\pi_1\num), (P^0_t)^*\nu^0 \big)  &  \le Ce^{-ct} \W_{\dtnb}\big(\pi_1\num, \nu^0 \big) \\
&\le \tfrac{1}{2}   \W_{\dtnb}\big(\pi_1\num, \nu^0 \big).   
\end{align*}
It follows that
\begin{align*}
\W_{\dtnb}\big( \pi_1\num,\nu^0    \big)\le  C \W_{\dtnbtwo}\big(\pi_1(P^m_t)^*\num, (P^0_t)^*(\pi_1\num) \big).
\end{align*}
From the definition~\eqref{form:W_d} of $\W_{\dtnb}$, we note that
\begin{align*}
\W_{\dtnbtwo}\big(\pi_1(P^m_t)^*\num, (P^0_t)^*(\pi_1\num) \big) \le \E\,\dtnbtwo(\um(t),\uo(t)) , 
\end{align*}
where $\um(0)=(u_0^m,v^m_0)$, $\uo(0)=u^m_0$ and $(u_0^m,v_0^m)\sim\num$. Note that
\begin{align*}
&\E \big[\|u_0^m\|^2_{H^2}+m\|v_0^m\|^2_{H^1}\big]+\E\Psitwom(u_0^m,v_0^m)^p +\E\exp\big\{\beta \big[\Psionem(u_0^m,v_0^m) +2m\|\Phi_1(u_0^m)\|_{L^1}\big]  \big\}\\
&= \int_{\Hcal^1}\|u\|^2_{H^2}+m\|v\|^2_{H^1}+\Psitwom(u,v)^p +\exp\big\{\beta \big[\Psionem(u,v) +2m\|\Phi_1(u)\|_{L^1}\big]  \big\}\num(\d u,\d v).
\end{align*}
In light of \eqref{ineq:moment-bound:nu^m:H^1+H^2}, it is clear that $U_0^m=(u^0_m,v^0_m)\sim \num$ satisfies the condition~\eqref{cond:U_0} for all $p\ge1$ and $\beta>0$ sufficiently small. By virtue of Corollary~\ref{cor:m->0:dtnb(u^m,u^0)}, we obtain
\begin{align*}
\E\,\dtnbtwo(\um(t),\uo(t)) \to 0,\quad \text{as }\,\, m\to 0,
\end{align*}
whence
\begin{align*}
\W_{\dtnb}\big( \pi_1\num,\nu^0    \big) \to 0,\quad \text{as}\,\,m\to 0.
\end{align*}
The proof is thus finished.

\end{proof}

Turning back to Proposition~\ref{prop:m->0:|u^m-u^0|}, we will make use of the following result from \cite[Section 6.1]{cerrai2020convergence} giving the convergence~\eqref{lim:m->0:|u^m-u^0|} under the additional condition that $\f$ is a Lipschitz function.
\begin{lemma}{ \cite[Limit (6.4)]{cerrai2020convergence} } \label{lem:m->0:|u^m-u^0|:Lipschitz}
Suppose that $Q$ satisfies Assumption~\eqref{cond:Q} and that $\f$ is Lipschitz. Let $U_0^m=(u_0^m,v_0^m)\in \Hcal^1$ be a random variable satisfying
\begin{align*}
\limsup_{m\to 0}\E\big[ \|u^m_0\|^2_{H^1}+m\|v^m_0\|^2_H\big]<\infty.
\end{align*}
Suppose that $(\um(t),\vm(t))$ and $\uo(t)$ are respectively the solutions of~\eqref{eqn:wave} and~\eqref{eqn:react-diff} with initial conditions $U_0^m$ and $u_0^m$. Then, for all $T>0$, Then, for all $T>0$,
\begin{align} \label{lim:m->0:|u^m-u^0|:Lipschitz}
\sup_{t\in[0,T]}\E \|\um(t)-\uo(t)\|_H\to 0,\quad{as}\,\,m\to 0. 
\end{align}
\end{lemma}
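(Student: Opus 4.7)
The proof is a classical Smoluchowski--Kramers argument exploiting two structural facts: that \eqref{eqn:wave} and \eqref{eqn:react-diff} are driven by the \emph{same} noise $Q\,dw$, and that the Lipschitz hypothesis on $\f$ makes Gronwall-type energy estimates cheap. The plan is to introduce the corrector $\psi^m(t) := u^m(t) - u^0(t) + mv^m(t)$. Adding the two components of \eqref{eqn:wave} gives $d(u^m + mv^m) = (-Au^m + \f(u^m))\,dt + Q\,dw$, and subtracting \eqref{eqn:react-diff} causes the stochastic integrals to cancel, so $\psi^m$ obeys the pathwise, noise-free ODE
\begin{align*}
\tfrac{d}{dt}\psi^m = -A(u^m - u^0) + \f(u^m) - \f(u^0), \qquad \psi^m(0) = m v_0^m.
\end{align*}
This reduces the stochastic comparison problem to a deterministic ODE with random coefficients, on which pathwise energy methods apply.

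Next, I would derive an energy estimate for $\psi^m$ in the weak norm $\|\cdot\|_{H^{-1}}$. Pairing the ODE above with $A^{-1}\psi^m$ in $H$, using the identity $u^m - u^0 = \psi^m - mv^m$ to re-express the dissipative term, and applying $\|\f(u^m) - \f(u^0)\|_H \le L\|u^m - u^0\|_H$ together with Young's inequality, yields
\begin{align*}
\tfrac{d}{dt}\|\psi^m(t)\|^2_{H^{-1}} + \|u^m(t) - u^0(t)\|^2_H \le C\bigl(\|\psi^m(t)\|^2_{H^{-1}} + m^2\|v^m(t)\|^2_H\bigr).
\end{align*}
Working in $H^{-1}$ rather than $H$ is essential: a direct $H$-energy estimate would generate the term $m\la A v^m, \psi^m\ra_H$, which requires $v^m \in H^1$, a regularity not available from the hypothesis $v_0^m \in H$ alone. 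For the right-hand side, $\E\|\psi^m(0)\|^2_{H^{-1}} \le \alpha_1^{-1}\, m\cdot \E[m\|v_0^m\|^2_H] = O(m)$ by the initial condition hypothesis, while the Lipschitz version ($\lambda=1$) of Lemma~\ref{lem:moment-bound:H} provides the uniform equipartition estimate $\sup_{t\ge 0}\E[m\|v^m(t)\|^2_H]\le C$, so that $\sup_{t}\E[m^2\|v^m(t)\|^2_H]=O(m)$. Gronwall then gives
\begin{align*}
\sup_{t\in[0,T]}\E\|\psi^m(t)\|^2_{H^{-1}} + \int_0^T \E\|u^m(s)-u^0(s)\|^2_H\,ds \le C_T\, m \to 0.
\end{align*}

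Finally, I would upgrade this to the desired $\sup_{t\in[0,T]}\E\|u^m(t)-u^0(t)\|_H\to 0$. Using $u^m-u^0=\psi^m-mv^m$, Cauchy--Schwarz, and the equipartition bound,
\begin{align*}
\E\|u^m(t)-u^0(t)\|_H \le \E\|\psi^m(t)\|_H + \sqrt{m}\,\bigl(\E[m\|v^m(t)\|^2_H]\bigr)^{1/2} = \E\|\psi^m(t)\|_H + O(\sqrt m),
\end{align*}
and the first summand is controlled via the interpolation $\|\psi^m\|^2_H \le \|\psi^m\|_{H^{-1}}\|\psi^m\|_{H^1}$ together with a uniform-in-$(m,t)$ bound on $\E\|\psi^m(t)\|^2_{H^1}$ obtained from a $\Hcal^2$-level energy estimate for $(u^m,v^m)$ in the Lipschitz setting (the Lipschitz analog of Lemma~\ref{lem:moment-bound:H^2}). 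The main obstacle is precisely this $H^1$-bound on $\psi^m$: the straightforward $\Hcal^2$-estimate would require $v_0^m\in H^1$, which is not assumed. One circumvents this by exploiting the fast $O(m^{-1})$ dissipation in the $v$-component of the wave system: the semigroup smoothing boosts $v^m$ into $H^1$ for $t\ge O(\sqrt m)$, and the short initial layer $[0,\sqrt m]$ is handled separately by a crude direct estimate whose contribution vanishes with $m$.
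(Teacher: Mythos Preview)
The paper does not prove this lemma; it is quoted verbatim from \cite{cerrai2020convergence} and used as a black box in the proof of Proposition~\ref{prop:m->0:|u^m-u^0|}. So there is no ``paper's own proof'' to match here, and your attempt should be judged on its own.

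Your corrector $\psi^m=u^m-u^0+mv^m$ and the $H^{-1}$ energy estimate are correct and standard; they cleanly deliver $\sup_{t\in[0,T]}\E\|\psi^m(t)\|^2_{H^{-1}}=O(m)$ and $\int_0^T\E\|u^m-u^0\|^2_H\,ds=O(m)$. The gap is in the upgrade step. Your interpolation $\|\psi^m\|_H^2\le \|\psi^m\|_{H^{-1}}\|\psi^m\|_{H^1}$ requires a uniform bound on $\E\|\psi^m\|_{H^1}$, hence on $m\|v^m\|_{H^1}$, and you try to manufacture this via ``semigroup smoothing'' of $v^m$ into $H^1$. This is where the argument fails: the damped wave semigroup is hyperbolic and has \emph{no} spatial smoothing; the fast $O(m^{-1})$ damping in the $v$--equation is purely a temporal relaxation rate and does not raise Sobolev regularity. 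After the initial layer, $v^m$ is (formally) close to $-Au^m+\f(u^m)+Q\dot w$, which for $u^m\in H^1$ lives only in $H^{-1}$, not $H^1$. The $\Hcal^2$--energy (your ``Lipschitz analog of Lemma~\ref{lem:moment-bound:H^2}'') genuinely needs $v_0^m\in H^1$, which the hypothesis does not supply, and no initial--layer trick repairs this.

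A correct route under the stated hypothesis---and the one taken in \cite{cerrai2020convergence}---does not pass through an $H^1$ bound on $\psi^m$. It works in the mild formulation, writing $u^m$ via the damped--wave propagator and $u^0$ via the heat semigroup, and uses the quantitative convergence of the former to the latter (uniformly on $[0,T]$, in $\mathcal L(H;H)$ for the $\f$--Duhamel term and in $L^2(\Omega;H)$ for the stochastic convolution). The Lipschitz assumption then closes a Gronwall loop directly in $\sup_{t\in[0,T]}\E\|\cdot\|_H$, with the residual terms controlled by $\E[\|u_0^m\|^2_{H^1}+m\|v_0^m\|^2_H]$. Your $H^{-1}$ corrector argument is a nice alternative for the \emph{integrated} convergence, but it does not by itself yield the pointwise--in--$t$ statement the lemma asserts.
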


We now provide the proof of Proposition~\ref{prop:m->0:|u^m-u^0|} by removing the Lipschitz condition making use of suitable energy estimates in Section \ref{sec:moment-bound} and Appendix~\ref{sec:react-diff}.

\begin{proof}[Proof of Proposition \ref{prop:m->0:|u^m-u^0|} ]
To remove the Lipschitz condition as in Lemma~\ref{lem:m->0:|u^m-u^0|:Lipschitz}, we shall employ an argument similarly to that in \cite[Section 6.2]{cerrai2020convergence} tailored to our settings in dimensions $d=2,3$. 

We note that since the initial condition $U^m_0\in \Hcal^2$, in view of Lemma \ref{lem:moment-bound:H^2:Psi_2m:sup_[0,T]:random-initial-cond} and Lemma \ref{lem:react-diff:moment-bound:H^2:sup_[0,T]}, $\um(t)$ and $u^0(t)$ are elements in $H^2\subset L^\infty$, by Sobolev embedding. So, for $R>0$, we introduce the stopping times $\tau^m_R$ and $\tau^0_R$ given by
\begin{equation*}
\tau^m_R=\inf\{t\ge 0:\|\um(t)\|_{L^\infty}> R   \} ,
\end{equation*} 
and
\begin{align*}
\tau^0_R=\inf\{t\ge 0:\|u^0(t)\|_{L^\infty}>R\}.
\end{align*}

It is clear that for $0\le t\le \tau^m_R$, $ (\um(t),\vm(t))  =(\um_R(t),\vm_R(t))$ where $(\um_R(t),\vm_R(t))$ is the solution of the following truncating version of~\eqref{eqn:wave}
\begin{align}
\d \um(t)&=\vm(t)\d t,\nt \\
m\,\d \vm(t)&=-A\um(t)\d t-\vm(t)\d t+\f(\um(t))\theta_R(\um(t))\d t+ Q\d w(t),\label{eqn:wave:truncate}\\
(\um(0),\vm(0))&=(\um_0,\vm_0),\nt
\end{align}
where $\theta_R$ is the cut--off function introduced in~\eqref{form:theta_N}. Likewise, for $0\le t\le \tau^0_R$, $\uo(t)=\uo_R(t)$ which solves
\begin{equation} \label{eqn:react-diff:truncate}
\d \uo(t)=-A\uo(t)\d t+\f(\uo(t))\theta_R(\uo(t))\d t+ Q\d w(t),\quad\uo(0)=\um_0.
\end{equation}
Observe that the nonlinearities in~\eqref{eqn:wave:truncate} and~\eqref{eqn:react-diff:truncate} are Lipschitz functions. 

Turning to~\eqref{lim:m->0:|u^m-u^0|}, we note that
\begin{align*}
&\E\|\um(t)-\uo(t)\|_{H}\\
&=\E\Big(\|\um(t)-\uo(t)\|_H \mathbf{1}\{\tau_R^m\mi \tau^0_R>t\}\Big)+\E\Big(\|\um(t)-\uo(t)\|_{H} \mathbf{1}\{\tau_R^m\mi \tau^0_R\le t\}\Big)\\
&=I_{1,R}^{m}(t)+I^m_{2,R}(t).
\end{align*}
With regard to $I^m_{1,R}$, since $U^m_0$ satisfies the hypothesis of Lemma~\ref{lem:m->0:|u^m-u^0|:Lipschitz} (by virtue of condition~\eqref{cond:U_0}), Lemma~\ref{lem:m->0:|u^m-u^0|:Lipschitz} then implies the limit
\begin{align*}
\sup_{t\in[0,T]}I_{1,R}^{m}(t)\le \sup_{t\in[0,T]}\E \|\um_R(t)-\uo_R(t)\|_H \to 0,\quad\text{as}\,\, m\to 0.
\end{align*}
Concerning $I_{2,R}^m$, we invoke Holder's and Markov's inequalities to infer
\begin{align*}
I^m_{2,R}(t) &\le \Big(2\E\|\um(t)\|^2_H+2\E\|\uo(t)\|^2_{H}\Big)^{1/2}\Big(\P\big(\tau_R^m\le t\big)+\P\big(\tau_R^0\le t\big)\Big)^{1/2}\\
&\le \Big(2\E\|\um(t)\|^2_H+2\E\|\uo(t)\|^2_{H}\Big)^{1/2}\cdot \tfrac{1}{R}\Big(\E\sup_{r\in[0,t]}\|\um(r)\|_{L^\infty}^2+\E\sup_{r\in[0,t]}\|\uo(r)\|_{L^\infty}^2\Big)^{1/2}.
\end{align*}
In view of~\eqref{ineq:moment-bound:H:exponential:random-initial-cond:sup_[0,T]} and~\eqref{ineq:react-diff:exponential-bound}, for $\beta$ sufficiently small,
\begin{align*}
\E\|\um(t)\|^2_H+\E\|\uo(t)\|^2_{H} &\le C\, \E \exp\big\{\beta [\Psionem(u_0^m,v_0^m)+2m\|\Phi_1(u_0^m)\|_{L^1}]\big\}.
\end{align*}
On the other hand, we invoke Agmon's inequality making use of~\eqref{ineq:moment-bound:H^2:|Au|^2:sup_[0,T]:random-initial-cond} and \eqref{ineq:react-diff:moment-bound:H^2:sup_[0,T]} to obtain for some $p>0$ and $\beta$ sufficiently small
\begin{align*}
&\E\sup_{r\in[0,t]}\|\um(r)\|_{L^\infty}^2+\E\sup_{r\in[0,t]}\|\uo(r)\|_{L^\infty}^2 \\
&\le C\Big(\E \big[\|u_0^m\|^2_{H^2}+m\|v_0^m\|^2_{H^1}\big]+\E\Psitwom(u_0^m,v_0^m)^p +\E\exp\big\{\beta \big[\Psionem(u_0^m,v_0^m) +2m\|\Phi_1(u_0^m)\|_{L^1}\big]  \big\}+1\Big).
\end{align*}
It follows that
\begin{align*}
&\sup_{t\in[0,T]} I^m_{2,R}(t)\\
& \le \tfrac{C}{R}\Big(\E \big[\|u_0^m\|^2_{H^2}+m\|v_0^m\|^2_{H^1}\big]+\E\Psitwom(u_0^m,v_0^m)^p +\E\exp\big\{\beta \big[\Psionem(u_0^m,v_0^m) +2m\|\Phi_1(u_0^m)\|_{L^1}\big]  \big\}+1\Big)^{1/2},
\end{align*}
for some positive constant $C=C(T)$ independent of $m$ and $R$. By condition~\eqref{cond:U_0}, we deduce for all $m$ sufficiently small
\begin{align*}
\sup_{t\in[0,T]} I^m_{2,R}(t) \le \tfrac{C}{R}.
\end{align*}
Altogether, recalling
\begin{align*}
\E\|\um(t)-\uo(t)\|_{H}=I_{1,R}^{m}(t)+I^m_{2,R}(t),
\end{align*}
we establish limit~\eqref{lim:m->0:|u^m-u^0|} by first taking $R$ sufficiently large and then sending $m$ to 0. The proof is thus complete.

\end{proof}

Next, we prove Corollary~\ref{cor:m->0:dtnb(u^m,u^0)}, making use of Proposition~\ref{prop:m->0:|u^m-u^0|}.

\begin{proof}[Proof of Corollary~\ref{cor:m->0:dtnb(u^m,u^0)}]
By the expression~\eqref{form:d.tilde_(N,beta)} of $\dtnb$, we have
\begin{align*}
\E\, \dtnb(\um(t),\uo(t))& \le  \sqrt{N} \E \sqrt{\|\um(t)-\uo(t)\|_H\big[1+e^{\beta \|\um(t)\|^2_H}+e^{\beta \|\uo(t)\|^2_H}\big]}\\
&\le \sqrt{N}\sqrt{\E\|\um(t)-\uo(t)\|_H}\sqrt{1+\E\,e^{\beta \|\um(t)\|^2_H}+\E\, e^{\beta \|\uo(t)\|^2_H}}.
\end{align*}
For $\beta$ sufficiently small, we invoke~\eqref{ineq:moment-bound:H:exponential:random-initial-cond:sup_[0,T]} and~\eqref{ineq:react-diff:exponential-bound} to see that
\begin{align*}
&\E\,e^{\beta \|\um(t)\|^2_H}+\E\, e^{\beta \|\uo(t)\|^2_H}\\
&\le  C\, \E \exp\big\{\beta [\Psionem(u_0^m,v_0^m)+2m\|\Phi_1(u_0^m)\|_{L^1}]\big\} +C .
\end{align*}
As a consequence of condition~\eqref{cond:U_0} , the above right--hand side is a finite constant independent of $m$. From Proposition~\ref{prop:m->0:|u^m-u^0|}, we therefore obtain 
\begin{align*}
\sup_{t\in[0,T]} \E\, \dtnb(\um(t),\uo(t)) \to 0,
\end{align*}
as $m \to 0$, by virtue of~\eqref{lim:m->0:|u^m-u^0|}. The proof is thus finished.

\end{proof}

We finish this subsection by presenting the proof of Proposition~\ref{prop:regularity}.

\begin{proof}[Proof of Proposition~\ref{prop:regularity}]

We first claim that $\num$ satisfies uniform exponential bounds in $\Hcal^1$. That is for all $\beta$ sufficiently small,
\begin{align} \label{ineq:moment-bound:nu^m:exponential:H^1}
\int_{\Hcal^1}\exp\big\{\beta[\Psionem(u,v)+2m\|\Phi_1(u)\|_{L^1}]\big\}\num(\d u,\d v) < C,
\end{align}
for some positive constant $C$ independent of $m$ and $\num$. The proof of~\eqref{ineq:moment-bound:nu^m:exponential:H^1} is quite standard making use of the exponential bound~\eqref{ineq:moment-bound:H:exponential}. To see this, for $R$, consider the set
\begin{align*}
B_R = \{(u,v)\in \Hcal^1: \Psionem(u,v)+2m\|\Phi_1(u)\|_{L^1}>R  \}.
\end{align*}
Recalling that $V_m$ and $\Psionem(u,v)+2m\|\Phi_1(u)\|_{L^1}$ are equivalent (see \eqref{cond:V_m<Psi_1m<V_m}), there exists $R=R(\varepsilon,m)>0$ large enough 
 such that
\begin{align*}
\num( B_R^c )<\varepsilon.
\end{align*}
Given $N>0$, we set $\phi_N(u,v)=e^{\beta[\Psionem(u,v)+2m\|\Phi_1(u)\|_{L^1}]}\mi N$. By invariance of $\num$, since $\phi_N$ is bounded,
\begin{align*}
\int_{\Hcal^1} P_t^m\phi_N(u,v)\num(\d u,\d v)=\int_{\Hcal^1} \phi_N(u,v)\num(\d u,\d v).
\end{align*}
Also, by the choice of $B_R$, we have
\begin{align*}
\int_{\Hcal^1} P_t^m\phi_N(u,v)\num(\d u,\d v)& =\int_{B_R} P_t^m\phi_N(u,v)\num(\d u,\d v)+\int_{B^c_R} P_t^m\phi_N(u,v)\num(\d u,\d v)\\
&\le \int_{B_R} P_t^m\phi_N(u,v)\num(\d u,\d v)+N\varepsilon.
\end{align*}
To estimate the first term on the above right--hand side, we invoke~\eqref{ineq:moment-bound:H:exponential} and obtain for all $\beta>0$ sufficiently small
\begin{align*}
P_t^m\phi_N(u,v)& =\E\big(e^{\beta[\Psionem(\um(t),\vm(t))+2m\|\Phi_1(\um(t))\|_{L^1}]}\mi N\big) \\
&\le   Ce^{-ct}e^{\beta[\Psionem(u,v)+2m\|\Phi_1(u)\|_{L^1}]}+C,
\end{align*}
whence
\begin{align*}
\int_{B_R} P_t^m\phi_N(u,v)\num(\d u,\d v)\le C e^{-ct}e^{\beta R}+C.
\end{align*}
In the above, $C,c$ only depend on $\beta$. It follows that for all $N$ we have the bound
\begin{align*}
\int_{\Hcal^1} \phi_N(u,v)\num(\d u,\d v) \le C e^{-ct}e^{\beta R^2}+N\varepsilon+C.
\end{align*}
We may take $\varepsilon$ small and then take $t$ sufficiently large to arrive at the following uniform bound in $N$ and $m$:
\begin{align*}
\int_{\Hcal^1} \phi_N(u,v)\num(\d u,\d v)\le C<\infty.
\end{align*}
We therefore establish~\eqref{ineq:moment-bound:nu^m:exponential:H^1}, by virtue of the Monotone Convergence Theorem.

Next, considering $\Psitwom$ as in~\eqref{form:Psi_2m}, we claim that for all $p\ge 1$
\begin{align} \label{ineq:moment-bound:nu^m:H^2:p-moment}
\limsup_{m\to 0}\int_{\Hcal^1} \Psitwom(u,v)^p\num(\d u,\d v)<\infty.
\end{align}
To see this, recall the well--known time--averaged measure $\num_T$ defined as
\begin{align*}
\num_T(\cdot)=\frac{1}{T}\int_0^T\close P^m_t(0,\cdot)\d t.
\end{align*}
In the above, $P^m_t$ is the transition probability associated with $(\um(t;0),\vm(t;0))$, i.e., zero initial condition. By the Krylov--Bogoliubov procedure, we know that $\num_T$ converges weakly to $\num$, the unique invariant probability measure of~\eqref{eqn:wave}. In particular, for each $m$, the following limit holds
\begin{align*}
\int_{\Hcal^1}\big[\Psitwom(u,v)^p\mi N\big]\num_T(\d u,\d v)\to \int_{\Hcal^1}\big[\Psitwom(u,v)^p\mi N\big]\num(\d u,\d v),
\end{align*}
as $T\to\infty$. Note that
\begin{align*}
&\int_{\Hcal^1}\Psitwom(u,v)^p\mi N\num_T(\d u,\d v)\\
& \le \frac{1}{T}\int_0^T \E\Psitwom(\um(t),\vm(t))^p\d t\\
&\le \frac{1}{T}\int_0^T C_{2,p} e^{-c_{2,p} t}  \Big(\Psi_{2,m}(0)^p + \big[\Psionem(0)+2m\|\Phi_1(0)\|_{L^1}\big]^{q_{2,p}} \Big) +C_{2,p}\d t\le C,
\end{align*}
where in the second to last estimate, we invoked~\eqref{ineq:moment-bound:H^2:n-moment}. As a consequence, we obtain
\begin{align*}
\int_{\Hcal^1}\big[\Psitwom(u,v)^p\mi N\big]\num(\d u,\d v) \le C,
\end{align*}
implying~\eqref{ineq:moment-bound:nu^m:H^2:p-moment}, by virtue of the Monotone Convergence Theorem. 

We now turn to the bound
\begin{align} \label{ineq:moment-bound:nu^m:H^2:1-moment}
\limsup_{m\to 0}\int_{\Hcal^1}\|u\|^2_{H^2}+m\|v\|^2_{H^1}\num(\d u,\d v)<\infty.
\end{align}
For $(u_0,v_0)\in\Hcal^2$ and $N>0$, we invoke~\eqref{ineq:d.E.Psi_2m} to see that
\begin{align*}
&\E \Psitwom(\um(t),\vm(t)) +\int_0^t\big(\tfrac{1}{2}\E\|A\um(r)\|^2_H+\tfrac{1}{2}m\E\|A^{1/2}\vm(r)\|^2_H\big)\mi N\d r\\
&\le  \Psitwom(u_0,v_0)   +C\int_0^t \E\Psionem(\um(r),\vm(r))^q\d r+Ct\\
&\le \Psitwom(u_0,v_0)   +C\big[\Psionem(u_0,v_0)+2m\|\Phi_1(u_0)\|_{L^1}\big]  ^q+Ct.
\end{align*}
In the last estimate above, we employed~\eqref{ineq:moment-bound:H:n-moment}. Since $\Psitwom, \Psionem+2m\|\Phi_1(\cdot)\|_{L^1}\in L^1(\num)$ by virtue of~\eqref{ineq:moment-bound:nu^m:exponential:H^1} and~\eqref{ineq:moment-bound:nu^m:H^2:1-moment}, respectively, we invoke the invariance property of $\num$ to obtain
\begin{align*}
&t\int_{\Hcal^1}\Big[ \big(\tfrac{1}{2} \|Au_0\|^2_H+\tfrac{1}{2}m\E\|A^{1/2}v_0\|^2_H\big)\mi N\Big] \num(\d u_0,\d v_0)\\
&= \int_{\Hcal^1} \int_0^t\Big[ \big(\tfrac{1}{2}\E\|A\um(r)\|^2_H+\tfrac{1}{2}m\E\|A^{1/2}\vm(r)\|^2_H\big)\mi N\Big]\d r\,\num(\d u_0,\d v_0)\\
&\le C\int_{\Hcal^1}\big[\Psionem(u_0,v_0)+2m\|\Phi_1(u_0)\|_{L^1}\big]^q \num(\d u_0,\d v_0)+Ct,
\end{align*}
whence
\begin{align*}
\int_{\Hcal^1} \Big[ \big(\tfrac{1}{2} \|Au_0\|^2_H+\tfrac{1}{2}m\|A^{1/2}v_0\|^2_H\big)\mi N \Big]\num(\d u_0,\d v_0) \le C,
\end{align*}
for some positive constant $C$ independent of $N$ and $m$. By virtue of the Monotone Convergence Theorem, we establish~\eqref{ineq:moment-bound:nu^m:H^2:1-moment}.

Altogether, \eqref{ineq:moment-bound:nu^m:exponential:H^1},~\eqref{ineq:moment-bound:nu^m:H^2:p-moment} and \eqref{ineq:moment-bound:nu^m:H^2:1-moment} produce~\eqref{ineq:moment-bound:nu^m:H^1+H^2}, as claimed.

\end{proof}

\subsection{Proof of Theorem~\ref{thm:m->0:f}} \label{sec:small-mass:f}

The proof of Theorem~\ref{thm:m->0:f} follows along the lines of \cite[Appendix A]{glatt2021mixing} tailored to our settings. See also \cite[Theorem 3.13]{glatt2022short}.

\begin{proof}[Proof of Theorem~\ref{thm:m->0:f}] 1. With regard to~\eqref{lim:m->0:delta_(u^m)}, fixing $N$ large and $\beta$ small enough such that Theorem~\ref{thm:geometric-ergodicity:mass} holds, let $T^*=T^*(N,\beta)$ be the time constant as in \eqref{ineq:geometric-ergodicity:mass}. We note that
\begin{align} \label{ineq:triangle:0}
\sup_{t\in[0,nT^*]}\W_{\dtnbhalf}\big( \pi_1(P^m_t)^*\delta_{U_0^m},(P^0_t)^*\delta_{u^m_0}  \big)   & \le   \sup_{s\in[0,nT^*]} \E \,\dtnbhalf\big( \um(t),\uo(t)  \big).
\end{align}
In the above, we recall that $(\um(t),\vm(t))$ and $\uo(t)$ respectively are the solutions of~\eqref{eqn:wave} and~\eqref{eqn:react-diff} with initial conditions $U_0^m$ and $u^m_0$. Since $\|U^m_0\|_{\Hcal^2}<R$, $U^0_m$ satisfies the condition~\eqref{cond:U_0}. In particular, Corollary~\ref{cor:m->0:dtnb(u^m,u^0)} implies
\begin{align} \label{ineq:triangle:0a}
\sup_{s\in[0,nT^*]} \E \,\dtnbhalf\big( \um(t),\uo(t)  \big)\to 0,\quad\text{as  }m\to 0.
\end{align}

On the other hand, for all $t\ge nT^*$, by the triangle inequality~\eqref{ineq:W_dtnbhalf<W_dtnb}, we have
\begin{align}
& \W_{\dtnbhalf}\big( \pi_1(P^m_t)^*\delta_{U_0^m},(P^0_t)^*\delta_{u^m_0}  \big)  \nt \\
& \le C\big[\W_{\dtnb}\big( \pi_1(P^m_t)^*\delta_{U_0^m},\pi_1\num \big)+\W_{\dtnbtwo}\big(\pi_1 \num,\nu^0  \big)+\W_{\dtnbtwo}\big( \nu^0,(P^0_t)^*\delta_{u^m_0}  \big)\big]. \label{ineq:triangle:a}
\end{align}
To estimate the first term on the above right--hand side, we employ~\eqref{ineq:W_dmtnb>W_dtnb}
\begin{align*}
\W_{\dtnb}\big( \pi_1(P^m_t)^*\delta_{U_0^m},\pi_1\num \big) \le \W_{\dmtnb}\big( (P^m_t)^*\delta_{U_0^m},\num \big),
\end{align*}
where $\dmtnb$ is the function--like distance as in~\eqref{form:d.tilde^m_(N,beta)}.  For $t\ge nT^*$, we invoke invariance property of $\num$ and \eqref{ineq:geometric-ergodicity:mass} to estimate
\begin{align*}
\W_{\dmtnb}\big( (P^m_t)^*\delta_{U_0^m},\num \big) &= \W_{\dmtnb}\big( (P^m_t)^*\delta_{U_0^m},(P^m_t)^*\num \big)\\
&\le Ce^{-cn}\W_{\dmtnb}\big( (P^m_{t-\lfloor t/T^*\rfloor})^*\delta_{U_0^m},(P^m_{t-\lfloor t/T^*\rfloor})^*\num \big)\\
&\le Ce^{-cn}\sup_{s\in[0,T^*]}\W_{\dmtnb}\big( (P^m_s)^*\delta_{U_0^m},(P^m_s)^*\num \big).
\end{align*} 
Recalling $\dmtnb$ as in~\eqref{form:d.tilde^m_(N,beta)}, let $(X,Y)$ be an arbitrarily coupling of $\big( (P^m_s)^*\delta_{U_0^m},(P^m_s)^*\num \big)$, we invoke~\eqref{ineq:varrho^m_beta<|U-U.tilde|} and obtain
\begin{align*}
\W_{\dmtnb}\big( (P^m_s)^*\delta_{U_0^m},(P^m_s)^*\num \big) &\le \E\dmtnb (X,Y)\\
&\le  \sqrt{N\E\Big(\varrho^m_\beta(X,Y)\big[1+e^{\beta V_m(X)} + e^{\beta V_m(Y)}\big]\Big)}\\
&\le C+ C\E e^{c\beta V_m(X)} +C \E e^{c\beta V_m(Y)},
\end{align*}
for some positive constants $C,c$ independent of $\beta$, $m$, and $(X,Y)$. In view of~\eqref{ineq:moment-bound:H:exponential:random-initial-cond} and \eqref{cond:V_m<Psi_1m<V_m}, we further estimate
\begin{align*}
&\E e^{c\beta V_m(X)} + \E e^{c\beta V_m(Y)}\\
& \le \E e^{c\beta [\Psionem(X)+\|\Phi_1(\pi_1 X)\|_{L^1}]} + \E e^{c\beta [\Psionem(Y)+\|\Phi_1(\pi_1 Y)\|_{L^1}]}\\
& \le C e^{c\beta [\Psionem(U^m_0)+\|\Phi_1(\pi_1 u^m_0)\|_{L^1}]} +C \int_{\Hcal^1}  e^{c\beta [\Psionem(u,v)+\|\Phi_1(u)\|_{L^1}]}\num(\d  u,\d v)+C.
\end{align*}
In the above, we recall $\Phi_1$ and $\Psionem$ defined in~\eqref{cond:Phi_1} and \eqref{form:Psi_1m}, respectively. As a consequence, 
\begin{align*}
&\sup_{s\in[0,T^*]}\W_{\dmtnb}\big( (P^m_s)^*\delta_{U_0^m},(P^m_s)^*\num \big) \\
&\le C  e^{c\beta [\Psionem(U^m_0)+\|\Phi_1(\pi_1 u^m_0)\|_{L^1}]} +C \int_{\Hcal^1}  e^{c\beta [\Psionem(u,v)+\|\Phi_1(u)\|_{L^1}]}\num(\d  u,\d v)+C.
\end{align*}
Since $\|U_0^m\|_{\Hcal^2}<R$ and $\num$ satisfies the uniform exponential bound~\eqref{ineq:moment-bound:nu^m:exponential:H^1}, we infer
\begin{align*}
\sup_{s\in[0,T^*]}\W_{\dmtnb}\big( (P^m_s)^*\delta_{U_0^m},(P^m_s)^*\num \big)  \le C,
\end{align*}
for some positive constant $C=C(T^*)$ independent of $m$. Hence, for all $t\ge nT^*$,
\begin{align}
\W_{\dtnb}\big( \pi_1(P^m_t)^*\delta_{U_0^m},\pi_1\num \big)& \le \W_{\dmtnb}\big( (P^m_t)^*\delta_{U_0^m},\num \big)   \nt \\
&\le Ce^{-cn}\sup_{s\in[0,T^*]}\W_{\dmtnb}\big( (P^m_s)^*\delta_{U_0^m},(P^m_s)^*\num \big)  \nt \\
&\le Ce^{-cn}. \label{ineq:triangle:b}
\end{align}
Concerning the last term on the right--hand side of~\eqref{ineq:triangle:a}, we employ~\eqref{ineq:react-diff:geometric-ergodicity} to see that
\begin{align*}
\W_{\dtnbtwo}\big( \nu^0,(P^0_t)^*\delta_{u^m_0}  \big)&=\W_{\dtnbtwo}\big( (P^0_t)^*\nu^0,(P^0_t)^*\delta_{u^m_0}  \big)\\
& \le Ce^{-ct}\W_{\dtnbtwo}\big( \nu^0,\delta_{u^m_0}  \big),
\end{align*}
which holds for all $t\ge \tilde{T}$ where $\tilde{T}$ is as in~\eqref{ineq:react-diff:geometric-ergodicity}. Recalling $\dtnb$ from \eqref{form:d.tilde_(N,beta)}, we see that
\begin{align*}
\W_{\dtnbtwo}\big( \nu^0,\delta_{u^m_0}  \big) \le C\int_{H}\|u\|^2+ e^{2\beta \|u\|^2_H}\nu^0(\d u)+C\big(\|u^m_0\|^2_H+e^{2\beta\|u^m_0\|^2_H}\big)+C.
\end{align*}
Since $\|U_0^m\|_{\Hcal^2}<R$ and $\nu^0$ satisfies the exponential bound~\eqref{ineq:react-diff:exponential-bound:nu^0}, we infer
\begin{align*}
\W_{\dtnbtwo}\big( \nu^0,\delta_{u^m_0}  \big)  \le C,
\end{align*}
for some positive constant $C=C(R,\beta)$ independent of $m$ and $t$. It follows that
\begin{align} \label{ineq:triangle:c}
\W_{\dtnbtwo}\big( \nu^0,(P^0_t)^*\delta_{u^m_0}  \big) \le Ce^{-ct}, \quad t\ge \tilde{T}.
\end{align}
We collect~\eqref{ineq:triangle:a}, ~\eqref{ineq:triangle:b} and ~\eqref{ineq:triangle:c} to deduce for all $t\ge nT^*\ge \tilde{T}$
\begin{align*}
\sup_{t\ge nT^*}\W_{\dtnbhalf}\big( \pi_1(P^m_t)^*\delta_{U_0^m},(P^0_t)^*\delta_{u^m_0}  \big)  \le Ce^{-cn}+C\W_{\dtnbtwo}\big(\pi_1 \num,\nu^0  \big),
\end{align*}
for some positive constant $C=C(T^*,\beta,R)$ independent of $n$ and $m$. This together with~\eqref{ineq:triangle:0} implies
\begin{align*}
\sup_{t\ge 0}\W_{\dtnbhalf}\big( \pi_1(P^m_t)^*\delta_{U_0^m},(P^0_t)^*\delta_{u^m_0}  \big)  \le  \sup_{s\in[0,nT^*]} \E \,\dtnbhalf\big( \um(t),\uo(t)  \big)+Ce^{-cn}+C\W_{\dtnbtwo}\big( \pi_1\num,\nu^0  \big),
\end{align*}
In view of~\eqref{ineq:triangle:0a} and Theorem~\ref{thm:nu^m->nu^0}, we establish
\begin{align*}
\sup_{t\ge 0}\W_{\dtnbhalf}\big( \pi_1(P^m_t)^*\delta_{U_0^m},(P^0_t)^*\delta_{u^m_0}  \big)  \to 0,\quad\text{as }\,\,m\to 0,
\end{align*}
by first taking $n$ large and then sending $m$ to 0. In turn, this produces the desired limit~\eqref{lim:m->0:delta_(u^m)} for all $\beta$ small and $N$ large enough, as claimed. 

2. Let $f\in C_b(H;\rbb)$ be an observable satisfying $L_f=[f]_{\text{Lip},\dtnb}<\infty$ (see~\eqref{form:Lipschitz}), we invoke~\eqref{ineq:W_d(nu_1,nu_2):dual} to infer
\begin{align*}
\big|\E\, f(\um(t))-\E\,f(\uo(t))\big| \le L_f\W_{\dtnb}\big( \pi_1(P^m_t)^*\delta_{U_0^m},(P^0_t)^*\delta_{u^m_0}  \big) .
\end{align*}
By virtue of~\eqref{lim:m->0:delta_(u^m)}, we establish~\eqref{lim:m->0:f(u^m)}, thereby finishing the proof.

\end{proof}

\section*{Acknowledgment}
The author thanks Nathan Glatt-Holtz and Vincent Martinez for fruitful discussions on the topic of this paper. The author also would like to thank the anonymous reviewer for their valuable comments and suggestions.

\appendix

\section{Stochastic convolution} \label{sec:stochastic-convolution}

We consider the stochastic convolution $\Gamma^m(t)=(\pi_1\Gamma^m(t),\pi_2\Gamma^m(t))$ solving the linear system
\begin{equation} \label{eqn:wave:linear}
\begin{aligned}
\d u(t)&=v(t)\d t,\\
m\,\d v(t)&=-Au(t)\d t-v(t)\d t+ Q\d w(t),\\
(u(0),v(0))&=0\in \Hcal^1.
\end{aligned}
\end{equation}
In Lemma~\ref{lem:irreducibility:Gamma^m(t)} stated below, we assert two properties of $\Gammau(t)$, namely, the small ball probabilities and moment bounds in $H^2$. The former was employed in Section~\ref{sec:geometric-ergodicity:irreducibility} whereas the latter appeared in the proof of Lemma~\ref{lem:moment-bound:H^2:|Au|^2:sup_[0,T]:random-initial-cond}.
\begin{lemma} \label{lem:irreducibility:Gamma^m(t)}
Let $\Gamma^m(t)$ be the solution of~\eqref{eqn:wave:linear}. Then, the followings hold:

1. For all $r>0$ and $T>1$,
\begin{align}\label{ineq:irreducibility:Gamma^m(t)}
\P\Big(\sup_{t\in[0, T]} \|\pi_1\Gamma^m(t)\|_{H^2}^2+m^2\|\pi_2\Gamma^m(t)\|_{H^1}^2<r \Big) \ge c>0,
\end{align}
for some positive constant $c=c(T,r)$ independent of $m$.

2. For all $p\ge 1$ and $T>0$
\begin{align}\label{ineq:moment-bound:Gamma^m(t):H^2:sup_[0,T]}
\E\sup_{t\in[0,T]}\|\Gammau(t)\|^{p}_{H^2}\le C,
\end{align}
for some positive constant $C=C(T,p)$ independent of $m$.
\end{lemma}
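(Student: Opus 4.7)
Plan. The two parts of the lemma will be treated separately, with Part 2 feeding into the high-mode tail estimate of Part 1.

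For Part 2, I would exploit the Gaussianity of $\Gamma^m$ and decompose it in the eigenbasis of $A$: each Fourier component $u^m_k(t):=\la \pi_1\Gamma^m(t), e_k\ra_H$, together with its derivative $v^m_k(t)$, satisfies a scalar damped stochastic oscillator driven by a one-dimensional Brownian motion of intensity controlled by $\|Qe_k\|_H$. The resulting two-dimensional linear Ornstein--Uhlenbeck process has an explicitly computable stationary covariance whose $u$-component variance equals $\|Qe_k\|^2_H/(2\alpha_k)$, independently of $m$. Multiplying by $\alpha_k^2$ and summing yields $\E\|\pi_1\Gamma^m(t)\|_{H^2}^2 \le \tfrac{1}{2}\Tr(QAQ^*)$ uniformly in $m$ and $t$, and higher moments $p\ge 1$ follow by Gaussian hypercontractivity. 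The supremum over $[0,T]$ is then extracted either by the factorization method of Da Prato--Zabczyk applied modewise, or by Kolmogorov's continuity criterion together with the above $L^p$ bound; the additional regularity afforded by~\eqref{cond:Q:Tr(QA^3Q)} provides the margin needed to absorb the endpoint losses uniformly in $m$.

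For Part 1, I would decompose $\Gamma^m = P_N\Gamma^m + (I-P_N)\Gamma^m$ and control the two pieces separately. Chebyshev's inequality applied to the Part 2 estimate restricted to modes $k > N$ gives
\[
\P\Big(\sup_{t\in[0,T]}\|(I-P_N)\pi_1\Gamma^m\|^2_{H^2} + m^2\|(I-P_N)\pi_2\Gamma^m\|^2_{H^1} > \tfrac{r}{2}\Big) \le \tfrac{1}{2}
\]
for $N=N(T,r)$ sufficiently large, uniformly in $m$, by virtue of the tail decay $\Tr((I-P_N)QA^3Q^*)\to 0$. The low-mode contribution $P_N\Gamma^m$ is a $2N$-dimensional centered Gaussian process on $[0,T]$; upon rescaling the velocity as $(u^m_k, m v^m_k)_{k\le N}$, the $m$-singular variance of $v^m_k$ (of order $1/m$) is exactly cancelled by the $m^2$ weight in~\eqref{ineq:irreducibility:Gamma^m(t)}, producing a finite-dimensional Gaussian whose covariance admits a nondegenerate $m\to 0$ limit corresponding to the Ornstein--Uhlenbeck process of the reaction--diffusion equation. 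Standard density lower bounds for nondegenerate finite-dimensional Gaussians then deliver a positive small-ball probability uniformly in $m$.

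The principal obstacle in both parts is preserving uniformity as $m\to 0$. Naive energy estimates for~\eqref{eqn:wave:linear} at the $H^2$ level produce apparent $m^{-1}$ losses, as one can check by applying It\^o's formula to the natural energy $\tfrac{1}{2}\|\pi_1\Gamma^m\|_{H^2}^2 + \tfrac{m}{2}\|\pi_2\Gamma^m\|_{H^1}^2$ and finding a drift contribution of $\tfrac{1}{2m}\Tr(QAQ^*)$; the proof must therefore bypass such energies via the modewise Ornstein--Uhlenbeck analysis above. In the small-ball estimate, it is similarly essential that the $m^2$ weight in the norm precisely matches the $m^{-1}$ scaling of $\mathrm{Var}(v^m_k)$; without this matching, the low-mode covariance would become degenerate in the direction of $v^m_k$ and the uniform lower bound would fail.
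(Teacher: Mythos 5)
The paper's proof takes a quite different route. It introduces the intermediate Langevin process $\eta^m$ (the solution of the simple first-order-in-velocity system $m\,\d v=-v\,\d t + Q\,\d w$, with no $Au$ term), which has an explicit formula. Part 1 and Part 2 are first proved for $\eta^m$ from an $m$-uniform $L^2$ increment bound for the pair $(\pi_1\eta^m, m\,\pi_2\eta^m)$ plus a small-ball/sup-moment criterion of Cs\"org\H o--Shao; the passage from $\eta^m$ to $\Gamma^m$ is purely deterministic, via a Gronwall argument on the difference $\bar u=\pi_1\Gamma^m-\pi_1\eta^m$, $\bar v=\pi_2\Gamma^m-\pi_2\eta^m$, which satisfies a random ODE with only $\pi_1\eta^m$ as forcing. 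Your approach (modewise stationary covariance, hypercontractivity, Kolmogorov continuity for Part 2; low/high mode splitting with Chebyshev and finite-dimensional Gaussian bounds for Part 1) is genuinely different and correctly identifies the key cancellation — that the stationary variance of the position component is $m$-independent while the naive $H^2$ energy has an $m^{-1}$ loss. However, two steps are glossed over in a way that would need real work.

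First, for Part 2 the Kolmogorov-continuity step requires an $m$-uniform bound $\E\|\pi_1\Gamma^m(t)-\pi_1\Gamma^m(s)\|^2_{H^2}\lesssim (t-s)$. You assert this follows from the stationary covariance, but the obvious route — writing $\pi_1\Gamma^m(t)-\pi_1\Gamma^m(s)=\int_s^t\pi_2\Gamma^m(r)\,\d r$ and bounding by the velocity variance — gives $(t-s)^2/m$, which blows up. One must instead exploit cancellation by substituting the equation for $\pi_2\Gamma^m$ back into the integral (giving $\pi_1\Gamma^m(t)-\pi_1\Gamma^m(s)=-m(\pi_2\Gamma^m(t)-\pi_2\Gamma^m(s))-\int_s^t A\pi_1\Gamma^m\,\d r+\int_s^tQ\,\d w$), or carry out the explicit 2D oscillator Green's-function computation; the paper sidesteps this entirely by doing the elementary version of this trick for the simpler process $\eta^m$, which has no $Au$ term. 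Your sketch does not verify this increment bound, and it is exactly the nontrivial point.

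Second, for Part 1 the claim that ``standard density lower bounds for nondegenerate finite-dimensional Gaussians then deliver a positive small-ball probability'' conflates a one-time marginal density bound with a path-space small-ball estimate for $\sup_{t\in[0,T]}$. Even after restricting to finitely many modes, the process $(u_k^m,m\,v_k^m)_{k\le N}$ on $[0,T]$ is infinite-dimensional Gaussian, its temporal correlation structure depends on $m$ (the velocity decorrelates on the scale $m$), and the $m\,v_k^m$-marginal in fact becomes degenerate as $m\to 0$. Deriving a positive, $m$-uniform lower bound on $\P(\sup_{[0,T]}|\cdot|<r)$ from that requires a genuine small-ball theorem for Gaussian processes in terms of their modulus of continuity (exactly the role the Cs\"org\H o--Shao lemma plays in the paper), not a pointwise density estimate. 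As written, your Part 1 argument does not close this gap.
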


In order to prove Lemma~\ref{lem:irreducibility:Gamma^m(t)}, we will compare $\Gammau(t)$ with the process $\pi_1\eta^m(t)$ where $\eta^m(t)$ solves the Langevin equation
\begin{equation} \label{eqn:Langevin:linear}
\begin{aligned}
\d u(t)&=v(t)\d t,\\
m\,\d v(t)&=-v(t)\d t+ Q\d w(t),\\
(u(0),v(0))&=0\in \Hcal^1.
\end{aligned}
\end{equation}
In what follows, we assert similar properties of $\pi_1\eta^m(t)$ that will be employed in the proof of Lemma~\ref{lem:irreducibility:Gamma^m(t)}.
\begin{lemma} \label{lem:irreducibility:eta^m(t)}
Let $\eta^m(t)=(\pi_1\eta^m(t),\pi_2\eta^m(t))$ be the solution of~\eqref{eqn:Langevin:linear}. Then, the followings hold:

1. For all $r\in(0,1)$ and $T>1$,
\begin{align} \label{ineq:irreducibility:eta^m(t)}
\P\Big(\sup_{0\le t\le T} \|\pi_1\eta^m(t)\|^2_{H^3}+m^2\|\pi_2\eta^m(t)\|_{H^3}^2< 8 \emph{Tr}(QA^3Q^*)r \Big) \ge e^{-cT/r}>0,
\end{align}
for some positive constant $c$ independent of $m$, $T$ and $r$.

2. For all $p\ge 1$ and $T>0$,
\begin{align}\label{ineq:moment-bound:eta^m(t):H^2:sup_[0,T]}
\E\sup_{t\in[0,T]}\|\pi_1\eta^m(t)\|^{p}_{H^3}\le C,
\end{align}
for some positive constant $C=C(T,p)$ independent of $m$.
\end{lemma}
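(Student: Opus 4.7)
The plan is to solve~\eqref{eqn:Langevin:linear} explicitly and reduce both estimates to properties of the auxiliary $H^3$-valued process $Qw$. Integrating the velocity equation with zero initial data yields $m\pi_2\eta^m(t)=\int_0^t e^{-(t-s)/m}Q\,\d w(s)$, and a stochastic Fubini then gives the two useful identities
\begin{equation*}
\pi_1\eta^m(t) \;=\; \frac{1}{m}\int_0^t e^{-(t-s)/m}\,Qw(s)\,\d s, \qquad m\pi_2\eta^m(t) \;=\; Qw(t) - \pi_1\eta^m(t).
\end{equation*}
The first expresses $\pi_1\eta^m(t)$ as a weighted average of $Qw(s)$ for $s\in[0,t]$ with total weight $1-e^{-t/m}\le 1$, so $\|\pi_1\eta^m(t)\|_{H^3}\le M_T := \sup_{s\in[0,T]}\|Qw(s)\|_{H^3}$. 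The second identity then yields $\sup_{t\in[0,T]} m\|\pi_2\eta^m(t)\|_{H^3}\le 2M_T$, and consequently the pathwise bound
\begin{equation*}
\sup_{t\in[0,T]}\bigl[\|\pi_1\eta^m(t)\|^2_{H^3} + m^2\|\pi_2\eta^m(t)\|^2_{H^3}\bigr] \;\le\; 5\,M_T^{\,2},
\end{equation*}
which is uniform in $m$. Part 2 follows at once: by condition~\eqref{cond:Q:Tr(QA^3Q)}, the process $Qw$ is a continuous $H^3$-valued martingale, and Doob's maximal inequality together with the Burkholder--Davis--Gundy inequality in $H^3$ give $\E M_T^p\le C_p\bigl(T\,\Tr(QA^3Q^*)\bigr)^{p/2}$, establishing~\eqref{ineq:moment-bound:eta^m(t):H^2:sup_[0,T]}.

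For Part 1, the same pathwise comparison reduces the claim to the small ball estimate
\begin{equation*}
\P\bigl(M_T^{\,2} < \tfrac{8}{5}\Tr(QA^3Q^*)\,r\bigr) \ge e^{-cT/r}
\end{equation*}
for the $H^3$-valued Brownian motion $Qw$. I intend to prove this via an iterative Markov argument. Fix thresholds $A=\tfrac{8}{5}\Tr(QA^3Q^*)r$ and $B=A/4$, and choose a time step $\tau=c_0 r/\Tr(QA^3Q^*)$ with $c_0$ small enough that Doob's and Chebyshev's inequalities, together with the Gaussian symmetry of $Qw(\tau)$, produce a single-step lower bound
\begin{equation*}
\inf_{\|x\|^2_{H^3}\le B}\P\bigl(\sup_{t\in[0,\tau]}\|x+Qw(t)\|^2_{H^3}<A,\ \|x+Qw(\tau)\|^2_{H^3}<B\bigr)\ge p_0,
\end{equation*}
with $p_0\in(0,1)$ independent of $x$. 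The supremum control comes from $\|x+Qw(t)\|\le\|x\|+\|Qw(t)\|$ combined with a Doob bound on $\sup\|Qw\|^2_{H^3}$, while the endpoint control relies on the observation that for $\tau\Tr(QA^3Q^*)$ small relative to $B$, Gaussian symmetry yields $\P(\|x+Qw(\tau)\|^2_{H^3}<B)\ge 1/2$ uniformly over $\|x\|^2_{H^3}\le B$. Partitioning $[0,T]$ into $N=\lceil T/\tau\rceil$ subintervals and chaining the single-step estimate via the Markov property then gives $p_0^{\,N}\ge e^{-cT/r}$, with $c$ depending only on $c_0,p_0$ and $\Tr(QA^3Q^*)$, and in particular independent of $m$.

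The technical core of the argument is the single-step bound: one must control the supremum over each subinterval and the endpoint norm \emph{simultaneously} and uniformly over starting points in the ball of radius $\sqrt{B}$, so that the conditional probabilities compound without the admissible ball shrinking through the $N\sim T/r$ iterations. Since $Q$ need not commute with $A$ and the covariance of $Qw$ on $H^3$ is only trace class, the endpoint probability is handled by Gaussian symmetry rather than by any spectral decomposition or Cameron--Martin shift. Once this single-step estimate is in hand, the exponential scaling $e^{-cT/r}$ in Part 1 and the $m$-independence in both parts follow immediately from the pathwise identities established at the outset.
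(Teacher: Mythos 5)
Your pathwise reduction of $\eta^m$ to $Qw$ is correct and elegant. The identities
\[
\pi_1\eta^m(t)=\frac{1}{m}\int_0^t e^{-(t-s)/m}\,Qw(s)\,\d s,\qquad m\,\pi_2\eta^m(t)=Qw(t)-\pi_1\eta^m(t),
\]
follow from integration by parts in the variation-of-constants formula, and since $\frac{1}{m}\int_0^t e^{-(t-s)/m}\d s = 1-e^{-t/m}\le 1$, convexity gives the $m$-uniform bound $\sup_{[0,T]}\bigl[\|\pi_1\eta^m(t)\|_{H^3}^2+m^2\|\pi_2\eta^m(t)\|_{H^3}^2\bigr]\le 5M_T^2$ with $M_T=\sup_{[0,T]}\|Qw(s)\|_{H^3}$. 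Part~2 then follows directly from Doob/BDG and is in fact more transparent than the paper's route, which separately bounds the increments $\E\|\pi_1\eta^m(t)-\pi_1\eta^m(s)\|_{H^3}^2$ and $\E\|\pi_2\eta^m(t)-\pi_2\eta^m(s)\|_{H^3}^2$ and invokes \cite[Lemma~6.3]{csorgo1994almost}.

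Part~1, however, has a genuine gap in the single-step bound underpinning the Markov chaining. You claim that for $\tau\,\Tr(QA^3Q^*)$ small relative to $B$, Gaussian symmetry yields $\P(\|x+Qw(\tau)\|^2_{H^3}<B)\ge 1/2$ uniformly over $\|x\|^2_{H^3}\le B$. Write $Z=Qw(\tau)$. The condition $\|x+Z\|^2_{H^3}<B$ reads $2\la x,Z\ra_{H^3}+\|Z\|^2_{H^3}<B-\|x\|^2_{H^3}$, so for $\|x\|^2_{H^3}$ near $B$ one needs $\la x,Z\ra_{H^3}<-\|Z\|^2_{H^3}/2<0$. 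Since $\|Z\|^2_{H^3}>0$ a.s., symmetry of the one-dimensional Gaussian $\la x,Z\ra_{H^3}$ alone gives at most a value strictly below $1/2$, not $\ge 1/2$; and more seriously, the $H^3$-covariance of $Qw$ is only trace class, so the variance of $\la x,Z\ra_{H^3}$ is not bounded below by $c\|x\|^2_{H^3}\tau$ uniformly. If $x$ is aligned with a small-eigenvalue direction of that covariance (which the chain $Qw(k\tau)$ does enter), the fluctuation $\la x,Z\ra_{H^3}$ is typically negligible compared with $\|Z\|^2_{H^3}\sim\tau\Tr(QA^3Q^*)$, and $\P\bigl(\la x,Z\ra_{H^3}<-\|Z\|^2_{H^3}/2\bigr)$ degenerates to zero rather than to a constant $p_0$. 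Thus the infimum over the ball is not bounded away from zero and the chaining does not close. This is precisely the anisotropy difficulty that \cite[Lemma~6.2]{csorgo1994almost} is designed to absorb, and the paper invokes that lemma (after the same kind of increment bound) rather than reproving the small-ball estimate. To repair your argument you would need either to cite such a small-ball result for Hilbert-space Gaussian processes, or to replace the ball-to-ball chaining by a decomposition in the eigenbasis of the $H^3$-covariance, applying one-dimensional small-ball bounds to finitely many principal modes and absorbing the infinite-dimensional tail by a Markov/Doob estimate.
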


\begin{proof} 1. By the variation constants formula, $\pi_2\eta^m(t)$ is explicitly given by
\begin{equation*}
\pi_2\eta^m(t)=\tfrac{1}{m}\int_0^te^{-\frac{1}{m}(t-r)}Q\d w(r),\quad t\ge 0.
\end{equation*}
Given $0\le s\le t$, we use It\^o's formula to compute
\begin{align*}
\E\| \pi_2\eta^m(t)-\pi_2\eta^m(s)\|^2_{H^3} &= \E\Big\|\tfrac{1}{m}\int_s^t e^{-\frac{1}{m}(t-r)}Q\d w(r)+\tfrac{1}{m}\big(e^{-\frac{1}{m}t}-e^{-\frac{1}{m}s}\big)\int_0^s e^{\frac{1}{m}r}Q\d w(r) \Big\|^2_{H^3} \\
&=\frac{1}{m^2}\Big(\int_s^t e^{-\frac{2}{m}(t-r)}\d r+\big(e^{-\frac{1}{m}t}-e^{-\frac{1}{m}s}\big)^2\int_0^s e^{\frac{2}{m}r}\d r\Big) \Tr(QA^3Q^*)\\
&= \frac{\Tr(QA^3Q^*)}{2m}\big(1 - e^{-\frac{2}{m}(t-s)}\big)\big[ \big(1-e^{-\frac{2}{m}s}\big)\big(1-e^{-\frac{2}{m}(t-s)}\big)+1  \big].
\end{align*}
Recall from~\eqref{cond:Q:Tr(QA^3Q)} that
$
\Tr(QA^3Q^*)<\infty.
$
So, we use the elementary inequality 
\begin{align*}
1-e^{-x}\le x,\quad x\ge 0,
\end{align*}
to produce
\begin{align*}
\E\|\pi_2 \eta^m(t)-\pi_2\eta^m(s)\|^2_{H^3}  \le 2\frac{\Tr(QA^3Q^*)}{m^2}(t-s).
\end{align*}
Also, from~\eqref{eqn:Langevin:linear}, we see that
\begin{align*}
\E\|\pi_1 \eta^m(t)-\pi_1\eta^m(s)\|^2_{H^3} &\le 2m^2\E\|\pi_2 \eta^m(t)-\pi_2\eta^m(s)\|^2_{H^3} +2\E\Big\|\int_s^t Q\d w(r)\Big\|_{H^3}^2\\
&\le 6 \Tr(QA^3Q^*)(t-s).
\end{align*}

Turning to~\eqref{ineq:irreducibility:eta^m(t)}, we shall employ the result from \cite[Lemma 6.2]{csorgo1994almost} applied to the Gaussian process $(\pi_1\eta^m(t),m\pi_2\eta^m(t))$ and establish~\eqref{ineq:irreducibility:eta^m(t)}. Indeed, by virtue of \cite[Lemma 6.2]{csorgo1994almost}, for $r\in(0,1)$ and $T>1$,
\begin{align*}
\P\Big( \sup_{t\in[0,T]} \|\pi_1\eta^m(t)\|_{H^3}^2+m^2\|\pi_2\eta^m(t)\|_{H^3}^2< 8 \Tr(QA^3Q^*)r\Big) \ge e^{-cT/r},
\end{align*}
holds for some positive constant $c$ independent of $T$, $r$ and $m$. This clearly establishes~\eqref{ineq:irreducibility:eta^m(t)}, as claimed.

2. With regard to~\eqref{ineq:moment-bound:eta^m(t):H^2:sup_[0,T]}, we shall employ \cite[Lemma 6.3]{csorgo1994almost} to obtain the moment bound in sup norm. To see this, for $0\le t\le T$, setting 
\begin{align*}
\Lambda(t)= \sqrt{6 \Tr(QA^3Q^*)t},\quad \text{ and }\quad \Gamma = \sqrt{6 \Tr(QA^3Q^*)T}.
\end{align*}
It is clear that
\begin{align*}
\int_1^\infty \Lambda(e^{-y^2}T)\d y<\infty.
\end{align*}
In particular, all the hypothesis of~\cite[Lemma 6.3]{csorgo1994almost} are met. In turn, this implies the bound for all $R>0$
\begin{align*}
\P\Big(\sup_{0\le t\le T} \|\pi_1\eta^m(t)\|_{H^3}> R\Big(\Gamma+ \int_1^\infty \Lambda(e^{-y^2}T)\d y  \Big)  \Big) \le C e^{-R^2/2},
\end{align*}
holds for some positive constant $C$ independent of $m$, $T$ and $R$. As a consequence, for $p\ge 1$
\begin{align*}
\E \big[\sup_{0\le t\le T} \|\pi_1\eta^m(t)\|_{H^3}^p\big]= \int_0^\infty \close R^{p-1}\P\Big(\sup_{0\le t\le T} \|\pi_1\eta^m(t)\|_{H^3}> R  \Big)\d R\le C,
\end{align*}
where $C=C(T,p)$ does not depend on $m$. The proof is thus finished.

\end{proof}

We now provide the proof of Lemma~\ref{lem:irreducibility:Gamma^m(t)}.

\begin{proof}[Proof of Lemma~\ref{lem:irreducibility:Gamma^m(t)}]
Letting $\eta^m(t)$ be the solution of~\eqref{eqn:Langevin:linear}, denote
\begin{align*}
\ubar(t)=\pi_1\Gamma^m(t)-\pi_1\eta^m(t),\quad \vbar(t)=\pi_2\Gamma^m(t)-\pi_2\eta^m(t).
\end{align*}
From~\eqref{eqn:wave:linear} and~\eqref{eqn:Langevin:linear}, observe that $(\ubar(t)_,\vbar(t)$ solve the system
\begin{align*}
\tfrac{\d}{\d t} \ubar(t)&=\vbar(t),\\
m\ddt \vbar(t)&=-A\ubar(t)-\vbar(t)-A\pi_1\eta^m(t),\\
(u(0),v(0))&=0.
\end{align*}
A routine calculation together with Cauchy--Schwarz inequality gives
\begin{align*}
\ddt (\| \ubar(t)\|^2_{H^2}+m\|\vbar(t)\|^2_{H^1})&= -2\|\vbar(t)\|^2_{H^1}-2\la A^{3/2}\pi_1\eta^m(t),A^{1/2}\vbar(t)\ra_{H}\\
&\le \|\pi_1\eta^m(t)\|^2_{H^3}.
\end{align*}
It follows that (recalling $m<1$)
\begin{align*}
\sup_{t\in[0,T]}\|\ubar(t)\|^2_{H^2}+m^2\|\vbar(t)\|^2_{H^1}\le T\sup_{t\in[0,T]}\|\pi_1\eta^m(t)\|^2_{H^3},
\end{align*}
whence by Sobolev embedding
\begin{align}
&\sup_{t\in[0,T]}\|\Gammau(t)\|^2_{H^2}+m^2\|\Gammav(t)\|^2_{H^1}  \nt \\
&\le 2\sup_{t\in[0,T]}\|\pi_1\eta^m(t)\|^2_{H^2}+m^2\|\pi_2\eta^m(t)\|^2_{H^1}+\|\ubar(t)\|^2_{H^2} +m^2\|\vbar(t)\|^2_{H^1}\nt \\
& \le   2\big(\tfrac{1}{\alpha_1}+T\big)\sup_{t\in[0,T]}\|\pi_1\eta^m(t)\|^2_{H^3}+2\tfrac{1}{\alpha_1^2}\sup_{t\in[0,T]}m^2\|\pi_2\eta^m(t)\|^2_{H^3}  \nt \\
&\le 2\big(\tfrac{1}{\alpha_1}+\tfrac{1}{\alpha_1^2}+T\big)\sup_{t\in[0,T]}\|\pi_1\eta^m(t)\|^2_{H^3}+m^2\|\pi_2\eta^m(t)\|^2_{H^3} . \label{ineq:Gamma^m<eta^m}
\end{align}
In light of Lemma~\ref{lem:irreducibility:eta^m(t)}, cf~\eqref{ineq:irreducibility:eta^m(t)}, for $r\in(0,1)$ and $T>1$, we have
\begin{align*}
&\P\Big(\sup_{0\le t\le T} \|\pi_1\Gamma^m(t)\|_{H^2}^2+m^2\|\pi_2\Gamma^m(t)\|_{H^1}^2 < 16 \Tr(QA^3Q^*)\big(\tfrac{1}{\alpha_1}+\tfrac{1}{\alpha_1^2}+T\big)r \Big) \\
&\ge \P\Big(\sup_{0\le t\le T} \|\pi_1\eta^m(t)\|_{H^3}^2+m^2\|\pi_2\eta^m(t)\|^2_{H^3}  < 8\Tr(QA^3Q^*)r\Big) \ge e^{-cT/r}>0.
\end{align*}
In turn, this produces~\eqref{ineq:irreducibility:Gamma^m(t)} for all $r>0$.

2. With regard to~\eqref{ineq:moment-bound:Gamma^m(t):H^2:sup_[0,T]}, we invoke~\eqref{ineq:Gamma^m<eta^m} again to see that for all $p\ge 1$
\begin{align*}
\E\sup_{t\in[0,T]}\|\Gammau(t)\|^p_{H^2} \le C \E\sup_{t\in[0,T]}\|\Gammau(t)\|^p_{H^3} <C(T,p),
\end{align*}
where the last estimate follows from~\eqref{ineq:moment-bound:eta^m(t):H^2:sup_[0,T]}. This finishes the proof.
\end{proof}

\section{Estimates on~\eqref{eqn:react-diff}} \label{sec:react-diff}

In this section, we collect several estimates on the reaction--diffusion equation~\eqref{eqn:react-diff}. The first three results describe the long time statistics of~\eqref{eqn:react-diff} and can be found in \cite{glatt2022short}.

\begin{theorem}{\cite[Theorem 8.1]{glatt2022short}} \label{thm:react-diff:geometric-ergodicity}
Suppose that Assumption \ref{cond:phi:well-posed} and Assumption \ref{cond:Q} hold. Then, for all $N$ sufficiently large and $\beta$ sufficiently small, there exists a positive constant $\tilde{T}=\tilde{T}(N,\beta)$ large enough such that the following holds for all $\nu_1,\nu_2\in \Pcal r(H)$,
\begin{equation} \label{ineq:react-diff:geometric-ergodicity:beta/2}
\W_{\dtnbtwo} \big((P^0_t)^*\nu_1,(P^0_t)^*\nu_2\big)\le Ce^{-c t}\W_{\dtnb}(\nu_1,\nu_2),\quad t\ge \tilde{T},
\end{equation}
for some positive constants $c=c(N,\beta), \,C=C(N,\beta)$ independent of $\nu_1,\nu_2$ and $t$. Here, $\dtnb$ and $\dtnbhalf$ are defined in~\eqref{form:d.tilde_(N,beta)}.

As a consequence,
\begin{equation} \label{ineq:react-diff:geometric-ergodicity}
\W_{\dtnb} \big((P^0_t)^*\nu_1,(P^0_t)^*\nu_2\big)\le Ce^{-c t}\W_{\dtnb}(\nu_1,\nu_2),\quad t\ge \tilde{T}.
\end{equation}
\end{theorem}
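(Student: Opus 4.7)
The plan is to reproduce, in the simpler parabolic setting, the three--step scheme used for the wave equation in Section~\ref{sec:geometric-ergodicity}: establish an exponential super--Lyapunov moment bound, an asymptotic strong Feller estimate, and an irreducibility property, then combine them through the generalized Harris framework of \cite{hairer2008spectral,hairer2011asymptotic,butkovsky2020generalized}. Since~\eqref{eqn:react-diff} is the formal $m=0$ case of~\eqref{eqn:wave}, the Lyapunov role is played by $V_0(u)=\|u\|^2_H$ and the natural metric is $d^0_{N,\beta}(u,\ut)=N\varrho^0_\beta(u,\ut)\mi 1$, with $\varrho^0_\beta$ the geodesic pseudo--distance associated with the weight $e^{\beta\|\cdot\|^2_H}$; this is the $H$--analogue of~\eqref{form:d^m_(N,beta)}.

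For the super--Lyapunov bound, applying It\^o's formula to $\|\uo(t)\|^2_H$, using the dissipation~\eqref{cond:phi:x.phi(x)<-x^(lambda+1)} together with $\Tr(QQ^*)<\infty$, and invoking the exponential martingale inequality exactly as in Lemma~\ref{lem:moment-bound:H} parts~4--5, yields $\E\, e^{\beta\|\uo(t)\|^2_H}\le C_{\beta,\gamma}\exp\{\beta e^{-\gamma t}\|u_0\|^2_H\}$ for all $\beta,\gamma$ sufficiently small. For asymptotic strong Feller, I would choose the Malliavin control $\zeta(t)=\alpha_{\nbar}Q^{-1}P_{\nbar}\rho^0(t)$, so that the Malliavin residual $\rho^0$ satisfies $\partial_t\rho^0+A\rho^0+\alpha_{\nbar}P_{\nbar}\rho^0=\f'(\uo)\rho^0$, whose $H$--energy decays exponentially thanks to the spectral gap $\|A^{1/2}\rho\|^2_H\ge \alpha_{\nbar}\|(I-P_{\nbar})\rho\|^2_H$ combined with the choice~\eqref{cond:phi:ergodicity}; this is in fact cleaner than the wave case because there is no velocity cross--term to absorb with Young's inequality. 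The Malliavin integration--by--parts formula as in~\eqref{eqn:Malliavin-by-part} then delivers the exact parabolic analogue of Proposition~\ref{prop:asymptotic-Feller}. Irreducibility $\inf_{\|u_0\|^2_H\le R}\P(\|\uo(t)\|^2_H<r)>0$ would follow by Girsanov--shifting to the controlled equation $\partial_t\ut+A\ut=\f(\ut)+Q\dot w-\alpha_{\nbar}P_{\nbar}(\ut-\pi_1\Gamma^0)$, bounding $\ut-\pi_1\Gamma^0$ in $H$ via the dissipative estimate~\eqref{cond:phi:x.phi(x)<-x^(lambda+1)}, and controlling the exit and Radon--Nikodym terms as in the proof of Proposition~\ref{prop:irreducibility}; the argument is in fact easier than the wave version because only the $H$--norm has to be tracked.

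Combining these three ingredients as in Proposition~\ref{prop:contracting-d-small}, asymptotic strong Feller delivers $d^0_{N,\beta}$--contractivity for pairs at distance less than one, and irreducibility delivers $d^0_{N,\beta}$--smallness of the sublevel sets $\{V_0\le R\}$; feeding these into the Hairer--Mattingly--Scheutzow scheme as in the proof sketch of Theorem~\ref{thm:geometric-ergodicity:mass} produces exponential contraction in the weighted pseudo--distance $\sqrt{d^0_{N,\beta}(u,\ut)[1+\lambda e^{\beta V_0(u)}+\lambda e^{\beta V_0(\ut)}]}$ for a suitable $\lambda>0$. The main obstacle will be passing from this symmetric contraction to the asymmetric statement with $\dtnbtwo$ on the left and $\dtnb$ on the right. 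This asymmetry reflects the fact that $\dtnb$ is only distance--like, so the usual equivalence of $\dtnb$ with its $\lambda$--weighted versions must be refined. The trick, as in \cite[Theorem 8.1]{glatt2022short}, is to run the Harris scheme at the enlarged parameter $2\beta$ and then use Cauchy--Schwarz to split $e^{2\beta V_0}=e^{\beta V_0}\cdot e^{\beta V_0}$: one factor is absorbed into the contraction of the $d^0_{N,2\beta}$--piece, while the other is controlled by the super--Lyapunov bound from the first step, which is precisely the mechanism that trades exponential weight for time decay. The consequence~\eqref{ineq:react-diff:geometric-ergodicity} is then immediate because $\dtnb\le \dtnbtwo$ pointwise, hence $\W_{\dtnb}\le \W_{\dtnbtwo}$.
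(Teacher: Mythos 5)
The paper does not prove this theorem: it is imported verbatim from \cite[Theorem 8.1]{glatt2022short}, labeled as such, and invoked as a black box in the proofs of Theorems~\ref{thm:nu^m->nu^0} and~\ref{thm:m->0:f}. There is consequently no proof in the paper against which to check your proposal. On its own terms, your sketch reproduces the architecture such a proof would follow --- and that the present paper itself runs for the wave equation in Section~\ref{sec:geometric-ergodicity}: exponential super-Lyapunov moments via It\^o's formula and the exponential martingale inequality, asymptotic strong Feller via the Malliavin shift $\zeta(t)=\alpha_{\nbar}Q^{-1}P_{\nbar}\rho^0(t)$ combined with the spectral gap built into Assumption~\ref{cond:Q}, irreducibility via a Girsanov shift to a stabilized equation, and the generalized Harris scheme of \cite{hairer2011asymptotic,butkovsky2020generalized}. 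Your observations that the parabolic case is cleaner (no velocity cross-term to absorb in the asymptotic strong Feller estimate) and that the asymmetric $\dtnbtwo$/$\dtnb$ form is bought by spending the super-Lyapunov decay $\E\exp\{2\beta\|u^0(T_0)\|_H^2\}\le C\exp\{\beta\|u^0(0)\|_H^2\}$ over a terminal time window are both consistent with what one expects from \cite{glatt2022short}. The Cauchy--Schwarz bookkeeping in your last paragraph is gestured at rather than carried out --- the Wasserstein distance is a first moment, not a second, so the split of $e^{2\beta V_0}$ has to be arranged against the terminal time block rather than inside a single expectation --- but this is a matter of detail in a plausible sketch, not a wrong approach, and in any event there is no proof in the paper for it to disagree with.
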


\begin{lemma}{\cite[Lemma 8.3]{glatt2022short}} \label{lem:react-diff:exponential-bound}
Let $u_0$ be a random variable in $L^2(\Omega; H)$. Then, for all $\beta$ sufficiently small, there exist positive constants $c$ and $C$ independent of $u_0$ and $t$ such that
\begin{equation} \label{ineq:react-diff:exponential-bound}
\E\, e^{\beta\|u^0 (t)\|^2_H}\le e^{-ct} \E\, e^{\beta\|u_0\|^2_H}+C.
\end{equation}
As a consequence, it holds that
\begin{align} \label{ineq:react-diff:exponential-bound:nu^0}
\int_H e^{\beta\|u\|^2_H}\nu^0(\emph{d} u)<\infty,
\end{align}
where $\nu^0$ is the unique invariant probability measure of~\eqref{eqn:react-diff}.
\end{lemma}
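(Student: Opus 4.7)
The plan is to establish the pointwise bound $\mathcal{L}^0 G \le -\tilde c\, G + \tilde C$ where $G(u) = e^{\beta \|u\|^2_H}$ and $\mathcal{L}^0$ denotes the generator of~\eqref{eqn:react-diff}, and then conclude by taking expectation and integrating. This mirrors the argument for~\eqref{ineq:moment-bound:H:exponential} in Lemma~\ref{lem:moment-bound:H}, part 3, but for the parabolic rather than hyperbolic dynamics. A standard Galerkin truncation allows us to apply It\^o's formula to $G$ rigorously even though $\f$ is only polynomial.

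First, applying $\mathcal{L}^0$ to $\|u\|^2_H$ yields
\begin{align*}
\mathcal{L}^0 \|u\|^2_H = -2\|A^{1/2}u\|^2_H + 2\langle \f(u), u\rangle_H + \mathrm{Tr}(QQ^*).
\end{align*}
The dissipativity~\eqref{cond:phi:x.phi(x)<-x^(lambda+1)} gives $\langle \f(u), u\rangle_H \le -a_2\|u\|^{\lambda+1}_{L^{\lambda+1}} + a_3 |\domain|$, and Poincar\'e's inequality gives $\|A^{1/2}u\|^2_H \ge \alpha_1\|u\|^2_H$. Applying It\^o's formula to $G(u) = e^{\beta\|u\|^2_H}$ then produces
\begin{align*}
\mathcal{L}^0 G(u) = \beta G(u)\bigl[-2\|A^{1/2}u\|^2_H + 2\langle \f(u), u\rangle_H + \mathrm{Tr}(QQ^*)\bigr] + 2\beta^2 G(u)\, \|Qu\|^2_H.
\end{align*}

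Since $\|Qu\|^2_H \le \mathrm{Tr}(QQ^*)\|u\|^2_H$ and $\mathrm{Tr}(QQ^*)<\infty$ by~\eqref{cond:Q:Tr(QA^3Q)}, the stochastic correction contributes at most $2\beta^2 \mathrm{Tr}(QQ^*)\|u\|^2_H\, G(u)$. Choosing $\beta$ small enough that $\alpha_1 - \beta\, \mathrm{Tr}(QQ^*) > 0$, the combined $\|u\|^2_H$--coefficient is strictly negative, and together with the dissipativity we obtain
\begin{align*}
\mathcal{L}^0 G(u) \le \beta G(u)\bigl[-c\|u\|^2_H + C\bigr],
\end{align*}
for positive constants $c,C$ independent of $\beta$. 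The elementary inequality $e^{\beta r}(c r - C) \ge \tilde c\, e^{\beta r} - \tilde C$ valid for $r \ge 0$ (already invoked in the proof of~\eqref{ineq:moment-bound:H:exponential}) then yields $\mathcal{L}^0 G(u) \le -\tilde c\, G(u) + \tilde C$. Taking expectation of It\^o's formula gives the differential inequality $\tfrac{\d}{\d t}\E\, G(u^0(t)) \le -\tilde c\, \E\, G(u^0(t)) + \tilde C$, and Gr\"onwall's inequality produces~\eqref{ineq:react-diff:exponential-bound}.

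The consequence~\eqref{ineq:react-diff:exponential-bound:nu^0} follows by applying the invariance of $\nu^0$ to the bounded truncation $G_N(u) = G(u)\wedge N$, using~\eqref{ineq:react-diff:exponential-bound} with initial law $\nu^0$ to derive the bound $\int_H G_N\,\d\nu^0 \le C(1+e^{-\tilde ct}\int_H G_N\,\d\nu^0)$, solving for the integral uniformly in $N$, and letting $N\to\infty$ via the Monotone Convergence Theorem. This is analogous to the argument used for~\eqref{ineq:moment-bound:nu^m:exponential:H^1} in the proof of Proposition~\ref{prop:regularity}. The only real subtlety is the choice of $\beta$ small enough that the noise correction $2\beta^2\mathrm{Tr}(QQ^*)\|u\|^2_H$ is strictly dominated by the Laplacian dissipation $2\beta\alpha_1\|u\|^2_H$, which is automatic under~\eqref{cond:Q:Tr(QA^3Q)}.
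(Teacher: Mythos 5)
Your generator--and--Lyapunov argument for~\eqref{ineq:react-diff:exponential-bound} is correct: the computation of $\mathcal{L}^0 G$ for $G(u)=e^{\beta\|u\|_H^2}$, the absorption of the stochastic correction $2\beta^2\|Qu\|_H^2 G$ into the Laplacian dissipation via Poincar\'e and a small choice of $\beta$, and the passage through $e^{\beta r}(c r-C)\ge \tilde c\, e^{\beta r}-\tilde C$ to Gr\"onwall all line up. Note that the paper itself gives no proof here; it only cites \cite[Lemma 8.3]{glatt2022short}. Your argument reproduces the same technique the paper applies to the hyperbolic analogue in Lemma~\ref{lem:moment-bound:H}, part 3, so it is consistent with the paper's toolkit and, in effect, fills in the omitted proof.

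The one step that does not hold as written is the intermediate inequality you claim for~\eqref{ineq:react-diff:exponential-bound:nu^0}, namely $\int_H G_N\,\d\nu^0 \le C\bigl(1+e^{-\tilde c t}\int_H G_N\,\d\nu^0\bigr)$. Invariance gives $\int G_N\,\d\nu^0 = \int P_t^0 G_N\,\d\nu^0$, and the pointwise bound from~\eqref{ineq:react-diff:exponential-bound} supplies $P_t^0 G_N(u)\le P_t^0 G(u)\le e^{-\tilde c t}G(u)+\tilde C$; but since $G\ge G_N$, this produces $\int G_N\,\d\nu^0\le e^{-\tilde c t}\int G\,\d\nu^0 + \tilde C$, which is circular because the right--hand side contains the full untruncated integral whose finiteness you are trying to establish. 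The inequality you want, with $G_N$ on both sides, does not follow from the monotonicity available. The cleaner route is the one the paper itself uses in the proof of Proposition~\ref{prop:regularity}: fix $\varepsilon>0$, choose a set $B_R=\{u : \|u\|_H^2 \le R\}$ with $\nu^0(B_R^c)<\varepsilon$, split $\int P_t^0 G_N\,\d\nu^0$ over $B_R$ and $B_R^c$, bound the first integrand by $e^{-\tilde c t}e^{\beta R}+\tilde C$ and the second by $N\varepsilon$, then shrink $\varepsilon$, enlarge $t$, and finally send $N\to\infty$ by monotone convergence. Your conclusion and overall strategy are correct; only this intermediate inequality needs replacing.
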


\begin{lemma}{\cite[Lemma 8.5]{glatt2022short}} \label{lem:react-diff:H^1+H^2:nu^0}
Let $\nu^0$ be the unique invariant probability measure of~\eqref{eqn:react-diff}. Then, 
\begin{align} \label{ineq:react-diff:H^1+H^2:nu^0}
\int_H\|u\|^p_{H^1}+\|u\|^2_{H^2}\nu_0(\emph{d} u)<\infty.
\end{align}
\end{lemma}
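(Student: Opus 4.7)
The plan is to prove~\eqref{ineq:react-diff:H^1+H^2:nu^0} by establishing uniform-in-time moment bounds in $H^1$ and $H^2$ for the solution $\uo(t;0)$ of~\eqref{eqn:react-diff} issued from the origin, and then transferring these bounds to $\nu^0$ via a Krylov--Bogoliubov procedure, mirroring the strategy used in the proof of Proposition~\ref{prop:regularity} for $\num$. Concretely, the two targets are $\sup_{t\ge 0}\E\|\uo(t;0)\|^{2p}_{H^1}<\infty$ for every $p\ge 1$ and $\sup_{t\ge 0}\E\|\uo(t;0)\|^{2}_{H^2}<\infty$.

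First I would derive the $H^1$ bound. It\^o's formula applied to $\|\uo\|^2_{H^1}=\|A^{1/2}\uo\|^2_H$ combined with integration by parts yields
\begin{align*}
\d\|\uo\|^2_{H^1} = \big(-2\|A\uo\|^2_H + 2\la\f'(\uo)\grad\uo,\grad\uo\ra_H + \Tr(QAQ^*)\big)\d t + \d M_t.
\end{align*}
By~\eqref{cond:phi:sup.phi'<a_f} and Young's inequality, $2\la\f'(\uo)\grad\uo,\grad\uo\ra_H \le 2a_\f\|\uo\|^2_{H^1} \le a_\f^2\|\uo\|^2_H + \|A\uo\|^2_H$, so the drift is dissipative modulo the lower-order term $\|\uo\|^2_H$. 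The exponential bound~\eqref{ineq:react-diff:exponential-bound} supplies $\sup_t\E\|\uo(t;0)\|^{2n}_H<\infty$ for every $n$, and Gronwall then produces the uniform $H^1$ second-moment bound. Extending to arbitrary $p\ge 1$ is routine by applying It\^o to $\|\uo\|^{2p}_{H^1}$ following the induction in the proof of~\eqref{ineq:moment-bound:H^2:n-moment}.

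Next I would address the $H^2$ bound. It\^o on $\|A\uo\|^2_H$ gives
\begin{align*}
\d\|A\uo\|^2_H = \big(-2\|A^{3/2}\uo\|^2_H + 2\la A^{3/2}\uo, A^{1/2}\f(\uo)\ra_H + \Tr(QA^2Q^*)\big)\d t + \d M_t,
\end{align*}
where $\Tr(QA^2Q^*)<\infty$ follows from~\eqref{cond:Q:Tr(QA^3Q)}. The key nonlinear estimate uses $\|A^{1/2}\f(\uo)\|_H = \|\f'(\uo)\grad\uo\|_H$, so by~\eqref{cond:phi:phi'=O(x^(lambda-1))} and the embedding $H^2\hookrightarrow L^\infty$ valid in $d\le 3$,
\begin{align*}
\|\f'(\uo)\grad\uo\|^2_H \le C\big(1+\|\uo\|^{2(\lambda-1)}_{H^2}\big)\|\uo\|^2_{H^1}.
\end{align*}
Because $\lambda<2$, Young's inequality with conjugate exponents $1/(\lambda-1)$ and $1/(2-\lambda)$ gives $\|\uo\|^{2(\lambda-1)}_{H^2}\|\uo\|^2_{H^1}\le\varepsilon\|\uo\|^2_{H^2}+C_\varepsilon\|\uo\|^{2/(2-\lambda)}_{H^1}$. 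After absorbing $\varepsilon\|\uo\|^2_{H^2}\le \varepsilon\|A^{3/2}\uo\|^2_H/\alpha_1$ into the dissipation, the drift reduces to
\begin{align*}
\tfrac{\d}{\d t}\E\|A\uo\|^2_H+c\,\E\|A\uo\|^2_H\le C\big(1+\E\|\uo\|^{2q}_{H^1}\big),
\end{align*}
for some $q=q(\lambda)$, and the first step makes the right-hand side uniformly bounded in $t$, so Gronwall concludes.

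The last step is the transfer to $\nu^0$. As in the proof of Proposition~\ref{prop:regularity}, the time-averaged measures $\nu^0_T=\frac{1}{T}\int_0^T P^0_t(0,\cdot)\d t$ converge weakly to $\nu^0$, and applying the uniform bounds from the first two steps to the truncations $\|u\|^{2p}_{H^1}\mi N$ and $\|u\|^2_{H^2}\mi N$ permits passing to the limit first in $T$ (by weak convergence) and then in $N$ (by monotone convergence) to obtain~\eqref{ineq:react-diff:H^1+H^2:nu^0}. The hardest part will be the $H^2$ step, where the polynomial growth of $\f'$ must be balanced precisely against the dissipation; the restriction $\lambda<2$ combined with $d\le 3$ is what allows Young's inequality to close the estimate without forfeiting control of $\|\uo\|_{H^2}$.
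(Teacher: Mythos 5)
The paper does not prove this lemma; it cites it directly from \cite[Lemma 8.5]{glatt2022short}, so there is no internal proof to compare against. Your argument is a correct self-contained derivation and follows the same template the paper uses for the analogous Proposition~\ref{prop:regularity} on the wave side: uniform-in-time Lyapunov estimates on the solution from a fixed initial condition, followed by a Krylov--Bogoliubov transfer to the invariant measure via bounded truncations and monotone convergence. The technical ingredients you invoke all check out: the $H^1$ step closes because $2a_\f\la u^0,Au^0\ra_H\le a_\f^2\|u^0\|_H^2+\|Au^0\|_H^2$ sidesteps any Poincar\'e constraint between $a_\f$ and $\alpha_1$; the $H^2$ step uses $\Tr(QA^2Q^*)<\infty$, which follows from~\eqref{cond:Q:Tr(QA^3Q)} and boundedness of $A^{-1/2}$; and the Young exponents $1/(\lambda-1)$ and $1/(2-\lambda)$ are conjugate precisely because $\lambda<2$, which is where that hypothesis enters. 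One small point worth spelling out if you write this up is that $\f(u^0)\in H^1_0$ (so that $\|A^{1/2}\f(u^0)\|_H=\|\f'(u^0)\grad u^0\|_H$) because $u^0$ vanishes on $\partial\domain$ and $\f(0)=0$, and that the maps $u\mapsto\|u\|_{H^1}^{2p}\mi N$ and $u\mapsto\|u\|_{H^2}^{2}\mi N$ are bounded lower semicontinuous on $H$, which is what licenses passing to the weak limit $\nu^0_T\to\nu^0$.
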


We conclude this section by Lemma~\ref{lem:react-diff:moment-bound:H^2:sup_[0,T]} giving an estimate in $H^2$ on any finite time interval. We have employed this result in Section~\ref{sec:small-mass:nu^m->nu^0}.

\begin{lemma} \label{lem:react-diff:moment-bound:H^2:sup_[0,T]}
Let $u_0$ be a random variable in $L^2(\Omega; H^2)$. Then, there exists a positive constant $p$ such that for all $T>0$, the following holds 
\begin{equation} \label{ineq:react-diff:moment-bound:H^2:sup_[0,T]}
\E\sup_{t\in[0,T]}\|u^0(t)\|^2_{H^2} \le  C\big(\E \|u_0\|^{p}_{H^1}+\E \|u_0\|^{2}_{H^2}+1\big),
\end{equation}
for some positive constant $=C(T,p)$ independent of $u_0$.
\end{lemma}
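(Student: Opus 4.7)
}]
The strategy mirrors that of Lemma~\ref{lem:moment-bound:H^2:|Au|^2:sup_[0,T]:random-initial-cond}: subtract off the stochastic convolution and reduce the $H^2$ estimate to an $H^1$ estimate by means of the embedding $H^2\hookrightarrow L^\infty$ in dimension $d\le 3$, crucially exploiting the condition $\lambda<2$. Let $z(t)$ solve the linear problem $\d z = -Az\,\d t + Q\,\d w$, $z(0)=0$; since $\Tr(QA^3Q^*)<\infty$ by Assumption~\ref{cond:Q}, standard maximal estimates give $\E\sup_{t\in[0,T]}\|z(t)\|_{H^2}^{2p'}\le C(T,p')$ for every $p'\ge 1$. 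Set $\bar u(t)=u^0(t)-z(t)$; it solves the random PDE $\partial_t\bar u = -A\bar u+\f(u^0(t))$, $\bar u(0)=u_0$.

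Multiplying the equation by $A^2\bar u$ in $H$ and computing $\tfrac{d}{dt}\|\bar u\|_{H^2}^2$, I will obtain
\[
\ddt\|\bar u(t)\|_{H^2}^2 = -2\|\bar u(t)\|_{H^3}^2+2\la A^{3/2}\bar u(t),A^{1/2}\f(u^0(t))\ra_H.
\]
Cauchy--Schwarz on the cross term yields a bound by $\tfrac12\|\bar u\|_{H^3}^2+C\|\f'(u^0)\grad u^0\|_H^2$. Using condition~\eqref{cond:phi:phi'=O(x^(lambda-1))} together with the Sobolev embedding $H^2\hookrightarrow L^\infty$ (valid since $d\le 3$),
\[
\|\f'(u^0)\grad u^0\|_H^2\le c\,\|u^0\|_{L^\infty}^{2(\lambda-1)}\|u^0\|_{H^1}^2+c\,\|u^0\|_{H^1}^2\le c\,\|u^0\|_{H^2}^{2(\lambda-1)}\|u^0\|_{H^1}^2+c\,\|u^0\|_{H^1}^2.
\]
Because $\lambda<2$, Young's inequality separates $\|u^0\|_{H^2}^{2(\lambda-1)}\|u^0\|_{H^1}^2\le \varepsilon\|u^0\|_{H^2}^2+C_\varepsilon\|u^0\|_{H^1}^{2/(2-\lambda)}$, and since $\|u^0\|_{H^2}^2\le 2\|\bar u\|_{H^2}^2+2\|z\|_{H^2}^2\le 2\alpha_1^{-1}\|\bar u\|_{H^3}^2+2\|z\|_{H^2}^2$, the $\|\bar u\|_{H^3}^2$ contribution can be absorbed into the dissipation. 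Using $\|\bar u\|_{H^3}^2\ge\alpha_1\|\bar u\|_{H^2}^2$, this produces a differential inequality of the form
\[
\ddt\|\bar u(t)\|_{H^2}^2\le -c\|\bar u(t)\|_{H^2}^2+C\|u^0(t)\|_{H^1}^{2p}+C\|z(t)\|_{H^2}^2+C,
\]
for some $p=p(\lambda)\ge 1$ independent of $u_0$.

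Applying Gronwall's inequality and taking the supremum then expectation yields
\[
\E\sup_{t\in[0,T]}\|\bar u(t)\|_{H^2}^2\le C\E\|u_0\|_{H^2}^2+C\,\E\sup_{t\in[0,T]}\|u^0(t)\|_{H^1}^{2p}+C\,\E\sup_{t\in[0,T]}\|z(t)\|_{H^2}^2+C,
\]
so that, modulo the triangle inequality $\|u^0\|_{H^2}^2\le 2\|\bar u\|_{H^2}^2+2\|z\|_{H^2}^2$, the claim reduces to the sup bound on $\|u^0\|_{H^1}$. This last ingredient follows by the standard $H^1$ pathwise bootstrap for reaction--diffusion equations analogous to Lemma~\ref{lem:moment-bound:H^2:Psi_2m:sup_[0,T]:random-initial-cond}: compute $\tfrac{d}{dt}\|\bar u\|_{H^1}^2$ in the same decomposition, control $\la \f'(u^0)\grad u^0,\grad\bar u\ra_H$ by $c\|\bar u\|_{H^1}^2+$ lower--order terms involving $\|z\|_{L^\infty}^{q}$ and $\|u^0\|_H^{q}$ (here $\lambda<2$ is again used via Young's inequality, exactly as in~\eqref{ineq:<phi'.grad(u^m),grad(u^m-Gamma^m)>}), apply Gronwall, and invoke the exponential bound~\eqref{ineq:react-diff:exponential-bound} on $\|u^0\|_H^2$ together with standard stochastic-convolution maximal estimates on $\|z\|_{H^2}$.

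The main obstacle is the same one that forced the restriction $\lambda<2$ throughout the paper: controlling the nonlinear term $\|\f'(u^0)\grad u^0\|_H^2$ in a way that leaves the dissipation dominant. The exponent $2(\lambda-1)$ on $\|u^0\|_{L^\infty}$ must be strictly less than $2$ so that Young's inequality absorbs the quadratic-in-$H^2$ factor into the $\|\bar u\|_{H^3}^2$ coefficient, pushing the remainder onto a higher power of $\|u^0\|_{H^1}$ (which is cheap by the previous bootstrap). Once this balance is achieved, the rest of the argument is routine Gronwall and Burkholder--Davis--Gundy manipulations.
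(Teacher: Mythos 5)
Your proposal is correct and follows essentially the same route as the paper's own proof: decompose $u^0 = \bar u + \Gamma^0$ where $\Gamma^0$ is the stochastic convolution, derive the $H^2$ differential inequality for $\bar u$, control the nonlinear term $\|\f'(u^0)\grad u^0\|_H^2$ via the embedding $H^2\hookrightarrow L^\infty$ and Young's inequality (which is where $\lambda<2$ is used), and then reduce to the $H^1$ sup bound proved by the analogous pathwise estimate~\eqref{ineq:<phi'.grad(u^m),grad(u^m-Gamma^m)>}. The only cosmetic difference is that you absorb $\varepsilon\|u^0\|_{H^2}^2$ back into the dissipation to produce a decay coefficient $-c\|\bar u\|_{H^2}^2$, whereas the paper keeps it as a positive Gronwall coefficient; both give the same finite-time conclusion, and your extra invocation of~\eqref{ineq:react-diff:exponential-bound} in the $H^1$ step is harmless but not actually needed in the paper's version.
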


The proof of Lemma~\ref{lem:react-diff:moment-bound:H^2:sup_[0,T]} is similarly to that of Lemma~\ref{lem:moment-bound:H^2:|Au|^2:sup_[0,T]:random-initial-cond}. For the sake of completeness, we sketch the main steps without going into detail.

\begin{proof}[Sketch of the proof of Lemma~\ref{lem:react-diff:moment-bound:H^2:sup_[0,T]}]
\emph{Step 1}: Let $\Gamma^0(t)$ be the solution of the linear equation
\begin{align*}
\d u(t)&=-Au(t)\d t+Q\d w(t),\quad u(0)=0\in H.
\end{align*}
which is relatively well--known \cite[Chapter 5]{da2014stochastic}. By It\^o's formula, we see that
\begin{align*}
\d \|\Gamma^0(t)\|_{H^2}^2=-2 \|\Gamma^0(t)\|^2_{H^3}\d t+2 \la \Gamma^0(t),Q\d w(t)\ra_{H^2}+\Tr(QA^2Q^*)\d t.
\end{align*}
Employing a strategy similarly to the proof of Lemma~\ref{lem:moment-bound:H}, part 5, while making use of the exponential Martingale inequality, it is not difficult to verify that
\begin{align} \label{ineq:Gamma^0}
\E\exp\big\{\sup_{t\in[0,T]}\|\Gamma^0(t)\|^2_{H^2} \big\}\le C(T).
\end{align}

\emph{Step 2}: Denoting $\ubar(t)=u^0(t)-\Gamma^0(t)$,
observe that
\begin{align*}
\ddt \ubar(t)=-A\ubar(t)+\f(u^0(t)),\quad \ubar(0)=u_0.
\end{align*}
A computation gives
\begin{align*}
\ddt \|\ubar(t)\|^2_{H^1} = -\|A\ubar(t)\|^2_H+\la \f'(u^0(t))\grad u^0(t),\grad\ubar(t)\ra_H.
\end{align*}
From~\eqref{ineq:<phi'.grad(u^m),grad(u^m-Gamma^m)>} applying to the cross term on the above right--hand side, we see that
\begin{align*}
\la \f'(u^0(t))\grad u^0(t),\grad\ubar(t)\ra_H \le c \|\ubar(t)\|^2_{H^1}+c\|\Gamma^0(t)\|^{2}_{H^1}+c\|\Gamma^0(t)\|_{H^2}^{2\lambda}+c\|\Gamma^0(t)\|_{H^2}^{\frac{2}{2-\lambda}}.
\end{align*}
It follows that for any $p\ge 2$, we have the estimate
\begin{align*}
\ddt \|\ubar(t)\|^{p}_{H^1} \le c \|\ubar(t)\|^{p}_{H^1} +c\|\Gamma^0(t)\|_{H^2}^{q},
\end{align*}
for some positive constant $q=q(p,\lambda)$. Together with~\eqref{ineq:Gamma^0}, we deduce
\begin{align} \label{ineq:react-diff:moment-bound:H^1:sup_[0,T]}
\E\sup_{t\in[0,T]}\|u^0(t)\|^{p}_{H^1} \le C(\E \|u_0\|^{p}_{H^1}+1).
\end{align}

\emph{Step 3}: Similarly to Step 2, we use Agmon's inequality to see that
\begin{align*}
\ddt \|\ubar(t)\|^2_{H^2}& = -\|A^{3/2}\ubar(t)\|^2_H+\la \f'(u^0(t))\grad u^0(t),\grad \triangle\ubar(t)\ra_H\\
&\le c \|\f'(u^0(t))\grad u^0(t)\|^2_H\\
&\le c\|u^0(t)\|^{2(\lambda-1)}_{L^\infty}\|u^0(t)\|^2_{H^1}+c\|u^0(t)\|^2_{H^1}\\
&\le c\|u^0(t)\|^2_{H^2}+c\|u^0(t)\|^{\frac{2}{2-\lambda}}_{H^1}+c\|u^0(t)\|^2_{H^1}\\
&\le c\|\ubar(t)\|^2_{H^2}+c\|\Gamma^0(t)\|^2_{H^2}+c\|u^0(t)\|^{\frac{2}{2-\lambda}}_{H^1}+c\|u^0(t)\|^2_{H^1}.
\end{align*}
In view of~\eqref{ineq:Gamma^0} and~\eqref{ineq:react-diff:moment-bound:H^1:sup_[0,T]}, we obtain
\begin{align*}
\E\sup_{t\in[0,T]}\|u^0(t)\|^{2}_{H^2} \le C(\E \|u_0\|^{p}_{H^1}+\E \|u_0\|^{2}_{H^2}+1),
\end{align*}
for some positive constant $C=C(T)$ independent of $u_0$. This produces~\eqref{ineq:react-diff:moment-bound:H^2:sup_[0,T]}, as claimed.

\end{proof}

\section{Auxiliary results} \label{sec:auxiliary-result}

We start with the following auxiliary result in Lemma~\ref{lem:phi} about the nonlinearity $\f$. We have employed Lemma~\ref{lem:phi} to establish irreducibility of~\eqref{eqn:wave} in Section~\ref{sec:geometric-ergodicity:irreducibility}.

\begin{lemma} \label{lem:phi}
Let $\f$ and $\lambda$ be as in Assumption~\ref{cond:phi:well-posed}. Then, the followings hold for all $x,y\in\rbb$:

1. \begin{align} \label{cond:phi:phi<x+x^lambda}
|\f(x)|\le \max\Big\{\sup_{|z|\le 1}|\f'(z)|,2a_1\Big\} (|x|+|x|^\lambda),
\end{align}
where $a_1$ is as in~\eqref{cond:phi:phi(x)=O(x^lambda)}.

2. \begin{align} \label{cond:phi:|phi(x)-phi(y)|}
|\f(x)-\f(y)| \le 2^\lambda a_4|x-y|\big(|x-y|^{\lambda-1}+|y|^{\lambda-1}+1  \big),
\end{align}
where $a_4$ is as in~\eqref{cond:phi:phi'=O(x^(lambda-1))}.

3. For all $\varepsilon$ sufficiently small, 
\begin{align} \label{cond:phi:xphi(x)<x^2-x^(lambda+1)}
x\f(x)\le (a_\f+\varepsilon)|x|^2-\frac{\varepsilon}{\big(\frac{2a_3}{a_2}\big)^{\frac{\lambda-1}{\lambda+1}}}|x|^{\lambda+1},
\end{align}
where $a_2,a_3$ are as in~\eqref{cond:phi:x.phi(x)<-x^(lambda+1)} and $a_\f=\sup_{x}\f'(x)$ is as in ~\eqref{cond:phi:sup.phi'<a_f}.

4. Recalling $\Phi_2(x)=-\int_0^x\f(r)\emph{d} r$ as in~\eqref{form:Phi_2}, for all $\varepsilon$ sufficiently small,
\begin{align} \label{cond:Phi_2:a}
  -\tfrac{1}{2}(a_\f+\varepsilon)x^2+\frac{\varepsilon}{(\lambda+1)\big(\frac{2a_3}{a_2}\big)^{\frac{\lambda-1}{\lambda+1}}}|x|^{\lambda+1}  \le \Phi_2(x),
\end{align}
and
\begin{align}\label{cond:Phi_2:b}
\Phi_2(x)  \le\max\Big\{\sup_{|z|\le 1}|\f'(z)|,2a_1\Big\}(x^2+|x|^{\lambda+1})+\tfrac{1}{2}a_\f x^2.
\end{align}

\end{lemma}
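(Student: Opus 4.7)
The plan is to establish the four statements of Lemma~\ref{lem:phi} sequentially, each following from Assumption~\ref{cond:phi:well-posed} together with elementary integration/splitting arguments. Throughout I will set $C_\f := \max\{\sup_{|z|\le 1}|\f'(z)|,\, 2a_1\}$ for brevity.

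For part 1, I would split on whether $|x|\le 1$ or $|x|>1$. In the first case the mean value theorem together with $\f(0)=0$ yields $|\f(x)|\le \sup_{|z|\le 1}|\f'(z)|\cdot|x|$, while in the second case~\eqref{cond:phi:phi(x)=O(x^lambda)} gives $|\f(x)|\le a_1(1+|x|^\lambda)\le 2a_1|x|^\lambda$. Summing produces~\eqref{cond:phi:phi<x+x^lambda}. For part 2, the mean value theorem produces $|\f(x)-\f(y)|\le |x-y|\sup_{z\in[x,y]}|\f'(z)|$, and~\eqref{cond:phi:phi'=O(x^(lambda-1))} combined with the triangle inequality $|z|\le |x-y|+|y|$ and the elementary bound $(a+b)^{\lambda-1}\le 2^{\lambda-1}(a^{\lambda-1}+b^{\lambda-1})$ (valid since $\lambda\in[1,2)$) yields~\eqref{cond:phi:|phi(x)-phi(y)|}.

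Part 3 is the technical heart of the lemma. The key is that two complementary one-sided bounds are available: from $\f(0)=0$ and \eqref{cond:phi:sup.phi'<a_f} one obtains the linear bound $x\f(x)\le a_\f x^2$ for all $x\in\rbb$, while~\eqref{cond:phi:x.phi(x)<-x^(lambda+1)} gives the dissipative bound $x\f(x)\le -a_2|x|^{\lambda+1}+a_3$. I would then argue by cases using the threshold $|x|^{\lambda+1}=2a_3/a_2$: in the regime $|x|^{\lambda+1}\ge 2a_3/a_2$, the additive constant $a_3$ is absorbed into $\tfrac{a_2}{2}|x|^{\lambda+1}$, producing $x\f(x)\le -\tfrac{a_2}{2}|x|^{\lambda+1}$, which dominates the right-hand side of~\eqref{cond:phi:xphi(x)<x^2-x^(lambda+1)} provided $\varepsilon$ is smaller than $\tfrac{a_2}{2}(2a_3/a_2)^{(\lambda-1)/(\lambda+1)}$. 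In the complementary regime $|x|^{\lambda+1}<2a_3/a_2$, the interpolation identity $|x|^{\lambda+1}=|x|^{\lambda-1}\cdot x^2\le (2a_3/a_2)^{(\lambda-1)/(\lambda+1)}x^2$ shows that the subtracted term on the right-hand side of~\eqref{cond:phi:xphi(x)<x^2-x^(lambda+1)} is at most $\varepsilon x^2$, so the residual $a_\f x^2$ dominates $x\f(x)$ by the linear bound. The main obstacle, modest in scope, is matching the awkward exponent $(\lambda-1)/(\lambda+1)$ arising from this interpolation threshold.

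For part 4, I would derive~\eqref{cond:Phi_2:a} by integrating~\eqref{cond:phi:xphi(x)<x^2-x^(lambda+1)}: dividing by $r$ (handling the sign change at $r=0$ separately for $r>0$ and $r<0$) gives $\f(r)\le (a_\f+\varepsilon)r-\tfrac{\varepsilon}{(2a_3/a_2)^{(\lambda-1)/(\lambda+1)}}r|r|^{\lambda-1}$ for $r>0$ with the inequality reversed for $r<0$, and integrating from $0$ to $x$ yields the claimed lower bound on $\Phi_2(x)=-\int_0^x\f(r)\,\mathrm{d}r$. For the upper bound~\eqref{cond:Phi_2:b}, I would integrate by parts:
\begin{align*}
\Phi_2(x)=-\int_0^x\f(r)\,\mathrm{d}r=-x\f(x)+\int_0^x r\f'(r)\,\mathrm{d}r.
\end{align*}
Part 1 bounds the first term by $|x\f(x)|\le C_\f(x^2+|x|^{\lambda+1})$, while $\f'\le a_\f$ (with a sign-based case split on $x\gtrless 0$) bounds the integral by $\tfrac{a_\f}{2}x^2$, producing~\eqref{cond:Phi_2:b}.
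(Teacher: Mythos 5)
Your argument for parts 1, 2, 3 and the upper bound~\eqref{cond:Phi_2:b} of part~4 is correct and coincides with the paper's: the same small/large-$|x|$ split with the mean-value theorem and~\eqref{cond:phi:phi(x)=O(x^lambda)} for part~1; the integral-form mean-value theorem with the elementary subadditivity of $t\mapsto t^{\lambda-1}$ for part~2; the threshold $|x|^{\lambda+1}=2a_3/a_2$ together with the two one-sided bounds $x\f(x)\le a_\f x^2$ and $x\f(x)\le -\tfrac{a_2}{2}|x|^{\lambda+1}$ for part~3; and integration by parts plus a sign-aware use of $\f'\le a_\f$ for~\eqref{cond:Phi_2:b}.

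For the lower bound~\eqref{cond:Phi_2:a} you take a genuinely different route from the paper, and yours is preferable. You divide the pointwise bound $r\f(r)\le (a_\f+\varepsilon)r^2-\tfrac{\varepsilon}{c}|r|^{\lambda+1}$ by $r$ — with the inequality flipping for $r<0$ — to obtain $\pm\f(r)\gtrless (a_\f+\varepsilon)r\mp\tfrac{\varepsilon}{c}|r|^{\lambda-1}r$, and then integrate over $[0,x]$ (resp.\ $[x,0]$). This yields the stated lower bound uniformly in both sign regimes. The paper instead bifurcates on the sign of $x$: for $x>0$ it proceeds essentially as you do, but for $x\le 0$ it invokes the absolute-value bound from part~1 and asserts $\Phi_2(x)\ge \max\{\sup_{|z|\le1}|\f'(z)|,2a_1\}\big(\tfrac12 x^2+|x|^{\lambda+1}\big)$. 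This inequality is reversed: the bound $|\f(r)|\le C_\f(|r|+|r|^\lambda)$ can only yield an \emph{upper} bound on $\Phi_2(x)=\int_x^0\f(r)\,\d r$, not a lower bound (try $\f(r)=-r$, $\lambda=1$: then $\Phi_2(x)=\tfrac12 x^2$, while the paper's claimed lower bound is a multiple strictly exceeding this). Your route avoids this step entirely and recovers~\eqref{cond:Phi_2:a} cleanly; you should keep it rather than conforming to the paper.
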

\begin{proof} 1. Since $\f(0)=0$, we employ the Mean Value Theorem to see that for $|x|\le 1$
\begin{align*}
|\f(x)|=|\f(x)-\f(0)|\le |x|\sup_{|z|\le 1}|\f'(z)|.
\end{align*}
On the other hand, for all $|x|\ge 1$, by \eqref{cond:phi:phi(x)=O(x^lambda)}, we infer 
\begin{align*}
|\f(x)|\le a_1(|x|^\lambda+1)\le 2a_1|x|^p.
\end{align*}
Altogether, we arrive at 
\begin{align*}
|\f(x)|\le \max\Big\{\sup_{|z|\le 1}|\f'(z)|,2a_1\Big\}  (|x|+|x|^{\lambda}).
\end{align*}
This proves~\eqref{cond:phi:phi<x+x^lambda}, as claimed.

2. Concerning~\eqref{cond:phi:|phi(x)-phi(y)|}, we invoke condition~\eqref{cond:phi:phi'=O(x^(lambda-1))} to estimate the difference $\f(x)-\f(y)$ as follows:
\begin{align*}
\f(x)-\f(y)&=\int_0^1 \f'\big(tx+(1-t)y\big)\d t(x-y)\\
&\le a_4|x-y|\sup_{t\in[0,1]}(|t(x-y)+y|^{\lambda-1}+1)\\
& \le  a_4|x-y|(2^{\lambda}|x-y|^{\lambda-1}+2^\lambda|y|^{\lambda-1}+1)  \\
&\le 2^\lambda a_4|x-y|\big( |x-y|^{\lambda-1}+|y|^{\lambda-1}+1  \big),
\end{align*}
which produces~\eqref{cond:phi:|phi(x)-phi(y)|}.

3. Turning to~\eqref{cond:phi:xphi(x)<x^2-x^(lambda+1)}, from the condition~\eqref{cond:phi:x.phi(x)<-x^(lambda+1)}, we see that for all
$|x|\ge \big(\tfrac{2a_3}{a_2}\big)^{\frac{1}{\lambda+1}},$
it holds that
\begin{align*}
x\f(x)\le -a_2|x|^{\lambda+1}+a_3 \le -\tfrac{1}{2}a_2|x|^{\lambda+1}.
\end{align*}
Also, for all $x\in\rbb$, since $\f(0)=0$, we invoke~\eqref{cond:phi:sup.phi'<a_f} to obtain
\begin{align*}
x\f(x)=x\int_0^x\close \f'(r)\d r\le a_\f x^2.
\end{align*}
In particular, for $|x|<\big(\tfrac{2a_3}{a_2}\big)^{\frac{1}{\lambda+1}}$,
\begin{align*}
x\f(x)\le (a_\f+\varepsilon)x^2-\frac{\varepsilon}{\big(\frac{2a_3}{a_2}\big)^{\frac{\lambda-1}{\lambda+1}}}|x|^{\lambda+1}.
\end{align*}
By choosing $\varepsilon$ sufficiently small, e.g.,
\begin{align*}
\frac{\varepsilon}{\big(\frac{2a_3}{a_2}\big)^{\frac{\lambda-1}{\lambda+1}}} <\tfrac{1}{2}a_2,
\end{align*}
we also deduce 
\begin{align*}
x\f(x)\le -\tfrac{1}{2}a_2|x|^{\lambda+1}\le (a_\f+\varepsilon)x^2-\frac{\varepsilon}{\big(\frac{2a_3}{a_2}\big)^{\frac{\lambda-1}{\lambda+1}}}|x|^{\lambda+1},
\end{align*}
for all $|x|\ge\big(\tfrac{2a_3}{a_2}\big)^{\frac{1}{\lambda+1}}$. This establishes~\eqref{cond:phi:xphi(x)<x^2-x^(lambda+1)}.

4. Concerning~\eqref{cond:Phi_2:a}, there are two cases depending on the sign of $x$. If $x$ is positive, we invoke~\eqref{cond:phi:xphi(x)<x^2-x^(lambda+1)} to see that
\begin{align*}
-x\f(x)\ge -(a_\f+\varepsilon)x^2+\frac{\varepsilon}{\big(\frac{2a_3}{a_2}\big)^{\frac{\lambda-1}{\lambda+1}}}|x|^{\lambda+1},
\end{align*}
whence
\begin{align*}
-\f(x) \ge -(a_\f+\varepsilon)x+\frac{\varepsilon}{\big(\frac{2a_3}{a_2}\big)^{\frac{\lambda-1}{\lambda+1}}}|x|^{\lambda}.
\end{align*}
It follows that
\begin{align*}
\Phi(x)=-\int_0^x \f(r)\d r\ge -\tfrac{1}{2}(a_\f+\varepsilon)x^2+\frac{\varepsilon}{(\lambda+1)\big(\frac{2a_3}{a_2}\big)^{\frac{\lambda-1}{\lambda+1}}}|x|^{\lambda+1}.
\end{align*}

Otherwise, if $x$ is non-positive, from~\eqref{cond:phi:phi<x+x^lambda}, we see that
\begin{align*}
\frac{-\f(x)}{\max\{\sup_{|z|\le 1}|\f'(z)|,2a_1\}}&\ge -  (|x|+|x|^{\lambda})= x-(-x)^{\lambda},
\end{align*}
implying
\begin{align*}
\Phi(x)=-\int_0^x \f(r)\d r\ge \max\Big\{\sup_{|z|\le 1}|\f'(z)|,2a_1\Big\}\big(\tfrac{1}{2}x^2 + |x|^{\lambda+1} \big).
\end{align*}
Altogether, by taking $\varepsilon$ sufficiently small, we arrive at the bound for all $x$
\begin{align*}
\Phi(x) \ge  -\tfrac{1}{2}(a_\f+\varepsilon)x^2+\frac{\varepsilon}{(\lambda+1)\big(\frac{2a_3}{a_2}\big)^{\frac{\lambda-1}{\lambda+1}}}|x|^{\lambda+1}.
\end{align*}

Turning to~\eqref{cond:Phi_2:b}, an integration by parts together with~\eqref{cond:phi:phi<x+x^lambda} and~\eqref{cond:phi:sup.phi'<a_f} yields
\begin{align*}
\Phi(x) =-\int_0^x \f(r)\d r&= -x\f(x)+\int_0^x r\f'(r)\d r\\
&\le -x\f(x)+a_\f\int_0^x r\d r\\
&\le  \max\Big\{\sup_{|z|\le 1}|\f'(z)|,2a_1\Big\}(x^2+|x|^{\lambda+1})+\tfrac{1}{2}a_\f x^2.
\end{align*}
This proves part 4.

\end{proof}

Next, we turn to Wasserstein distances and collect useful estimates on $\W_{\dmtnb}$ and $\W_{\dtnb}$. We have employed these bounds to study the small mass limits in Section~\ref{sec:small-mass}.

In Lemma~\ref{lem:W_dmtnb>W_dtnb} below, we assert that $\W_{\dmtnb}$ dominates $\W_{\dtnb}$. Since the proof of Lemma~\ref{lem:W_dmtnb>W_dtnb} is short, we include it here for the sake of completeness.
\begin{lemma} \label{lem:W_dmtnb>W_dtnb}
Let $\dmtnb$ and $\dtnb$ respectively be the distance--like functions in $\Hcal^1$ and $H$ defined in~\eqref{form:d.tilde^m_(N,beta)} and \eqref{form:d.tilde_(N,beta)}. Then, for all $\nu,\tilde{\nu}\in \Pcal r(\Hcal^1)$ and $m>0$
\begin{align} \label{ineq:W_dmtnb>W_dtnb}
\W_{\dmtnb}(\nu,\tilde{\nu}) \ge \W_{\dtnb}(\pi_1\nu,\pi_1\tilde{\nu}).
\end{align}
\end{lemma}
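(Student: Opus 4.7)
The plan is to exploit the coupling definition of the Wasserstein distance together with a pointwise comparison $\dmtnb(U,\Ut) \ge \dtnb(\pi_1 U,\pi_1\Ut)$ for all $U,\Ut\in\Hcal^1$. Once such a pointwise bound is in hand, taking any coupling $(X,Y)$ of $(\nu,\tilde\nu)$, the pair $(\pi_1 X,\pi_1 Y)$ is automatically a coupling of $(\pi_1\nu,\pi_1\tilde\nu)$, so
\[
\E\,\dmtnb(X,Y)\ge \E\,\dtnb(\pi_1 X,\pi_1 Y)\ge \W_{\dtnb}(\pi_1\nu,\pi_1\tilde\nu),
\]
and infimizing over couplings yields \eqref{ineq:W_dmtnb>W_dtnb}.

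The remaining task is therefore the pointwise comparison. Unpacking definitions, this amounts to verifying two elementary inequalities. First, from the explicit form~\eqref{form:V_m} of $V_m$, one reads off $V_m(u,v)\ge \|u\|^2_H$, hence $e^{\beta V_m(U)}\ge e^{\beta\|\pi_1 U\|^2_H}$ and similarly for $\Ut$. Second, for any $C^1$-path $\gamma:[0,1]\to\Hcal^1$ joining $U$ to $\Ut$, the integrand in~\eqref{form:varrho^m_beta} satisfies
\[
e^{\beta V_m(\gamma(s))}\bigl(m\|\pi_1\gamma'(s)\|^2_{H^1}+m^2\|\pi_2\gamma'(s)\|^2_{H}+\|\pi_1\gamma'(s)\|^2_{H}\bigr)^{1/2}\ge \|\pi_1\gamma'(s)\|_H,
\]
so integrating and applying the fundamental theorem of calculus to the $H$-valued curve $s\mapsto \pi_1\gamma(s)$ gives
\[
\int_0^1 e^{\beta V_m(\gamma(s))}\bigl(m\|\pi_1\gamma'(s)\|^2_{H^1}+m^2\|\pi_2\gamma'(s)\|^2_{H}+\|\pi_1\gamma'(s)\|^2_{H}\bigr)^{1/2}\d s\ge \|\pi_1 U-\pi_1\Ut\|_H.
\]
Taking the infimum over admissible $\gamma$ produces $\varrho^m_\beta(U,\Ut)\ge \|\pi_1 U-\pi_1\Ut\|_H$, and applying the increasing map $x\mapsto Nx\wedge 1$ gives $\dmnb(U,\Ut)\ge N\|\pi_1 U-\pi_1\Ut\|_H\wedge 1$.

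Combining the two inequalities inside the square root in~\eqref{form:d.tilde^m_(N,beta)} yields
\[
\dmtnb(U,\Ut)\ge \sqrt{(N\|\pi_1 U-\pi_1\Ut\|_H\wedge 1)\bigl[1+e^{\beta\|\pi_1 U\|^2_H}+e^{\beta\|\pi_1\Ut\|^2_H}\bigr]}=\dtnb(\pi_1 U,\pi_1\Ut),
\]
which is precisely the pointwise comparison needed. I do not anticipate any real obstacle here; the only point requiring a small amount of care is justifying that the coarse bound $e^{\beta V_m}\ge 1$ together with dropping the $m$-weighted $H^1$ and $H$ norms of $\pi_2\gamma'$ still leaves enough to recover the $H$-length of the projected path, and that the projection of a coupling is again a coupling of the marginals, both of which are straightforward.
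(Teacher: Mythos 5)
Your proof is correct and follows essentially the same route as the paper's: reduce to the pointwise inequality $\dmtnb(U,\Ut)\ge\dtnb(\pi_1U,\pi_1\Ut)$, establish it via $V_m(u,v)\ge\|u\|_H^2$ and the path-length comparison $\varrho^m_\beta(U,\Ut)\ge\|\pi_1U-\pi_1\Ut\|_H$, and then pass to the Wasserstein distances. The only (minor) difference is that you spell out the coupling-projection argument that the paper leaves implicit.
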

\begin{proof}
By the definition of Wasserstein distances as in~\eqref{form:W_d}, it suffices to prove that
\begin{align*}
\dmtnb(U,\Ut) \ge \dtnb(\pi_1 U,\pi_1\Ut).
\end{align*}
To see this, letting $\gamma(\cdot):[0,1]\to\Hcal^1$ be a path connecting $U$ and $\Ut$, we have
\begin{align*}
&\int_0^1 e^{\beta V_m(\gamma(s))}\big(m\|\pi_1\gamma'(s)\|^2_{H^1}+m^2\|\pi_2\gamma'(s)\|^2_{H}+\|\pi_1\gamma'(s)\|^2_{H}\big)^{1/2}\d s\\
&\ge \int_0^ 1 \|\pi_1\gamma'(s)\|_H\d s \ge \|\pi_1 U-\pi_1\Ut\|_H.
\end{align*}
Since the above inequality holds for any such path, from~\eqref{form:varrho^m_beta}, we obtain
\begin{align*}
\varrho^m_{\beta}(U,\Ut)\ge \|\pi_1 U-\pi_1\Ut\|_H.
\end{align*}
As a consequence, in view of expressions~\eqref{form:d.tilde^m_(N,beta)} and \eqref{form:d.tilde_(N,beta)}, we deduce
\begin{align*}
\dmtnb (U,\Ut)&=\sqrt{(N\varrho^m_\beta(U,\Ut)\mi 1)\big[1+e^{\beta V_m(U)}   +e^{\beta V_m(\Ut)}\big]}\\
&\ge \sqrt{(N \|\pi_1 U-\pi_1\Ut\|_H\mi 1)\big[1+e^{\beta \|\pi_1 U\|^2_H}   +e^{\beta \|\pi_1 \Ut\|^2_H}\big]}= \dtnb(\pi_1 U,\pi_1 \Ut).
\end{align*}
In turn, this establishes~\eqref{ineq:W_dmtnb>W_dtnb}.

\end{proof}

We conclude this section by Lemma~\ref{lem:W_dtnbhalf<W_dtnb} concerning a generalized triangle estimate between $\W_{\dtnb}$ and $\W_{\dtnbhalf}$. The argument of Lemma~\ref{lem:W_dtnbhalf<W_dtnb} follows along the lines of the proof of \cite[Lemma B.2]{glatt2022short} and thus is omitted.
\begin{lemma} \label{lem:W_dtnbhalf<W_dtnb}
Let $\dtnb$ be the distance--like function in $H$ defined in~\eqref{form:d.tilde_(N,beta)}. Then, for all $\nu_1,\nu_2,\nu_3\in \Pcal r(H)$,
\begin{align} \label{ineq:W_dtnbhalf<W_dtnb}
\W_{\dtnb}(\nu_1,\nu_3) \le C\Big(\W_{\dtnbtwo}(\nu_1,\nu_2)+\W_{\dtnbtwo}(\nu_2,\nu_3)\Big),
\end{align}
for some positive constant $C=C(N,\beta)$ independent of $\nu_1,\nu_2$ and $\nu_3$.
\end{lemma}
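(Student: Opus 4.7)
The approach is to combine the gluing lemma from optimal transport with an elementary decomposition of $\dtnb$, distributing the state-dependent weight between the two legs of the coupling path. Given $\varepsilon > 0$, I would first select couplings of $(\nu_1, \nu_2)$ and $(\nu_2, \nu_3)$ that are within $\varepsilon$ of being optimal for $\W_{\dtnbtwo}$, and apply the gluing lemma \cite[Chapter 1]{villani2008optimal} to produce a triple $(Z_1, Z_2, Z_3)$ on a common probability space with the desired pairwise projections. Since $\W_{\dtnb}(\nu_1, \nu_3) \le \E\, \dtnb(Z_1, Z_3)$, it suffices to bound $\E\,\dtnb(Z_1, Z_3)$ pathwise by a constant multiple of $\E\,\dtnbtwo(Z_1, Z_2) + \E\,\dtnbtwo(Z_2, Z_3)$.

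Two elementary inequalities then drive the decomposition. The subadditivity $(a+b) \mi 1 \le (a \mi 1) + (b \mi 1)$, combined with the triangle inequality in $H$, yields $(N\|Z_1 - Z_3\|_H) \mi 1 \le (N\|Z_1 - Z_2\|_H \mi 1) + (N\|Z_2 - Z_3\|_H \mi 1)$. The pointwise bound $e^{\beta r} \le 1 + e^{2\beta r}$ gives, after collecting like terms,
\begin{align*}
1 + e^{\beta\|Z_1\|_H^2} + e^{\beta\|Z_3\|_H^2} \le 3\bigl[\tilde W(Z_1, Z_2) + \tilde W(Z_2, Z_3)\bigr],
\end{align*}
where I set $\tilde W(x, y) := 1 + e^{2\beta\|x\|_H^2} + e^{2\beta\|y\|_H^2}$ for the weight appearing in $\dtnbtwo$. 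Multiplying the two inequalities and applying $\sqrt{a+b} \le \sqrt a + \sqrt b$ produces
\begin{align*}
\dtnb(Z_1, Z_3) &\le \sqrt 3\,\Bigl[\dtnbtwo(Z_1, Z_2) + \dtnbtwo(Z_2, Z_3) \\
&\qquad + \sqrt{(N\|Z_1 - Z_2\|_H \mi 1)\, \tilde W(Z_2, Z_3)} + \sqrt{(N\|Z_2 - Z_3\|_H \mi 1)\, \tilde W(Z_1, Z_2)}\Bigr],
\end{align*}
so the remaining task is to absorb the two cross terms, in which the truncated distance concerns one pair of indices while the weight concerns another.

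To control these cross terms I would split based on whether the distance appearing under the square root is large or small. For the first cross term, on the event $\{N\|Z_2 - Z_3\|_H \ge 1\}$ one has $\dtnbtwo(Z_2, Z_3)^2 = \tilde W(Z_2, Z_3)$, so the term is bounded directly by $\dtnbtwo(Z_2, Z_3)$. On the complementary event $\{\|Z_2 - Z_3\|_H < 1/N\}$, the quantitative expansion $\|Z_3\|_H^2 \le (1+\delta)\|Z_2\|_H^2 + C_{N,\delta}$, valid for any $\delta > 0$, permits comparing $\tilde W(Z_2, Z_3)$ with $\tilde W(Z_1, Z_2)$ and $\dtnbtwo(Z_2, Z_3)^2$ up to a constant depending only on $N, \beta$; the contribution is then folded into $C\bigl[\dtnbtwo(Z_1, Z_2) + \dtnbtwo(Z_2, Z_3)\bigr]$. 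The second cross term is handled symmetrically, after which taking expectation, optimizing over couplings, and passing $\varepsilon \to 0$ delivers the claim. The main obstacle is exactly this case analysis: since $\dtnb$ is only distance-like and fails the ordinary triangle inequality because of the square root and the state-dependent exponential weight, the cross terms cannot be dismissed algebraically; the saving grace is that the right-hand side uses the larger weight exponent $2\beta$ rather than $\beta$, providing exactly the budget needed to absorb the quadratic blow-up $\|Z_3\|_H^2 \lesssim \|Z_2\|_H^2 + \mathrm{const}$ that arises when $\|Z_2 - Z_3\|_H$ is small.
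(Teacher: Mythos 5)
The skeleton — glue two near-optimal couplings into a triple $(Z_1,Z_2,Z_3)$ and prove the pointwise estimate $\dtnb(Z_1,Z_3)\le C\bigl[\dtnbtwo(Z_1,Z_2)+\dtnbtwo(Z_2,Z_3)\bigr]$ — is the right one, but there is a genuine gap, and it originates in the step where you bound the weight via $e^{\beta r}\le 1+e^{2\beta r}$ to get
\begin{equation*}
1+e^{\beta\|Z_1\|_H^2}+e^{\beta\|Z_3\|_H^2}\le 3\bigl[\tilde W(Z_1,Z_2)+\tilde W(Z_2,Z_3)\bigr].
\end{equation*}
This spends the entire $\beta\to 2\beta$ budget on an inequality that buys nothing, and afterwards the cross terms genuinely cannot be absorbed. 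To see that the cross term $\sqrt{(N\|Z_1-Z_2\|_H\mi 1)\,\tilde W(Z_2,Z_3)}$ is out of reach of $C\bigl[\dtnbtwo(Z_1,Z_2)+\dtnbtwo(Z_2,Z_3)\bigr]$ on $\{\|Z_2-Z_3\|_H<1/N\}$, note that $\tilde W(Z_2,Z_3)$ contains $e^{2\beta\|Z_3\|_H^2}$, and $\|Z_3\|_H^2\le(1+\delta)\|Z_2\|_H^2+C_{N,\delta}$ only gives the exponent $2\beta(1+\delta)$, which exceeds the $2\beta$ available in $\tilde W(Z_1,Z_2)$ by an unbounded multiplicative amount; meanwhile $\dtnbtwo(Z_2,Z_3)^2=(N\|Z_2-Z_3\|_H\mi 1)\tilde W(Z_2,Z_3)$ carries exactly the small truncated-distance factor and therefore does not control $\tilde W(Z_2,Z_3)$ either. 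A concrete one--dimensional witness: $Z_2=R$, $Z_1=R-\tfrac1{2N}$, $Z_3=R+R^{-1/2}$. Then both legs are $<1/N$ for large $R$, the cross term above is $\sim e^{\beta R^2}e^{2\beta\sqrt R}$, yet $\dtnbtwo(Z_1,Z_2)\sim e^{\beta R^2}$ and $\dtnbtwo(Z_2,Z_3)\sim N^{1/2}R^{-1/4}e^{\beta R^2}e^{2\beta\sqrt R}$, so the cross term exceeds their sum by a factor of order $R^{1/4}$. (Note $\dtnb(Z_1,Z_3)\sim e^{\beta R^2/2}e^{\beta\sqrt R}$ is far smaller; the lemma itself holds, it is only your intermediate step that overshoots.)

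The fix is to retain the exponent $\beta$ in $W^\beta_{13}:=1+e^{\beta\|Z_1\|_H^2}+e^{\beta\|Z_3\|_H^2}$ and deploy the $\beta\to 2\beta$ budget exactly where it is needed: to transfer the weight from the endpoints $Z_1,Z_3$ to the middle point $Z_2$. Split on whether $\|Z_1-Z_2\|_H$ and $\|Z_2-Z_3\|_H$ are $<1/N$. If $\|Z_2-Z_3\|_H<1/N$ then $\|Z_3\|_H^2\le 2\|Z_2\|_H^2+2/N^2$, hence $e^{\beta\|Z_3\|_H^2}\le e^{2\beta/N^2}e^{2\beta\|Z_2\|_H^2}$, and combined with $1+e^{\beta\|Z_1\|_H^2}\le \tilde W(Z_1,Z_2)$ this gives $W^\beta_{13}\le C\,\tilde W(Z_1,Z_2)$; symmetrically, $\|Z_1-Z_2\|_H<1/N$ gives $W^\beta_{13}\le C\,\tilde W(Z_2,Z_3)$. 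If a leg is $\ge 1/N$, then the corresponding truncated distance saturates at $1$, so $\dtnbtwo$ of that pair equals $\sqrt{\tilde W}$ and dominates the corresponding endpoint contribution to $W^\beta_{13}$. Multiplying by $(N\|Z_1-Z_3\|_H\mi 1)\le(N\|Z_1-Z_2\|_H\mi 1)+(N\|Z_2-Z_3\|_H\mi 1)$ then produces in every case a bound by $C\bigl[\dtnbtwo(Z_1,Z_2)^2+\dtnbtwo(Z_2,Z_3)^2\bigr]$; taking square roots, expectations under the glued coupling, and passing to the infimum over couplings completes the proof.
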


\bibliographystyle{abbrv}
{\footnotesize\bibliography{wave-bib}}

\end{document}